\documentclass[11pt]{article}

\usepackage[utf8]{inputenc}
\usepackage{fullpage,graphicx,dsfont,amsfonts,amssymb,amsmath,psfrag,enumerate,float,amsthm,enumitem,mathtools,setspace,natbib,bbding,subcaption,tcolorbox,comment,ifthen,xstring,comment,xcolor}

\usepackage{breakcites}

\allowdisplaybreaks
\mathtoolsset{showonlyrefs}

\usepackage[colorlinks=true,bookmarks=true,linkcolor=blue,urlcolor=blue,citecolor=blue,breaklinks=true]{hyperref}

\usepackage{sectsty}
\makeatletter\def\@seccntformat#1{\protect\makebox[0pt][r]{\csname the#1\endcsname\hspace{12pt}}}\makeatother

\usepackage{algorithm,algpseudocode}
\usepackage{etoolbox}
\AtBeginEnvironment{algorithmic}{\setlength{\baselineskip}{1.5em}}

\newboolean{show_proofs}
\setboolean{show_proofs}{true}

\ifthenelse{\boolean{show_proofs}}
{
}
{
	\excludecomment{proof}
}
\definecolor{ddarkbrown}{rgb}{0.5,0.2,0.05} \definecolor{bbluegray}{rgb}{0.05,0,0.5}

\newtheorem{theorem}{Theorem}[section]
\newtheorem{proposition}[theorem]{Proposition}
\newtheorem{definition}[theorem]{Definition}
\newtheorem{lemma}[theorem]{Lemma}

\newtheorem{assumption}{Assumption}[section]
\newtheorem{example}[theorem]{Example}

\newtheorem{remark}[theorem]{Remark}

\newcommand{\xbar}{\overline{x}}

\let\la\langle
\let\ra\rangle
\newcommand{\inner}[2]{\left\langle{#1},{#2}\right\rangle}
\newcommand{\BSP}{\begin{equation*}\begin{split}}
      \newcommand{\ESP}{\end{split}\end{equation*}}

\newcommand{\BEAS}{\begin{eqnarray*}}
    \newcommand{\EEAS}{\end{eqnarray*}}
\newcommand{\BEA}{\begin{align*}}
    \newcommand{\EEA}{\end{align*}}
\newcommand{\BEQ}{\begin{equation}}
    \newcommand{\EEQ}{\end{equation}}

\newcommand{\BEQS}{\begin{equation*}}
    \newcommand{\EEQS}{\end{equation*}}

\newcommand{\BIT}{\begin{itemize}}
    \newcommand{\EIT}{\end{itemize}}
\newcommand{\BNUM}{\begin{enumerate}}
    \newcommand{\ENUM}{\end{enumerate}}

\newcommand{\BA}{\begin{array}}
    \newcommand{\EA}{\end{array}}

\newcommand{\reals}{{\mathbb R}}
\newcommand{\naturals}{{\mathbb N}}

\newcommand{\symm}{\mathbb{S}}  % symmetric matrices
\newcommand{\Rd}{\reals^d}
\newcommand{\Rdd}{\reals^{d \times d}}

\newcommand{\diag}{\mathop{\bf diag}}

\newcommand{\Prob}{\mathbb{P}}

\newcommand{\stopcrit}{\mathtt{stop}}
\newcommand{\stopk}[1]{\mathtt{stop}_{#1}}
\newcommand{\stopoob}{\mathtt{boundary}}

\newcommand{\stopres}{\mathtt{residual}}

\newcommand{\tCGbg}{\mathtt{tCG\text{-}bg}}
\newcommand{\boundarygradientstep}{\mathtt{boundary\_gradient\_step}}

\newcommand{\PRP}[1]{{\color{purple} #1}} % modifications in algorithms

\newcommand{\flow}{f_{\mathrm{low}}}
\newcommand{\qmin}{q_{\mathrm{min}}}
\newcommand{\Tin}{T_{\mathrm{in}}}

\newcommand{\Flg}{ \bar{\mathcal{F}} }

\newcommand{\bKrs}{\overline{K}_{r,s}}
\newcommand{\bKru}{\overline{K}_{r,u}}

\newcommand{\Deltacrit}{\Delta_{\rm crit}}
\newcommand{\Deltainf}{\Delta_{\rm inf}}

\newcommand{\opnorm}[1]{\|{#1}\|_{\mathrm{op}}}
\newcommand{\lambdamin}{\lambda_{\mathrm{min}}}

\newcommand{\dt}{\mathrm{d}t}

\title{A practical randomized trust-region method\\to escape saddle points in high dimension}

\author{Radu-Alexandru Dragomir, Xiaowen Jiang, Bonan Sun, Nicolas Boumal}

\date{Compiled \today}

\begin{document}

\maketitle

\begin{abstract}
Without randomization, escaping the saddle points of $f \colon \reals^d \to \reals$ requires at least $\Omega(d)$ pieces of information about $f$ (values, gradients, Hessian-vector products).
With randomization, this can be reduced to a polylogarithmic dependence in $d$.
The prototypical algorithm to that effect is perturbed gradient descent (PGD): through sustained jitter, it reliably escapes strict saddle points.
However, it also never settles: there is no convergence.
What is more, PGD requires precise tuning based on Lipschitz constants and a preset target accuracy.

To improve on this, we modify the time-tested trust-region method with truncated conjugate gradients (TR-tCG).
Specifically, we randomize the initialization of tCG (the subproblem solver),
and we prove that tCG automatically amplifies the randomization near saddles (to escape) and absorbs it near local minimizers (to converge).
Saddle escape happens over several iterations.
Accordingly, our analysis is multi-step, with several novelties.

The proposed algorithm is practical: it essentially tracks the good behavior of TR-tCG, with three minute modifications and a single new hyperparameter (the noise scale $\sigma$).
We provide an implementation and numerical experiments.
\end{abstract}

% Display TOC with section titles only
% APPENDICES ARE EXCLUDED WITH AN EXTRA LINE OF CODE JUST BEFORE \appendix -- check that out if you want to list the appendices too.
\setcounter{tocdepth}{1}
\tableofcontents

\clearpage
\section{Introduction}

We propose\footnote{A Matlab \href{https://github.com/NicolasBoumal/manopt/tree/master/manopt/solvers/trustregions}{implementation} is available as part of the Manopt toolbox~\citep{manopt}, extended to optimization on Riemannian manifolds.} a practical algorithm for unconstrained minimization problems,
\begin{align}
    \min_{x \in \Rd} f(x),
    \label{eq:minf}
\end{align}
where the cost function $f \colon \Rd \to \reals$ is twice continuously differentiable and possibly non-convex.

For \emph{theory}, we further assume that the gradient $\nabla f$ and Hessian $\nabla^2 f$ are Lipschitz continuous, that the critical points of $f$ are uniformly non-degenerate, and that the gradient at $x$ is large when there are no critical points near $x$. (The algorithm can be run without these assumptions.)
% Note: We also assume $f$ is lower bounded, but this is rather tame and unobjectionable, so let's skip it in the intro so we can get on with it.

Our aim is that, under these favorable conditions, the algorithm should offer the following \emph{theoretical guarantees in high dimensions}:
\begin{enumerate}
    \item The sequence of iterates $x_0, x_1, x_2, \ldots$ converges to a local minimizer;
    \item For every small enough $\epsilon > 0$, there is an iterate $x_K$ within distance $\epsilon$ from a local minimizer and such that the total number of oracle calls (to $f$, $\nabla f$ and Hessian-vector products) expended to reach $x_K$ grows:
    \begin{enumerate}
      \item not worse than $\log(1/\epsilon)$ with respect to the accuracy $\epsilon$,\footnote{Even if $f$ were a strongly convex quadratic it would not be possible to do better in terms of $\epsilon$: see~\citep[\S2.1.4]{Nesterov20018}, \citep{Nemirovski1983}.} and
      \item not worse than $\log(d)$ with respect to the dimension $d$. % Note: About the lower-bound: it matters that the bad function is quadratic, because this means you can get Hessian-vector products too, simply because Hess f(x)[v] = grad f(v) - grad f(0).
    \end{enumerate}
\end{enumerate}
Before proposing something new, let us review some contenders.

For starters, gradient descent may get stuck in a saddle point, thus failing even the first requirement (albeit not generically so~\citep{Lee2016}). % Note: We hesitated to cite \citep{Du2017} here to point out that even with generic initialization GD can get really slow, but one would have to check if the bad functions built in that ref satisfy the assumptions we make here. We don't need to go there though, so let's not take a detour so early in the paper.

Under our assumptions,
second-order algorithms such as the trust-region method (TR) with an exact subproblem solver do meet requirements 1 and 2a: see \citep[\S3]{Goyens2024}. %Yes, they have a remark about actual convergence; infinite loop: no need to pre specify epsilon.
However, to solve the subproblem exactly, as many as $d$ Hessian-vector products are necessary in the worst case.
Thus, requirement 2b is not met.

In fact, such linear growth with $d$ is unavoidable for \emph{deterministic} algorithms.
This is folklore: see Section~\ref{ss:justification_randomization} for a proof.

Accordingly, we turn to \emph{randomized} algorithms.
The aim now becomes to satisfy all of the above with probability at least $1-\delta$ in each run of the algorithm, allowing the number of oracle calls to grow with $\log\!\left(\frac{d}{\delta\epsilon}\right)$.

As a prominent example, \citet{Jin2017,Jin2021} showed that adding sustained and well-calibrated random perturbations to the iterates of gradient descent is sufficient to escape saddle points with polylogarithmic scaling in dimension.
Their \emph{perturbed gradient descent} algorithm does not require Hessian-vector products.
It was later accelerated~\citep{Jin2018} and extended to Riemannian optimization~\citep{sun2019prgd,criscitiello2019escapingsaddles,criscitiello2020accelerated}.

These perturbed gradient algorithms, however, do not converge.
Rather, they produce a sequence of iterates which never settle, as they are pushed around by random perturbations.
If sufficiently many iterates are produced, then a constant fraction of them are near second-order critical (which, under our assumptions, implies they are near a local minimizer).
Moreover, one must decide on a target accuracy $\epsilon$ ahead of time, then carefully tune the step-sizes, the magnitude of the perturbations and the total number of iterations as a function of $\epsilon$ and the various other problem parameters (such as Lipschitz constants).
This is arguably impractical.

Meanwhile, practical deterministic algorithms tend to perform well in most cases.
This is true in particular of the \emph{trust-region method} with the \emph{truncated conjugate gradients} subproblem solver (TR-tCG, see below).
The latter does not solve the TR subproblem exactly, relying instead on a heuristic modification of the classical conjugate gradients (CG) algorithm to solve the subproblem approximately.
This typically entails a modest number of Hessian-vector product evaluations.
However, in its deterministic form, TR-tCG may get stuck at strict saddle points.
Even if not, its worst-case behavior scales at least polynomially in $d$ (because it is deterministic).

Therefore, we propose to \emph{lightly randomize TR-tCG}: enough to secure the desired theoretical guarantees, while being careful not overly to interfere with its habitual behavior.
This involves only three minor modifications to the standard algorithm (described next), and results in a method that meets the announced requirements.

\subsection{Reminders about the standard TR-tCG algorithm}

Given $x_0 \in \Rd$ and $\Delta_0 > 0$,
a trust-region method~\citep{Conn2000} generates a sequence of points $x_0, x_1, x_2, \ldots$ and radii $\Delta_0, \Delta_1, \Delta_2, \ldots$ as follows (see Algorithm~\ref{algo:pTR} with $\sigma = 0$ for now).
At iteration $k$, we compute an (approximate) minimizer $u_k$ for the \emph{trust-region subproblem} (TRS):
\begin{align}
    \min_{v \in \Rd} \; m_{x_k}(v) && \textrm{ subject to } && \|v\| \leq \Delta_k, && \textrm{ with } && m_{x_k}(v) = \inner{\nabla f(x_k)}{v} + \frac{1}{2} \inner{\nabla^2 f(x_k)v}{v}.
    \label{eq:subproblem_intro}
\end{align}
By design, $f(x_k + v) \approx f(x_k) + m_{x_k}(v)$ for $v$ small enough.
Thus, $x_k + u_k$ (approximately) minimizes a quadratic approximation of $f$ within the ball of radius $\Delta_k$ around $x_k$.
As we consider moving to that new point, the ratio
\begin{align}
    \frac{f(x_k) - f(x_k + u_k)}{m_{x_k}(0) - m_{x_k}(u_k)}
    \label{eq:rhointroclassic}
\end{align}
compares the actual decrease in function value to the decrease predicted by the model.
Depending on this ratio, the next iterate is either $x_{k+1} = x_k + u_k$ (accept) or $x_{k+1} = x_k$ (reject), and the next radius $\Delta_{k+1}$ is either set to $\Delta_k$, or reduced, or increased.

The eventual efficiency and practicality depend on the subproblem solver.
The time-tested approach we build on is the \emph{truncated conjugate gradients} method (tCG), also known as the Steihaug--Toint method---see~\citep{toint1981note,Steihaug1983tCG} and~\citep[\S7.5]{Conn2000}, and Algorithm~\ref{algo:tCG} with $\xi = 0$ and no boundary gradient step for now.

In a nutshell:
tCG consists in running CG on the quadratic model $m_{x_k}$ (at the cost of one Hessian-vector product per iteration), while
(a)~hoping to stop early with a sufficiently good approximate solution, and
(b) monitoring for signs that $\nabla^2 f(x_k)$ is not positive definite or that the iterates are leaving the ball of radius $\Delta_k$ (in which case we abort with a heuristic).
In all cases, some progress is made.
In the happy scenario where the Hessian is positive definite and the (then unique) minimizer of $m_{x_k}$ has norm less then $\Delta_k$, tCG (initialized at zero) is identical to CG: its iterates converge exponentially fast to the minimizer of the TRS.

\subsection{Proposed modifications of TR-tCG}

Our modifications to TR-tCG, reflected in Algorithms~\ref{algo:pTR},~\ref{algo:tCG} and~\ref{algo:bg} and Figure~\ref{fig:tCG-bg}, are as follows:
\begin{enumerate}
    \item Initialize tCG not with zero but with a small random vector $\xi_k$ of norm $\sigma$ (typically).

    This builds on a heuristic proposed by \citet[\S2.3.3]{baker2008riemannian} and connects with theory for randomized Krylov spaces developed by \citet{LanczosBound1992}, \citet{Carmon2018a} and \citet[Thm.~1]{Royer2020}.

    The noise scale $\sigma$ is the only new hyperparameter.
    For our theory to apply, it should be small enough compared to (often unknown) problem parameters, but it is safe to take $\sigma$ ``very small'' because undershooting only incurs a logarithmic penalty.

    \item Shift the ratio $\rho_k$ usually computed in TR methods.

    This accounts for the fact that tCG with a nonzero initialization may return an update vector $u_k$ which performs worse than the zero vector.
    Specifically, we add $\theta_k = m_{x_k}(\xi_k) - m_{x_k}(0)$ to both the numerator and denominator of~\eqref{eq:rhointroclassic}, resulting in~\eqref{eq:rhok}.
    This shift allows TR occasionally to accept a step leading to an increase in function value ($f(x_{k+1}) > f(x_k)$).
    We found that this helps to escape saddle points.

    \item Augment tCG with a ``boundary gradient step'': $\tCGbg$.

    Classically, tCG runs CG until it either obtains an acceptable solution (small residue) or it runs into the boundary of the trust region (radius $\Delta_k$).
    Here, we run tCG with radius $\Delta_k / 2$.
    If it runs into \emph{that} boundary, then we execute one more gradient step from that point, limited to the larger region of radius $\Delta_k$ (see Figure~\ref{fig:tCG-bg}).

    This unlocks a key decrease inequality which would otherwise be unavailable when the model is non-convex and the gradient is small (that is, near saddles): see Section~\ref{sec:tcgbgbenefits}.
\end{enumerate}
These modifications are simple to implement, and do not overly perturb the normal functioning of TR-tCG when unnecessary.
They do, however, enable new theoretical guarantees.

\begin{figure}[t]
    \centering
    \begin{subfigure}{0.45\textwidth}
        \includegraphics[width=0.9\linewidth]{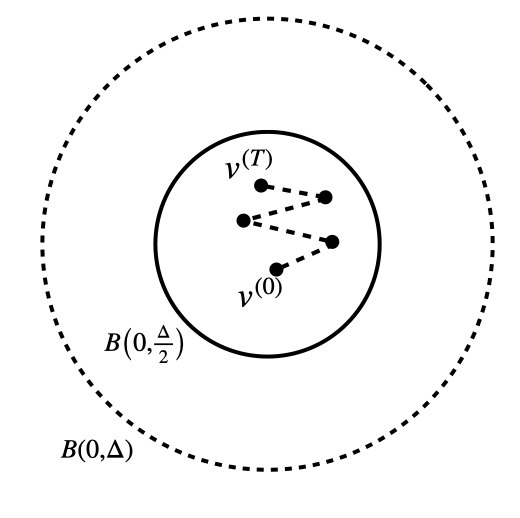}
        \caption{$\stopcrit = \stopres$}
    \end{subfigure}
    \begin{subfigure}{0.45\textwidth}
        \includegraphics[width=0.9\linewidth]{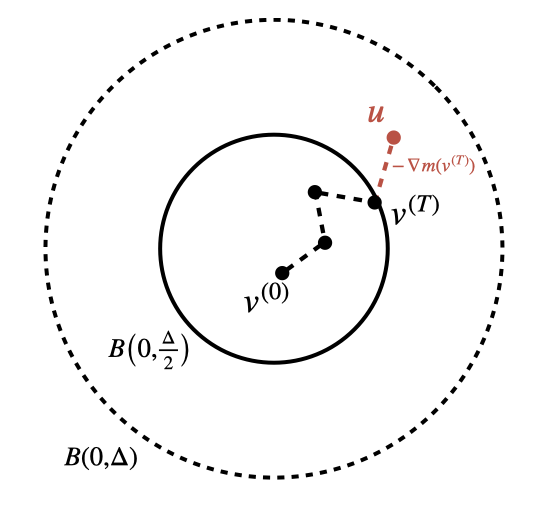}
        \caption{$\stopcrit = \stopoob$}
    \end{subfigure}
    \caption{Demonstrating the two scenarios in $\tCGbg$ (Algorithm~\ref{algo:tCG}). If the $\stopres$ criterion triggers while the iterates are still inside the ball of radius $\Delta/2$, we stop there (and the method is equivalent to standard CG). If the iterates reach the sphere of radius $\Delta/2$, we perform a last gradient step from $v^{(T)}$. This last step unlocks a descent guarantee in the non-convex case.}
    \label{fig:tCG-bg}
\end{figure}

\subsection{How these modifications help to reach the stated goal}

One of the key difficulties in the algorithm design is that it must both be able to escape saddle points (which requires ``jitter'') and to converge to local minimizers (which requires steadiness)---yet it is not straightforward to determine whether we are near one or the other.

An important feature of our algorithm is that it does not try to make such a determination, and therefore it also does not need to know problem constants such as the ones that appear in our assumptions below, nor does it require one to preset a target tolerance $\epsilon$.
This is in contrast to several theoretical algorithms which switch between modes (e.g., ``large gradient steps'' or ``saddle point detection''), often based on unknowable thresholds.

Our analysis relies on the fact that \emph{CG itself behaves differently near saddles and minimizers}.

\begin{itemize}
  \item \textbf{Near local minima}, the quadratic model is strongly convex, and the CG iterates are ``captured'' inside the trust region, provided the radius is large enough.
  It follows that the TR iterates converge quadratically to the nearby minimizer.
  This property \emph{still holds in the presence of randomization} (unlike methods such as perturbed gradient descent).
  This is because the noise is added to the \emph{initialization} of the subproblem solver rather than to its output: CG has an opportunity to \emph{absorb the noise}.

  \item \textbf{Near saddle points}, the CG iterates are ``pushed outwards'' by the negative curvature: this result is stated in Proposition~\ref{prop:vt_qi_growth}, which is of independent interest (see Figure~\ref{fig:tcg_nonconvex} for an illustration).
  Therefore, they cannot converge to the saddle, unless the initialization is in a small unfavorable region.
  The random perturbation avoids this region with high probability.

  To obtain complexity guarantees, it is not sufficient to prove that the iterates do not converge to the saddle: we also need to show that they achieve \emph{sufficient decrease in function value}.
  Unfortunately, we cannot secure this with standard tCG. Indeed, even though CG eventually escapes, it might do so through a ``flat'' direction of the landscape which achieves little to no decrease.

  We solve this issue by performing an additional gradient step from the boundary of the trust region (see Section~\ref{s:proof_sketch} for a proof sketch).
  The idea is that, when the Hessian is non-degenerate, the gradient norm increases as we move away from the saddle.
\end{itemize}

% The analysis rests on several new ideas.
% In particular, it relies on controlling the behavior of the algorithm over the span of several (inner and outer) iterations.

Our theoretical analysis crucially requires that the Hessian eigenvalues at critical points are bounded away from zero by a global constant (the $\mu$-Morse property).
In this setting, we show that the algorithm converges with high probability to a local minimizer.
The complexity to reach an $\epsilon$-minimizer depends on several problem constants, including a lower bound on the gradient norm away from critical points, but only logarithmically on dimension $d$ and precision~$\epsilon$.

Moreover, our algorithm is \emph{practical}: we only introduce one extra parameter (the noise scale $\sigma$), and it does not require additional costly subroutines, such as eigenvalue decompositions.
Its implementation and performance is mostly comparable to standard TR-tCG

\subsection{Limitations}

Let us briefly highlight some limitations.

While it is straightforward to run the proposed algorithm on any $C^2$ cost function, our theoretical analysis requires isolated critical points (among other things).
Relaxing this assumption may be possible, but it seems non-trivial.

Moreover, the algorithm is not entirely adaptive since we require the noise scale $\sigma$ to be smaller than some threshold $\bar \sigma_{\rm g}$ which depends on unknown problem constants.
Fortunately, our bounds depend only logarithmically on $\sigma$.
In practice, we set $\sigma$ to a small positive value (e.g., $10^{-6}$).
If this is smaller than necessary, the penalty is mild.
If it is larger than necessary, the algorithm may still work in practice, and theoretical guarantees similar to standard TR-tCG still apply.

Finally, although our complexity estimate has good dependence in $d$ and $\epsilon$, it has non-ideal dependence in the Hessian condition number (near saddle points), and in the maximal trust-region radius $\bar \Delta$.
% \nb{We do get the right scaling wrt condition number near local minimizers though (Lemma 7.1) -- let's make sure this is clear, too.}
Some competing algorithms (see related work below) have better complexity, at the expense of being less practical.
% \radu{the paragraph "approximate second-order methods" in the related work section. Our closest direct competitors are the Newton-CG method from \cite{Royer2020}, and trust-region "capped-CG" from \cite{Curtis2021}}
This may be due to improvable bounds in our analysis.
% We believe that our theoretical analysis is overly conservative on some aspects, and that the bounds could be improved in future work.

\subsection{Outline of the paper}

Section~\ref{s:algo_details} gives additional details on the algorithm, Section~\ref{ss:assumptions} states the assumptions, Section~\ref{sss:informal_thm} announces the main theorem, and Section~\ref{s:proof_sketch} sketches the proof.
The rest of the paper provides a detailed proof (Sections~\ref{s:preliminaries}--\ref{s:complexity}) as well as numerical experiments (Section~\ref{s:numerical_experiments}), followed by a discussion.

\subsection{Related work}

\paragraph{Approximate second-order methods.}

Several works study inexact second-order methods for non-convex problems, with convergence guarantees to a local minimizer. Closest to our work are the trust-region CG  methods of \citep{Curtis2021,Goyens2024}. Other approaches include inexact cubic-regularized Newton \citep{Agarwal2017}, and
Newton-CG variants analyzed in \citep{Royer2018,Royer2020,Yao2022}. More recently, \cite{Liu2023} suggest replacing CG with MINRES as a subproblem solver in Newton schemes. We highlight that, unlike our approach, all these methods require the use of an additional subroutine, such as eigenvector search, to detect negative curvature around saddles.

\paragraph{Other saddle-avoiding algorithms.}

A line of work uses tools from dynamical systems, such as the \emph{center-stable manifold theorem}, to show that gradient descent avoids saddle points from almost all initializations \citep{Lee2016,Lee2019}.
These results do not address the rate of convergence, which can be slow \citep{Du2017}.
See also~\citep{dixit2023exittime} for escape rates of GD near saddles.

Several other works design algorithms to find a local minimizer (or more generally, a second-order stationary point), approximately, with complexity guarantees.
\begin{itemize}
  \item \textbf{Perturbed gradient descent:} the idea of adding well-calibrated noise to gradient descent in order to avoid saddles appeared in \citep{Ge2015}, and the complexity was later improved by \citet{Jin2017}.
  \item \textbf{Accelerated gradient methods:} \citet{Carmon2017convex_guilty} propose an efficient algorithm based on Nesterov acceleration for non-convex problems in order to compute an approximate critical point. \citet{Carmon2018} show that approximate second-order stationarity guarantees can be obtained for accelerated gradient descent, at the price of using an \emph{eigenvector search} subroutine to detect negative curvature.
  \cite{Jin2018} design a perturbed accelerated gradient method that avoids eigenvector search, instead relying on the addition of noise and a simpler ``negative curvature exploitation'' subroutine.
  \item \textbf{Exact second-order methods:} algorithms that perform exact minimization of the second-order model provably avoid saddle points. This is the case for cubic-regularized Newton \citep{Nesterov2006,Cartis2009}, as well as trust-region methods \citep{Cartis2012,Curtis2016}. The main caveat is that solving the subproblem exactly is costly in high dimension.
\end{itemize}

Other papers on saddle avoidance algorithms include \citep{AllenZhu2018,cao2025efficientlyescapingsaddlepoints,li2024randomizedalgorithmnonconvexminimization}.
Our work is also related to the theory of randomized Krylov subspaces \citep{LanczosBound1992,Carmon2018a}, which we use to analyze CG with perturbed initialization.

\paragraph{Global guarantees in non-convex optimization.}

Our work contributes to research on algorithms and theoretical guarantees for non-convex optimization.
Several such problems exhibit what is known as \emph{benign non-convexity}, where the objective function may have saddle points but it has no spurious local minima.
This observation motivates efforts to build optimization algorithms that avoid saddles and converge to local minima.
Notable examples include phase retrieval \citep{Sun2017}, matrix completion \citep{Ge2016}, Burer-Monteiro factorizations \citep{Boumal2019} and linear neural networks \citep{Kawaguchi2016}.
See also \citep{Chi2019} for more.

\subsection{Notation}

At iteration $k$ of Algorithm~\ref{algo:pTR}, we let $g_k = \nabla f(x_k)$ and $H_k = \nabla^2 f(x_k)$.
The iterates of Algorithm~\ref{algo:pTR} are the \emph{outer} iterates, indexed with the subscript $k$ (e.g., $x_k$), while the iterates of Algorithm~\ref{algo:tCG} ($\tCGbg$, the \emph{inner} iterates) are indexed with the superscript ${(t)}$ (e.g., $v^{(t)}$).
We write $v_k^{(t)}$ to denote the $t$th inner iterate of $\tCGbg$ at outer iteration $k$ of Algorithm~\ref{algo:pTR} (and likewise for $r_k^{(t)}$ and $p_k^{(t)}$).

The positive part of $s \in \reals$ is $[s]_+ = \max(s, 0)$.
Also, $\lfloor s \rfloor$ is $s$ rounded down to the nearest integer.
The set of symmetric matrices of size $d$ is $\symm_d$.
For $A \in \symm_d$, we let $\lambdamin(A)$
%and $\lambdamax(A)$
denote the smallest
%and largest eigenvalues of $A$.
eigenvalue of $A$.
For $v \in \Rd$, the Euclidean norm is $\|v\| = \sqrt{v^\top v}$.
For $M \in \Rdd$, the operator (or spectral) norm is $\opnorm{M}$ (the largest singular value of $M$).
%
%The unit sphere in $\Rd$ for the Euclidean norm is $S^{d-1} = \{ x \in \Rd : \|x\| = 1 \}$.
%, while $B(x, R)$ is the closed Euclidean ball of radius $R$ centered at $x$.

\section{The algorithm} \label{s:algo_details}

We described the proposed randomized trust-region method at a high level in the introduction.
The specifics are given in Algorithm~\ref{algo:pTR} (the trust-region outer scheme), Algorithm~\ref{algo:tCG} (the subproblem solver, $\tCGbg$) and Algorithm~\ref{algo:bg} (the boundary gradient step).

Before diving into the theoretical analysis, we give additional details on the subproblem solver $\tCGbg(H, g, \Delta, \xi)$.
This method generates a sequence of iterates $v^{(0)}, \ldots, v^{(T)}$ and outputs a vector $u$ as well as a termination reason $\stopcrit \in \{ \stopoob, \stopres \}$, in an attempt to approximately minimize the model $m(v) = \inner{g}{v}+ \frac{1}{2}\inner{v}{Hv}$ in the ball of radius $\Delta$.
Internally, the method also generates the \emph{conjugate directions} $p^{(t)}$ and the \emph{residual} vectors $r^{(t)} = -\nabla m(v^{(t)})$.

This algorithm starts from the initialization $v^{(0)} = \xi$, and it generates the iterates $v^{(t)}$ using (classical) CG for iterations $t = 0, 1, \ldots$ until one of the following happens:
\begin{itemize}
    \item If the residual $r^{(t)}$ is small with respect to $g$, then it terminates with output $u = v^{(t)}$ and $\stopcrit = \stopres$.
    \item If the normal CG step would \emph{leave the ball} of radius $\Delta / 2$ or if \emph{nonpositive curvature} is detected in the Hessian, then
    \begin{itemize}
        \item $v^{(t)}$ is computed instead by minimizing $s \mapsto m(v^{(t-1)} + s p^{(t-1)})$ constrained to the ball of radius $\Delta/2$ (this is a simple quadratic on an interval), and
        \item $\tCGbg$ \textbf{performs a last gradient step on
        $m$ from $v^{(t)}$} constrained to the ball of radius $\Delta$, to get $u = v^{(t)} - s' \nabla m(v^{(t)})$ with the optimal $s'$. The details are specified in Algorithm~\ref{algo:bg}. It then terminates with output $u$ and $\stopcrit = \stopoob$.
    \end{itemize}
\end{itemize}

Notice that the subproblem solver (Algorithms~\ref{algo:tCG} and~\ref{algo:bg}) is deterministic.
The only source of randomness is the vectors $\xi_k$ generated in Algorithm~\ref{algo:pTR} (and passed to the subproblem solver).

\begin{algorithm}[t]
    \caption{\PRP{Randomized} trust-region method (\PRP{modifications} are highlighted in color)}
    \label{algo:pTR}
    \begin{algorithmic}[1]
        \Statex \textbf{Parameters:} maximum radius $\bar\Delta > 0$ and acceptance thresholds $0 < \rho' < \rho''< 1$
        \Statex \textbf{Input:} initial iterate $x_0 \in \reals^d$, initial radius $\Delta_0$ and \PRP{noise scale $\sigma$} with $0 < \PRP{4\sigma} \le \Delta_0 \le \bar \Delta$ \vspace{.5em}
        \For{$k = 0, 1, 2, \ldots$}
            \State Let $H_k = \nabla^2 f(x_k)$ and $g_k = \nabla f(x_k)$
            \State \PRP{Sample $\bar\xi_k$ uniformly at random on the unit sphere in $\Rd$}
            \State \PRP{Set $\xi_k = \pm \min(\sigma, \Delta_k/4) \bar\xi_k$ with the sign chosen such that $\langle H_k \xi_k, g_k \rangle \ge 0$} \label{line:xik}
            \State Call $\PRP{\tCGbg}(H_k, g_k, \Delta_k, \PRP{\xi_k})$ to get step $u_k$ and termination reason $\stopk{k}$
            \State Compute the \PRP{shifted} ratio of improvement in $f$ vs the model $m_{x_k}(v) = \inner{g_k}{v} + \frac{1}{2} \inner{v}{H_kv}$:
              \begin{align}
                \rho_k & = \frac{f(x_k) - f(x_k + u_k) + \PRP{\theta_k}}{m_{x_k}(0) - m_{x_k}(u_k) + \PRP{\theta_k}}, &&
                \textrm{ with } && \PRP{\theta_k = m_{x_k}(\xi_k) - m_{x_k}(0)}.
                \label{eq:rhok}
              \end{align}
            \State Accept or reject the tentative next iterate:
            \begin{align}
              x_{k+1} =
                \begin{cases}
                    x_k + u_k & \textrm{if } \rho_k \ge \rho' \textrm{ (accept)}, \\
                    x_k & \textrm{otherwise (reject).}
                \end{cases}
              \label{eq:acceptreject}
            \end{align}
            \State Update the trust-region radius: % Note: first condition is more commonly $\rho_k < 1/4$ with $0 < \rho' < 1/4 < \rho'' < 1$, but this has few ramifications.
            \begin{align*}
              \Delta_{k+1} =
                \begin{cases}
                    \frac{1}{4} \Delta_k & \textrm{if } \rho_k < \rho', \\
                    \min(2 \Delta_k, \bar\Delta) & \textrm{if } \rho_k > \rho'' \textrm{ and } \stopk{k} = \stopoob, \\
                    \Delta_k & \textrm{otherwise.}
                \end{cases}
            \end{align*}
        \EndFor
    \end{algorithmic}
\end{algorithm}

\begin{algorithm}[ph]
    \caption{$\tCGbg(H, g, \Delta, \xi)$: truncated conjugate gradients with boundary gradient step}
    \label{algo:tCG}
    \begin{algorithmic}[1]
        \Statex \textbf{Parameters:} residual thresholds $\omega_1 \in (0, 1)$ and $\omega_2 > 0$
        \Statex \textbf{Input:} $H \in \symm_d$, $g \in \reals^d$, radius $\Delta > 0$, initial point $\xi \in \reals^d$ with $\|\xi\| \le \Delta / 4$
        \Statex \textbf{Output:} approximate minimizer $u$ of $m(v) = \inner{g}{v} + \frac{1}{2}\inner{v}{Hv}$ subject to $\|v\| \le \Delta$,
        \Statex \phantom{\textbf{Output:}} termination reason $\stopcrit \in \{ \stopoob, \stopres \}$, and total number of iterations $T$ \vspace{.5em}
        \State Initialize $v^{(0)} = \xi$
        \State Set $r^{(0)} = -(H v^{(0)} + g)$ and $p^{(0)} = r^{(0)}$
        \State \textbf{if} $r^{(0)} = 0$ \textbf{then} \textbf{output} $u = v^{(0)}$, $\stopcrit = \stopres$, and $T = 0$ % Note: We can get rid of this by moving the residual stopping criterion to the beginning of the for loop, but it's also fine as is. Keeping the residual stopping criterion at the end makes it possible to exit before computing the last beta and p, which is how one would program it too, so let's keep it that way.
        \For{$t = 1, 2, \ldots$}
            \State $\alpha^{(t)} = \frac{\|r^{(t-1)}\|^2}{\inner{p^{(t-1)}}{H p^{(t-1)}}}$
            \State $v^{(t-1)}_+ = v^{(t-1)} + \alpha^{(t)} p^{(t-1)}$
            \If{$\inner{p^{(t-1)}}{H p^{(t-1)}} \le 0$ \textbf{or} $\|v^{(t-1)}_+\| \ge \Delta / \PRP{2}$} \label{line:iftoboundaryintcgbg}
                \State Set $v^{(t)} = v^{(t-1)} + s p^{(t-1)}$ with $s \ge 0$ such that $\|v^{(t)}\| = \Delta / \PRP{2}$
                \State \PRP{$u = \boundarygradientstep(H, g, \Delta, v^{(t)})$}
                \State \textbf{output} $u$, $\stopcrit = \stopoob$, and $T = t$
            \Else
                \State $v^{(t)} = v^{(t-1)}_+$
            \EndIf
            \State $r^{(t)} = r^{(t-1)} - \alpha^{(t)} H p^{(t-1)}$
            \If{$\|r^{(t)}\| \le \min(\omega_1 \|g\|, \omega_2 \|g\|^2)$}
                \State \textbf{output} $u = v^{(t)}$, $\stopcrit = \stopres$, and $T = t$
            \EndIf
            \State $\beta^{(t)} = \frac{\|r^{(t)}\|^2}{\|r^{(t-1)}\|^2}$
            \State $p^{(t)} = r^{(t)} + \beta^{(t)} p^{(t-1)}$
        \EndFor
    \end{algorithmic}
\end{algorithm}

\begin{algorithm}[ph]
    \caption{$\boundarygradientstep(H,g,\Delta,v)$}
    \label{algo:bg}
    \begin{algorithmic}[1]
        \Statex \textbf{Input:} $H \in \symm_d$, $g \in \reals^d$, radius $\Delta > 0$, initial point $v \in \reals^d$ with $\|v\| \le \Delta$.
        % Note: this performs a Cauchy-type gradient step starting from $v$, constrained to the ball $B(0, \Delta)$
        \Statex Minimizes $m(u) = \inner{g}{u} + \frac{1}{2} \inner{u}{Hu}$ along $\{ v - s \nabla m(v) : s \in \reals \}$ constrained to $\|u\| \leq \Delta$. \vspace{.5em}
        \State $r = -(Hv + g)$
        \State $\bar{\alpha} = \frac{\|r\|^2}{\langle r, H r \rangle}$
        \State $v_+ = v + \bar{\alpha} r$
        \If{$\langle r, H r \rangle \le 0$ \textbf{or} $\|v_+\| \ge \Delta$}
            \State \textbf{output} $u = v + s' r$ with $s' \ge 0$ such that $\|u\| = \Delta$
        \Else
            \State \textbf{output} $u = v_+$
        \EndIf
    \end{algorithmic}
\end{algorithm}

\section{Saddle escape and fast convergence in high dimension}
\label{s:assumptions_main_thm}

In this section, we first state and comment on our assumptions, then provide a simplified statement of the main theorem, and finally sketch the proof of that theorem to serve as a roadmap.

\subsection{Assumptions} \label{ss:assumptions}

We consider the minimization problem~\eqref{eq:minf} under the following standing assumptions on $f$.
\begin{assumption}\label{ass:f}
  We work in dimension\footnote{We assume $d \geq 3$ for convenience because the probability concentration bounds derived in Appendix~\ref{app:technical} would require separate treatment for $d \in \{1, 2\}$. In any case, for those low dimensions, it is more practical to solve the trust-region subproblem exactly.} $d \geq 3$.
  The cost function $f \colon \Rd \to \reals$ is twice continuously differentiable and lower-bounded by $\flow \in \reals$, that is, $f(x) \geq \flow$ for all $x \in \Rd$.
  Moreover,
    \begin{enumerate}[label=(\roman*)]
        \item \label{subass:lh} The Hessian of $f$ is Lipschitz continuous with constant $L_H$, that is,
        \begin{align*}
          \opnorm{\nabla^2 f(x) - \nabla^2 f(y)} \leq L_H \|x-y\| && \forall x, y \in \Rd,
        \end{align*}
        and
        \item \label{subass:lg} The gradient of $f$ is Lipschitz continuous with constant $L_G$, that is,
        \begin{align*}
          \| \nabla f(x) - \nabla f(y) \| \leq L_G \|x-y\| && \forall x, y \in \Rd.
        \end{align*}
        Equivalently, the Hessian is bounded: $\opnorm{\nabla^2 f(x)} \leq L_G$ for all $x \in \Rd$.
    \end{enumerate}
\end{assumption}
These assumptions are common to streamline theoretical analysis~\citep{Conn2000,Cartis2012,Jin2017}.
In practice, the Lipschitz conditions may not hold globally on $\Rd$.
However, since the algorithm is (approximately) monotone, the iterates belong to a sublevel set of the form ${\{x \in \Rd \,:\, f(x) \leq f(x_0) + \varepsilon \}}$; see Lemma~\ref{lemma:obj_decrease}.
Thus, it would be sufficient to assume that the gradient and Hessian are Lipschitz on that set.
This is the case if, for instance, $f$ is $C^3$ and coercive (as in Example~\ref{ex:mb} below).
At any rate, the constants $\flow, L_G, L_H$ are not required to run the algorithm.

We impose two additional conditions: the $\mu$-Morse property (to ensure critical points are uniformly non-degenerate) and the strong gradient property (to ensure the gradient norm is lower bounded away from critical points).

\begin{definition}\label{def:mu_morse}
    Let $\mu > 0$.
    We say that $f$ satisfies the \textbf{$\mu$-Morse property} if, for every critical point $\xbar$ of $f$, the singular values of $\nabla^2 f(\xbar)$ are at least $\mu$, that is,
    \begin{align*}
      \| \nabla^2 f(\xbar)u \| \geq \mu \|u\| && \forall u \in \Rd.
    \end{align*}
    Equivalently, no eigenvalue of $\nabla^2 f(\xbar)$ lies in the interval $(-\mu, \mu)$.
\end{definition}
If $f$ has the $\mu$-Morse property, then its critical points are isolated. Let $\xbar$ be a critical point: if $\nabla^2 f(\xbar)$ is positive definite, then $\xbar$ is a (strict) local minimizer, otherwise, we say that $\xbar$ is a (strict) saddle point.\footnote{For convenience, our definition of saddle points includes local maxima; this will not be an issue.}
Under Assumption~\ref{ass:f}, it further follows that the Hessian's eigenvalues at every critical point lie in the set $[-L_G,-\mu] \cup [\mu,L_G]$, where necessarily $\mu \leq L_G$.
\begin{definition}\label{def:strong_saddle}
    Let $R,\gamma > 0$.
    We say that $f$ satisfies the \textbf{$(R,\gamma)$-strong gradient property} if, for every $x \in \reals^d$, at least one of the following holds:
    \begin{enumerate}[label=(\roman*)]
        \item $\|\nabla f(x)\| \geq \gamma$;
        \item there exists a critical point $\xbar$ of $f$ such that $\|x - \xbar\| \leq R$.
    \end{enumerate}
\end{definition}
One way to secure the strong gradient property is via Proposition~\ref{prop:strong_gradient} in the appendix, which simply requires the gradient to be bounded away from zero for all $x$ outside a compact set.

The next example from~\citep[\S4]{Jin2017} illustrates these properties.

\begin{example}[Rank-one matrix factorization]\label{ex:mb}
    Let $M \in \symm_d$ be a positive definite matrix with eigenvalues $\lambda_1 > \dots > \lambda_d > 0$.
    Let
    \[
    f(x) = \frac{1}{4}\|xx^\top - M\|_{\mathrm{F}}^2,
    \]
    where $\|\cdot\|_{\mathrm{F}}$ denotes the Frobenius norm.
    Then $f$ satisfies the $\mu$-Morse property with constant
    \[
    \mu = \min\!\big( \lambda_1 - \lambda_2, \ldots, \lambda_{d-1} - \lambda_d, \lambda_d \big) > 0.
    \]
    (See~\citep[\S4]{Jin2017} or Lemma~\ref{lemma:mu_morse_example} in the appendix for a more general proof.)
    This function also satisfies the $(R, \gamma)$-strong gradient property for \emph{any} $R>0$ with \emph{some} $\gamma>0$, because $\|\nabla f(x)\|$ is lower bounded by a positive constant for $x$ large enough (see Proposition~\ref{prop:strong_gradient} and Lemma~\ref{lemma:mu_morse_example}).

    The Lipschitz conditions in Assumption~\ref{ass:f} do not hold globally; however, as commented above, this can be resolved by noting that the sublevel sets are compact.
\end{example}

Various versions of the assumptions above appear in the related literature, including in~\citep[\S3.1]{Ge2015}, \citep[\S3.1]{Jin2017} and~\citep[\S2.3]{Goyens2024}.
See also more literature pointers in~\citep[Rem.~2.2]{Goyens2024}.
The main way in which our assumptions depart from those is that we require all critical points to be isolated.
We believe this could be relaxed, but the extension seems non trivial.

Their commonality is that they allow to split the search space $\Rd$ in \emph{three regions}.
To make this explicit, let us state our formal assumption.
It requires the strong gradient radius $R$ to be small enough: Proposition~\ref{prop:strong_gradient} can be helpful to that effect, as in the example above.

\begin{assumption} \label{ass:mumorse_and_stronggrad}
  The cost function $f$ satisfies the $\mu$-Morse property and the $(R_s, \gamma_s)$-strong gradient property with
  $
    R_s \leq \frac{\mu^2 }{4L_H L_G}.
  $
\end{assumption}

\noindent
Under Assumptions~\ref{ass:f} and~\ref{ass:mumorse_and_stronggrad}, it follows that for any $R\leq \mu / (2L_H)$ we can partition $\Rd$ in three regions.
This will follow from Lemma~\ref{lemma:growth_mu} with $c = 1/2$:
\begin{enumerate}[label=(\roman*)]
  \item The \textbf{saddle point neighborhoods}: around each saddle point $\xbar$, the ball of radius $R$ contains no other critical point, and $\nabla^2 f(x)$ has an eigenvalue less than $-\mu/2$ for all $x$ in that ball.
  \item The \textbf{local minimizer neighborhoods}: around each local minimizer $x^*$, the function $f$ is $\mu/2$-strongly convex in the ball of radius $R$.
  \item The \textbf{large gradient region}: for all $x$ in the complement of those balls, $\|\nabla f(x)\|$ is lower bounded by $\min\!\left( \mu R/2, \gamma_s \right)$.
  % Note: The logic goes: if $R \geq R_s$, then we get $\gamma_s$ as a lower-bound directly; and if $R < R_s$, then use Lemma~\ref{lemma:growth_mu}(i) with $c = 1/2$ to cover the points that are at distance between $R$ and $R_s$ of a critical point.
\end{enumerate}
In particular, the balls around critical points are disjoint.
This is formalized in Section~\ref{s:global_comp}.

\subsection{Main theorem}\label{sss:informal_thm}

% Defined some macros so if we want to change the notation back later it is easier
\newcommand{\gradlb}{\bar G} % in previous version it was \gamma(\bar R), I use \bar G to be consistent with later notation
\newcommand{\tildeO}{\tilde{\mathcal{O}}}
\newcommand{\sigmag}{\bar \sigma_{\rm global}} % in previous version it was \bar \sigma_{\rm g} but I use \bar \sigma_{\rm global} to be consistent with later notation

Consider a function $f$ satisfying Assumptions~\ref{ass:f} and~\ref{ass:mumorse_and_stronggrad}.
We define a radius $\bar R$ (Eq.~\eqref{eq:def_constants_DeltacritRbar} in Section~\ref{s:global_comp}) and a maximal noise value $\sigmag$ (Eq.~\eqref{eq:def_constants} and \eqref{eq:cond_sigma_global}).
They both depend polynomially on problem constants $(\mu,L_H,L_G,\gamma_s)$ and algorithm parameters $(\rho',\omega_2,\Delta_0)$, with $\sigmag$ also depending \textbf{logarithmically} on the confidence $\delta$ and dimension~$d$ (see Remark \ref{remark:sigma_global}).

The constant $\bar R$ scales roughly as $\min(\gamma_s / L_G, \mu^2 / (L_HL_G))$ and represents the radius of the balls around critical points that we consider for the local analysis.
Following the discussion above, let
\[\gradlb :=\frac{\mu}{2} \bar R\]
 denote a lower bound for the gradient norm outside these neighborhoods.
Here is a simplified statement of our main result (in Appendix~\ref{app:informal_justif}, we justify that this is indeed a valid simplification of the formal theorems).
\begin{theorem}[Simplification of Theorems~\ref{thm:outer_complexity} and~\ref{thm:global_inner_its}]
    Assume that the noise scale satisfies $0 < \sigma \leq \sigmag$.
    Then, the radius is lower bounded as $\Delta_k \geq 8 \bar  R$ for every~$k$ (Lemma~\ref{lemma:lowbound_deltak}) and, for any target accuracy $\epsilon \leq \bar R$, with probability at least $1-\delta$, the method finds a point $x_K$ that is $\epsilon$-close to a local minimizer $x^*$ in at most
    \begin{equation}\label{eq:bound_outer_informal}
    K \leq \tildeO\!\left(  \frac{ L_G\left(f(x_0)-\flow\right)}{\gradlb^2} \log\log \!\left(\frac{d}{\delta \sigma}\right)  + \log_2 \! \left(1 + \log_2 \frac{\bar R}{\epsilon}\right)\right)
    \end{equation}
    outer iterations.
    These require at most $K+2$
    evaluations of $f$ and $\nabla f$, and at most
    \begin{equation}\label{eq:bound_inner_informal}
      K \cdot  \tildeO\!\left( \sqrt{\frac{L_G}{\mu}} \log\!\left( \frac{d}{\delta \sigma \epsilon} \right) + \frac{L_G^2 \bar \Delta^2}{ \min( \omega_1 \gradlb, \omega_2 \gradlb^2)^2 } \right)
    \end{equation}
    Hessian-vector products (inner iterations) in total.
    Furthermore, once the point $x_K$ is found, for all $k > K$, the iterates $x_k$ converge to $x^*$ deterministically with a quadratic convergence rate.
    Here, the notation $\tilde{\mathcal{O}}$ hides additional logarithmic dependence on problem constants $(\flow, L_G, L_H, \mu, \gamma_s)$, algorithm parameters $(\rho', \omega_1, \omega_2)$ and inputs $(x_0, \Delta_0)$.
\end{theorem}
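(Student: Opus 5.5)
We derive the statement from a global decomposition of the iterations into three kinds, matching the partition of $\Rd$ into saddle neighborhoods, minimizer neighborhoods and the large gradient region from Section~\ref{ss:assumptions} with radius $\bar R$.

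\textbf{Step 1: radius and monotonicity.} First I would show that $\Delta_k \geq 8\bar R$ for all $k$ (Lemma~\ref{lemma:lowbound_deltak}). The argument is standard. When $\Delta_k$ is small compared to $\bar R$, Taylor's theorem with $L_H$ shows that $\rho_k \geq \rho''$. The shift $\theta_k$ does not spoil this, because $\|\xi_k\| \le \sigma \le \sigmag$ is tiny and the sign choice on line~\ref{line:xik} controls the sign of $\theta_k$. So the radius is never reduced below that threshold. Next I would invoke Lemma~\ref{lemma:obj_decrease}: every accepted step decreases $f$ up to an $O(\sigma^2 L_G)$ term. This keeps the iterates in a sublevel set and bounds the cumulative increase by something negligible once $\sigma\le\sigmag$.

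\textbf{Step 2: progress per phase.} I would classify the outer iterations.
\begin{enumerate}[label=(\alph*)]
  \item In the large gradient region, $\|g_k\|\ge \gradlb$. Since $\Delta_k \ge 8\bar R$, a Cauchy-type decrease in $m_{x_k}$ gives $f(x_k)-f(x_{k+1}) \gtrsim \gradlb^2/L_G$ on every accepted step. If $\stopcrit=\stopres$, I use the standard CG decrease bound. If $\stopcrit=\stopoob$, I use the boundary-gradient-step bound of Section~\ref{sec:tcgbgbenefits}. Rejections are bounded in number because the radius stays above $8\bar R$.
  \item Near a saddle point, I would use the multi-step argument. Proposition~\ref{prop:vt_qi_growth} shows that the CG iterates are pushed outward, so tCG hits the boundary $\Delta_k/2$, except on an event of probability polynomially small in $\sigma/\sqrt d$; this is the randomized Krylov bound in the spirit of \citet{LanczosBound1992}. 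From the boundary, the gradient of $f$ has grown by non-degeneracy. The boundary gradient step then yields decrease of order $\gradlb^2/L_G$ over a bounded number of outer iterations. That number is $O(\log\log(d/(\delta\sigma)))$: each iteration the distance from the saddle roughly squares its escape ratio, doubling the log of the amplified noise.
  \item Near a local minimizer, the model is $\mu/2$-strongly convex and tCG is captured inside the ball. The iteration reduces to inexact Newton, with residual $\le \omega_2\|g\|^2$ giving quadratic convergence. This costs $\log_2(1+\log_2(\bar R/\epsilon))$ iterations, and the iterates never leave, which gives the deterministic tail claim.
\end{enumerate}

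\textbf{Step 3: counting.} Since $f(x_0)-\flow$ bounds the total decrease, the number of phases of type (a) and (b) is at most $L_G(f(x_0)-\flow)/\gradlb^2$. Each type (b) phase costs a $\log\log$ factor. Each phase has failure probability $\delta'$, and a union bound over the phases, with $\delta'$ equal to $\delta$ divided by the number of phases, gives the $1-\delta$ guarantee with only a logarithmic cost. Adding (c) gives \eqref{eq:bound_outer_informal}.

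For \eqref{eq:bound_inner_informal}, I bound the inner iterations of each call in one of two ways:
\begin{itemize}
  \item In strongly convex or saddle regimes, the CG / Lanczos rate $\sqrt{L_G/\mu}\log(\cdot)$ applies, either until the residual criterion or until the boundary is reached from noise of size $\sigma$.
  \item Otherwise, in the generic regime, iterations before exit are at most $L_G^2\bar\Delta^2/\min(\omega_1\gradlb,\omega_2\gradlb^2)^2$. This follows since each CG step decreases $m$ by at least $\|r\|^2/(2L_G)$ while $m$ is bounded below on the ball by $-L_G\bar\Delta^2$ minus gradient terms.
\end{itemize}
The $f$ and $\nabla f$ evaluations are one per outer iteration plus initialization.

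\textbf{Main obstacle.} The hardest step is the saddle phase (b). One has to show that escape occurs over several outer iterations despite rejections and possibly increasing $f$ via $\theta_k$, that the iterates do not drift back, and that the probabilistic event is fresh at each iteration. I would first try to track the component of $x_k-\xbar$ along the negative eigenspace of $\nabla^2 f(\xbar)$ and show it grows geometrically, conditioning on each $\bar\xi_k$ independently.
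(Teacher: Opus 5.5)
Your overall architecture matches the paper's: the radius floor $\Delta_k\ge 8\bar R$, Cauchy decrease in the large-gradient region, capture near minimizers, potential-function counting with a union bound over at most $\lfloor (f(x_0)-\flow)/\Flg\rfloor+1$ phases, and region-by-region inner-iteration bounds. However, the saddle phase (b), which you rightly flag as the crux, rests on a false claim and on a plan that points the wrong way.

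It is not true that at a saddle iteration $\tCGbg$ reaches the sphere of radius $\Delta_k/2$ except on a small-probability event. The randomization only guarantees $|\langle\xi_k-\bar v_k,q\rangle|\gtrsim\delta\sigma/\sqrt d$, with $\bar v_k=-H_k^{-1}g_k$. This is also the typical size when $\langle\bar v_k,q\rangle\approx 0$ (e.g.\ $f$ locally quadratic and $x_k-\xbar\perp q$). Meanwhile the stopping threshold $\min(\omega_1\|g_k\|,\omega_2\|g_k\|^2)$ can be of order $\omega_2\mu^2\|x_k-\xbar\|^2$, which is far larger. CG can then push the residual below the threshold before negative curvature is detected (e.g.\ when the positive eigenvalues are clustered), so $\tCGbg$ stops with $\stopres$ with probability close to one. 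This is precisely why escape takes several outer iterations.

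Nor can you repair this by showing that the component of $x_k-\xbar$ along the negative eigenspace grows across outer iterations. A successful $\stopres$ step is an inexact Newton step, and Lemma~\ref{lemma:quadratic_conv_crit} holds at saddles exactly as at minimizers: $\|x_{k+1}-\xbar\|\le\|x_k-\xbar\|^2/(2R)$. The outer iterates are attracted to the saddle. The geometric growth in Proposition~\ref{prop:vt_qi_growth} happens only across the inner CG iterates of a single call.

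The missing idea is to play these two facts against each other.
\begin{itemize}
\item Proposition~\ref{prop:vt_qi_growth}, together with anti-concentration of $\langle\xi_k,q\rangle$ around the deterministic number $\langle\bar v_k,q\rangle$ (Lemma~\ref{lemma:concentration_xi}), gives a residual floor $\|r_k^{(t)}\|\ge\delta\mu\|\xi_k\|\sqrt{\pi/(32d)}$ for all $t\le\Tin^{(k)}$, however close $x_k$ is to $\xbar$ (Lemma~\ref{lemma:reslowerbound}).
\item Each successful $\stopres$ step squares $\|x_k-\xbar\|/(2R)$. This also keeps the iterates within $R$ of $\xbar$, so the floor stays applicable.
\item Hence the threshold $\omega_2\|g_k\|^2\le\omega_2L_G^2\|x_k-\xbar\|^2$ decays doubly exponentially. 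It falls below the floor after $\bKrs\approx\log_2\log_2(2+\sqrt d/(\omega_2\mu\delta\sigma))$ such steps.
\item Rejections in between number at most $\bKru=\log_4(\bar\Delta/\Deltainf)$, because the radius cannot grow before the first successful $\stopoob$ iteration $k_r$ (Proposition~\ref{prop:escape}).
\end{itemize}
So the $\log\log(d/(\delta\sigma))$ factor comes from quadratic convergence \emph{toward} the saddle colliding with a noise floor, not from repeated amplification.

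The decrease at $k_r$ then comes from Proposition~\ref{prop:decrease_tcg_nonconvex} with $\nu=\mu/2$, minus the shift losses. By Lemma~\ref{lemma:obj_decrease}, each accepted step can lose $\|g_k\|\sigma+\frac{L_G}{2}\sigma^2$. This is first order in $\sigma$, not $O(L_G\sigma^2)$ as in your Step 1. Summing over at most $\bKrs+1$ accepted steps is exactly why one needs $\sigma K_{\rm esc}(\delta',\sigma)\le\rho'\mu^2\bar R/(8L_G^2)$.

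Two smaller corrections to Step 1:
\begin{itemize}
\item The sign choice on line~\ref{line:xik} enforces $\langle H_k\xi_k,g_k\rangle\ge0$, i.e.\ $\|r_k^{(0)}\|\ge\|g_k\|$. It says nothing about the sign of $\theta_k$. The shift is harmless for a different reason: it makes the denominator of $\rho_k$ exactly $m_{x_k}(\xi_k)-m_{x_k}(u_k)$. Also, $\rho_k\ge\rho'$ suffices; you do not need $\rho_k\ge\rho''$.
\item Near critical points, a Taylor bound against the Cauchy decrease cannot give success, since $\|g_k\|$ may be arbitrarily small. Lemma~\ref{lemma:radius} instead uses $\|u_k\|\le\frac{8}{3\mu}\|g_k\|$ in the $\stopres$ case and the $\nu^2\Delta_k^2/(32L_G)$ decrease in the $\stopoob$ case.
\end{itemize}
Your lower-bound-on-the-ball argument for the generic inner-iteration count, by contrast, is fine.
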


The right-hand side of \eqref{eq:bound_outer_informal} reveals a two-phase convergence behavior.
The first term bounds the number of iterations performed in both the \emph{large gradient} region and \emph{saddle point} neighborhoods.
Up to logarithmic factors, it matches the classical $\mathcal{\tilde{O}}\!\left( L_G \left( f(x_0)-\flow\right)/\bar G^2 \right) $ bound that describes the number of outer iterations needed for standard TR to find a point with gradient norm less than $\bar G$ \citep{Conn2000}.
After the first phase, the iterates enter the $\bar R$-neighborhood of a local minimizer, where the method enjoys deterministic quadratic convergence: this corresponds to the second term in~\eqref{eq:bound_outer_informal}.
See Section~\ref{s:discussion} for additional discussion.

Note also that, when $f$ fails to satisfy the $\mu$-Morse and strong gradient property, Algorithm \ref{algo:pTR} is still capable of finding $\epsilon$-critical points.
We show in Proposition~\ref{prop:epsilon_complexity} that, when $f$ only satisfies Assumption~\ref{ass:f}, and provided that $\sigma$ is small enough, the method finds an $\epsilon$-critical point in $\mathcal{\tilde{O}}(L_G \left( f(x_0)-\flow\right)/ \epsilon^2)$ outer iterations (again, the standard rate).

\subsection{Proof sketch} \label{s:proof_sketch}
Let $g_k = \nabla f(x_k)$ and $H_k = \nabla^2 f(x_k)$ for every $k \geq 0$.
Recalling the partition of $\Rd$ described in Section~\ref{ss:assumptions}, we classify the iterations in three types: \textit{large gradient}, \textit{saddle point} and \textit{local minimizer} iterations.
Importantly, the algorithm does not explicitly distinguish between these types: it is a theoretical construct.

For \textbf{local minimizer} iterations, we establish a local convergence result (Proposition~\ref{prop:min_capture}): if $\|x_k - x^*\| \leq \bar R$ where $x^*$ is a local minimizer, then
\[
  \frac{\|x_{k+1}-x^*\|}{2\bar R} \leq \left( \frac{\|x_{k}-x^*\|}{2\bar R}\right)^2,
\]
and therefore $x_k, x_{k+1}, \ldots$ converges (at least) quadratically to $x^*$.
This is deterministic.
In particular, the random perturbations $\xi_k, \xi_{k+1}, \ldots$ are ``absorbed''.

To prove that $x_k$ eventually enters the neighborhood of a minimizer, we show that both {large gradient} and {saddle point} iterations decrease the objective by a sufficient amount $\Flg \approx \frac{\gradlb^2}{L_G}$.
This provides control since the total progress in objective value is at most $f(x_0) - \flow$.

For successful \textbf{large gradient} iterations (Proposition~\ref{prop:bound_large_gradient}), we use the fact that
\BEQ\label{eq:sketch_lg}
  f(x_k)-f(x_{k+1}) \geq \rho' \cdot\min\!\left( \frac{1}{2L_G}\|g_k\|^2, \frac{1}{8} \Delta_k \|g_k\| \right) - \mathcal{O}(\sigma).
\EEQ
This is similar to the analysis of standard TR-tCG, up to a perturbation due to randomization involving the noise scale $\sigma$.
Remember that, in this region, $\|g_k\|$ is lower bounded by $\gradlb$.
This leads to a decrease guarantee if the noise is small enough (which we assume) and if the radius $\Delta_k$ is lower bounded (which we prove in Lemma~\ref{lemma:lowbound_deltak}).
This too is deterministic.

For \textbf{saddle point} iterations, $\|g_k\|$ is small and we cannot use the bound \eqref{eq:sketch_lg} to establish sufficient decrease. This is where we need the following \emph{saddle escape} result.
Ultimately, this is what drives the modifications we brought to the algorithm. Notice how (a) the escape happens over several iterations (single iteration analysis is not enough), and (b) this is where randomization comes in.

\begin{theorem}[Saddle escape, simplification of Theorem~\ref{thm:saddle_escape}]
  Assume there exists an iteration $k_0$ of Algorithm~\ref{algo:pTR} such that $\|x_{k_0} - \xbar\| \leq \bar R$ where $\xbar$ is a saddle point of $f$.
  Then, for any $\delta \in (0,1)$, if the noise scale satisfies $0 < \sigma \leq \sigmag$, with probability at least $1-\delta$, there exists an index $k_r \geq k_0$ such that
  \BEQ\label{eq:sketch_saddle}
    f(x_{k_0}) - f(x_{k_r +1}) \geq \frac{\rho' \mu^2}{256 L_G} \Delta_{k_r}^2,
  \EEQ
  and the escape time is bounded as
  \begin{align*}
    k_r - k_0 \leq K_{\rm esc} && \textrm{ with } &&
  K_{\rm esc} = \tilde{\mathcal{O}}\!\left( \log\log\!\left( \frac{d}{\delta \sigma}\right) \right),
  \end{align*}
  where $\tildeO$ hides logarithmic dependence on function constants $(L_G, L_H, \mu, \gamma_s)$ and algorithm parameters $(\rho', \omega_2, \Delta_0)$.
\end{theorem}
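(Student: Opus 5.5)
We plan a multi-step argument that tracks the iterates $x_{k_0}, x_{k_0+1}, \ldots$ while they stay in a ball of radius $\mathcal{O}(\bar R)$ around the saddle $\xbar$. Let $E_-$ be the eigenspace of $\nabla^2 f(\xbar)$ with eigenvalues $\le -\mu$. By the Hessian Lipschitz property and $\bar R \lesssim \mu/L_H$, every $H_k$ in this region has a corresponding near-invariant negative subspace with eigenvalues $\le -\mu/2$. The goal is to find the first iteration $k_r \geq k_0$ at which $\tCGbg$ terminates with $\stopoob$ and the step is accepted. At that iteration we will prove \eqref{eq:sketch_saddle} through the boundary gradient step. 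Before it, we will show that every iteration loses at most a negligible amount $\mathcal{O}(\sigma)$ of objective value, thanks to the shift $\theta_k$ in \eqref{eq:rhok}.

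\textbf{Decrease at an accepted boundary iteration.} Suppose $\stopk{k} = \stopoob$. Then $v^{(T)}$ lies on the sphere of radius $\Delta_k/2$, and $\|v^{(T)}\| \ge \Delta_k/4 \ge 2\bar R$, which uses $\Delta_k \ge 8\bar R$ from Lemma~\ref{lemma:lowbound_deltak}. Since $x_k + v$ is then far from the saddle, we expect the model gradient $\nabla m_{x_k}(v^{(T)}) = g_k + H_k v^{(T)}$ to be large. When $v^{(T)}$ has a substantial component along the negative eigenspace, this gives $\|\nabla m(v^{(T)})\| \gtrsim \mu \Delta_k$. The gradient step restricted to the ball of radius $\Delta_k$ then decreases the model by at least $\min(\|r\|^2/L_G, \|r\|\Delta_k/2) \gtrsim \mu^2\Delta_k^2/L_G$, so no exact minimizer is needed.

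\textbf{Controlling $\theta_k$.} The CG iterates decrease the model monotonically from $m(\xi_k)$, so $m(0) - m(u_k) + \theta_k \geq m(\xi_k) - m(u_k) \gtrsim \mu^2\Delta_k^2/L_G$. Acceptance ($\rho_k \ge \rho'$) then converts this into the function decrease $\frac{\rho'\mu^2}{256L_G}\Delta_{k_r}^2$ minus $|\theta|$ terms. These terms are $\mathcal{O}(\sigma\|g\| + L_G\sigma^2)$ and are absorbed by choosing $\sigma \le \sigmag$. Summing the possibly positive drifts $\mathcal{O}(\sigma)$ over the earlier iterations $k_0,\ldots,k_r-1$, each of which is a $\stopres$ or rejected iteration, costs only $K_{\rm esc}\cdot\mathcal{O}(\sigma)$, which is again absorbed.

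\textbf{Escape time (the main obstacle).} We must show that $\stopoob$ with acceptance occurs within $K_{\rm esc}$ iterations with probability $1-\delta$. The first ingredient is Proposition~\ref{prop:vt_qi_growth}: under a random initialization, the component of the CG iterates along negative eigenvectors grows geometrically with rate governed by $\sqrt{L_G/\mu}$. By the randomized Krylov/anticoncentration bounds \citep{LanczosBound1992,Carmon2018a}, the initial projection of $\bar\xi_k$ onto $E_-$ is at least about $\delta/\sqrt d$ with probability $1-\delta$. If CG reaches the residual tolerance before the boundary, then $\|r\| \le \omega_2\|g_k\|^2$, so $x_{k+1}$ is a Newton-like step, and we expect $\|g_{k+1}\| \lesssim \|g_k\|^2$.

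Suppose no boundary exit occurs. The iterates then move essentially along Newton steps toward the saddle, while the noise component along $E_-$ is amplified relative to the shrinking gradient. After $\mathcal{O}(\log\log(d/(\delta\sigma)))$ doublings of precision, the amplified random component must push CG to the boundary $\Delta_k/2$. This double-logarithmic count is exactly where the stated bound comes from. Rejections shrink $\Delta_k$ only a bounded number of times before the Taylor error $L_H\Delta_k^3$ becomes small relative to the model decrease, and $\Delta_k$ stays $\ge 8\bar R$.

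The delicate parts are two. First, we need a union bound over the $K_{\rm esc}$ independent draws $\xi_k$. Second, we need to show that the sign choice in line~\ref{line:xik} together with the geometric CG growth gives a uniform lower bound on the negative-curvature component of $v^{(T)}$. This is what makes $\|\nabla m(v^{(T)})\| \gtrsim \mu\Delta_k$ hold, and so that step is the crux we would attack first.
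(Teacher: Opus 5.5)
There is a genuine gap, and it sits at the step you yourself call the crux: the sufficient decrease at $k_r$. Your handling of the $\theta_k$ drift and your counting of accepted $\stopres$ and rejected iterations broadly follow the paper's route (Proposition~\ref{prop:escape}).

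You want $\|\nabla m_{x_k}(v^{(T)})\|\gtrsim\mu\Delta_k$ from a uniform lower bound on the negative-curvature component of $v^{(T)}$. You expect that bound to come from the sign choice in line~\ref{line:xik} together with Proposition~\ref{prop:vt_qi_growth}. Those tools cannot deliver it.
\begin{itemize}
\item Proposition~\ref{prop:vt_qi_growth} controls $|\langle v^{(t)}-\bar v_k,q\rangle|$, measured from the model's critical point $\bar v_k=-H_k^{-1}g_k$ rather than from $0$.
\item It only multiplies this quantity by $1+\mu/(2L_G)$ per inner iteration, not at a rate governed by $\sqrt{L_G/\mu}$.
\item The starting value is, with probability $1-\delta$, only guaranteed to be of order $\delta\sigma/\sqrt d$.
\item $\tCGbg$ may hit the sphere of radius $\Delta_k/2$ after one or two inner iterations, so nothing of order $\Delta_k$ follows.
\end{itemize}
The sign choice plays no role in this. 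Its purpose is to guarantee $\|r^{(0)}\|=\|g_k+H_k\xi_k\|\ge\|g_k\|$. That way the first CG step decreases the model by $\min(\|g_k\|^2/(2L_G),\Delta_k\|g_k\|/8)$ despite the random initialization.

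The missing idea is that no directional information about $v^{(T)}$ is needed, only non-degeneracy of the Hessian. By Lemma~\ref{lemma:growth_mu} with $c=1/2$, all singular values of $H_{k_r}$ are at least $\mu/2$, so $\|H_{k_r}v^{(T)}\|\ge\mu\Delta_{k_r}/4$. This gives two cases.
\begin{itemize}
\item If $\|g_{k_r}\|\ge\mu\Delta_{k_r}/8$, the first CG step (using the sign choice) decreases the model by $\mu^2\Delta_{k_r}^2/(128L_G)$.
\item Otherwise $\|g_{k_r}+H_{k_r}v^{(T)}\|\ge\mu\Delta_{k_r}/8$, and the boundary gradient step (Lemma~\ref{lemma:cauchy_step}) gives the same decrease.
\end{itemize}
This is Proposition~\ref{prop:decrease_tcg_nonconvex}, and it is precisely why the boundary gradient step exists. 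Combined with Lemma~\ref{lemma:obj_decrease} and your drift bound, it yields the claim.

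Your first intuition, that $x_k+v^{(T)}$ is far from the saddle, can also be completed directly. Write $\nabla m_{x_k}(v)=H_k(v-\bar v_k)$. Since $\|x_{k_r}-\xbar\|\le\bar R$ and $\Delta_{k_r}\ge 8\bar R$, we get $\|\bar v_{k_r}\|\le(1+L_H\bar R/\mu)\bar R<\Delta_{k_r}/7$. Hence $\|\nabla m_{x_{k_r}}(v^{(T)})\|\ge\frac{\mu}{2}(\frac{1}{2}-\frac{1}{7})\Delta_{k_r}$.

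Three smaller corrections in the escape-time part:
\begin{itemize}
\item Anticoncentration must be applied to $\langle\xi_k,q\rangle$ around the fixed value $\langle\bar v_k,q\rangle$, with $q$ an eigenvector of $H_k$ itself (Lemma~\ref{lemma:concentration_xi} with shift $c$). Applying it to the projection of $\bar\xi_k$ onto an eigenspace of $\nabla^2 f(\xbar)$ is not enough.
\item No amplification is needed, only that $|\langle v^{(t)}-\bar v_k,q\rangle|$ never decreases. This keeps every residual before the boundary above $\frac{\mu}{2}|\langle\xi_k-\bar v_k,q\rangle|$. That rules out $\stopres$ once Lemma~\ref{lemma:quadratic_conv_crit} has shrunk the tolerance $\omega_2\|g_k\|^2$ doubly exponentially. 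The same lemma gives $\|x_k-\xbar\|\le\bar R$ up to $k_r$, which your drift bound needs.
\item Counting rejections requires noting that $\Delta_k$ cannot increase before $k_r$, since an increase needs an accepted $\stopoob$ step. Hence there are at most $\log_4(\bar\Delta/(8\bar R))$ rejections.
\end{itemize}
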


% We note that, because of randomness, the function value might increase slightly for individual iterations between $k_0$ and $k_r$. However, overall decrease is guaranteed at iteration $k_r+1$.

Here too, we get a decrease guarantee upon lower-bounding $\Delta_k$ as done in Lemma~\ref{lemma:lowbound_deltak}.
It follows that the right-hand sides of both~\eqref{eq:sketch_lg} and~\eqref{eq:sketch_saddle} are lower bounded by some constant $\Flg >0$.

From \eqref{eq:sketch_saddle} we deduce that, \emph{on average}, each ``saddle point'' iteration decreases the objective by at least $\Flg/K_{\rm esc}$, while each successful ``large gradient'' iteration does so by $\Flg$.
After also arguing that not too many of those can be unsuccessful (Lemma~\ref{lem:mostlysuccessful}), we conclude that, with high probability, the method enters the neighborhood of a minimizer after at most ${K_{\rm esc}}(f(x_0)-\flow)/\Flg$ iterations.

Finally, we need to bound the number of Hessian-vector products required by $\tCGbg$: this essentially matches the number of inner iterations.
We do so in Section~\ref{s:complexity}.
For \emph{local minimizer} iterations, the model is strongly convex and the complexity of CG is well known.
In \emph{saddle point} neighborhoods, we rely on a result by \citet{Carmon2018a} which quantifies how fast Krylov-based methods are able to detect negative curvature.
As for \emph{large gradient} iterations, we establish a new result, noticing that the maximal model progress $\tCGbg$ can make is bounded due to the trust-region constraint (Lemma~\ref{lemma:cg_complexity_delta}).

The saddle escape theorem does much of the work, with the most new ingredients.
Let us sketch its proof next.

\paragraph{Proof sketch of saddle escape theorem.}
Let $x_{k_0}$ be close to a saddle point $\xbar$. We first show that the iterates \emph{do not converge} to $\xbar$.
Then we prove that there exists an escape iteration $k_r\geq k_0$ for which sufficient decrease is achieved.

Say $x := x_k$ is one of the iterates with $k \geq k_0$ that is still close to $\xbar$.
The quadratic model $m$ for $f$ around $x$ is non-convex as the Hessian at $x$ has at least one eigenvalue smaller than $-\mu/2$.
Consider running $T$ iterations of CG on $m$ initialized at $v^{(0)}$.
We show in Proposition~\ref{prop:vt_qi_growth} that the resulting iterates $v^{(t)}$ are ``pushed away'' by the negative curvature.
More precisely, for every $0 \leq t \leq T$,
\begin{align}
  |\la v^{(t)} - \bar v, q \ra |\geq \left(1 + \frac{\mu}{2L_G}\right)^{t} |\la v^{(0)} - \bar v, q \ra |,
  \label{eq:res_lowbound_sketch}
\end{align}
where $q$ is an eigenvector corresponding to a negative eigenvalue of the Hessian at $x$ and $\bar v$ is the critical point of the quadratic model $m$ around $x$.
To prove this, we show that CG makes at least as much progress in direction $q$ as gradient descent with step size $1/L_G$ (whose iterates can be computed in closed form).
Geometrically, this means that there is a region around the saddle point which the iterates never enter:
see Figure~\ref{fig:tcg_nonconvex} for an illustration.

\begin{figure}
  \centering
  \includegraphics[width=0.55\linewidth]{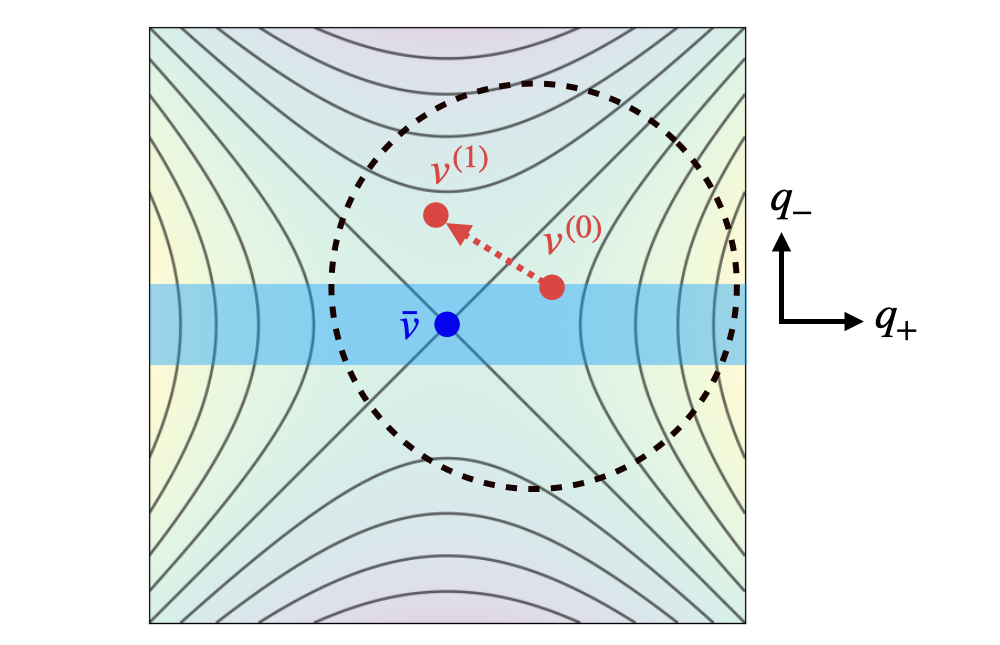}
  \caption{Illustration of the CG dynamics on a non-convex model $m$ (Proposition~\ref{prop:vt_qi_growth}). The vectors $q_+$ and $q_-$ are direction of respectively positive and negative curvature, and $\bar v$ is the saddle point. Since the projection of $v^{(t)}-\bar v$ on $q_-$ increases along the iterations, the iterates never enter the blue area. They eventually reach the boundary of the trust region (dotted line).}
  \label{fig:tcg_nonconvex}
\end{figure}

Then, randomness comes into play: because $v^{(0)} = \xi$ is distributed on a sphere of (typical) radius $\sigma$ (uniformly, up to sign), the right-hand side of \eqref{eq:res_lowbound_sketch} is lower bounded by a positive value with high probability.
Locally, the model residual is commensurate with the distance to the saddle point: we conclude that $\|x_k-\xbar\|$ is lower bounded, and that the iterates do not converge to the saddle.

To show the escape result, we examine the stopping criterion in $\tCGbg$.
If the $\stopres$ criterion triggers, the distance to the saddle is \emph{reduced} quadratically.
By our previous point, this cannot happen more than a few outer iterations in a row; eventually, the $\stopoob$ criterion must trigger and the iterates reach the boundary of the trust region.
Denote $k_r$ the first moment this happens:
\[
  \quad k_r = \inf\{ k \geq k_0 \,:\,k \textrm{ is successful and }\stopk{k} = \stopoob \}.
\]
We show that, at iteration $k_r$, the function value decreases sufficiently.
By the $\mu$-Morse property, the model gradient around saddle points satisfies the growth inequality
\[
  {\|\nabla m(v^{(T)})-g\| =\|\nabla m(v^{(T)})-\nabla m(0)\|  \geq \frac{\mu}{2} \|v^{(T)}\|},
\]
and $\|v^{(T)}\| = \Delta/2$ when the $\stopoob$ criterion triggers.
Therefore, either $\|g\|$ or $\| \nabla m(v^{(T)})\|$ is large, when $\Delta$ is large.
If $\|g\|$ is large, the first iteration of $\tCGbg$ (the Cauchy step) provides sufficient decrease.
Otherwise, $\|\nabla m(v^{(T)})\|$ is large and the decrease is achieved by performing a gradient step from $v^{(T)}$: \emph{this is why the new boundary gradient step helps escape saddles},
see Section~\ref{sec:tcgbgbenefits} for details.

\section{Deterministic supporting lemmas, old and new} \label{s:preliminaries}

In this section, we establish useful supporting lemmas and propositions for our analysis.
Section~\ref{sec:ineqsf} provides general inequalities regarding $f$ and its derivatives under Lipschitz and Morse assumptions.
Section~\ref{s:CGfacts} relates $\tCGbg$ to CG and tCG, and states mostly standard facts about the latter to shine a light on the former.
One possibly novel result there is the precise control of CG's behavior on non-convex quadratics as shown in Proposition~\ref{prop:vt_qi_growth}.
Section~\ref{sec:tcgbgbenefits} introduces mostly new results that quantify the exact benefits of the new boundary gradient step that was introduced to turn tCG into $\tCGbg$.
Finally, Section~\ref{s:progressTRbasics} provides basic results about Algorithm~\ref{algo:pTR} in terms of objective value decrease and behavior near critical points.

All statements in this section are deterministic and independent of the strong gradient property.
The section is designed in part so that we can discuss the saddle escape theorem next (Section~\ref{s:saddle}).

\subsection{Preliminary inequalities} \label{sec:ineqsf}

The trust-region algorithm relies on approximate minimization of second-order approximations of $f$ in the form $f(x + u) \approx f(x) + m_x(u)$, where the model $m_x \colon \Rd \to \reals$ around $x$ is defined by
\begin{align}
  m_x(u) & = \la \nabla f(x),u \ra + \frac{1}{2}\la \nabla^2 f(x)u,u \ra.
  \label{eq:mk}
\end{align}
Assumption~\ref{ass:f} ensures this approximation is accurate, in the sense that for all $x,u \in \reals^d$ we have
\begin{align}
  |f(x+u) - f(x) - m_x(u)| & \leq \frac{L_H}{6} \|u\|^3, \textrm{ and } \label{eq:lh_bound} \\
  |f(x+u) - f(x) - m_x(u)| & \leq \frac{1}{2}\Big( L_G \|u\|^2 + \big|\la \nabla^2 f(x)u,u \ra \big| \Big) \leq L_G \|u\|^2.
  \label{eq:lg_bound}
\end{align}
See for example~\citep[Lem.~1]{Nesterov2006}, where it is also shown that
\begin{align}
  \|\nabla f(x+u) - \nabla f(x) - \nabla^2 f(x)[u]\| \leq \frac{L_H}{2}\|u\|^2.
  \label{eq:LHnablabound}
\end{align}
Adding the $\mu$-Morse property (Definition~\ref{def:mu_morse}) provides further control on the gradient and Hessian of $f$ near critical points.

\begin{lemma}\label{lemma:growth_mu}
  Let $f \colon \reals^{d} \to \reals$ have an $L_H$-Lipschitz continuous Hessian. % (e.g., as included in Assumption~\ref{ass:f}).
  Assume $\xbar$ is a critical point of $f$ such that the singular values of $\nabla^2 f(\xbar)$ are all at least $\mu > 0$. % (e.g., as provided by the $\mu$-Morse property).
  Let $c \in [0, 1]$ and $x \in \reals^d$ be such that
  \[
    \|x - \xbar\| \leq \frac{\mu}{L_H} c.
  \]
  Then,
  \begin{enumerate}[label=(\roman*)]
    \item\label{item:growth_mu1} We have $\|\nabla f(x)\| \geq \mu(1 - \frac{c}{2})\|x-\xbar\|$ and $\|\nabla^2 f(x)u\| \geq \mu(1-c)\|u\|$ for all $u \in \reals^d$.
    % and therefore the singular values of $\nabla^2 f(x)$ belong to $[\mu(1-c),L_G]$.
    \item\label{item:growth_mu2} If $\xbar$ is a saddle point of $f$, then
    $
      \lambda_{\min}(\nabla^2 f(x)) \leq -\mu(1-c).
    $
    \item\label{item:growth_mu3} If $\xbar$ is a local minimizer of $f$, then
    $
      \lambda_{\min}(\nabla^2 f(x)) \geq \mu(1-c).
    $
  \end{enumerate}
\end{lemma}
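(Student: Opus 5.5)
The proof is a perturbation argument around $\xbar$, using only the Lipschitz continuity of the Hessian and Weyl-type eigenvalue bounds. Write $H = \nabla^2 f(\xbar)$ and $e = x - \xbar$, so that $\|e\| \le c\mu/L_H$.

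For the Hessian bound in item~\ref{item:growth_mu1}, note that $\opnorm{\nabla^2 f(x) - H} \le L_H\|e\| \le c\mu$. Hence, for every $u$,
\[
\|\nabla^2 f(x)u\| \ge \|Hu\| - c\mu\|u\| \ge \mu(1-c)\|u\|,
\]
where the last step uses that the singular values of $H$ are at least $\mu$.

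For the gradient bound, I use $\nabla f(\xbar) = 0$ together with~\eqref{eq:LHnablabound}, applied at the base point $\xbar$ with displacement $e$:
\[
\|\nabla f(x) - He\| \le \frac{L_H}{2}\|e\|^2.
\]
This gives
\[
\|\nabla f(x)\| \ge \mu\|e\| - \frac{L_H}{2}\|e\|^2 = \|e\|\Big(\mu - \frac{L_H}{2}\|e\|\Big) \ge \mu\Big(1-\frac{c}{2}\Big)\|e\|,
\]
since $\frac{L_H}{2}\|e\| \le \frac{c\mu}{2}$. The whole argument rests on expanding around $\xbar$ rather than around $x$; that choice is what yields the sharper constant $1 - c/2$ for the gradient, as opposed to $1-c$ for the Hessian.

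For items~\ref{item:growth_mu2} and~\ref{item:growth_mu3}, I apply Weyl's inequality: for symmetric matrices, $|\lambdamin(A) - \lambdamin(B)| \le \opnorm{A-B}$. With $A = \nabla^2 f(x)$ and $B = H$, this gives $|\lambdamin(\nabla^2 f(x)) - \lambdamin(H)| \le c\mu$.

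1. If $\xbar$ is a saddle point, then $H$ is not positive definite. Its eigenvalues lie outside $(-\mu,\mu)$, so $\lambdamin(H) \le -\mu$, and therefore $\lambdamin(\nabla^2 f(x)) \le -\mu + c\mu = -\mu(1-c)$.
2. If $\xbar$ is a local minimizer, then $H$ is positive definite, so $\lambdamin(H) \ge \mu$, and therefore $\lambdamin(\nabla^2 f(x)) \ge \mu(1-c)$.

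Each step is routine once this setup is fixed.
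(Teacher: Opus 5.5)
Your proposal is correct and matches the paper's proof. You get the gradient bound from the Hessian-Lipschitz Taylor remainder expanded at $\bar x$; the paper writes this same step as an integral. You get the Hessian bound from the triangle inequality with $\|\nabla^2 f(x) - \nabla^2 f(\bar x)\|_{\mathrm{op}} \le c\mu$. For items (ii)--(iii) you invoke Weyl's inequality, where the paper carries out the equivalent Rayleigh-quotient estimate by hand, so this is only a cosmetic difference.
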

\begin{proof}
  The proof is standard, see Appendix~\ref{app:additional}.
\end{proof}

\subsection{Facts about the conjugate gradients method} \label{s:CGfacts}

We begin by analyzing the first phase of $\tCGbg$ (Algorithm~\ref{algo:tCG}), for which the iterates are identical to those of standard CG~\citep{Hestenes1952CG}.
This phase lasts as long as the iterates remain in the interior of the ball of radius $\Delta/2$ (in particular, the method has not detected negative curvature).

We argue momentarily that $\tCGbg$ terminates in finite time.
Let $T$ denote the total number of iterations (as stated in the algorithm's output).
Further define
\begin{align}
  \Tin & = \max\!\left\{ 0 \leq t \leq T \,:\, \|v^{(0)}\|, \ldots, \|v^{(t)}\| < \frac{\Delta}{2} \right\}.
  \label{eq:def_tin}
\end{align}
By design, $\Tin$ equals either $T-1$ or $T$.
We highlight the following important fact:
\begin{quote}
  \begin{center}
    \textbf{For } $0 \leq t \leq \Tin$,\,\,\, \textbf{$\tCGbg$ is equivalent to standard CG}.
  \end{center}
\end{quote}
Thus, the sequences $v^{(t)}, r^{(t)}, p^{(t)}$ are identical to those of CG for $t$ up to $\Tin$ included.\footnote{In Algorithm~\ref{algo:tCG}, we initialize $v^{(0)}$ to a nonzero vector~$\xi$.
Several results on CG and tCG from the literature assume initialization at zero.
To apply them to our setting, we use the standard fact that CG on some quadratic with nonzero initialization is equivalent to CG on a shifted quadratic with zero initialization. See Lemma~\ref{lemma:cg_equiv}.}

We first recall classical properties of CG.
For $t \geq 1$ and $b \in \reals^d$, we consider Krylov subspaces
\begin{align}
  \mathcal{K}_t(H,b) = \mathrm{span}(b, Hb,\dots, H^{t-1} b).
  \label{eq:KrylovspaceKtHb}
\end{align}
The following properties are used throughout.

\begin{lemma}[Classical properties of CG]\label{lemma:well_known_props}
  Let $H \in \symm_d$ satisfy $\opnorm{H} \leq L_G$.
  Fix vectors $g, \xi \in \reals^d$ and a radius $\Delta > 0$ such that $\|\xi\| \leq \Delta / 4$.
  Run $\tCGbg(H, g, \Delta, \xi)$ (Algorithm~\ref{algo:tCG}) and let $\Tin$ be as defined in~\eqref{eq:def_tin}.
  Then for every $1 \leq t \leq \Tin$ we have
  % Note: if $r^{(0)} = 0$, then $T_in = T = 0$ so the lemma is correct because the interval 1 <= t <= T_in is empty. The claim about $r^{(0)}$ is true as stated too.
  \begin{enumerate}[label=(\roman*)]
    \item\label{item:pt_basis} $p^{(0)}, \ldots, p^{(t-1)}$ form a basis of $\mathcal{K}_t(H, r^{(0)})$,
    \item\label{item:res} $r^{(t)} = -\nabla m(v^{(t)}) = -(Hv^{(t)} + g)$ with $m(v) = \inner{g}{v} + \frac{1}{2} \inner{v}{Hv}$ (also true for $t = 0$),
    \item\label{item:kt} $v^{(t)}$ is the solution of the following minimization problem over a shifted Krylov space~\eqref{eq:KrylovspaceKtHb}:
    \begin{align*}
      \min_{v \in \reals^d} m(v) && \textrm{ subject to } && v \,\in\,
      v^{(0)} + \mathcal{K}_t(H,r^{(0)}),
    \end{align*}
    \item\label{item:grad_decrease} the model value decreases with each iteration as
    \begin{align*}
      m(v^{(t)}) - m(v^{(t-1)}) \leq-\frac{1}{2L_G} \|r^{(t-1)}\|^2,
    \end{align*}
    \item\label{item:psd_kt} the restriction of $H$ to $\mathcal{K}_t(H, r^{(0)})$ is positive definite, i.e.,
    \begin{align*}
      \la Hu,u \ra > 0 && \forall u \in \mathcal{K}_t(H, r^{(0)}), u \neq 0,
    \end{align*}
    \item\label{item:steplength} $v^{(t)} = v^{(t-1)} + \alpha^{(t)} p^{(t-1)}$ with $\alpha^{(t)} \geq \frac{1}{L_G}$.
  \end{enumerate}
  Moreover, the algorithm terminates in finite time with $\Tin \leq T \leq d$.
\end{lemma}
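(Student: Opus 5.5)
The plan is to reduce everything to textbook CG with zero initialization via the shift equivalence (Lemma~\ref{lemma:cg_equiv}), and then check that each classical fact survives as long as $t \le \Tin$. For $t \le \Tin$ the branch at line~\ref{line:iftoboundaryintcgbg} was never taken, so no curvature test failed and no truncation occurred. Hence every $v^{(t)}$ was produced by the plain CG recursion, with $\inner{p^{(s-1)}}{Hp^{(s-1)}}>0$ for all $s\le t$.

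Setting $w = v - v^{(0)}$ turns $m$ into the quadratic $\tilde m(w) = m(v^{(0)}) - \inner{r^{(0)}}{w} + \frac12\inner{w}{Hw}$. The algorithm then becomes standard CG on $\tilde m$ started at $w=0$ with right-hand side $r^{(0)}$. With this reduction the items follow in order.

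\begin{itemize}
\item \emph{Item~\ref{item:res}.} I will prove it by induction from $r^{(t)} = r^{(t-1)} - \alpha^{(t)}Hp^{(t-1)}$ and $v^{(t)} = v^{(t-1)}+\alpha^{(t)}p^{(t-1)}$; the base case $t=0$ is the definition.
\item \emph{Items~\ref{item:pt_basis} and~\ref{item:psd_kt}.} I will prove these jointly by induction, using the standard orthogonality relations $\inner{r^{(i)}}{r^{(j)}}=0$ and $\inner{p^{(i)}}{Hp^{(j)}}=0$ for $i\ne j$. These hold in the usual Hestenes--Stiefel argument as long as all denominators $\inner{p}{Hp}$ are positive. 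Every $p^{(s)}$ lies in $\mathcal K_{s+1}$, and the $p^{(s)}$ are nonzero: $r^{(s)}\neq0$ for $s<t$, since otherwise the residual test would have stopped the method earlier, and $p^{(s)} = r^{(s)}+\beta p^{(s-1)}$ with $r^{(s)}\perp p^{(s-1)}$. Being $H$-conjugate with positive $H$-norms, they are linearly independent, so they form a basis of $\mathcal K_t$. In that basis $H$ restricted to $\mathcal K_t$ is diagonal with positive entries $\inner{p^{(s)}}{Hp^{(s)}}$, which gives positive definiteness.
\item \emph{Item~\ref{item:kt}.} Positive definiteness makes $m$ strictly convex on $v^{(0)}+\mathcal K_t$. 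The residual $r^{(t)}$ is orthogonal to $\mathcal K_t$ (it is orthogonal to all earlier $p^{(s)}$), so $v^{(t)}$ satisfies the first-order optimality condition there.
\item \emph{Items~\ref{item:steplength} and~\ref{item:grad_decrease}.} Using $\inner{r^{(t-1)}}{p^{(t-1)}} = \|r^{(t-1)}\|^2$ and $\|p^{(t-1)}\|\ge\|r^{(t-1)}\|$ (Pythagoras, since $r^{(t-1)}\perp p^{(t-2)}$), I get $\alpha^{(t)} = \|r^{(t-1)}\|^2/\inner{p^{(t-1)}}{Hp^{(t-1)}} \ge \|r^{(t-1)}\|^2/(L_G\|p^{(t-1)}\|^2)$. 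This is not yet $\ge 1/L_G$. The better route is to write $\alpha^{(t)} = \|r^{(t-1)}\|^2/\inner{r^{(t-1)}}{H\,p^{(t-1)}}$ via conjugacy and bound the denominator by Cauchy--Schwarz. Cleanest of all is to use the exact line-search identity $m(v^{(t)}) - m(v^{(t-1)}) = -\frac12 \|r^{(t-1)}\|^4/\inner{p^{(t-1)}}{Hp^{(t-1)}}$. The optimality in item~\ref{item:kt} shows this decrease is at least the decrease of the exact-line-search steepest-descent step from $v^{(t-1)}$ along $r^{(t-1)}$, which is at least $\|r^{(t-1)}\|^2/(2L_G)$ by $\opnorm H\le L_G$. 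That proves item~\ref{item:grad_decrease}. Equating the two expressions then gives $\alpha^{(t)}\|r^{(t-1)}\|^2 \ge \|r^{(t-1)}\|^2/L_G$, i.e.\ item~\ref{item:steplength}.
\item \emph{Finite termination.} The $p^{(s)}$ are independent, so at most $d$ of them exist. Once $t=d$ is reached, the Krylov space is exhausted and $r^{(d)}\perp\mathbb R^d$ forces $r^{(d)}=0$, which triggers the residual stop unless the boundary stop fired first. Hence $\Tin\le T\le d$.
\end{itemize}

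The main obstacle is item~\ref{item:steplength}: the naive quotient bound fails, and the argument must go through the subspace-optimality comparison with a gradient step.
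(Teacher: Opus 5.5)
Your proof is correct. For items (i)--(iii), (v) and the bound $\Tin \le T \le d$ it follows the same classical path as the paper, which cites Hestenes--Stiefel and Conn et al.\ for (i)--(iii) and argues (v) and finite termination exactly as you do. Your proof of (iv) is also the paper's argument, except that you use an exact line search along $r^{(t-1)}$ where the paper uses the fixed step $1/L_G$.

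The genuine difference is item (vi).
\begin{itemize}
\item The paper imports Royer et al.'s Lemma~7, $\langle p^{(t-1)}, Hp^{(t-1)}\rangle \le \langle r^{(t-1)}, Hr^{(t-1)}\rangle$, transferred to nonzero initialization via Lemma~\ref{lemma:cg_equiv}. It then reads off $\alpha^{(t)} \ge \|r^{(t-1)}\|^2/\langle r^{(t-1)}, Hr^{(t-1)}\rangle \ge 1/L_G$.
\item You instead combine the exact decrease identity $m(v^{(t-1)}) - m(v^{(t)}) = \frac12 \alpha^{(t)} \|r^{(t-1)}\|^2$ with the Krylov-subspace optimality of item (iii). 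This makes (vi) a corollary of (iv).
\end{itemize}
Your route is self-contained, and it actually reproves the cited lemma. Compare the CG decrease $\frac12\|r\|^4/\langle p, Hp\rangle$ with the exact steepest-descent decrease $\frac12\|r\|^4/\langle r, Hr\rangle$, rather than only with $\|r\|^2/(2L_G)$: this yields $\langle p, Hp\rangle \le \langle r, Hr\rangle$ directly. The citation buys brevity. Your abandoned Cauchy--Schwarz idea also works if you apply it in the $H$-inner product on $\mathcal{K}_t(H, r^{(0)})$: by conjugacy, $\langle p, Hp\rangle = \langle r, Hp\rangle \le \sqrt{\langle r, Hr\rangle \langle p, Hp\rangle}$.

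One point to make explicit is that the exact line search along $r^{(t-1)}$ is well defined. This holds because $r^{(t-1)}$ is a nonzero element of $\mathcal{K}_t(H, r^{(0)})$, on which $H$ is positive definite by (v). Alternatively, use the step $1/L_G$ as the paper does, which needs no sign condition on the curvature. Your indexing also explicitly covers the first transition $v^{(0)} \to v^{(1)}$ in (iv). The paper's write-up, stated for $1 \le t \le \Tin - 1$, leaves that step implicit.
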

\begin{proof}
  We defer details to Appendix~\ref{app:additional}.
  Items~\ref{item:pt_basis}--\ref{item:kt} are classical.
  Item~\ref{item:grad_decrease} comes from the fact that a CG iteration is at least as good as a gradient descent step.
  Item~\ref{item:psd_kt} holds because up to $\Tin$ the method did not stop, hence it did not detect negative curvature.
  Item~\ref{item:steplength} follows from~\citep[Lem.~7]{Royer2020}.
  Finally, $T \leq d$ because CG is guaranteed to reach zero residual (for convex models) or to detect negative curvature (for non-convex models) in at most $d$ iterations.
\end{proof}

\paragraph{Boundedness of CG for strongly convex quadratics.}
At the heart of our analysis is the fact that CG behaves differently depending on whether the model is convex or not. In the strongly convex case, iterates stay bounded.
To prove this, we rely on a classical result by \citet{Steihaug1983tCG}.

\begin{lemma}\label{lemma:tcg_boundedness}
  Let $H \in \symm_d$ satisfy $H \succ 0$.
  Consider vectors $g, \xi \in \reals^d$ and a radius $\Delta > 0$ such that $\|\xi\| \leq \Delta/4$.
  Run $\tCGbg(H, g, \Delta, \xi)$ with output $T$.
  Then, the tentative iterates $v_+^{(t)}$ satisfy
  \begin{align*}
    \|v^{(t)}_+\| \leq 2\|\xi\| + \|H^{-1}g\| && \textrm{ for every } 0 \leq t \leq T-1.
  \end{align*}
\end{lemma}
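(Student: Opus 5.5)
Since $H \succ 0$, the nonpositive-curvature test in line~\ref{line:iftoboundaryintcgbg} never triggers, so for $0 \le t \le T-1$ the tentative iterate $v_+^{(t)} = v^{(t)} + \alpha^{(t+1)} p^{(t)}$ is exactly the next iterate of standard CG started at $\xi$. (For $t \le T-1$ we have $t \le \Tin$, because all previous tentative steps stayed inside the ball and were taken.) So it suffices to bound the norm of the standard CG iterates, $w^{(t)}$ say, for $t \le T$; then $v_+^{(t)} = w^{(t+1)}$.

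The plan is to reduce to zero initialization via Lemma~\ref{lemma:cg_equiv} and apply Steihaug's classical monotonicity result. Write $w^{(t)} = \xi + z^{(t)}$. Then $z^{(t)}$ are the CG iterates, started at $0$, for the shifted quadratic $\tilde m(z) = m(\xi + z) = \mathrm{const} + \inner{\tilde g}{z} + \frac12\inner{z}{Hz}$ with $\tilde g = g + H\xi$. Steihaug's theorem \citep{Steihaug1983tCG} states that CG from zero on a strongly convex quadratic produces iterates with nondecreasing norms. They converge (in at most $d$ steps) to the minimizer $z^\star = -H^{-1}\tilde g$. Hence $\|z^{(t)}\| \le \|z^\star\|$ for every $t$.

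Next I compute $z^\star = -H^{-1}g - \xi$, so $\|z^\star\| \le \|H^{-1}g\| + \|\xi\|$. The triangle inequality then gives
\[
\|w^{(t)}\| \le \|\xi\| + \|z^{(t)}\| \le 2\|\xi\| + \|H^{-1}g\|.
\]
Applying this with $t+1$ in place of $t$ gives the claim for $v_+^{(t)}$.

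The main point needing care is the monotonicity $\|z^{(t)}\| \le \|z^\star\|$ when CG terminates early by the residual test. This is fine, since Steihaug's result is about the full CG sequence, and our iterates are a prefix of it. I would also check that $\alpha^{(t)}$ and the directions coincide with those of CG on $\tilde m$, which holds because the residual $r^{(0)} = -(H\xi + g) = -\tilde g$ matches. If I want to avoid citing Steihaug, the fallback is the standard argument: $\inner{p^{(i)}}{p^{(j)}} > 0$, which gives $\|z^{(t)}\|$ increasing and $z^\star = z^{(t)} + \sum_{j \ge t} \alpha^{(j+1)} p^{(j)}$ with nonnegative inner products.
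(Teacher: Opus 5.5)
Your proposal is correct and follows essentially the same route as the paper. Both identify the tentative iterates $v_+^{(t)}$, $0\le t\le T-1$, with standard CG iterates started at $\xi$. Both then apply Steihaug's monotonicity of the distance to the initial point (after shifting to zero initialization via Lemma~\ref{lemma:cg_equiv}), bound that distance by $\|v^*-\xi\| = \|H^{-1}g+\xi\|\le \|\xi\|+\|H^{-1}g\|$, and conclude with the triangle inequality.
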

\begin{proof}
  Since the model is strictly convex, for every $0 \leq t \leq T-1$ we have $\alpha^{(t+1)} > 0$ and $v_+^{(0)}, \ldots, v^{(T-1)}_+$ are the same iterates as those of standard CG.
  We now invoke \citet[Thm.~2.1]{Steihaug1983tCG} to state that the distance of the CG iterates to the initial point is increasing along the iterations.\footnote{\citet{Steihaug1983tCG} assumes $v^{(0)} = 0$; we apply it to our setting using Lemma~\ref{lemma:cg_equiv}.}
  Again by strict convexity of the model, the CG iterates converge to the unique minimizer $v^* = -H^{-1}g$.
  As a consequence,
  \[
    \|v_+^{(0)} - v^{(0)} \|\leq \dots \leq \|v_+^{(T-1)} - v^{(0)}\| \leq \|v^* - v^{(0)}\|.
  \]
  It follows that for every $0 \leq t \leq T-1$, $\|v^{(t)}_+\| \leq \|v^{(0)}\| + \|v^* - v^{(0)}\| \leq 2\|v^{(0)}\|+ \|v^*\|$.
\end{proof}

\paragraph{Divergence of CG for non-convex quadratics.}
If the model is not convex, the CG iterates (and its residuals) diverge exponentially along the negative curvature directions.
Though intuitive, this precise quantification seems novel.
It is an essential piece of the saddle escape theorem.

\begin{proposition}\label{prop:vt_qi_growth}
  Let $H \in \symm_d$ satisfy $\opnorm{H} \leq L_G$.
  Consider vectors $g, \xi \in \reals^d$ and a radius $\Delta > 0$ such that $\|\xi\| \leq \Delta/4$.
  Define ${\bar v = - H^{+}g}$, with $H^+$ the pseudoinverse of $H$.
  Let $q$ denote an eigenvector of $H$ associated to a negative eigenvalue $\lambda < 0$.

  Then the iterates of $\tCGbg(H, g, \Delta, \xi)$ satisfy, for all $0 \leq t \leq \Tin$ with $\Tin$ as in~\eqref{eq:def_tin},
  \begin{align}
    |\la v^{(t)} - \bar v, q \ra |\geq \left(1 + \frac{|\lambda|}{L_G}\right)^{t} |\la v^{(0)} - \bar v, q \ra |.
    \label{eq:vt_qi_growth}
  \end{align}
  This further implies some growth in the residuals since $\|r^{(t)}\| \geq |\lambda| |\la v^{(t)} - \bar v, q \ra|$ if $\|q\| = 1$.
\end{proposition}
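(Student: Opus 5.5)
The plan is to pass to the polynomial representation of CG residuals and to read off the component along $q$ via the Ritz values. By Lemma~\ref{lemma:well_known_props}\ref{item:kt} and~\ref{item:res}, for $0 \le t \le \Tin$ we have $v^{(t)} - v^{(0)} \in \mathcal{K}_t(H, r^{(0)})$. Hence $r^{(t)} = r^{(0)} - H(v^{(t)} - v^{(0)}) = P_t(H)\, r^{(0)}$ for some polynomial $P_t$ of degree at most $t$ with $P_t(0) = 1$. The claim is trivial for $t = 0$, so we fix $1 \le t \le \Tin$.

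\emph{Step 1: reduce to residual components.} Since $\lambda \neq 0$, $q$ lies in the range of $H$. Therefore $\langle H\bar v, q\rangle = -\langle HH^+ g, q\rangle = -\langle g, q\rangle$. It follows that, for any $v$,
\begin{align*}
  \langle r(v), q\rangle = -\langle Hv + g, q\rangle = -\lambda \langle v - \bar v, q\rangle .
\end{align*}
Because $q$ is an eigenvector, $\langle P_t(H) r^{(0)}, q\rangle = P_t(\lambda) \langle r^{(0)}, q\rangle$. Dividing by $-\lambda$ gives $\langle v^{(t)} - \bar v, q\rangle = P_t(\lambda)\langle v^{(0)} - \bar v, q\rangle$. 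So it suffices to show $|P_t(\lambda)| \ge (1 + |\lambda|/L_G)^t$. The final remark of the statement then follows at once: for $\|q\| = 1$, $\|r^{(t)}\| \ge |\langle r^{(t)}, q\rangle| = |\lambda|\,|\langle v^{(t)} - \bar v, q\rangle|$.

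\emph{Step 2: identify $P_t$ through Ritz values.} This is the main step. Let $Q_t$ be an orthonormal basis of $\mathcal{K}_t(H, r^{(0)})$, which has dimension exactly $t$ by Lemma~\ref{lemma:well_known_props}\ref{item:pt_basis}. Set $T_t = Q_t^\top H Q_t$. By Lemma~\ref{lemma:well_known_props}\ref{item:psd_kt} and $\opnorm{H} \le L_G$, the eigenvalues $\theta_1, \ldots, \theta_t$ of $T_t$ (the Ritz values) lie in $(0, L_G]$. The optimality in Lemma~\ref{lemma:well_known_props}\ref{item:kt} gives the Galerkin condition $r^{(t)} \perp \mathcal{K}_t(H, r^{(0)})$, i.e., $P_t(H) r^{(0)} \perp \mathcal{K}_t$.

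The standard Lanczos argument then shows that $P_t$ is the normalized characteristic polynomial of $T_t$:
\begin{align*}
  P_t(x) = \prod_{i=1}^t \left(1 - \frac{x}{\theta_i}\right).
\end{align*}
To prove this, write the orthogonality conditions as $\langle P_t(H) r^{(0)}, H^j r^{(0)}\rangle = 0$ for $j < t$. This says $P_t$ is orthogonal to all polynomials of degree $< t$ in the discrete inner product induced by $r^{(0)}$ and $H$; since $\dim \mathcal{K}_t = t$, this determines $P_t$ uniquely once $P_t(0) = 1$ is imposed. By Cayley--Hamilton applied to $T_t$, the characteristic polynomial $\chi_t$ of $T_t$ satisfies $Q_t^\top \chi_t(H) r^{(0)} = \chi_t(T_t)\, Q_t^\top r^{(0)} = 0$. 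Here we use that $H^j r^{(0)} = Q_t T_t^j Q_t^\top r^{(0)}$ for $j < t$, while the degree-$t$ term is handled by projection. Finally, $\chi_t(0) = \det(-T_t) \neq 0$ because $T_t \succ 0$, so normalizing $\chi_t$ to value $1$ at $0$ yields $P_t$. Verifying this identification cleanly is the step I expect to need the most care; alternatively, I would cite it as a classical fact about CG (e.g., via its equivalence with Lanczos).

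\emph{Step 3: conclude.} For $\lambda < 0$ and $\theta_i \in (0, L_G]$, each factor satisfies $|1 - \lambda/\theta_i| = 1 + |\lambda|/\theta_i \ge 1 + |\lambda|/L_G$. Taking the product over $i = 1, \ldots, t$ gives $|P_t(\lambda)| \ge (1 + |\lambda|/L_G)^t$, which proves~\eqref{eq:vt_qi_growth}.
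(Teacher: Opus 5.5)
Your proof is correct, and it takes a genuinely different route from the paper's.

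\textbf{The paper's route.} The paper argues directly from the CG recurrences. Because $\beta^{(t)}\ge 0$, each direction $p^{(t-1)}$ is a nonnegative combination of $r^{(0)},\dots,r^{(t-1)}$. Taking inner products with $q$ and using $\langle Hv^{(s)}+g,q\rangle=\lambda\langle v^{(s)}-\bar v,q\rangle$, an induction shows that all $\langle v^{(s)}-\bar v,q\rangle$ share one sign. The extra terms can therefore be dropped, leaving a per-step factor $1+\alpha^{(t)}|\lambda|$. The step-length bound $\alpha^{(t)}\ge 1/L_G$ from Lemma~\ref{lemma:well_known_props}\ref{item:steplength} (imported from Royer et al.) then finishes. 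The reading is that, along $q$, CG does at least as well as gradient descent with step $1/L_G$.

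\textbf{Your route.} You use only the Krylov optimality of Lemma~\ref{lemma:well_known_props}\ref{item:kt}, through its Galerkin condition, and positive definiteness on $\mathcal{K}_t(H,r^{(0)})$ from Lemma~\ref{lemma:well_known_props}\ref{item:psd_kt}. You identify the residual polynomial as $\prod_i(1-x/\theta_i)$, with Ritz values $\theta_i\in(0,L_G]$, which gives the exact identity
\[
\langle v^{(t)}-\bar v,q\rangle=\prod_{i=1}^t\Bigl(1+\frac{|\lambda|}{\theta_i}\Bigr)\langle v^{(0)}-\bar v,q\rangle .
\]

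\textbf{What each buys.} Your identity is sharper, since Ritz values can be far below $L_G$. It also gives sign preservation for free, and it needs neither the step-length lemma nor the specific form of the CG updates. The cost is reliance on the Lanczos/Ritz identification of the residual polynomial, which is classical but heavier than the paper's short induction.

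\textbf{Checks on your Step 2.} Your verification of that identification holds up. The identity $H^jr^{(0)}=Q_tT_t^jQ_t^\top r^{(0)}$ holds for $j<t$, and the degree-$t$ term is indeed handled by projecting, giving $Q_t^\top H^t r^{(0)}=T_t^tQ_t^\top r^{(0)}$. Uniqueness of $P_t$ is fine: when $\dim\mathcal{K}_t=t$, the polynomials of degree at most $t$ orthogonal to those of degree below $t$ form a one-dimensional space, so the normalization $P_t(0)=1$ pins $P_t$ down.
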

\begin{proof}
 All statements in this proof are understood to hold for every $1 \leq t \leq \Tin$ (some also hold for $t = 0$, trivially).
 Owing to the update rule $p^{(t)} = r^{(t)} + \beta^{(t)} p^{(t-1)}$ with $\beta^{(t)} \geq 0$, we have
 \begin{align*}
  p^{(t)} = r^{(t)} + \sum_{s = 0}^{t-1} \zeta_{t,s} r^{(s)},
  \label{eq:decomp_rt}
 \end{align*}
 where $\zeta_{t,s}$ are some nonnegative coefficients.
 Since $t \leq \Tin$, we have $v^{(t-1)}_+ = v^{(t)}$, and therefore
 \begin{align*}
    v^{(t)} -\bar v & = v^{(t-1)}-\bar v + \alpha^{(t)}p^{(t-1)} \\
      & = v^{(t-1)}-\bar v + \alpha^{(t)}r^{(t-1)} + \alpha^{(t)}\sum_{s=0}^{t-2} \zeta_{t-1,s} r^{(s)} \\
      & = v^{(t-1)}-\bar v - \alpha^{(t)}(Hv^{(t-1)}+g) - \alpha^{(t)}\sum_{s=0}^{t-2} \zeta_{t-1,s} (Hv^{(s)}+g).
 \end{align*}
 Since $q$ is in the image of $H$, notice that $\la g, q \ra = \la HH^+ g, q \ra = \la -H\bar v, q \ra = -\la \bar v, Hq \ra$.
 Take inner products with $q$ on both sides of the above.
 We find for every $s$ that $\la Hv^{(s)}+g,q \ra = \la v^{(s)}-\bar v,Hq \ra = \lambda \la v^{(s)}-\bar v,q \ra $.
 Assume without loss of generality that $\la v^{(0)}-\bar v,q \ra \geq 0$ (otherwise, flip the sign of $q$).
 Since $\lambda < 0$, we get
 \begin{align*}
  \la v^{(t)} - \bar v, q \ra & = \left(1+\alpha^{(t)}|\lambda| \right)  \la v^{(t-1)} - \bar v,q \ra + \alpha^{(t)} |\lambda|  \sum_{s=0}^{t-2} \zeta_{t-1,s} \la v^{(s)} - \bar v, q \ra.
 \end{align*}
 Remembering that $\alpha^{(t)}$ and $\zeta_{t-1,s}$ are nonnegative, it follows by induction that $\la v^{(t)} - \bar v, q \ra \geq 0$, and therefore
 \begin{align*}
  \la v^{(t)} - \bar v, q \ra \geq \left(1 + \alpha^{(t)} |\lambda| \right)  \la v^{(t-1)} - \bar v, q \ra.
 \end{align*}
 To conclude, we use the fact that $\alpha^{(t)} \geq 1/L_G$ by Lemma~\ref{lemma:well_known_props}\ref{item:steplength}.

 The last statement about the residuals $r^{(t)} = -(Hv^{(t)} + g)$ holds since, if $\|q\| = 1$, then
 \begin{align*}
  \|r^{(t)}\| \geq |\la r^{(t)}, q \ra| = |\la H v^{(t)} + g, q \ra| = |\la v^{(t)} - \bar v, Hq \ra| = |\lambda| |\la v^{(t)} - \bar v, q \ra|.
 \end{align*}
 (Here, we see that the residuals, too, grow along the direction $q$).
\end{proof}

\subsection{Decrease guarantees for $\tCGbg$ and benefits of the boundary gradient step} \label{sec:tcgbgbenefits}

We now establish a lower bound on the objective decrease achieved by Algorithm~\ref{algo:tCG}.
To do so, we build on the CG results above and incorporate an analysis specific to $\tCGbg$, which takes into account the ball constraint and the new boundary gradient step.

We rely on the following lemma, which will be used to analyze the boundary gradient step. It is similar to the analysis of the \emph{Cauchy step} in standard tCG~\citep{Steihaug1983tCG}.
\begin{lemma}\label{lemma:cauchy_step}
  Let $H \in \symm_d$ satisfy $\opnorm{H} \leq L_G$.
  Consider vectors $g, v \in \reals^d$ and a radius $\Delta > 0$.
  Assume $\|v\| \leq \Delta / 2$.
  Run $\boundarygradientstep(H,g,\Delta,v)$ with output $u$ (Algorithm~\ref{algo:bg}).
  Then,
  \[
   m(v) - m(u) \geq \begin{cases}
    \frac{1}{2L_G}\|\nabla m(v)\|^2 & \textrm{ if } \|u\| < \Delta, \\
    \frac{1}{4}\Delta \|\nabla m(v)\| & \textrm{ if } \|u\| = \Delta,
  \end{cases}
  \]
  where $m$ denotes the quadratic $m(w) = \la g, w \ra + \frac{1}{2} \la w, Hw \ra$.
\end{lemma}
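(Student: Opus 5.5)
Set $r = -\nabla m(v)$ and study the one-dimensional function $\phi(s) = m(v + s r)$. Expanding the quadratic gives
\[
  \phi(s) = m(v) - s\|r\|^2 + \frac{s^2}{2}\la r, Hr \ra .
\]
The output of Algorithm~\ref{algo:bg} is $u = v + s^\star r$, with $s^\star$ the minimizer of $\phi$ over the interval of $s \geq 0$ for which $\|v + s r\| \le \Delta$. If $r = 0$ the claim is trivial, so assume $r \neq 0$. Since $\|v\| \le \Delta/2 < \Delta$, this interval contains $[0, \bar s]$, where $\bar s > 0$ is the unique positive root of $\|v + s r\| = \Delta$. I would split into the two cases of the algorithm.

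\emph{Case $\|u\| < \Delta$.} This happens exactly when $\la r, Hr\ra > 0$ and $\|v_+\| < \Delta$, so $u = v_+$ with $\bar\alpha = \|r\|^2/\la r, Hr\ra$. Plugging in gives
\[
  m(v) - m(u) = \frac{\|r\|^4}{2\la r,Hr\ra} \ge \frac{\|r\|^2}{2L_G},
\]
because $\la r, Hr\ra \le L_G\|r\|^2$ by $\opnorm{H}\le L_G$. This is the first bound.

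\emph{Case $\|u\| = \Delta$.} Then $u = v + \bar s r$. The key geometric fact is $\bar s \|r\| = \|u - v\| \ge \|u\| - \|v\| \ge \Delta/2$, using $\|v\| \le \Delta/2$. It remains to show $\phi(0) - \phi(\bar s) \ge \frac12 \bar s \|r\|^2$; combined with the geometric fact this yields $\frac{1}{2}\bar s\|r\|^2 \ge \frac{\Delta}{4}\|r\|$, the second bound. There are two subcases:
\begin{itemize}
  \item If $\la r,Hr\ra \le 0$, then $\phi(0)-\phi(\bar s) \ge \bar s\|r\|^2 \ge \frac12 \bar s \|r\|^2$.
  \item If $\la r,Hr\ra>0$ but $\|v_+\|\ge\Delta$, then $\bar s \le \bar\alpha$, since the norm along the ray is convex in $s$ and starts below $\Delta$, so it first reaches $\Delta$ no later than at $\bar\alpha$. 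Hence $\bar s\la r,Hr\ra \le \|r\|^2$, and
  \[
    \phi(0)-\phi(\bar s) = \bar s\|r\|^2 - \frac{\bar s^2}{2}\la r,Hr\ra \ge \frac{\bar s}{2}\|r\|^2 .
  \]
\end{itemize}

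The only mildly delicate step is justifying $\bar s \le \bar\alpha$, i.e., that the boundary is crossed before the unconstrained minimizer along the ray. This follows from convexity of $s\mapsto\|v+sr\|$ together with $\|v\|<\Delta\le\|v_+\|$. Everything else is direct computation. Note that this is exactly where the hypothesis $\|v\|\le\Delta/2$ (rather than $\le\Delta$) is used: it guarantees a step of length at least $\Delta/2$ to the boundary.
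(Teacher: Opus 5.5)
Your proof is correct and follows the paper's argument essentially step for step. You use the same expansion $m(v+sr) = m(v) - s\|r\|^2 + \frac{s^2}{2}\la r, Hr\ra$, the same three cases (interior step; boundary step with $\la r,Hr\ra>0$ via $s' \le \bar\alpha$; boundary step with $\la r,Hr\ra\le 0$), and the same triangle-inequality bound $s'\|r\| \ge \Delta/2$, while your explicit handling of $r=0$ and the convexity justification of $s'\le\bar\alpha$ only make explicit details the paper leaves implicit.
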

\begin{proof}
 Let $r = -\nabla m(v) = -(g+Hv)$.
 Let $s' \geq 0$ be the scalar such that $u = v + s' r$.
 Observe that
 \begin{equation}
  m(u) - m(v) = -s'\|r\|^2 + \frac{(s')^2}{2}\la Hr,r \ra.
  \label{eq:exp_v1}
 \end{equation}
 We distinguish three cases, according to the conditional statement in Algorithm~\ref{algo:bg}.
 From that algorithm, recall the definition of $\bar{\alpha} = \|r\|^2 / \langle r, H r \rangle$.

 \noindent
 \textbf{Case 1:} $\|u\| < \Delta$.
 Then, we did not enter the ``if'' statement.
 Thus, $s' = \bar \alpha$, $\la r,Hr \ra > 0$ and
 \begin{equation*}
  m(u) - m(v) = -\frac{\|r\|^4}{2 \la r,Hr \ra} \leq  - \frac{1}{2L_G}\|r\|^2.
 \end{equation*}

 \noindent
 \textbf{Case 2:} $\|u\| = \Delta$ and $\la r, Hr \ra > 0$.
 This implies that $\|v_+\| \geq \Delta$ where $v_+ = v + \bar \alpha r$, and $s'$ is the unique positive real such that $\|v+s'r\| = \Delta$ (uniqueness follows from the fact that $\|v\| < \Delta$ by assumption).
 We deduce that $0 < s' \leq \bar \alpha$ (since $\bar \alpha$ is positive) and so
 \begin{equation*}
  m(u) - m(v) = s' \left(
  -\|r\|^2 + \frac{s'}{2} \la r, Hr \ra
  \right)
  \leq s' \left(
  -\|r\|^2 + \frac{ \bar \alpha}{2} \la r, Hr \ra
  \right)
  = -\frac{1}{2}s'\|r\|^2.
 \end{equation*}

 \noindent
 \textbf{Case 3:} $\|u\| = \Delta$ and $\la r, Hr \ra \leq 0$.
 From \eqref{eq:exp_v1} we deduce that
 \begin{equation*}
  m(u)-m(v) \leq -s'\|r\|^2.
 \end{equation*}
 It remains to lower bound $s'$ in Cases 2 and 3.
 Because $\|v\|\leq \Delta/2$ and $\|v+s'r\| = \Delta$, the triangle inequality gives
 \begin{equation*}
  \Delta \leq \|v\| + s'\|r\| \leq \frac{\Delta}{2} + s'\|r\| \quad \implies \quad s' \geq \frac{\Delta}{2\|r\|},
 \end{equation*}
 which leads to the desired inequality.
\end{proof}

Using this lemma, we provide a lower bound on the model decrease achieved by $\tCGbg$.

\begin{lemma}[Lower bound on model decrease] \label{lemma:cauchy_decrease}
  Let $H \in \symm_d$ satisfy $\opnorm{H} \leq L_G$.
  Consider vectors $g, \xi \in \reals^d$ and a radius $\Delta > 0$.
  Assume $\|\xi\| \leq \Delta / 4$.
  Run $\tCGbg(H,g,\Delta,\xi)$ with output $(u, \stopcrit, T)$.
  Then,
  \[
    m(v^{(0)} ) - m(u) \geq W_1 + W_2 \geq 0,
  \]
  where $m(v) = \la g,v \ra + \frac{1}{2} \la v, Hv \ra$ and
  \[
  \setlength{\jot}{12pt}
  \begin{split}
    W_1 &= \begin{cases}
      \frac{1}{2L_G}\|r^{(0)}\|^2 & \textrm{ if } \|v^{(1)}\| < \Delta / 2, \\
      \frac{1}{8}\Delta \|r^{(0)}\| & \textrm{ if } \|v^{(1)}\| = \Delta / 2,
    \end{cases}\\
         %
     % \min\!\left( \frac{1}{2L_G}\|r^{(0)}\|^2, \frac{1}{8}\Delta \|r^{(0)}\| \right),\\
    W_2 &= \begin{cases}
      \min\!\left( \frac{1}{2L_G}\|r^{(T)}\|^2, \frac{1}{4}\Delta \|r^{(T)}\| \right) & \textrm{ if } \stopcrit = \stopoob, \\
      0 & \textrm{ otherwise}.
    \end{cases}
  \end{split}
  \]
  In particular, the denominator in the definition of $\rho_k$~\eqref{eq:rhok} is nonnegative.
\end{lemma}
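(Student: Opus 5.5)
We split the total decrease $m(v^{(0)}) - m(u)$ into a telescoping sum along the inner iterates and bound two pieces: the first inner step (giving $W_1$), and the boundary gradient step when $\stopcrit = \stopoob$ (giving $W_2$). All other steps are shown to be nonincreasing in $m$. If $r^{(0)} = 0$, the algorithm returns $u = v^{(0)}$ with $T=0$ and $\stopcrit = \stopres$. Then $W_1 = 0$ and $W_2 = 0$, so the claim holds trivially; from now on assume $r^{(0)} \neq 0$.

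\textbf{Monotonicity and the split.} We write
\[
m(v^{(0)}) - m(u) = \big(m(v^{(0)}) - m(v^{(1)})\big) + \big(m(v^{(1)}) - m(v^{(T)})\big) + \big(m(v^{(T)}) - m(u)\big),
\]
where the last bracket is zero when $\stopcrit = \stopres$ (then $u = v^{(T)}$).
\begin{itemize}
\item For the interior CG steps $1 \le t \le \Tin$, Lemma~\ref{lemma:well_known_props}\ref{item:grad_decrease} gives $m(v^{(t)}) \le m(v^{(t-1)})$.
\item If $\stopcrit=\stopoob$, the last iterate $v^{(T)} = v^{(T-1)} + s p^{(T-1)}$ is obtained by moving along $p^{(T-1)}$ to the sphere of radius $\Delta/2$. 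This move does not increase $m$. Indeed, $\la p^{(T-1)}, r^{(T-1)}\ra = \|r^{(T-1)}\|^2 > 0$ by the standard CG identity, so $p^{(T-1)}$ is a descent direction. Writing $\phi(s) = m(v^{(T-1)} + s p^{(T-1)})$, we have $\phi'(0) < 0$. If the curvature $\la p^{(T-1)}, H p^{(T-1)}\ra$ is nonpositive, $\phi$ is decreasing on $[0,\infty)$. Otherwise $\phi$ is a convex quadratic minimized at $\alpha^{(T)}$, and $s \le \alpha^{(T)}$ because the full step $v^{(T-1)}_+$ would leave the ball of radius $\Delta/2$. In both cases $\phi(s) \le \phi(0)$.
\end{itemize}
So the middle bracket is nonnegative, and it suffices to bound the first and last brackets by $W_1$ and $W_2$.

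\textbf{First step (the Cauchy-type bound $W_1$).} Since $p^{(0)} = r^{(0)} = -\nabla m(v^{(0)})$, the first step moves along the negative gradient from $v^{(0)}$, with $\|v^{(0)}\| \le \Delta/4$. This is exactly the calculation in Lemma~\ref{lemma:cauchy_step}, with the outer radius replaced by $\Delta/2$.
\begin{itemize}
\item If $\|v^{(1)}\| < \Delta/2$, the step is the exact line minimizer with positive curvature. Then $m(v^{(0)})-m(v^{(1)}) = \|r^{(0)}\|^4/(2\la r^{(0)},Hr^{(0)}\ra) \ge \|r^{(0)}\|^2/(2L_G)$.
\item If $\|v^{(1)}\| = \Delta/2$, the same case analysis as in Lemma~\ref{lemma:cauchy_step} (positive curvature with $s \le \bar\alpha$, or nonpositive curvature) gives a decrease of at least $\tfrac12 s\|r^{(0)}\|^2$. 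The triangle inequality gives $\Delta/2 \le \Delta/4 + s\|r^{(0)}\|$, so $s \ge \Delta/(4\|r^{(0)}\|)$. Hence the decrease is at least $\tfrac18 \Delta\|r^{(0)}\|$.
\end{itemize}
This is where the factor $1/8$ (rather than $1/4$) comes from: the initial point may already lie at distance $\Delta/4$ from the origin.

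\textbf{Last step ($W_2$) and conclusion.} When $\stopcrit=\stopoob$, the point $v^{(T)}$ satisfies $\|v^{(T)}\| = \Delta/2$, and $u = \boundarygradientstep(H,g,\Delta,v^{(T)})$. Lemma~\ref{lemma:cauchy_step} applies directly with $\nabla m(v^{(T)}) = -r^{(T)}$. Taking the minimum of its two cases gives $m(v^{(T)}) - m(u) \ge W_2$. One should check that $r^{(T)}$ here denotes $-\nabla m(v^{(T)})$ at the boundary point; this is consistent with Lemma~\ref{lemma:well_known_props}\ref{item:res}.

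Summing the three brackets yields $m(v^{(0)}) - m(u) \ge W_1 + W_2 \ge 0$. For the final claim, the denominator of $\rho_k$ in~\eqref{eq:rhok} is
\[
m(0) - m(u) + \theta_k = m(\xi_k) - m(u) = m(v^{(0)}) - m(u) \ge 0 .
\]

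The only delicate point is the monotonicity of the truncated boundary step. It needs the sign facts $\la p^{(T-1)}, r^{(T-1)}\ra > 0$ and $s \le \alpha^{(T)}$ in the positive-curvature case. The corner case $T=1$, where the first step itself hits the boundary, is covered because the $W_1$ analysis then applies to that truncated step.
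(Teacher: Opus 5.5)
Your proof is correct and follows the paper's argument: the same three-term split, with Lemma~\ref{lemma:cauchy_step} applied at radius $\Delta/2$ from $\|v^{(0)}\|\le\Delta/4$ to get $W_1$ and at radius $\Delta$ from $v^{(T)}$ to get $W_2$. The only difference is in the middle term: you prove its nonnegativity directly, via CG descent on the interior steps plus the line-search argument for the truncated boundary step (using $\la p^{(T-1)}, r^{(T-1)}\ra = \|r^{(T-1)}\|^2$ and $s \le \alpha^{(T)}$), whereas the paper simply cites Steihaug's monotonicity theorem, so your version is slightly more self-contained.
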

\begin{proof}
  We decompose the decrease into three parts:
  \begin{align}
    m(v^{(0)}) - m(u) =
      \Big( m(v^{(0)}) - m(v^{(1)}) \Big) +
      \Big( m(v^{(1)}) - m(v^{(T)}) \Big) +
      \Big( m(v^{(T)}) - m(u) \Big).
    \label{eq:decomp}
  \end{align}
  \paragraph{Term 1.}
  Let us bound $m(v^{(0)}) - m(v^{(1)})$.
  By examining the first iteration of Algorithm~\ref{algo:tCG}, we notice that, formally, $v^{(1)}$ is equal to the output of
  \[
    \boundarygradientstep\!\left(H,g, \frac{\Delta}{2}, v^{(0)}\right)
  \]
  with $\|v^{(0)}\| \leq \Delta/4$. Lemma~\ref{lemma:cauchy_step} then yields $m(v^{(0)})-m(v^{(1)}) \geq W_1$, since $r^{(0)} = - \nabla m(v^{(0)})$.

  \paragraph{Term 2.}
  Note that $v^{(T)}$ is equivalent to the output of the standard tCG algorithm. Therefore, we use the monotonicity result of \citet[Thm.~2.1]{Steihaug1983tCG} to deduce that $m(v^{(1)}) - m(v^{(T)}) \geq 0$.

  \paragraph{Term 3.} Finally, we bound $m(v^{(T)}) - m(u)$.
  If $\stopcrit = \stopres$, then $u = v^{(T)}$ and the term is~$0$.
  Otherwise, $u$ is the output of
  \[\boundarygradientstep\!\left(H, g, \Delta, v^{(T)}\right)\]
  with $\|v^{(T)}\| = \Delta / 2$. We then apply Lemma~\ref{lemma:cauchy_step} to get the lower bound by $W_2$.
\end{proof}

\paragraph{Benefit of boundary step: decrease guarantee for small gradient models.}

Using the previous result, we show that if the Hessian is nondegenerate and $\tCGbg$ terminates with $\stopcrit = \stopoob$, then a sufficient decrease inequality is satisfied.
Crucially, this inequality \textbf{does not depend on the norm of $g$}, as shown next in Proposition~\ref{prop:decrease_tcg_nonconvex}.
This will be essential for analyzing Algorithm~\ref{algo:pTR} around saddle points, where the gradient norm is small.

The key is as follows: although the terms $W_1$ and $W_2$ in Lemma~\ref{lemma:cauchy_decrease} can both be small, they cannot both be small for a same instance.

\begin{proposition}\label{prop:decrease_tcg_nonconvex}
  Let $H \in \symm_d$ have all its singular values in $[\nu, L_G]$ for some $0 < \nu \leq L_G$.
  Consider vectors $g, \xi \in \reals^d$ and a radius $\Delta > 0$.
  Assume $\|\xi\| \leq \Delta / 4$ and $\la H\xi, g \ra \geq 0$.
  Run $\tCGbg(H,g,\Delta,\xi)$ with output $(u, \stopcrit, T)$.
  If $\stopcrit = \stopoob$, then
  \[
    m(v^{(0)}) - m(u) \geq \frac{\nu^2 \Delta^2}{32L_G}.
  \]
\end{proposition}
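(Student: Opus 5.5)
The plan is to start from Lemma~\ref{lemma:cauchy_decrease}, which gives $m(v^{(0)}) - m(u) \geq W_1 + W_2$ with $W_1, W_2 \geq 0$. It then suffices to show that at least one of $W_1$ or $W_2$ is at least $\frac{\nu^2\Delta^2}{32L_G}$. Since $\stopcrit = \stopoob$, we have
\[
W_2 = \min\!\left(\frac{1}{2L_G}\|r^{(T)}\|^2, \frac{1}{4}\Delta\|r^{(T)}\|\right).
\]
In both cases of its definition, $W_1 \geq \min\!\left(\frac{1}{2L_G}\|r^{(0)}\|^2, \frac{1}{8}\Delta\|r^{(0)}\|\right)$. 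So everything reduces to lower-bounding either $\|r^{(0)}\|$ or $\|r^{(T)}\|$ by a multiple of $\nu\Delta$.

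\textbf{Step 1: $\|r^{(0)}\|\ge\|g\|$.} This is where the sign condition $\la H\xi, g\ra \geq 0$ enters. Expanding $r^{(0)} = -(g + H\xi)$ gives
\[
\|r^{(0)}\|^2 = \|g\|^2 + 2\la g, H\xi\ra + \|H\xi\|^2 \geq \|g\|^2 .
\]
Without this step, the random initialization could make $r^{(0)}$ small even when $g$ is large. I expect this to be the only subtle point.

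\textbf{Step 2: growth of $r^{(T)}$.} When $\stopcrit = \stopoob$, we have $\|v^{(T)}\| = \Delta/2$. Because the singular values of $H$ are at least $\nu$, this gives $\|Hv^{(T)}\| \geq \nu\Delta/2$. Then, since $r^{(T)} = -(g + Hv^{(T)})$ (Lemma~\ref{lemma:well_known_props}\ref{item:res}, which remains valid at the boundary iterate by definition of the residual), the triangle inequality yields
\[
\|r^{(T)}\| \geq \frac{\nu\Delta}{2} - \|g\| .
\]

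\textbf{Step 3: case split on $\|g\|$ at threshold $\nu\Delta/4$.}

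If $\|g\| \geq \nu\Delta/4$, then Step 1 gives $\|r^{(0)}\| \geq \nu\Delta/4$. Hence
\[
W_1 \geq \min\!\left(\frac{\nu^2\Delta^2}{32L_G}, \frac{\nu\Delta^2}{32}\right) = \frac{\nu^2\Delta^2}{32L_G},
\]
using $\nu \leq L_G$.

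Otherwise, $\|g\| < \nu\Delta/4$, and Step 2 gives $\|r^{(T)}\| \geq \nu\Delta/4$. Hence
\[
W_2 \geq \min\!\left(\frac{\nu^2\Delta^2}{32L_G}, \frac{\nu\Delta^2}{16}\right) = \frac{\nu^2\Delta^2}{32L_G}.
\]

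In either case, the nonnegativity of the other term concludes the proof.
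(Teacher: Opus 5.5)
Your proposal is correct and follows the paper's proof essentially step for step: you obtain $\|r^{(0)}\| \geq \|g\|$ from the sign condition and $\|Hv^{(T)}+g\| \geq \nu\Delta/2 - \|g\|$ from the triangle inequality, then split at $\|g\| = \nu\Delta/4$, keeping $W_1$ in one case and $W_2$ in the other. Your reading of $r^{(T)}$ as $-\nabla m(v^{(T)})$, the residual used inside the boundary gradient step, also matches how the paper interprets $W_2$.
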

\begin{proof}
  % Note: $r^{(0)} \neq 0$ because otherwise we would have $\stop = \stopres$.
  First note that $\|r^{(0)}\| = \|H\xi + g\| \geq \|g\|$ owing to the assumption $\la H\xi, g \ra \geq 0$.
  From Lemma~\ref{lemma:cauchy_decrease}, we obtain
  \begin{align}
    m(v^{(0)}) - m(u)
      & \geq \min\!\left( \frac{1}{2L_G}\|r^{(0)}\|^2, \frac{\Delta}{8} \|r^{(0)}\| \right)
          +
        \min\!\left( \frac{1}{2L_G}\|r^{(T)}\|^2, \frac{\Delta}{4} \|r^{(T)}\| \right) \nonumber\\
      & \geq \min\!\left( \frac{1}{2L_G}\|g\|^2, \frac{\Delta}{8} \|g\| \right)
          +
        \min\!\left( \frac{1}{2L_G}\|Hv^{(T)}+g\|^2, \frac{\Delta}{4} \|Hv^{(T)}+g\| \right).
        \label{eq:bound_gHv}
  \end{align}
  There are two cases to consider.

  \noindent
  \textbf{Case 1:} $\|g\| \geq \nu \Delta / 4$.
  Then we keep only the first term in the right-hand side of~\eqref{eq:bound_gHv}:
  \[
    m(v^{(0)}) - m(u) \geq \min\!\left( \frac{\nu^2 \Delta^2}{32L_G}, \frac{\nu\Delta^2}{32} \right) = \frac{\nu^2 \Delta^2}{32L_G}.
  \]

  \noindent
  \textbf{Case 2:} $\|g\| \leq \nu\Delta/4$.
  We now only keep the second term in the right-hand side of~\eqref{eq:bound_gHv}.
  As the singular values of $H$ belong to $[\nu,L_G]$, we have
  \[
    \|Hv^{(T)}+g\| \geq \|Hv^{(T)}\|-\|g\| \geq \nu \|v^{(T)}\| - \|g\|.
  \]
  Since $\stopcrit = \stopoob$, $\|v^{(T)}\| = \Delta/2$.
  Thus, $\|Hv^{(T)}+g\| \geq \frac{1}{2}\nu\Delta - \frac{1}{4}\nu \Delta = \frac{1}{4} \nu \Delta$, so that
  \[
    m(v^{(0)}) - m(u) \geq \min\!\left(
      \frac{\nu^2 \Delta^2}{32L_G}, \frac{\nu\Delta^2}{16}
    \right) = \frac{\nu^2 \Delta^2}{32L_G},
  \]
  concluding the proof.
\end{proof}

\begin{remark}
  Proposition~\ref{prop:decrease_tcg_nonconvex} leads to a decrease guarantee of $\mathcal{O}(\frac{\mu^2}{L_G} \Delta^2)$ for objectives that satisfy the $\mu$-Morse property.
  In comparison, methods that use a \textbf{minimum eigenvalue oracle}, such as that of \cite{Goyens2024}, have a $\mathcal{O}(\mu \Delta^2)$ decrease guarantee (by computing $u$ as the direction of minimal eigenvalue of $H$).
  Our bound is worse by a factor of $\mu / L_G$, which is the theoretical price we pay for having an arguably more practical method.
  Indeed, using a minimum eigenvalue oracle can be costly and is unnecessary in practice.
\end{remark}

\subsection{Progress guarantees for the trust-region method} \label{s:progressTRbasics}

As the last part of this section regarding preliminary guarantees, we provide some control over the outer iterates, that is, the iterates $x_k$ of Algorithm~\ref{algo:pTR}.
That algorithm is randomized (through the perturbations $\xi_k$), but the results stated next are still deterministic.
The random nature of these perturbations will be exploited later, in Section~\ref{s:saddle}.

First, we estimate the progress in objective value.
Recall $g_k = \nabla f(x_k)$, $H_k = \nabla^2 f(x_k)$ and $m_{x_k}(v) = \inner{g_k}{v} + \frac{1}{2} \inner{v}{H_kv}$.

\begin{lemma}[Objective decrease]\label{lemma:obj_decrease}
  Consider iteration $k$ in Algorithm~\ref{algo:pTR}.
  Assume $\opnorm{H_k} \leq L_G$.
  If iteration $k$ is successful (that is, $\rho_k \geq \rho'$), then
  \begin{align*}
    f(x_{k}) - f(x_{k+1}) \geq \rho'\left[ m_{x_k}(\xi_k) - m_{x_k}(u_k) \right] -\|g_k\| \|\xi_k\| - \frac{L_G}{2} \|\xi_k\|^2.
  \end{align*}
\end{lemma}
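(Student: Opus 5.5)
Since iteration $k$ is successful, the acceptance rule~\eqref{eq:acceptreject} gives $x_{k+1} = x_k + u_k$ and $\rho_k \geq \rho'$. The plan is to unpack the shifted ratio~\eqref{eq:rhok}. By Lemma~\ref{lemma:cauchy_decrease} applied with $v^{(0)} = \xi_k$ (which is valid since $\|\xi_k\| \le \Delta_k/4$ by line~\ref{line:xik}), the shifted denominator satisfies
\[
  m_{x_k}(0) - m_{x_k}(u_k) + \theta_k = m_{x_k}(\xi_k) - m_{x_k}(u_k) \geq 0 ,
\]
where we used $m_{x_k}(0)=0$ and $\theta_k = m_{x_k}(\xi_k)$.

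If the denominator is strictly positive, then $\rho_k \geq \rho'$ rearranges to
\[
  f(x_k) - f(x_{k+1}) + \theta_k \geq \rho'\left[ m_{x_k}(\xi_k) - m_{x_k}(u_k) \right].
\]
If instead the denominator is zero, we adopt the convention implicit in the algorithm: the step is accepted only when the numerator is nonnegative. The same inequality then still holds, since its right-hand side is $0$. I will note this edge case briefly, and it is the only delicate point.

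It then remains to bound the shift $\theta_k$ from above. We have
\[
  \theta_k = \inner{g_k}{\xi_k} + \tfrac12 \inner{\xi_k}{H_k \xi_k} \leq \|g_k\|\|\xi_k\| + \tfrac{L_G}{2}\|\xi_k\|^2 ,
\]
which follows from Cauchy--Schwarz together with the assumption $\opnorm{H_k} \leq L_G$. Moving $\theta_k$ to the right-hand side of the previous inequality and applying this bound yields the claimed estimate. I expect no real obstacle here: the argument is a direct rearrangement, and the only care needed is the sign of the denominator, which Lemma~\ref{lemma:cauchy_decrease} provides.
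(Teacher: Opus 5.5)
Your proof is correct and follows the paper's argument. On a successful step you use $\rho_k\ge\rho'$ together with the nonnegativity of $m_{x_k}(\xi_k)-m_{x_k}(u_k)$ from Lemma~\ref{lemma:cauchy_decrease} to get $f(x_k)-f(x_{k+1})\ge \rho'[m_{x_k}(\xi_k)-m_{x_k}(u_k)]-\theta_k$, then bound $\theta_k$ by Cauchy--Schwarz and $\opnorm{H_k}\le L_G$. Your zero-denominator acceptance rule is your own convention rather than something stated in Algorithm~\ref{algo:pTR}; the paper does not treat that case, which can only occur when $r_k^{(0)}=0$.
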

\begin{proof}
  Recall the definitions of $\rho_k$ and $\theta_k = m_{x_k}(\xi_k) - m_{x_k}(0)$~\eqref{eq:rhok}.
  For a successful step, $\rho_k \geq \rho'$ so that $x_{k+1} = x_k + u_k$ and
  \begin{align*}
    f(x_{k}) - f(x_{k+1}) & = f(x_{k}) - f(x_{k} + u_k) + \theta_k - \theta_k \\
    & = \rho_k \left[ m_{x_k}(\xi_k) - m_{x_k}(u_k)  \right] - \theta_k, \\
    & \geq \rho' \left[ m_{x_k}(\xi_k) - m_{x_k}(u_k)  \right] - \theta_k,
  \end{align*}
  where we used $m_{x_k}(\xi_k) - m_{x_k}(u_k) \geq 0$ by Lemma~\ref{lemma:cauchy_decrease}.
  Conclude with $\theta_k = \la g_k, \xi_k \ra + \frac{1}{2}\la H_k \xi_k,\xi_k \ra \leq \|g_k\| \|\xi_k\| + \frac{L_G}{2} \|\xi_k\|^2$.
\end{proof}

The next guarantee is related to the behavior of Algorithm~\ref{algo:pTR} near a critical point, under the $\mu$-Morse property (Definition~\ref{def:mu_morse}).
We show that if $\tCGbg$ terminates with $\stopcrit = \stopres$, the next outer iterate gets quadratically closer to that critical point (be it a local minimizer or a saddle point).
The proof parallels standard arguments (more often written for $\xbar$ a local minimizer).

\begin{lemma}[Quadratic convergence near critical points]\label{lemma:quadratic_conv_crit}
  Let $f \colon \reals^{d} \to \reals$ satisfy Assumption~\ref{ass:f}.
  Assume $\xbar$ is a critical point of $f$ such that the singular values of $\nabla^2 f(\xbar)$ are all at least $\mu > 0$.
  Let $R > 0$ satisfy
  \[
    R \leq \min\!\left( \frac{\mu^2}{4L_HL_G}, \frac{\mu}{8\omega_2 L_G^2}\right).
  \]
  If at iteration $k$ of Algorithm~\ref{algo:pTR} we have $\|x_k - \xbar\| \leq R$ and iteration $k$ is successful with $\stopk{k} = \stopres$,
  then the next iterate satisfies
  \begin{equation} \label{eq:bound_quadconv}
    \|x_{k+1} - \xbar\| \leq \frac{\|x_k -\xbar\|^2}{2R} \leq \frac{\|x_k-\xbar\|}{2} \leq \frac{R}{2}.
  \end{equation}
\end{lemma}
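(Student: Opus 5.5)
Since iteration $k$ is successful with $\stopk{k} = \stopres$, we have $x_{k+1} = x_k + u_k$, where $u_k = v^{(T)}$ satisfies $\|H_k u_k + g_k\| = \|r^{(T)}\| \leq \omega_2 \|g_k\|^2$ (also when $T = 0$, since then $r^{(0)} = 0$). The plan is the standard Newton-type argument with an inexact step. Write $e_k = x_k - \xbar$ and use the identity
\begin{align*}
  H_k (x_{k+1} - \xbar) = H_k e_k + H_k u_k = \big( H_k e_k - g_k \big) + \big( H_k u_k + g_k \big).
\end{align*}
For the first term, apply~\eqref{eq:LHnablabound} at the point $x_k$ with displacement $-e_k$, using $\nabla f(\xbar) = 0$: this gives $\|g_k - H_k e_k\| \leq \frac{L_H}{2}\|e_k\|^2$. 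For the second term, use $\|g_k\| = \|\nabla f(x_k) - \nabla f(\xbar)\| \leq L_G \|e_k\|$, so that $\|H_k u_k + g_k\| \leq \omega_2 L_G^2 \|e_k\|^2$.

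Next we invert $H_k$. Since $\|e_k\| \leq R \leq \frac{\mu^2}{4 L_H L_G} \leq \frac{\mu}{4L_H}$ (using $\mu \leq L_G$), Lemma~\ref{lemma:growth_mu}\ref{item:growth_mu1} applies with $c = 1/4$ and gives $\|H_k w\| \geq \frac{3\mu}{4}\|w\|$ for all $w$. Combining the two bounds,
\begin{align*}
  \|x_{k+1} - \xbar\| \leq \frac{4}{3\mu}\Big( \frac{L_H}{2} + \omega_2 L_G^2 \Big) \|e_k\|^2 .
\end{align*}
It then suffices to check that $\frac{4}{3\mu}\big(\frac{L_H}{2} + \omega_2 L_G^2\big) \leq \frac{1}{2R}$. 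The two assumed bounds on $R$ give $\frac{L_H}{2} \leq \frac{\mu^2}{8 L_G R} \leq \frac{\mu}{8R}$ and $\omega_2 L_G^2 \leq \frac{\mu}{8R}$. Hence the constant is at most $\frac{4}{3\mu}\cdot\frac{\mu}{4R} = \frac{1}{3R} \leq \frac{1}{2R}$. The remaining inequalities in~\eqref{eq:bound_quadconv} follow immediately from $\|e_k\| \leq R$.

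The only delicate point is making sure the residual criterion is actually available; the rest is routine bookkeeping of constants. If $T = 0$, then $u_k = \xi_k$ and $r^{(0)} = 0$, so the bound holds trivially. Otherwise the stopping test at line $\|r^{(T)}\| \leq \min(\omega_1\|g\|, \omega_2\|g\|^2)$ gives it directly. Note that the randomized initialization $\xi_k$ plays no role here: only the final residual matters.
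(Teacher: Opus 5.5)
Your proposal is correct and follows the paper's proof essentially line for line. It uses the same splitting of $H_k(x_{k+1}-\xbar)$ into the Taylor remainder $H_k(x_k-\xbar)-g_k$ and the final residual $H_k u_k + g_k$, the same bound $\big(\frac{L_H}{2}+\omega_2 L_G^2\big)\|x_k-\xbar\|^2$, and the same inversion of $H_k$ via Lemma~\ref{lemma:growth_mu}. The only differences are cosmetic: you take $c = 1/4$, which gives the slightly sharper constant $1/(3R)$, where the paper takes $c = 1/2$ and lands exactly on $1/(2R)$; and you spell out the $T=0$ exit, which the paper leaves implicit.
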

\begin{proof}
  Since the step is successful, $x_{k+1} = x_k + u_k$.
  According to the $\stopres$ criterion, $u_k$ satisfies $\|H_k u_k + g_k\| \leq \min(\omega_1\|g_k\|,\omega_2 \|g_k\|^2)$.
  This implies
  \begin{align*}
    \|H_k(x_{k+1}-\xbar)\| &= \| H_k(x_{k} + u_k-\xbar) - g_k + g_k\|\\
      & \leq \| H_k(x_{k}-\xbar) - g_k \| + \|H_k u_k + g_k\|\\
      & \stackrel{\mathrm{(a)}}{\leq} \frac{L_H}{2}\|x_k - \xbar\|^2 + \min(\omega_1\|g_k\|,\omega_2 \|g_k\|^2)\\
      & \leq \frac{L_H}{2}\|x_k - \xbar\|^2 + \omega_2 \|g_k\|^2\\
      & \stackrel{\mathrm{(b)}}{\leq} \left(\frac{L_H}{2} + \omega_2 L_G^2\right) \|x_k - \xbar\|^2,
  \end{align*}
  where (a) holds because $0 = \nabla f(\xbar) = \nabla f(x_k) + \nabla^2 f(x_k)[\xbar - x_k] + w_k$ with $\|w_k\| \leq \frac{L_H}{2} \|x_k - \xbar\|^2$ due to Lipschitz continuity of the Hessian~\eqref{eq:LHnablabound}, and inequality (b) holds because $\|g_k\| = \|\nabla f(x_k) - \nabla f(\xbar)\| \leq L_G \|x_k - \xbar\|$ due to Lipschitz continuity of the gradient.
  Since we have ${\|x_k - \xbar\| \leq R \leq \mu / (2L_H)}$ (because the assumptions imply $\mu \leq L_G$), we can also lower bound the left-hand side using Lemma~\ref{lemma:growth_mu} with $c = 1/2$ which yields $\frac{\mu}{{2}}\|x_{k+1} - \xbar\|\leq \|H_k(x_{k+1} - \xbar)\|$ and thus
  \BEQ\label{eq:quad_conv_former_label}
  \begin{split}
    \frac{\mu}{2}\|x_{k+1} - \xbar\| &\leq \left( \frac{L_H}{2} + \omega_2 L_G^2 \right) \|x_k - \xbar\|^2
    \leq \frac{\mu}{4R} \|x_k-\xbar\|^2,
  \end{split}
  \EEQ
   where the last bound holds due to $\frac{L_H}{2} \leq \frac{\mu}{8R}$ and $\omega_2 L_G^2 \leq \frac{\mu}{8R}$.
   %$R \leq \mu^2 / (4L_HL_G)\leq \mu / (4L_H)$ and $R \leq \mu/ (8\omega_2 L_G^2)$.
   %Inequality~\eqref{eq:bound_quadconv} follows.
\end{proof}

\section{Saddle point escape by randomization}\label{s:saddle}

We are ready to study Algorithm~\ref{algo:pTR} in the neighborhood of a strict saddle point $\xbar$.
The goal is to prove the saddle escape theorem (Theorem~\ref{thm:saddle_escape}), which is made possible by the random perturbations $\xi_k$.

The explicit requirements on $f$ in this section hold if $f$ satisfies Assumption~\ref{ass:f} and the $\mu$-Morse property at $\xbar$, so that the singular values of $\nabla^2 f(\xbar)$ lie in $[\mu, L_G]$, and at least one eigenvalue is negative.
We also assume the trust-region radii remain bounded away from zero: that is guaranteed later (Lemma~\ref{lemma:lowbound_deltak}) with additional assumptions on $f$.

Our strategy is to prove that, with high probability, after a small number of outer iterations which may remain close to the saddle point, $\tCGbg$ must eventually trigger the $\stopoob$ criterion, thereby causing sufficient decrease in $f$ thanks to the new boundary gradient step.
Intuitively, once $f(x_k)$ drops sufficiently below $f(\xbar)$, we have ``escaped'' $\xbar$ because $f(x_k)$ is (approximately) monotonically decreasing in $k$.

We partition outer iterations into \emph{successful} and \emph{unsuccessful} ones:
\begin{align}
  \mathcal{S} = \{ k \geq 0 \,:\, \rho_k \geq \rho' \} && \textrm{ and } &&
  \mathcal{U} = \{ k \geq 0 \,:\, \rho_k < \rho'\}.
  \label{eq:calScalU}
\end{align}
As Algorithm~\ref{algo:pTR} has randomness coming (only) from $\bar\xi_k$, the iterates and radii are random variables.
It can be seen that the sequence $\{(x_k, \Delta_k)\}_{k \geq 0}$ is a Markov process.

\paragraph{Residual lower bound.}
The first step is to prove that the method \emph{is unlikely to converge to the saddle point $\xbar$}.
To this end, we first combine Proposition~\ref{prop:vt_qi_growth}---which describes the divergence of CG for non-convex models---with a concentration inequality on the random perturbations $\xi_k$, in order to show that the CG residuals are likely to stay bounded away from zero if $x_k$ is near $\xbar$.

\begin{lemma}\label{lemma:reslowerbound}
  Let $f \colon \reals^{d} \to \reals$ satisfy Assumption~\ref{ass:f}.
  Assume $\xbar$ is a saddle point of $f$ such that the singular values of $\nabla^2 f(\xbar)$ are all at least $\mu > 0$.
  Let $\delta \in (0, 1)$ be a confidence parameter.

  If at iteration $k$ of Algorithm~\ref{algo:pTR} we have $\|x_k-\xbar\| \leq \mu/(2L_H)$,
  then, with probability at least $1-\delta$, the residuals of $\tCGbg$ are bounded as
  \begin{align*}
    \|r_k^{(t)}\| \geq \delta \mu \|\xi_k\| \sqrt{\frac{\pi}{32d}} && \forall t \in \{ 0, \ldots, \Tin^{(k)} \},
  \end{align*}
  where $\Tin^{(k)}$ is as in~\eqref{eq:def_tin}. % so that $\|v_k^{(t)}\| < \frac{\Delta_k}{2}$ for all $t \leq \Tin^{(k)}$.
\end{lemma}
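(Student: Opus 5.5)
The plan is to combine Proposition~\ref{prop:vt_qi_growth} with an anti-concentration bound for a uniformly random vector on the sphere. Since $\|x_k-\xbar\| \leq \mu/(2L_H)$, Lemma~\ref{lemma:growth_mu} with $c=1/2$ gives two facts. First, $\lambdamin(H_k) \leq -\mu/2$. Second, all singular values of $H_k$ are at least $\mu/2$, so $H_k$ is invertible and $\bar v = -H_k^{-1}g_k$. Let $\lambda \le -\mu/2$ be the smallest eigenvalue and $q$ a unit eigenvector for it. Applying Proposition~\ref{prop:vt_qi_growth} (with its residual consequence), for every $0 \le t \le \Tin^{(k)}$ we get
\[
\|r_k^{(t)}\| \geq |\lambda|\,|\la v_k^{(t)}-\bar v, q\ra| \geq \frac{\mu}{2}\,|\la \xi_k - \bar v, q\ra|,
\]
after dropping the factor $(1+|\lambda|/L_G)^t \ge 1$. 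The task therefore reduces to lower-bounding $|\la \xi_k,q\ra - \la \bar v,q\ra|$ with probability at least $1-\delta$.

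Write $\xi_k = s\,\epsilon\,\bar\xi_k$, where $s=\min(\sigma,\Delta_k/4)=\|\xi_k\|$ is deterministic given the past and $\epsilon\in\{\pm1\}$ is the sign chosen in line~\ref{line:xik}. The sign depends on $\bar\xi_k$, which complicates things, so I would avoid conditioning on it. The set of $\bar\xi_k$ for which $|\la s\epsilon\bar\xi_k,q\ra - a| < \eta$ (with $a=\la\bar v,q\ra$) is contained in the union of $\{|\la \bar\xi_k,q\ra - a/s|<\eta/s\}$ and $\{|\la \bar\xi_k,q\ra + a/s|<\eta/s\}$. By a union bound, it suffices to control $\Prob(|\la\bar\xi_k,q\ra - b| < \tau)$ uniformly in $b\in\reals$.

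For this I would use the density of $z=\la\bar\xi_k,q\ra$ for $\bar\xi_k$ uniform on $S^{d-1}$. It is proportional to $(1-z^2)^{(d-3)/2}$ on $[-1,1]$, which is maximal at $z=0$ (this uses $d\ge 3$). Its maximum equals $\Gamma(d/2)/(\sqrt{\pi}\,\Gamma((d-1)/2))$, which is at most $\sqrt{d/(2\pi)}$ by a Gautschi-type inequality. Hence any interval of length $2\tau$ has probability at most $2\tau\sqrt{d/(2\pi)}$, and the union bound gives total failure probability at most $4\tau\sqrt{d/(2\pi)}$.

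Setting this equal to $\delta$ gives $\tau = \delta\sqrt{2\pi}/(4\sqrt d) = \delta\sqrt{\pi/(8d)}$. On the complementary event, which has probability at least $1-\delta$, we get $|\la\xi_k-\bar v,q\ra|\ge s\tau$. Then
\[
\|r_k^{(t)}\|\ge \frac{\mu}{2}\,\delta\|\xi_k\|\sqrt{\frac{\pi}{8d}} = \delta\mu\|\xi_k\|\sqrt{\frac{\pi}{32d}},
\]
which is exactly the claimed constant.

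The main obstacle is the correct handling of the data-dependent sign $\epsilon$ and of the conditioning on the past. Given $x_k$ and $\Delta_k$, the vector $\bar\xi_k$ is independent of $(H_k,g_k,q,\bar v,s)$, so the probability is computed conditionally on the past. The sign issue is handled by the two-interval union bound above, which is what produces the factor $4$ (hence $32$) in the constant. A secondary technical point is the density bound $\sqrt{d/(2\pi)}$, which I would place in the appendix as a standard Gamma-ratio estimate.
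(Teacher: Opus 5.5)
Your proposal is correct and is essentially the paper's proof: Lemma~\ref{lemma:growth_mu} and Proposition~\ref{prop:vt_qi_growth} reduce the claim to anti-concentration of $\langle \xi_k - \bar v_k, q\rangle$, which the paper handles in Lemma~\ref{lemma:concentration_xi}. There the data-dependent sign is removed via $\bigl||\langle q,\xi_k\rangle| - |c|\bigr| \le |\langle q,\xi_k\rangle + c|$ and the density $2a_d(1-t^2)^{(d-3)/2}$ of $|\langle q,\bar\xi_k\rangle|$, which is the folded form of your two-interval union bound and yields the same constant $\sqrt{\pi/(32d)}$.
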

\begin{proof}
  Since $\|x_k-\xbar\|\leq \mu/(2L_H)$, by Lemma~\ref{lemma:growth_mu}\ref{item:growth_mu1}--\ref{item:growth_mu2} the Hessian $H_k = \nabla^2 f(x_k)$ is nonsingular and has at least one eigenvalue $\lambda$ smaller than~$-\mu/2$.
  Let $q$ be a unit eigenvector of $H_k$ associated to $\lambda$.
  Also let $\bar v_k = -H_k^{-1}g_k$.
  % We have for every $1 \leq t \leq \Tin^{(k)}$:
  % \begin{align*}
  %   \|r_k^{(t)}\| &= \|H_k v_k^{(t)} + g_k \|\\
  %   &\geq |\la H_k v_k^{(t)} + g_k, q \ra|\\
  %   &= |\la v_k^{(t)} - \bar v_k, H_k q \ra| \\
  %   &= |\lambda\la v_k^{(t)} -\bar v_k, q \ra|.
  % \end{align*}
  Proposition~\ref{prop:vt_qi_growth} then yields
  \begin{align*}
    \|r_k^{(t)}\|\geq |\lambda|\left( 1+ \frac{|\lambda|}{L_G} \right)^t |\la v_k^{(0)} - \bar v_k , q \ra| \geq \frac{\mu}{2} |\la v_k^{(0)} - \bar v_k , q \ra|  = \frac{\mu}{2}|\la \xi_k, q \ra - \la \bar v_k, q \ra|.
  \end{align*}
  Apply the concentration inequality of Lemma~\ref{lemma:concentration_xi} with $c = -\la \bar v_k, q\ra$ to deduce that
  \begin{align*}
    | \la \xi_k,q \ra - \la \bar v_k,q \ra | \geq \delta \|\xi_k\| \sqrt{\frac{\pi}{8d}}
  \end{align*}
  holds with probability at least $1-\delta$.
  Combine to conclude.
\end{proof}

\paragraph{Escape time bound.}
Next, we show that if the residuals stay bounded away from zero (as provided by Lemma~\ref{lemma:reslowerbound}), then there must be an outer iteration for which the $\stopoob$ criterion triggers in $\tCGbg$.

Let us assume for now that the radius is lower bounded as $\Delta_k \geq \Deltainf$ for every iteration $k$.
An explicit value for $\Deltainf$ (depending on problem constants) is given in Section~\ref{ss:radius_lowbound}.

\begin{proposition}[Escape time bound]\label{prop:escape}
  Let $f \colon \reals^{d} \to \reals$ satisfy Assumption~\ref{ass:f}.
  Assume $\xbar$ is a saddle point of $f$ such that the singular values of $\nabla^2 f(\xbar)$ are all at least $\mu > 0$.
  Let $R > 0$ satisfy
  \begin{equation} \label{eq:R_cond}
    R \leq \min\!\left( \frac{\mu^2}{4L_HL_G}, \frac{\mu}{8\omega_2 L_G^2} \right).
  \end{equation}
  Running Algorithm~\ref{algo:pTR}, assume $\Delta_k \geq \Deltainf$ for all $k \geq 0$.

  Condition on the existence of an iteration $k_0$ such that $\|x_{k_0} - \xbar\| \leq R$.
  Let $k_r$ be the index (possibly infinite) defined by
  \begin{align}
    k_r = \inf\!\Big\{ k \geq k_0 \,:\, k \in \mathcal{S} \textup{ and } \stopk{k} = \stopoob \Big\},
    \label{eq:krpropescapetime}
  \end{align}
  that is, $k_r$ is the first successful iteration after $k_0$ for which $\tCGbg$ hits the trust-region boundary.
  Then,
  \begin{enumerate}[label=(\roman*)]
    \item\label{item:saddlecapture} for all $k \in \{k_0, \ldots, k_r\}$ we have $\|x_k - \xbar\| \leq R$, and
    \item\label{item:escaptime} for all $\delta \in (0, 1)$, we have with probability at least $1-\delta$ that
    \[
      k_r \leq k_0 + \bKrs + \bKru,
    \]
    where $\bKrs$ and $\bKru$ are bounds on the number of successful and unsuccessful iterations made between $k_0$ and $k_r$ respectively, with
    \begin{align}
      \bKrs = \log_2 \log_2\!\left(2+\frac{\sqrt{d}}{ \omega_2 \mu \delta \min(\sigma, \Deltainf/4)} \right) && \textrm{ and } &&
      \bKru = \log_4\!\left(\frac{\bar \Delta}{\Deltainf}\right).
      \label{eq:def_krs_kru}
    \end{align}
  \end{enumerate}
\end{proposition}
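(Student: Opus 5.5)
Part (i) follows by tracking iterations between $k_0$ and $k_r$. For $k_0 \le k < k_r$, each iteration is either unsuccessful, so $x_{k+1} = x_k$, or successful with $\stopk{k} = \stopres$. In the second case Lemma~\ref{lemma:quadratic_conv_crit} applies, since $R$ satisfies exactly its hypotheses, and gives $\|x_{k+1}-\xbar\| \le \|x_k - \xbar\|^2/(2R) \le R/2$. Induction on $k$ then gives $\|x_k - \xbar\|\le R$ for all $k\in\{k_0,\dots,k_r\}$. This part is deterministic.

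For part (ii) I would bound successful and unsuccessful iterations separately.

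\emph{Unsuccessful iterations.} A successful iteration with $\stopk{k}=\stopres$ leaves $\Delta_{k+1}=\Delta_k$, because the radius doubles only under $\stopoob$. An unsuccessful iteration divides the radius by $4$. Hence before $k_r$ the radius never increases. Since $\Deltainf \le \Delta_k \le \bar\Delta$ throughout, there are at most $\log_4(\bar\Delta/\Deltainf)=\bKru$ unsuccessful iterations in $[k_0,k_r)$.

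\emph{Successful iterations.} Let $e_j$ be the distance to $\xbar$ after the $j$-th successful iteration. From the quadratic contraction, $e_j/(2R) \le (e_0/(2R))^{2^j} \le 2^{-2^j}$, so the distance shrinks doubly exponentially. To contradict this I need a lower bound on the distance coming from randomness.

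\emph{Lower bound via randomness.} Consider a successful iteration $k$ with $\stopk{k}=\stopres$ and $\|x_k-\xbar\|\le R\le\mu/(2L_H)$. Then $\Tin^{(k)}=T$, so Lemma~\ref{lemma:reslowerbound} applies to the final residual: with probability $1-\delta'$,
\[
\|r_k^{(T)}\|\ge \delta'\mu\|\xi_k\|\sqrt{\pi/(32d)}.
\]
The stopping rule gives $\|r_k^{(T)}\|\le \omega_2\|g_k\|^2 \le \omega_2 L_G^2\|x_k-\xbar\|^2$. Combining the two,
\[
\|x_k-\xbar\|^2 \gtrsim \frac{\delta'\mu\min(\sigma,\Deltainf/4)}{\omega_2 L_G^2\sqrt d},
\]
using $\|\xi_k\| = \min(\sigma,\Delta_k/4) \ge \min(\sigma,\Deltainf/4)$. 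Comparing with the doubly exponential decay shows that after roughly $\log_2\log_2\bigl(2+\sqrt d/(\omega_2\mu\delta\min(\sigma,\Deltainf/4))\bigr)$ successful $\stopres$ iterations the lower bound is violated. Such an iteration therefore cannot occur, and the next successful iteration must be $k_r$.

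\emph{Main obstacle: the probability bookkeeping.} There are two difficulties.
\begin{itemize}
\item The constants must be matched to the exact form of $\bKrs$. I would absorb factors like $R$ and $L_G$ using the bound $R\le\mu/(8\omega_2L_G^2)$; note that $e_0\le R$ so $e_0/(2R)\le 1/2$.
\item The union bound over iterations must hold the failure probability to $\delta$ overall. I expect it suffices to apply Lemma~\ref{lemma:reslowerbound} only once, at the single candidate iteration, the $(\bKrs)$-th successful one. That lemma conditions on $x_k$, and the fresh $\bar\xi_k$ is independent of the past by the Markov structure, so one application with $\delta'=\delta$ should suffice.
\end{itemize}
If a single application does not go through, I would fall back to a union bound with $\delta'=\delta/\bKrs$; this costs only a factor inside the double logarithm.
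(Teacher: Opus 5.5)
Your plan is the paper's proof. It uses the same induction through Lemma~\ref{lemma:quadratic_conv_crit} for (i) and the same monotone-radius count giving at most $\lfloor\bKru\rfloor$ rejections before $k_r$. It then sets the doubly exponential contraction (with $L_GR\le 1/(8\omega_2)$ pushing the final residual below $\frac{1}{16\omega_2}2^{-2^{\bKrs}}$) against one application of Lemma~\ref{lemma:reslowerbound} at confidence $\delta$. The paper makes that application at $\bar k$, the last successful iteration of the window $\{k_0,\ldots,k_0+\lfloor\bKru+\bKrs\rfloor\}$, on the event $k_r>k_0+\lfloor\bKru+\bKrs\rfloor$.

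The step you flagged is a genuine gap, and your justification does not close it; the paper's write-up does not address it either. Lemma~\ref{lemma:reslowerbound} is a statement about the draw $\bar\xi_k$ at an index fixed before that draw. It transfers to random indices $\kappa$ with $\{\kappa=k\}$ determined by $\bar\xi_0,\ldots,\bar\xi_{k-1}$. It does not transfer to ``the $j$th successful iteration'', because whether $k\in\mathcal{S}$ depends on $\rho_k$, hence on $\bar\xi_k$ itself. So independence of $\bar\xi_k$ from the past is not enough, and the selection even favors bad draws: very close to $\xbar$, a draw ending in $\stopres$ gives a tiny step that is accepted, while a $\stopoob$ step with a large radius may be rejected. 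More importantly, every rejection issues a fresh independent draw at the same point. Up to $\lfloor\bKru\rfloor+1$ draws at that point must therefore all be good, which one application at level $\delta$ cannot certify. Your fallback, a union bound with $\delta/\bKrs$ over successful iterations, still selects on success and counts the wrong draws.

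A correct version goes as follows. Let $\tau$ be the first $k\ge k_0$ such that $\{k_0,\ldots,k-1\}$ contains $\lfloor\bKrs\rfloor$ successful iterations. Then each of $\tau,\tau+1,\ldots,\tau+\lfloor\bKru\rfloor$ is fixed before its own draw. So Lemma~\ref{lemma:reslowerbound} applies at each with confidence $\delta/(\lfloor\bKru\rfloor+1)$, for a total failure probability of at most $\delta$.

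Now suppose $k_r>k_0+\lfloor\bKru+\bKrs\rfloor$, and let $s\ge\tau$ be the first successful iteration. It lies in the window, and since $\tau$ it is preceded only by rejections. Hence $s$ is one of the indices above, $x_s=x_\tau$, and iteration $s$ ends with $\stopres$. Its final residual is then at most $\omega_2L_G^2\|x_\tau-\xbar\|^2\le\frac{1}{16\omega_2}2^{-2^{\lfloor\bKrs\rfloor+1}}$, which contradicts the lower bound exactly as in your computation.

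This proves the proposition with $\delta$ replaced by $\delta/(\lfloor\bKru\rfloor+1)$ inside $\bKrs$. Downstream, for instance in Theorem~\ref{thm:saddle_escape}, the change is confined to the double logarithm. It does not, however, give the bound literally as stated.
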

\begin{proof}
  We first show item~\ref{item:saddlecapture} by induction on $k$.
  The claim holds for $k = k_0$.
  Assume now that it holds for some $k \in \{k_0, \ldots, k_r-1\}$.
  If iteration $k$ is unsuccessful, then $x_{k+1} = x_k$ so $\|x_{k+1} - \xbar\| \leq R$.
  Otherwise, by definition of $k_r$, we know $k\in \mathcal{S}$ and $\stopk{k}=\stopres$.
  Lemma~\ref{lemma:quadratic_conv_crit} yields
  \begin{equation} \label{eq:quad_conv}
    \| x_{k+1} - \xbar \| \leq \frac{\|x_k-\xbar\|^2}{2R} \leq \frac{\|x_k-\xbar\|}{2} \leq \frac{R}{2}.
  \end{equation}
  In particular, we also have $\|x_{k+1}-\xbar\|\leq R$, as desired.

  Let us prove item~\ref{item:escaptime}.
  To bound the number of unsuccessful iterations, note that $\Delta_{k_0} \geq \cdots \geq \Delta_{k_r}$ because for $k_0 \leq k < k_r$ we have $k \in \mathcal{U}$ or $\stopk{k} = \stopres$ (or both).
  For each unsuccessful step, the radius is divided by $4$.
  Thus, the number of unsuccessful steps is $\log_4\!\left(\frac{\Delta_{k_0}}{\Delta_{k_r}}\right) \leq \log_4\!\left( \frac{\bar \Delta}{\Deltainf}\right) =: \bKru$ because $\Delta_{k_r} \geq \Deltainf$ by assumption and $\Delta_{k_0} \leq \bar\Delta$ by design.
  For the number of successful iterations, randomness comes into play.

  To bound $k_r$ with high probability, assume that $k_r >  k_0 + \lfloor \bKru +\bKrs \rfloor$.
  We show that the probability of this event is smaller than $\delta$.
  Let $K_{r,u}$ and $K_{r,s}$ be defined as
  \begin{align}
    K_{r,u} & = \left| \, \mathcal{U} \cap \left\{ k_0,\dots, k_0 + \lfloor \bKru + \bKrs \rfloor  \right\} \right|, & K_{r,s} & = \left| \, \mathcal{S} \cap \left\{ k_0,\dots, k_0 + \lfloor \bKru + \bKrs \rfloor \right\} \right|.
  \end{align}
  Notice that $K_{r,u} + K_{r,s} = \lfloor \bKru + \bKrs \rfloor +1$.
  By the previous paragraph, we have $K_{r,u} \leq \lfloor \bKru \rfloor$, hence $K_{r,s} \geq \lfloor \bKrs \rfloor + 1$.
  Therefore there exists at least one successful step; let us denote by $\bar k = \max\!\left\{ k \in \mathcal{S} \cap \left\{ k_0,\dots, k_0 + \lfloor \bKru + \bKrs \rfloor  \right\} \right\} $ the last such step in this interval.
  From~\eqref{eq:quad_conv} we find that $\{x_k\}$ approaches $\xbar$ quadratically over successful iterations.
  Unrolling~\eqref{eq:quad_conv}, we get:
  \[
    \frac{\|x_{\bar k}-\xbar\|}{2R} \leq \left( \frac{\|x_{k_0}-\xbar\|}{2R} \right)^{2^{ K_{r,s} }} \leq  \left( \frac{1}{2} \right)^{2^{ K_{r,s} }}.
  \]
  By the definition of $\bar k$ and our assumption on $k_r$, we have $\stopk{\bar k} = \stopres$.
  Therefore, letting $T_{\bar k}$ be the index of the last inner step of $\tCGbg$ at outer iteration $\bar k$, we have
  \[
  \|r_{\bar k}^{(T_{\bar k})}\| \leq \min\!\left( \omega_1\|g_{\bar k}\|, \omega_2 \|g_{\bar k}\|^2 \right)
  \leq \omega_2 \|g_{\bar k}\|^2.
  \]
  Since $\nabla f(\xbar) = 0$, we have $\|g_{\bar k}\| = \|\nabla f(x_{\bar k}) - \nabla f(\xbar)\| \leq L_G \| x_{\bar k} - \xbar\|$, so that
  \begin{align}
    \|r_{\bar k}^{(T_{\bar k})}\|
      \leq \omega_2L_G^2 \|x_{\bar k} - \xbar\|^2
      \leq \omega_2L_G^2 R^2 \cdot2^{2-2^{ K_{r,s} +1 }}
      < \frac{1}{64\omega_2}\cdot 2^{2-2^{ \bKrs }},
    \label{eq:bound_rtk}
  \end{align}
where we used that $RL_G \leq \mu/(8\omega_2L_G) \leq 1 / (8\omega_2)$ and $K_{r,s}\geq \lfloor \bKrs \rfloor +  1 > \bKrs $.

We proved that if $k_r>k_0 + \lfloor \bKru + \bKrs \rfloor$, then \eqref{eq:bound_rtk} holds.
% , and also, $x_{\bar k} \in \ballR$ and $4\sigma \leq \Delta_{\bar k}$.
Therefore,
\[
  \Prob\!\left[k_r > k_0 + \lfloor \bKru + \bKrs \rfloor \right] \leq \Prob\!\left[\|r_{\bar k}^{(T_{\bar k})}\| < \frac{1}{16\omega_2}\cdot 2^{-2^{\bKrs}}\right].
\]
On the other hand, since $\|x_{\bar k} - \xbar\| \leq R \leq \mu / (2L_H)$, Lemma~\ref{lemma:reslowerbound} provides that
\[
  \Prob\!\left[\|r_{\bar k}^{(T_{\bar k})}\| <  \delta \mu \|\xi_k\| \sqrt{\frac{\pi}{32d}} \right] \leq \delta.
\]
Using the expression~\eqref{eq:def_krs_kru} for $\bKrs$, we note that
\[
  \frac{1}{16\omega_2}\cdot 2^{-2^{\bKrs}} = \frac{1}{16\omega_2} \cdot \frac{1}{ \left(2 + \frac{\sqrt{d}}{ \omega_2 \mu \delta \min(\sigma, \Deltainf/4) } \right)}
   \leq \frac{\mu\delta \min(\sigma, \Deltainf/4)}{16 \sqrt{d}}
   < \delta \mu \min(\sigma, \Deltainf/4) \sqrt{\frac{\pi}{32d}}.
\]
Combining the three previous expressions with $\|\xi_k\| = \min(\sigma, \Delta_k/4) \geq \min(\sigma, \Deltainf/4)$, we find
\begin{align*}
  \Prob\big[k_r > k_0 + \lfloor \bKru + \bKrs \rfloor  \big] \leq \Prob\!\left[\|r_{\bar k}^{(T_{\bar k})}\| <  \delta \mu \|\xi_k\| \sqrt{\frac{\pi}{32d}} \right]  \leq \delta,
\end{align*}
as announced.
% To cover the case where $K_{r,s}$ is $0$, we conclude\footnote{Indeed, when $K_{r,s} = 0$ the reasoning leading to the bound above is not valid, and the bound could be negative. If we add $2$ to the argument of the double logarithm, we solve this issue and get a bound that holds in both cases.} that
% $ K_{r,s}
% \leq \log_2 \log_2\!\left(2+\frac{\sqrt{d}}{ \omega_2 \mu \delta \sigma}\right)$.
% It follows that $k_r \leq \bar k = k_0 + \lfloor \bKru +\bKrs \rfloor$ with probability at least $1-\delta$, as announced.
\end{proof}

Combining the above with Proposition~\ref{prop:decrease_tcg_nonconvex}, we can further guarantee that the objective decreases by a substantial amount upon escaping a saddle.

\begin{theorem}[Saddle escape]\label{thm:saddle_escape}
  Let $f \colon \reals^{d} \to \reals$ satisfy Assumption~\ref{ass:f}.
  Assume $\xbar$ is a saddle point of $f$ such that the singular values of $\nabla^2 f(\xbar)$ are all at least $\mu > 0$.
  Running Algorithm~\ref{algo:pTR}, assume $\Delta_k \geq \Deltainf > 0$ for all $k \geq 0$.

  Condition on the existence of an iteration $k_0$ such that
  \begin{align*}
    \|x_{k_0} - \xbar\| \leq R && \textrm{ with some } && R \leq \min\!\left( \frac{\mu^2}{4L_HL_G}, \frac{\mu}{8\omega_2L_G^2} \right).
  \end{align*}
  Set a confidence parameter $\delta \in (0, 1)$.
  Assume the noise scale $\sigma$ satisfies these two conditions:
  \begin{align}
  \label{eq:cond_sigma1}
    &0<\sigma \leq  \min\!\left( \frac{\Deltainf}{4},
    R
      \right), \textrm{ and}\\
  \label{eq:cond_sigma2}
    & \left(  1+  \log_2 \log_2\!\left(2+\frac{\sqrt{d}}{ \omega_2 \mu \delta\sigma} \right)\right) \sigma \leq  \frac{\rho' \mu^2\Deltainf^2}{512 R L_G^2}.
  \end{align}
  (See Remark~\ref{remark:sigma_bar1} for a comment on~\eqref{eq:cond_sigma2}.)
  Let $k_r$ be defined as in~\eqref{eq:krpropescapetime}:
  \[
    k_r = \inf\!\Big\{ k \geq k_0\,:\, k \in \mathcal{S} \textup{ and } \stopk{k} = \stopoob \Big\}.
  \]
  Then, with probability at least $1-\delta$, we have $k_r \leq k_0 + \bKrs + \bKru$ with $\bKrs$ and $\bKru$ as in~\eqref{eq:def_krs_kru}.
  In that event, the objective value decreases by some amount from iteration $k_0$ to $k_r + 1$:
  \[
      f(x_{k_0})-f(x_{k_r+1})   \geq \frac{\rho' \mu^2\Delta_{k_r}^2}{256 L_G}.
  \]
\end{theorem}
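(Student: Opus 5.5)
The probabilistic part comes directly from Proposition~\ref{prop:escape}. Its hypotheses on $R$ and $\Delta_k \ge \Deltainf$ are exactly those assumed here, so with probability at least $1-\delta$ we get $k_r \le k_0 + \bKrs + \bKru$. Moreover, by item~\ref{item:saddlecapture}, every iterate $x_k$ with $k_0 \le k \le k_r$ stays in the ball of radius $R$ around $\xbar$. Since $\sigma \le \Deltainf/4 \le \Delta_k/4$, we have $\|\xi_k\| = \sigma$ for all $k$. On this event, I will bound the total change $f(x_{k_0}) - f(x_{k_r+1})$ by splitting the iterations $k_0,\ldots,k_r$ into three groups:
\begin{itemize}
\item unsuccessful iterations, which contribute zero change;
\item successful iterations with $\stopres$, for $k<k_r$;
\item the single iteration $k_r$.
\end{itemize}

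\textbf{Iteration $k_r$.} By Lemma~\ref{lemma:growth_mu} with $c=1/2$ (valid because $R \le \mu/(2L_H)$), the singular values of $H_{k_r}$ lie in $[\mu/2, L_G]$. Line~\ref{line:xik} ensures $\la H_{k_r}\xi_{k_r}, g_{k_r}\ra \ge 0$. Proposition~\ref{prop:decrease_tcg_nonconvex} with $\nu = \mu/2$ then gives $m(\xi_{k_r}) - m(u_{k_r}) \ge \mu^2\Delta_{k_r}^2/(128 L_G)$. Lemma~\ref{lemma:obj_decrease} converts this into
\[
f(x_{k_r}) - f(x_{k_r+1}) \ge \frac{\rho'\mu^2\Delta_{k_r}^2}{128 L_G} - \|g_{k_r}\|\sigma - \frac{L_G}{2}\sigma^2 .
\]
Here $\|g_{k_r}\| \le L_G R$ and $\sigma \le R$, so the error is at most $\tfrac32 L_G R\sigma$.

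\textbf{Successful $\stopres$ iterations $k<k_r$.} Lemma~\ref{lemma:obj_decrease} together with Lemma~\ref{lemma:cauchy_decrease} (which gives a nonnegative model decrease) yields $f(x_k) - f(x_{k+1}) \ge -\tfrac32 L_G R\sigma$. The number of such iterations is at most the number of successful iterations before $k_r$. The proof of Proposition~\ref{prop:escape} shows this is bounded by $\bKrs$; I plan to bound it by $\bKrs$ with $\min(\sigma,\Deltainf/4) = \sigma$, possibly up to rounding, which the leading ``$1+$'' in~\eqref{eq:cond_sigma2} absorbs.

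\textbf{Summing.} Including iteration $k_r$, the total error is at most
\[
\Bigl(1 + \log_2\log_2\bigl(2 + \tfrac{\sqrt d}{\omega_2\mu\delta\sigma}\bigr)\Bigr)\cdot \tfrac32 L_G R\sigma .
\]
By~\eqref{eq:cond_sigma2} this is at most $\tfrac{3}{2}\cdot\frac{\rho'\mu^2\Deltainf^2}{512 L_G} \le \frac{\rho'\mu^2\Delta_{k_r}^2}{256 L_G}$, using $\Deltainf \le \Delta_{k_r}$. Subtracting from $\rho'\mu^2\Delta_{k_r}^2/(128 L_G)$ leaves at least $\rho'\mu^2\Delta_{k_r}^2/(256 L_G)$, as claimed.

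The main obstacle is the bookkeeping on the count of successful pre-escape iterations. That count is bounded only implicitly inside the proof of Proposition~\ref{prop:escape}, through the event $k_r \le k_0 + \lfloor \bKru + \bKrs\rfloor$ together with the unsuccessful count bounded by $\bKru$. I will make it explicit as "successful steps in $[k_0,k_r)$ number at most $\lfloor\bKrs\rfloor + $ slack" by redoing that counting. If needed, I will instead bound the number of successful steps directly by noting that each successful $\stopres$ step halves the distance to $\xbar$, which the residual lower bound of Lemma~\ref{lemma:reslowerbound} caps on the high-probability event.
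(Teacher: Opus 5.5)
Your proposal is correct and is essentially the paper's proof: Proposition~\ref{prop:escape} for the high-probability bound and for $\|x_k-\xbar\|\le R$ on $[k_0,k_r]$, Proposition~\ref{prop:decrease_tcg_nonconvex} with $\nu=\mu/2$ at $k_r$, Lemma~\ref{lemma:obj_decrease} with nonnegative model decrease at the other successful steps, and~\eqref{eq:cond_sigma2} to absorb at most $\bKrs+1$ error terms (your per-step bound $\tfrac32 L_G R\sigma$ is slightly tighter than the paper's $2RL_G\sigma$). On the counting step, which the paper simply asserts as $K_{r,s}\le\bKrs$: the bound does not follow from $k_r\le k_0+\lfloor\bKrs+\bKru\rfloor$ plus an upper bound on unsuccessful steps, but from the core of that proof, namely that $\lfloor\bKrs\rfloor+1$ successful $\stopres$ steps before $k_r$ would, via the quadratic contraction of Lemma~\ref{lemma:quadratic_conv_crit} (mere halving, as in your fallback, only gives a $\log$ rather than $\log\log$ count), force a residual below the threshold of Lemma~\ref{lemma:reslowerbound}; so at most $\lfloor\bKrs\rfloor$ such steps occur, and the leading $1$ in~\eqref{eq:cond_sigma2} pays for iteration $k_r$ itself, not for rounding.
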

\begin{proof}
Proposition~\ref{prop:escape} provides the bound $k_r \leq k_0 + \bKru + \bKrs$ with probability at least $1-\delta$.
Going forward, we condition on that event.
In particular, $k_r$ is finite.
To estimate the progress in objective value, we first use Lemma~\ref{lemma:obj_decrease}.
To this end, note that the function value only changes with successful iterations, and observe that $\|\xi_k\| = \min(\sigma, \Delta_k/4) = \sigma$ for all $k$ by assumption, thus
\begin{align*}
  f(x_{k_0}) - f(x_{k_r+1})
    & = \sum_{\substack{k\in \mathcal{S}, \\ k_0\leq k \leq k_r}} f(x_{k}) - f(x_{k+1}) \\
    & \geq \sum_{\substack{k\in \mathcal{S} \\ k_0\leq k \leq k_r}}
      \left( \rho'\left[ m_{x_k}(\xi_k) - m_{x_k}(u_k) \right]
              -\|g_k\|\sigma
              -\frac{1}{2}L_G \sigma^2
      \right).
\end{align*}
From the deterministic part of Proposition~\ref{prop:escape}, for $k_0 \leq k\leq k_r$, we have $\|x_k - \xbar\| \leq R$ and hence $\|g_k\|\leq L_G \| x_k-\xbar \| \leq L_G R$.
Also, $m_{x_k}(\xi_k) - m_{x_k}(u_k) \geq 0$ for all $k$ by monotonicity of $\tCGbg$ (Lemma~\ref{lemma:cauchy_decrease}), so that
\begin{align*}
  f(x_{k_0}) - f(x_{k_r+1})
    & \geq \rho' \left[ m_{x_{k_r}}(\xi_{k_r}) - m_{x_{k_r}}(u_{k_r}) \right]
            - \sum_{\substack{k\in \mathcal{S} \\ k_0\leq k \leq k_r}}\left(
                R L_G \sigma + \frac{1}{2} L_G \sigma^2
              \right) \\
    & \geq \rho' \left[ m_{x_{k_r}}(\xi_{k_r}) - m_{x_{k_r}}(u_{k_r}) \right]
            - (K_{r,s}+1) \left(
                R + \frac{\sigma}{2}
              \right)  L_G \sigma,
\end{align*}
where $K_{r,s}$ is now the number of successful iterations between $k_0$ and $k_r$.
To bound the model progress at iteration $k_r$, we apply Proposition~\ref{prop:decrease_tcg_nonconvex} with $\nu = \mu / 2$---the assumptions are satisfied because the singular values of $H_{k_r}$ belong to $[\frac{\mu}{2},L_G]$ owing to Lemma~\ref{lemma:growth_mu} with $c = 1/2$,
%$\|\xi_{k_r}\| = \sigma \leq \Deltainf / 4 \leq \Delta_{k_r} /4$ by assumption, ---- commented out because this is now automatic
and $\stopk{k_r} = \stopoob$ by definition of $k_r$.
This leads to
\begin{equation} \label{eq:dec_cond_12}
  f(x_{k_0}) - f(x_{k_r+1})
    \geq  \frac{\rho' \mu^2\Delta_{k_r}^2}{128 L_G}
           - (K_{r,s}+1) \left( R + \frac{\sigma}{2} \right)  L_G \sigma.
\end{equation}
To proceed, we use the estimate $K_{r,s} \leq \bKrs$ from the probabilistic part of Proposition~\ref{prop:escape}, as well as conditions~\eqref{eq:cond_sigma1} and~\eqref{eq:cond_sigma2} on $\sigma$:
\[
\begin{split}
  \left(
  R + \frac{\sigma}{2}
  \right)  L_G \sigma (K_{r,s}+1) &\leq  2R L_G \sigma ( \bKrs + 1 )\\
  &\leq \;2RL_G \sigma \left(  1+  \log_2 \log_2\!\left(2+\frac{\sqrt{d}}{ \omega_2 \mu \delta\sigma} \right)\right)
   \leq \frac{\rho' \mu^2\Deltainf^2}{256 L_G}.
\end{split}
\]
Combining with \eqref{eq:dec_cond_12}, and remembering that $\Delta_{k_r} \geq \Deltainf$, yields the result.
\end{proof}

\begin{remark}[Explicit condition on $\sigma$]\label{remark:sigma_bar1} In Theorem~\ref{thm:saddle_escape}, condition~\eqref{eq:cond_sigma2} on $\sigma$ requires
  \[
    \left( 1  +  \log_2 \log_2\!\left(2 + \frac{a}{\sigma} \right) \right) \sigma \leq b
  \]
  for some $a, b > 0$ which do not depend on $\sigma$.
  Invoking Lemma~\ref{lemma:sigma_loglog}, this holds in particular if
  \[
    0 < \sigma \leq \frac{b}{2\log_2(2 + \frac{2a}{b})}.
  \]
  Thus, there exists $\bar\sigma_{\rm saddle} > 0$ such that conditions~\eqref{eq:cond_sigma1} and~\eqref{eq:cond_sigma2} are satisfied for all $0 < \sigma \leq \bar\sigma_{\rm saddle}$. Note that, as announced in Section \ref{sss:informal_thm}, $\sigma_{\rm saddle}$ has only a logarithmic dependence in the dimension $d$ and the failure probability $\delta$.
\end{remark}

To exploit the objective decrease (probabilistically) guaranteed by Theorem~\ref{thm:saddle_escape}, one needs a lower bound on the trust-region radii.
We provide one in the next section (Lemma~\ref{lemma:lowbound_deltak}).

\section{Global convergence to a local minimizer} \label{s:global_comp}

Throughout this section, we think of $f$ as satisfying Assumptions~\ref{ass:f} and~\ref{ass:mumorse_and_stronggrad}.
Under these conditions, we establish convergence of the iterates $x_k$ of Algorithm~\ref{algo:pTR} to a local minimizer of $f$.

The assumptions provide constants $(\flow, L_G, L_H)$ as well as $(\mu, R_s, \gamma_s)$.
The algorithm itself also involves parameters $(\rho', \rho'', \bar\Delta, \omega_1, \omega_2)$ and inputs $(x_0, \Delta_0, \sigma)$.
Using some of these, we further define several constants used throughout the analysis.
First,
\begin{align}
  \Deltacrit & = \frac{(1-\rho') \mu^2 }{10L_HL_G}, &
  \bar R & = \min\!\left(
  \frac{(1-\rho')\gamma_s}{256 L_G},\frac{\mu}{8\omega_2 L_G^2},
  \frac{\Deltacrit}{32}, \frac{\Delta_0}{8}
  \right),
  \label{eq:def_constants_DeltacritRbar}
\end{align}
then
\begin{align}
  \bar G & = \frac{\mu}{2} \bar R, &
  \Flg & = \frac{\rho' }{5L_G} \bar{G}^2, &
  \bar \sigma & =
  \frac{\rho'}{5L_G} \bar G \leq \bar R.
  \label{eq:def_constants}
\end{align}
Notice that they do not depend on the initial point $x_0$ or the noise scale $\sigma$.
Later, we will require $\sigma \leq \bar \sigma$.
Also, notice that $32 \bar{R} \leq \Deltacrit \leq \mu^2 / (10 L_H L_G) \leq \mu / (10 L_H)$ since $\mu \leq L_G$.

Consider the iterates $\{(x_k,\Delta_k)\}_{k \geq 0}$ generated by Algorithm~\ref{algo:pTR}.
We partition the iteration indices $\mathbb{N} = \{0, 1, 2, \ldots\}$ into three sets relative to the threshold radius $\bar R$:
\begin{itemize}
  \item \textbf{Saddle point iterations:}
  \begin{align}
    \mathcal{N} = \left\{ k  \geq 0 \,:\, \mbox{there exists a strict saddle point } \xbar \mbox{ of } f \mbox{ such that } \|x_k - \xbar\| \leq \bar R \right\}.
    \label{eq:saddleiterations}
  \end{align}
  \item \textbf{Local minimizer iterations:}
  \begin{align}
    \mathcal{M} = \left\{ k  \geq 0 \,:\, \mbox{there exists a local minimizer } x^* \mbox{ of } f \mbox{ such that } \|x_k - x^*\| \leq \bar R \right\}.
    \label{eq:localminiterations}
  \end{align}
  \item \textbf{Large gradient iterations:} these are all the other iterations, namely,
  \begin{align}
    \mathcal{G} = \mathbb{N} \setminus \left( \mathcal{N} \cup \mathcal{M}\right).
    \label{eq:largegraditerations}
  \end{align}
\end{itemize}
Since $\bar R \leq \mu / (2L_H)$, we can infer from Lemma~\ref{lemma:growth_mu} (points~\ref{item:growth_mu2} and~\ref{item:growth_mu3}) that $\mathcal{N}$ and $\mathcal{M}$ are disjoint, and therefore that $(\mathcal{N},\mathcal{M},\mathcal{G})$ indeed forms a partition of $\mathbb{N}$.

Let us first justify the term ``large gradient'' attributed to the set $\mathcal{G}$: this is the first consequence of the $(\gamma_s, R_s)$-strong gradient property (part of Assumption~\ref{ass:mumorse_and_stronggrad}).

\begin{lemma}\label{lemma:lowbound_G}
  Under Assumptions~\ref{ass:f} and~\ref{ass:mumorse_and_stronggrad},
  for all $k \in \mathcal{G}$ we have $\|g_k\| \geq \bar G$, where $g_k = \nabla f(x_k)$.
\end{lemma}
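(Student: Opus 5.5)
We argue by cases on the distance from $x_k$ to the nearest critical point, using the $(R_s,\gamma_s)$-strong gradient property together with Lemma~\ref{lemma:growth_mu}\ref{item:growth_mu1}. Fix $k \in \mathcal{G}$ and write $x = x_k$. By definition of $\mathcal{G}$, every critical point $\xbar$ of $f$ (saddle or local minimizer) satisfies $\|x - \xbar\| > \bar R$.

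\textbf{Case 1:} $\|\nabla f(x)\| \geq \gamma_s$. We need $\gamma_s \geq \bar G$. From~\eqref{eq:def_constants_DeltacritRbar}, $\bar R \leq \frac{(1-\rho')\gamma_s}{256 L_G} \leq \frac{\gamma_s}{L_G}$. Since $\mu \leq L_G$, this gives $\bar G = \frac{\mu}{2}\bar R \leq \frac{\gamma_s}{2} \leq \gamma_s$.

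\textbf{Case 2:} otherwise. The strong gradient property then provides a critical point $\xbar$ with $\|x - \xbar\| \leq R_s \leq \frac{\mu^2}{4L_HL_G}$. Because $\mu \leq L_G$, we have $R_s \leq \frac{\mu}{4L_H}$, so Lemma~\ref{lemma:growth_mu}\ref{item:growth_mu1} applies with $c = 1/4$ (any $c \le 1$ would do). It yields $\|\nabla f(x)\| \geq \mu(1-\tfrac{c}{2})\|x - \xbar\| \geq \frac{\mu}{2}\|x-\xbar\|$. Combined with $\|x - \xbar\| > \bar R$, this gives $\|\nabla f(x)\| > \frac{\mu}{2}\bar R = \bar G$.

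The only points needing care are the applicability conditions. The critical point $\xbar$ from the strong gradient property is a saddle or a minimizer, and both are excluded within radius $\bar R$ since $k\notin\mathcal{N}\cup\mathcal{M}$. The radius condition $R_s \leq \mu c/L_H$ also requires the inequality $\mu \le L_G$, which was noted after Definition~\ref{def:mu_morse}. With these checked, there is no real obstacle: the proof is a short two-case check.
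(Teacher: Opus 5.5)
Your proof is correct and is essentially the paper's argument. You split via the strong gradient property, apply Lemma~\ref{lemma:growth_mu}\ref{item:growth_mu1} near a critical point (using that $k\in\mathcal{G}$ excludes every critical point within $\bar R$), and check $\bar G\le\gamma_s$ from the definition of $\bar R$; the differences from the paper (splitting on $\|\nabla f(x_k)\|\ge\gamma_s$ rather than on distance $R_s$, and taking $c=1/4$ rather than $c=1/2$) are cosmetic.
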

\begin{proof}
  Let $k \in \mathcal{G}$.
  If $\|x_k -\xbar\| > R_s$ for every critical point $\xbar$ of $f$, then $\|g_k\| \geq \gamma_s$ by the strong gradient property (Assumption~\ref{ass:mumorse_and_stronggrad}).
  Otherwise, let $\xbar$ be a critical point such that $\|x_k - \xbar \| \leq R_s$.
  By definition of $\mathcal{G}$, we have $\|x_k - \xbar\| > \bar R$.
  Since $R_s\leq \mu / (2L_H)$, Lemma~\ref{lemma:growth_mu}\ref{item:growth_mu1}
  %with Assumption~\ref{ass:f}
  yields
  \begin{equation*}
    \|g_k\| \geq \frac{\mu}{2} \|x_k - \xbar \| \geq \frac{\mu}{2} \bar R.
  \end{equation*}
  Thus, in all cases, $\|g_k\| \geq \min(\gamma_s, \mu \bar R / 2)$.
  Owing to the definition of $\bar R$~\eqref{eq:def_constants_DeltacritRbar}, we also have
  \[
     \frac{\mu \bar R}{2} \leq \frac{\mu(1-\rho')\gamma_s}{256L_G}\leq \gamma_s
  \]
  because $\mu \leq L_G$ and $\rho' \in (0, 1)$.
  Therefore, $\min(\gamma_s, \mu\bar R/2) = \mu \bar R / 2 = \bar G$~\eqref{eq:def_constants}.
\end{proof}

\subsection{Lower bound on the trust-region radius (deterministic)} \label{ss:radius_lowbound}

The proof sketch in Section~\ref{s:proof_sketch} argues sufficient decrease in the objective function $f$ can be secured if the trust-region radius $\Delta_k$ remains bounded away from zero.
We show this now.
The first step is to prove that iteration $k$ is necessarily successful if $\Delta_k$ is small enough: this is the purpose of the next two lemmas.
This is relevant because successful iterations never trigger a reduction in the trust-region radius.

Recall $g_k = \nabla f(x_k)$ and $H_k = \nabla^2 f(x_k)$.
Also recall the partition of iteration indices into successful ($\mathcal{S}$) and unsuccessful ($\mathcal{U}$) steps~\eqref{eq:calScalU}.

\begin{lemma}[Success guarantee for large gradient iterations] \label{lemma:large_gradient_radius}
  Let $f \colon \reals^{d} \to \reals$ be $C^2$ with an $L_G$-Lipschitz continuous gradient.
  If at iteration~$k$ of Algorithm~\ref{algo:pTR} the radius $\Delta_k$ satisfies
  \begin{align}
    \Delta_k \leq \frac{(1-\rho')}{8L_G} \|g_k\|,
    \label{subeq:delta1}
  \end{align}
  then iteration $k$ is successful: $\rho_k \geq \rho'$, that is, $k \in \mathcal{S}$.
\end{lemma}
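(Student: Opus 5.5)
The plan is the classical ``small radius implies success'' argument. We show that $1 - \rho_k \le 1 - \rho'$ by bounding the numerator error by $\mathcal{O}(L_G \Delta_k^2)$ and the denominator from below by $\Omega(\Delta_k \|g_k\|)$. The shift $\theta_k$ cancels neatly.

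First, note that~\eqref{subeq:delta1} with $\Delta_k > 0$ forces $g_k \neq 0$. Since $\nabla f$ is $L_G$-Lipschitz, we have $\opnorm{H_k} \le L_G$, so all earlier lemmas apply. Write $D_k = m_{x_k}(0) - m_{x_k}(u_k) + \theta_k = m_{x_k}(\xi_k) - m_{x_k}(u_k)$ for the denominator of~\eqref{eq:rhok}. Because $f(x_k) - f(x_k + u_k) + \theta_k = D_k - \big(f(x_k+u_k) - f(x_k) - m_{x_k}(u_k)\big)$, we get
\[
  1 - \rho_k = \frac{f(x_k+u_k) - f(x_k) - m_{x_k}(u_k)}{D_k} \le \frac{L_G \|u_k\|^2}{D_k} \le \frac{L_G \Delta_k^2}{D_k},
\]
provided $D_k > 0$. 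Here we use~\eqref{eq:lg_bound} and the fact that $\|u_k\| \le \Delta_k$, which holds by construction of $\tCGbg$ and the boundary gradient step.

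Second, we lower bound $D_k$ via Lemma~\ref{lemma:cauchy_decrease}, keeping only $W_1$ (recall $W_2 \ge 0$). This gives $D_k \ge W_1 \ge \min\!\big(\tfrac{1}{2L_G}\|r^{(0)}\|^2, \tfrac{\Delta_k}{8}\|r^{(0)}\|\big)$. By the sign choice in line~\ref{line:xik}, $\la H_k \xi_k, g_k \ra \ge 0$, hence $\|r^{(0)}\| = \|H_k\xi_k + g_k\| \ge \|g_k\|$, exactly as in the proof of Proposition~\ref{prop:decrease_tcg_nonconvex}. Thus $D_k \ge \min\!\big(\tfrac{1}{2L_G}\|g_k\|^2, \tfrac{\Delta_k}{8}\|g_k\|\big)$.

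Third, we use~\eqref{subeq:delta1}, i.e.\ $\|g_k\| \ge 8L_G\Delta_k/(1-\rho')$. This gives $\tfrac{1}{2L_G}\|g_k\|^2 \ge \tfrac{4\Delta_k}{1-\rho'}\|g_k\| \ge \tfrac{\Delta_k}{8}\|g_k\|$, so the minimum equals $\tfrac{\Delta_k}{8}\|g_k\|$. Applying~\eqref{subeq:delta1} once more yields
\[
  D_k \ge \frac{\Delta_k \|g_k\|}{8} \ge \frac{L_G \Delta_k^2}{1-\rho'} > 0 .
\]
Plugging this into the first display gives $1 - \rho_k \le 1 - \rho'$, i.e.\ $\rho_k \ge \rho'$, so $k \in \mathcal{S}$.

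The only delicate point is making sure that randomization does not spoil the Cauchy-type decrease. This is handled by the sign choice for $\xi_k$, which guarantees $\|r^{(0)}\| \ge \|g_k\|$, and by the shift $\theta_k$, which makes the denominator exactly the model decrease from $\xi_k$ covered by Lemma~\ref{lemma:cauchy_decrease}.
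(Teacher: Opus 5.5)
Your proof is correct and follows essentially the same route as the paper. You write $1-\rho_k$ as the cubic-free remainder \eqref{eq:lg_bound} divided by the shifted denominator $m_{x_k}(\xi_k)-m_{x_k}(u_k)$, lower-bound that denominator by $W_1$ from Lemma~\ref{lemma:cauchy_decrease} using $\|r_k^{(0)}\|\ge\|g_k\|$, and conclude with \eqref{subeq:delta1}. The only cosmetic difference is that you bound $1-\rho_k$ one-sidedly and identify which term of the minimum is active, whereas the paper bounds $|\rho_k-1|$ by a maximum over both cases.
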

\begin{proof}
  We control $\rho_k$~\eqref{eq:rhok} using Lipschitz continuity of the gradient~\eqref{eq:lg_bound}, as follows:
  \begin{align}
    |\rho_k - 1| & = \left|\frac{f(x_k) - f(x_k + u_k) - \left[ m_{x_k}(0) - m_{x_k}(u_k) \right] }{m_{x_k}(0) - m_{x_k}(u_k) + \theta_k }\right| \nonumber\\
    & = \left| \frac{ f(x_k) + m_{x_k}(u_k) - f(x_k + u_k) }{ m_{x_k}(\xi_k) - m_{x_k}(u_k) }\right| \nonumber\\
    & \leq \frac{L_G\|u_k\|^2}{|m_{x_k}(\xi_k) - m_{x_k}(u_k)|}.
    \label{eq:bound_rhok_former_label}
  \end{align}
  To bound the denominator, we apply Lemma~\ref{lemma:cauchy_decrease}, keeping only the term~$W_1$:
  \BEQ\label{eq:recall_model_dec}
    m_{x_k}(\xi_k) - m_{x_k}(u_k)
      \geq
      \min\!\left(
        \frac{1}{2L_G}\|r^{(0)}_k\|^2,
        \frac{1}{8}\Delta_k \|r_k^{(0)}\|
      \right)
      \geq \min\!\left(
        \frac{1}{2L_G}\|g_k\|^2,
        \frac{1}{8}\Delta_k \|g_k\|
      \right),
  \EEQ
  recalling that $\|r_k^{(0)}\| = \|H_k \xi_k + g_k\| \geq \|g_k\|$ owing to $\la H_k\xi_k,g_k \ra \geq 0$ (see line~\ref{line:xik} in Algorithm~\ref{algo:pTR}).
  By design, $\|u_k\| \leq \Delta_k$ and $\rho' \in (0, 1)$.
  Then, combining the two previous bounds with~\eqref{subeq:delta1} yields
  \[
    1 - \rho_k \leq |\rho_k - 1| \leq  \max\!\left( \frac{2L_G^2 \Delta_k^2}{\|g_k\|^2},
    \frac{8L_G \Delta_k}{\|g_k\|} \right)
    \leq \max\!\left( \frac{(1-\rho')^2}{32}, 1-\rho' \right) = 1-\rho'.
  \]
  It follows that $\rho_k \geq \rho'$ and hence that iteration $k$ is successful.
\end{proof}

\begin{lemma}[Success guarantee for iterations near critical points]\label{lemma:radius}
  Let $f \colon \reals^{d} \to \reals$ satisfy Assumption~\ref{ass:f}.
  Assume $\xbar$ is a critical point of $f$ such that the singular values of $\nabla^2 f(\xbar)$ are all at least $\mu > 0$.
  If at iteration $k$ of Algorithm~\ref{algo:pTR} we have $\|x_k - \xbar\| \leq \mu^2/(4L_HL_G)$
  and the radius $\Delta_k$ satisfies
  \begin{align*}
    \Delta_k \leq \Deltacrit && \textrm{ with } && \Deltacrit = \frac{(1-\rho') \mu^2}{10L_HL_G} \textrm{ as in~\eqref{eq:def_constants_DeltacritRbar}},
  \end{align*}
  then iteration $k$ is successful: $\rho_k \geq \rho'$, that is, $k \in \mathcal{S}$.
\end{lemma}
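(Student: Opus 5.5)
The plan is to follow the template of Lemma~\ref{lemma:large_gradient_radius}, but to bound the model error with the \emph{cubic} estimate~\eqref{eq:lh_bound} instead of~\eqref{eq:lg_bound}. Exactly as in~\eqref{eq:bound_rhok_former_label}, we have
\[
  1-\rho_k \le |\rho_k-1| \le \frac{\frac{L_H}{6}\|u_k\|^3}{m_{x_k}(\xi_k)-m_{x_k}(u_k)},
\]
so it suffices to show that this ratio is at most $1-\rho'$.

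\textbf{Spectral control of $H_k$.} The hypothesis $\|x_k-\xbar\|\le \mu^2/(4L_HL_G)$ corresponds to $c=\mu/(4L_G)\le 1/4$ in Lemma~\ref{lemma:growth_mu}. Hence the singular values of $H_k$ lie in $[\nu,L_G]$ with $\nu := \tfrac34\mu$. Using $c\le 1/4$ rather than $c=1/2$ is what makes the constants in $\Deltacrit$ work out. We then split into cases according to $\stopk{k}$.

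\textbf{Case $\stopk{k}=\stopoob$.} Here $\|u_k\|\le\Delta_k$. Since $\la H_k\xi_k,g_k\ra\ge0$ and $\|\xi_k\|\le\Delta_k/4$ by design, Proposition~\ref{prop:decrease_tcg_nonconvex} applies with $\nu=\tfrac34\mu$ and gives
\[
  m_{x_k}(\xi_k)-m_{x_k}(u_k)\ge \frac{9\mu^2\Delta_k^2}{512L_G}.
\]
The ratio is therefore at most
\[
  \frac{512\,L_HL_G\Delta_k}{54\,\mu^2}\le \frac{512}{540}(1-\rho')<1-\rho',
\]
using $\Delta_k\le\Deltacrit$.

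\textbf{Case $\stopk{k}=\stopres$.} Here $u_k=v_k^{(T)}$, so $\|u_k\|<\Delta_k/2$. The stopping test gives $\|H_ku_k+g_k\|\le\omega_1\|g_k\|$, hence $\nu\|u_k\|\le\|H_ku_k\|\le(1+\omega_1)\|g_k\|$, that is, $\|u_k\|\le 2\|g_k\|/\nu$.

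For the denominator, Lemma~\ref{lemma:cauchy_decrease} (term $W_1$, together with $\|r_k^{(0)}\|\ge\|g_k\|$) gives
\[
  m_{x_k}(\xi_k)-m_{x_k}(u_k)\ge\min\!\left(\frac{\|g_k\|^2}{2L_G},\frac{\Delta_k\|g_k\|}{8}\right).
\]
Write $\|u_k\|^3\le\|u_k\|^2\cdot\|u_k\|$ and choose which factors to bound by $2\|g_k\|/\nu$ and which by $\Delta_k/2$ according to which term attains the minimum.
\begin{itemize}
  \item If the first term attains the minimum, bound $\|u_k\|^2\le4\|g_k\|^2/\nu^2$ and $\|u_k\|\le\Delta_k/2$. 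The ratio is then at most $\tfrac23L_HL_G\Delta_k/\nu^2$.
  \item If the second term attains the minimum, bound $\|u_k\|^2\le\Delta_k^2/4$ and $\|u_k\|\le2\|g_k\|/\nu$. The ratio is then at most $\tfrac23L_H\Delta_k/\nu\le\tfrac23L_HL_G\Delta_k/\nu^2$, using $\nu\le L_G$.
\end{itemize}
With $\nu^2=\tfrac{9}{16}\mu^2$ and $\Delta_k\le\Deltacrit$, both bounds are at most $\tfrac{32}{270}(1-\rho')<1-\rho'$.

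\textbf{Degenerate case and conclusion.} If $g_k=0$, the residual test reads $\|r\|\le0$. If it triggers, then $u_k=-H_k^{-1}g_k=0$ and $\rho_k=1$ trivially. Otherwise the boundary case above applies. In all cases $\rho_k\ge\rho'$, so $k\in\mathcal{S}$.

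The main obstacle is the boundary case: with the crude choice $c=1/2$ (that is, $\nu=\mu/2$), the constant comes out to $\tfrac{128}{60}(1-\rho')$, which exceeds $1-\rho'$. One must exploit that the hypothesis actually yields $c\le1/4$, and hence $\nu\ge\tfrac34\mu$.
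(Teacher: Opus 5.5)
Your proposal is correct and follows essentially the same route as the paper. Both bound $|\rho_k-1|$ with the cubic estimate \eqref{eq:lh_bound}, use Lemma~\ref{lemma:growth_mu} with $c=1/4$ to get $\nu=\tfrac34\mu$, apply Proposition~\ref{prop:decrease_tcg_nonconvex} in the $\stopoob$ case and $\|u_k\|\le\tfrac{8}{3\mu}\|g_k\|$ in the $\stopres$ case, and arrive at the same constants $\tfrac{256}{270}$ and $\tfrac{32}{270}$. The differences are cosmetic: the paper observes that $\stopres$ forces $\|v^{(1)}\|<\Delta_k/2$, so only the $\|g_k\|^2/(2L_G)$ branch of $W_1$ is needed where you treat both branches, and it does not single out $g_k=0$ as you do.
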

\begin{proof}
  First note that because $\|x_k-\xbar\|$ is small enough, Lemma~\ref{lemma:growth_mu}\ref{item:growth_mu1} with $c = 1/4$ states that $\|H_k u\| \geq \frac{3\mu}{4} \|u\|$ for all $u \in \reals^d$.

  The argument reuses certain elements of the proof of Lemma~\ref{lemma:large_gradient_radius}.
  We control the ratio $\rho_k$~\eqref{eq:rhok} using Lipschitz continuity of the Hessian~\eqref{eq:lh_bound}, as follows:
  \BEQ\label{eq:bound_rhok}
    |\rho_k - 1| = \left| \frac{ f(x_k) + m_{x_k}(u_k) - f(x_k + u_k) }{ m_{x_k}(\xi_k) - m_{x_k}(u_k) }\right| \leq \frac{L_H \|u_k\|^3 }{6| m_{x_k}(\xi_k) - m_{x_k}(u_k)|}.
  \EEQ
  We distinguish two cases, depending on the reason $\tCGbg$ terminated.

  If $\stopk{k} = \stopres$, then all iterates of $\tCGbg$ remained inside the ball of radius $\Delta_k / 2$.
  Thus, reusing the fact that $\|r_k^{(0)}\| \geq \|g_k\|$, Lemma~\ref{lemma:cauchy_decrease} implies
  \[
    m_{x_k}(\xi_k)- m_{x_k}(u_k) \geq \frac{1}{2L_G} \|r_k^{(0)}\|^2 \geq \frac{1}{2L_G}  \|g_k\|^2.
  \]
  Using also $\|u_k\| \leq \Delta_k / 2$, it follows that
  \begin{align}
    |\rho_k - 1| & \leq \frac{L_G L_H\|u_k\|^3}{3\|g_k\|^2} \leq \frac{L_G L_H\|u_k\|^2}{3\|g_k\|^2} \cdot \frac{\Delta_k}{2}.
    \label{eq:bound_rhok_res}
  \end{align}
  Since $\tCGbg$ terminated with $\stopk{k}=\stopres$, we further know that
  \[\|H_k u_k + g_k\| \leq \min(\omega_1 \|g_k\|,\omega_2\|g_k\|^2) \leq \omega_1 \|g_k\| \leq \|g_k\|. \]
  As a result, using $\|H_k u\| \geq \frac{3\mu}{4} \|u\|$ with $u = u_k$, we get
  \begin{align}
    \|u_k\| \leq \frac{4}{3\mu} \|H_ku_k\| \leq \frac{4}{3\mu} \big( \|g_k\| + \|H_ku_k + g_k\| \big) \leq \frac{8}{3\mu}\|g_k\|.
    \label{eq:uk_res_bound}
  \end{align}
  Using this bound in \eqref{eq:bound_rhok_res} together with $\Delta_k \leq \Deltacrit$ yields
  \begin{align*}
    1 - \rho_k \leq |\rho_k - 1| \leq \frac{L_HL_G}{3\|g_k\|^2}\cdot \frac{64\|g_k\|^2}{9\mu^2} \cdot\frac{\Deltacrit}{2} = \frac{64L_HL_G}{54\mu^2} \Deltacrit \leq 1-\rho',
  \end{align*}
  which implies that $\rho_k \geq \rho'$ and hence $k \in \mathcal{S}$.

  If $\stopk{k} = \stopoob$, we apply Proposition~\ref{prop:decrease_tcg_nonconvex}.
  Its assumptions hold with $\nu = 3\mu / 4$, so that
  \begin{align}
    m_{x_k}(\xi_k) - m_{x_k}(u_k) \geq \left(\frac{3}{4}\right)^2 \frac{\mu^2\Delta_k^2}{32 L_G}.
    \label{eq:moddecreaseoob}
  \end{align}
  Then, inequality~\eqref{eq:bound_rhok} implies, using the fact that $\|u_k\| \leq \Delta_k$ and $\Delta_k\leq \Deltacrit$,
  \[
    1 - \rho_k \leq |\rho_k - 1| \leq \frac{L_H\Delta_k^3}{6}\cdot \left(\frac{4}{3}\right)^2 \cdot \frac{32L_G}{\mu^2 \Delta_k^2} \leq \frac{256L_HL_G}{27\mu^2} \Deltacrit \leq 1-\rho',
  \]
  and therefore step $k$ is successful.
\end{proof}

Combining these two results, we deduce that the trust-region radius $\Delta_k$ is lower bounded.
This is the second consequence of the strong gradient property (part of Assumption~\ref{ass:mumorse_and_stronggrad}).
Notice that it is deterministic and that it holds regardless of the noise scale.

\begin{lemma}[radius lower bound] \label{lemma:lowbound_deltak}
  Under Assumptions~\ref{ass:f} and~\ref{ass:mumorse_and_stronggrad},
  let $\bar R$ be as in~\eqref{eq:def_constants_DeltacritRbar}.
  Then
  \begin{align}
    \Delta_k \geq 8 \bar R && \textrm{ for all } k = 0, 1, 2 \ldots
    \label{eq:radius_lowbound}
  \end{align}
\end{lemma}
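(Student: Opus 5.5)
We argue by induction on $k$, using that the radius only shrinks (by a factor $4$) on unsuccessful iterations. The base case holds because $\bar R \leq \Delta_0/8$ by definition~\eqref{eq:def_constants_DeltacritRbar}. For the inductive step, assume $\Delta_k \geq 8\bar R$. If iteration $k$ is successful, then $\Delta_{k+1} \geq \Delta_k$ (it is kept or doubled, capped at $\bar\Delta \geq \Delta_0 \geq 8\bar R$), so the claim persists. If instead $\Delta_k \geq 32\bar R$, then even an unsuccessful step gives $\Delta_{k+1} \geq \Delta_k/4 \geq 8\bar R$. It therefore suffices to show that every iteration with $8\bar R \leq \Delta_k < 32\bar R$ is successful. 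In fact we will show that every iteration with $\Delta_k \leq 32\bar R$ is successful.

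To do so, fix $k$ with $\Delta_k \leq 32\bar R$ and split according to the location of $x_k$, using the strong gradient property through the radius $R_s \leq \mu^2/(4L_HL_G)$.

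\textbf{Case A: some critical point $\xbar$ satisfies $\|x_k-\xbar\|\leq R_s$.} Then $\|x_k-\xbar\|\leq \mu^2/(4L_HL_G)$, and $\Delta_k \leq 32\bar R \leq \Deltacrit$ by the definition of $\bar R$. Lemma~\ref{lemma:radius} applies directly and gives $k\in\mathcal{S}$.

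\textbf{Case B: no such critical point exists.} The $(R_s,\gamma_s)$-strong gradient property then gives $\|g_k\|\geq\gamma_s$. Using $\bar R \leq (1-\rho')\gamma_s/(256L_G)$, we get
\[
  \Delta_k \leq 32\bar R \leq \frac{(1-\rho')\gamma_s}{8L_G} \leq \frac{(1-\rho')}{8L_G}\|g_k\|,
\]
so Lemma~\ref{lemma:large_gradient_radius} yields $k\in\mathcal{S}$.

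The only delicate point is checking that the constants in~\eqref{eq:def_constants_DeltacritRbar} were chosen so that the threshold $32\bar R$ sits below both success thresholds. It does: the factor $256 = 32\cdot 8$ matches Lemma~\ref{lemma:large_gradient_radius}, and the term $\Deltacrit/32$ in $\bar R$ matches Lemma~\ref{lemma:radius}. One should also check that the near-critical case does not need $\|x_k-\xbar\|\leq\bar R$ but only $\leq R_s$, which Assumption~\ref{ass:mumorse_and_stronggrad} provides. With this, the case split covers all of $\Rd$ without invoking Lemma~\ref{lemma:lowbound_G}, and the argument is deterministic, independent of $\xi_k$.
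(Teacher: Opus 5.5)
Your proof is correct and follows essentially the same route as the paper. You first show that every iteration with $\Delta_k \leq 32\bar R$ is successful: near a critical point via Lemma~\ref{lemma:radius}, using $32\bar R \leq \Deltacrit$, and elsewhere via Lemma~\ref{lemma:large_gradient_radius}, using $\|g_k\| \geq \gamma_s$ and $32\bar R \leq (1-\rho')\gamma_s/(8L_G)$. You then conclude by induction from $\Delta_0 \geq 8\bar R$. The only difference is cosmetic: you split cases at distance $R_s$ from the critical points, whereas the paper splits at $\mu^2/(4L_HL_G)$. Both work because $R_s \leq \mu^2/(4L_HL_G)$.
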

\begin{proof}
  We first prove the following statement for every $k$:
  \begin{align}
    \textrm{If } \Delta_k \leq 32 \bar R,  \textrm{ then } \textrm{ iteration $k$ is successful.}
    \label{eq:rec_deltak}
  \end{align}
  Let $k\geq 0$. There are two cases to consider.

  First, assume that there exists a critical point $\bar x$ of $f$ such that ${\|x_k-\bar x\| \leq \mu^2 / (4L_HL_G)}$.
  Under the assumption of~\eqref{eq:rec_deltak}, Lemma~\ref{lemma:radius} applies because $\Delta_k \leq 32 \bar R \leq \Deltacrit$ by definition~\eqref{eq:def_constants_DeltacritRbar}.
  Thus, iteration $k$ is indeed successful.

  Second, assume $x_k$ is at least at a distance $\mu^2 / (4L_HL_G)$ from every critical point of $f$.
  Then, we have $\|g_k\| \geq \gamma_s$ by the strong gradient property (Assumption~\ref{ass:mumorse_and_stronggrad}).
  Under the assumptions of~\eqref{eq:rec_deltak}, Lemma~\ref{lemma:large_gradient_radius} applies because
  \[
    \Delta_k \leq 32 \bar R \leq \frac{(1-\rho') \gamma_s}{8L_G} \leq \frac{(1-\rho') \|g_k\|}{8L_G}
  \]
  by definition~\eqref{eq:def_constants_DeltacritRbar}.
  Thus, iteration $k$ is indeed successful.

  The above reasoning confirms that~\eqref{eq:rec_deltak} holds for all $k$.
  We now prove the target inequality~\eqref{eq:radius_lowbound} by induction.
  For $k = 0$, it holds because $\bar R \leq \Delta_0 / 8$ by definition~\eqref{eq:def_constants_DeltacritRbar}.
  Now assume $\Delta_k \geq 8 \bar R$ holds for some $k \geq 0$.
  Then, either $\Delta_k \leq 32 \bar R$, in which case~\eqref{eq:rec_deltak} provides that iteration $k$ is successful and hence $\Delta_{k+1} \geq \Delta_k \geq 8 \bar R$; or $\Delta_k > 32 \bar R$, in which case $\Delta_{k+1} \geq \Delta_k / 4 \geq 8 \bar R$, concluding the proof.
\end{proof}

Let us note already here that a corollary of having a lower bound on $\Delta_k$ is that most outer iterations are successful.
This is a standard, mechanical consequence of the trust-region radius update rule~\citep{Cartis2012}. % NB: I looked in the ConnGouldToint book, but didn't find this natural argument there.
\begin{lemma} \label{lem:mostlysuccessful}
  Running Algorithm~\ref{algo:pTR}, let $|\mathcal{S}|_k := |\{ 0 \leq \ell < k : \rho_\ell \geq \rho' \}|$ be the number of successful iterations completed before iteration $k$.
  Then,
  \begin{align*}
    k \leq \frac{3}{2} |\mathcal{S}|_k + \frac{1}{2} \log_2\!\left( \frac{\Delta_0}{\Delta_k} \right).
  \end{align*}
\end{lemma}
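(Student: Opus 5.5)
The argument is a counting argument on the radius updates: track $\log_2 \Delta_k$ and compare how much it can grow with how much it must shrink. The update rule gives three cases. An unsuccessful iteration ($\rho_\ell < \rho'$) sets $\Delta_{\ell+1} = \Delta_\ell/4$, so $\log_2\Delta$ drops by exactly $2$. A successful iteration either keeps $\Delta$ unchanged or sets $\Delta_{\ell+1} = \min(2\Delta_\ell, \bar\Delta) \le 2\Delta_\ell$, so $\log_2\Delta$ rises by at most $1$. (The case $\rho_\ell > \rho''$ requires $\rho_\ell \ge \rho'$, hence success, since $\rho'' > \rho'$.)

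Let $|\mathcal{U}|_k = k - |\mathcal{S}|_k$ be the number of unsuccessful iterations before $k$. Telescoping $\log_2(\Delta_{\ell+1}/\Delta_\ell)$ over $\ell = 0, \ldots, k-1$ gives
\begin{align*}
  \log_2\!\left(\frac{\Delta_k}{\Delta_0}\right) \le |\mathcal{S}|_k - 2|\mathcal{U}|_k .
\end{align*}
Rearranging, this becomes $2|\mathcal{U}|_k \le |\mathcal{S}|_k + \log_2(\Delta_0/\Delta_k)$, that is,
\begin{align*}
  |\mathcal{U}|_k \le \frac{1}{2}|\mathcal{S}|_k + \frac{1}{2}\log_2\!\left(\frac{\Delta_0}{\Delta_k}\right).
\end{align*}
Adding $|\mathcal{S}|_k$ to both sides gives $k = |\mathcal{S}|_k + |\mathcal{U}|_k \le \frac{3}{2}|\mathcal{S}|_k + \frac{1}{2}\log_2(\Delta_0/\Delta_k)$, which is the claim.

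Nothing here is deep. The only point needing care is the case analysis of the radius update, namely that every radius increase happens on a successful iteration and is at most a factor $2$; the cap at $\bar\Delta$ only helps. The bound is deterministic and needs no lower bound on $\Delta_k$ itself. Combined with Lemma~\ref{lemma:lowbound_deltak}, the logarithmic term is at most $\frac{1}{2}\log_2(\Delta_0/(8\bar R))$.
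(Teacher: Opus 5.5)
Your proof is correct and is essentially the paper's argument. The paper bounds $\Delta_k \le 2^{|\mathcal{S}|_k}\Delta_0/4^{|\mathcal{U}|_k}$ (each successful step at most doubles the radius, each unsuccessful step divides it by four), then takes $\log_2$ and substitutes $|\mathcal{U}|_k = k - |\mathcal{S}|_k$. Your telescoping of $\log_2(\Delta_{\ell+1}/\Delta_\ell)$ is the same computation.
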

\begin{proof}
  % Similar to $|\mathcal{S}|_k$,
  Let $|\mathcal{U}|_k := |\{ 0 \leq \ell < k : \rho_\ell < \rho' \}|$, so that $k = |\mathcal{S}|_k + |\mathcal{U}|_k$.
  Observe that
  \[
    \Delta_k \leq \frac{2^{|\mathcal{S}|_k} \Delta_0}{4^{|\mathcal{U}|_k}}
  \]
  because each successful step at most doubles the trust-region radius, while each unsuccessful step divides the radius by four.
  Reorganize as
  $
    \log_2\!\left( \frac{\Delta_0}{\Delta_k} \right) \geq 2 |\mathcal{U}|_k - |\mathcal{S}|_k
  $
  and substitute $|\mathcal{U}|_k = k - |\mathcal{S}|_k$.
\end{proof}

\subsection{Objective decrease for large gradient steps (deterministic)}

From the beginning of this section, recall the partition of the iterations of Algorithm~\ref{algo:pTR} into large gradient steps ($\mathcal{G}$), local minimizer steps ($\mathcal{M}$) and saddle point steps ($\mathcal{N}$).
The (probabilistic) analysis of saddle point iterations was carried out in Section~\ref{s:saddle}.
We now turn to analyzing the large gradient steps: this is deterministic.
The local minimizer steps occupy us in the next subsection.

\begin{proposition}[Large gradient steps] \label{prop:bound_large_gradient}
  Under Assumptions~\ref{ass:f} and~\ref{ass:mumorse_and_stronggrad},
  let $\bar \sigma, \Flg$ be as in~\eqref{eq:def_constants}.
  Assume $\sigma \leq \bar \sigma$.
  If iteration~$k$ of Algorithm~\ref{algo:pTR} is a successful ``large gradient'' iteration, that is, $k \in \mathcal{G} \cap \mathcal{S}$, then
  \begin{equation} \label{eq:dec_lg_1}
    f(x_k) - f(x_{k+1}) \geq \Flg.
  \end{equation}
\end{proposition}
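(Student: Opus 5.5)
The argument combines Lemma~\ref{lemma:obj_decrease}, the Cauchy-type bound \eqref{eq:recall_model_dec}, Lemma~\ref{lemma:lowbound_G} and Lemma~\ref{lemma:lowbound_deltak}. Since $k \in \mathcal{S}$, Lemma~\ref{lemma:obj_decrease} gives
\[
  f(x_k) - f(x_{k+1}) \geq \rho'\left[ m_{x_k}(\xi_k) - m_{x_k}(u_k) \right] - \|g_k\|\|\xi_k\| - \frac{L_G}{2}\|\xi_k\|^2 .
\]
For the model decrease, I keep only $W_1$ from Lemma~\ref{lemma:cauchy_decrease}. The sign choice on line~\ref{line:xik} gives $\|r_k^{(0)}\| \geq \|g_k\|$, so
\[
  m_{x_k}(\xi_k) - m_{x_k}(u_k) \geq \min\!\left( \frac{\|g_k\|^2}{2L_G}, \frac{\Delta_k \|g_k\|}{8} \right).
\]

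Write $G := \|g_k\|$. Since $k \in \mathcal{G}$, Lemma~\ref{lemma:lowbound_G} gives $G \geq \bar G$, and Lemma~\ref{lemma:lowbound_deltak} gives $\Delta_k \geq 8\bar R$.

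\begin{itemize}
  \item \textbf{Minimum is attained.} The definition of $\bar G$ gives $8\bar R = 16\bar G/\mu \geq 16\bar G/L_G$ because $\mu \leq L_G$. Hence $\frac{\Delta_k G}{8} \geq \frac{2\bar G G}{L_G} \geq \frac{G\bar G}{2L_G}$. Together with $\frac{G^2}{2L_G} \geq \frac{G\bar G}{2L_G}$, the model decrease is at least $\frac{G\bar G}{2L_G}$.
  \item \textbf{Noise terms.} Using $\|\xi_k\| \leq \sigma \leq \bar\sigma = \frac{\rho'\bar G}{5L_G}$, the term $\|g_k\|\|\xi_k\|$ is at most $\frac{\rho' G\bar G}{5L_G}$. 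Using also $\bar G \leq G$ and $\rho' < 1$, the term $\frac{L_G}{2}\|\xi_k\|^2$ is at most $\frac{\rho'^2\bar G^2}{50 L_G} \leq \frac{\rho' G\bar G}{50L_G}$.
\end{itemize}

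Combining these bounds,
\[
  f(x_k)-f(x_{k+1}) \geq \frac{\rho' G\bar G}{L_G}\left(\frac12 - \frac15 - \frac1{50}\right) \geq \frac{\rho' G\bar G}{5L_G} \geq \frac{\rho'\bar G^2}{5L_G} = \Flg .
\]

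The main obstacle is that the noise penalty $\|g_k\|\sigma$ grows linearly in $\|g_k\|$, which is unbounded. A comparison against the fixed constant $\bar G^2$ alone would fail. The fix is to keep the model decrease expressed in terms of the product $G\bar G$, rather than lowering it all the way to $\bar G^2$, so that the noise term can be absorbed proportionally. This uses the fact that both branches of the minimum scale at least linearly in $G$, which holds because of the radius lower bound.
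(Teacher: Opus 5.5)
Your proof is correct and follows the paper's argument. You keep only $W_1$ with $\|r_k^{(0)}\| \geq \|g_k\|$, use $\Delta_k \geq 8\bar R$ and $\|g_k\| \geq \bar G$ to get a model decrease of $\frac{1}{2L_G}\bar G\|g_k\|$ in product form, and then absorb the noise term $\sigma\|g_k\|$ proportionally via Lemma~\ref{lemma:obj_decrease} and $\sigma \leq \bar\sigma$. The only cosmetic difference is that you also bound the $\frac{L_G}{2}\sigma^2$ term by a multiple of $\bar G\|g_k\|$, giving the slightly stronger $\frac{\rho'}{5L_G}\bar G\|g_k\|$, whereas the paper substitutes $\|g_k\| \geq \bar G$ before handling that term.
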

\begin{proof}
  Recall that $\|r^{(0)}_k\| = \|g_k + H_k \xi_k\| \geq \|g_k\|$ by design of $\xi_k$.
  Invoke Lemma~\ref{lemma:cauchy_decrease} (keeping only the term $W_1$) to obtain:
  \[
    m_{x_k}(\xi_k) - m_{x_k}(u_k) \geq \min\!\left( \frac{1}{2L_G}\|g_k\|, \frac{1}{8}\Delta_k \right) \|g_k\|.
  \]
  Note that $\|g_k\| \geq \bar G$ since $k \in \mathcal{G}$ (Lemma~\ref{lemma:lowbound_G}).
  Moreover, $\frac{1}{8} \Delta_k \geq \bar{R}$ by Lemma~\ref{lemma:lowbound_deltak}.
  Therefore,
  \begin{align*}
    m_{x_k}(\xi_k) - m_{x_k}(u_k)
      & \geq \min\!\left( \frac{1}{2L_G}\bar G, \bar R \right) \|g_k\| \\
      & = \min\!\left( \frac{\mu}{4L_G} \bar R, \bar R \right) \|g_k\|
        = \frac{\mu}{4L_G}\bar R \|g_k\|
        = \frac{1}{2L_G} \bar G \|g_k\|,
  \end{align*}
  where we used the constants in~\eqref{eq:def_constants_DeltacritRbar} and~\eqref{eq:def_constants}, and the fact that $\mu \leq L_G$.
  To relate the model decrease to the progress in objective value, we use Lemma~\ref{lemma:obj_decrease} noting $\|\xi_k\| \leq \sigma$:
  \begin{align*}
    f(x_{k}) - f(x_{k+1}) &\geq \rho'\left[ m_{x_k}(\xi_k) - m_{x_k}(u_k) \right] -\|g_k\|\sigma -\frac{L_G \sigma^2}{2}
    \geq \left( \frac{\rho'}{2L_G} \bar{G} - \sigma \right)\|g_k\| -\frac{L_G \sigma^2}{2}.
  \end{align*}
  Finally, we use the fact that $\sigma \leq \bar \sigma \leq \rho' \bar{G} / (4L_G)$ as well as $\|g_k\| \geq \bar G$ again, to find
  \begin{align*}
    f(x_{k}) - f(x_{k+1}) & \geq \frac{\rho'}{4L_G} \bar{G}^2 - \frac{L_G}{2} \left( \frac{\rho' \bar{G}}{4L_G} \right)^2
    = \left( \frac{1}{4} - \frac{\rho'}{32} \right) \frac{\rho'}{L_G}\bar{G}^2
    \geq \frac{\rho'}{5L_G}\bar{G}^2
    = \Flg,
  \end{align*}
  as announced.
  (That last step is the only one that used a bound on $\sigma$.)
\end{proof}

\subsection{Capture and quadratic convergence near local minimizers (deterministic)}

We now show that, if an iterate of Algorithm~\ref{algo:pTR} reaches a neighborhood of a local minimizer, the ensuing iterates remain in that neighborhood (they are ``captured'').
Moreover, these subsequent steps are all accepted and the sequence converges at least quadratically to the local minimizer.
Let us emphasize that this is a \emph{deterministic} statement, in spite of the randomization.

% Note: the next proposition does use the strong gradient property, because you do need Lemma~\ref{lemma:lowbound_deltak} to make sure $\Delta_k$ won't be too small by the time you get to the neighborhood of $x^*$.

\begin{proposition}[Local minimizer steps] \label{prop:min_capture}
  Under Assumptions~\ref{ass:f} and~\ref{ass:mumorse_and_stronggrad},
  let $\bar R$ be as in~\eqref{eq:def_constants_DeltacritRbar}.
  Assume $\sigma \leq \bar R$.
  If iteration~$k$ of Algorithm~\ref{algo:pTR} is a ``local minimizer'' iteration, that is, $k \in \mathcal{M}$, let $x^*$ denote the local minimizer of $f$ which satisfies $\|x_{k} - x^*\| \leq \bar R$.
  Then,
  \begin{enumerate}[label=(\roman*)]
    \item \label{item:stopres} $\stopk{k} = \stopres$,
    \item \label{item:successlocalmin} the iteration is successful ($k\in \mathcal{S}$) with $\Delta_{k+1} = \Delta_k$, and
    \item \label{item:quad_conv} $\|x_{k+1} - x^*\| \leq \|x_{k} - x^*\|^2 / (2 \bar R) \leq \bar R / 2$.
  \end{enumerate}
  The last claim ensures all subsequent iterations satisfy the same assumptions as $k$.
  In particular, $x_k, x_{k+1}, x_{k+2}, \ldots$ converges at least quadratically to $x^*$.
\end{proposition}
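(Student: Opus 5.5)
The plan is to show first that the model $m_{x_k}$ is strongly convex with a minimizer well inside the trust region. That forces $\stopres$ to trigger, and the remaining claims then follow from Lemma~\ref{lemma:radius} and Lemma~\ref{lemma:quadratic_conv_crit}.

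\textbf{Step 1: strong convexity and a small Newton step.} Since $\|x_k - x^*\| \leq \bar R \leq \Deltacrit/32 \leq \mu/(320 L_H)$, Lemma~\ref{lemma:growth_mu}\ref{item:growth_mu3} with small $c$ gives $\lambdamin(H_k) \geq \mu(1-c) \geq \mu/2$, so $H_k \succ 0$. Lipschitz continuity of the gradient gives $\|g_k\| \leq L_G \|x_k - x^*\|$. I would use a sharper bound on the Newton step. Writing $0 = \nabla f(x^*) = g_k + H_k(x^* - x_k) + w_k$ with $\|w_k\| \leq \frac{L_H}{2}\|x_k-x^*\|^2$ (by~\eqref{eq:LHnablabound}), we get
\[
\|H_k^{-1} g_k\| \leq \|x_k - x^*\| + \frac{2}{\mu}\cdot\frac{L_H}{2}\|x_k - x^*\|^2 \leq 2\bar R .
\]
This avoids the cruder bound $\|H_k^{-1}g_k\| \leq (2L_G/\mu)\bar R$, which would carry a condition-number factor.

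\textbf{Step 2: the CG iterates stay inside the ball, so $\stopres$ triggers.} By Lemma~\ref{lemma:tcg_boundedness}, every tentative iterate satisfies
\[
\|v_+^{(t)}\| \leq 2\|\xi_k\| + \|H_k^{-1}g_k\| \leq 2\sigma + 2\bar R \leq 4\bar R .
\]
By Lemma~\ref{lemma:lowbound_deltak}, $\Delta_k/2 \geq 4\bar R$. I must check that the inequality on line~\ref{line:iftoboundaryintcgbg} of Algorithm~\ref{algo:tCG} is never triggered. The test there is $\|v_+^{(t)}\| \geq \Delta_k/2$, so the non-strict case $4\bar R = \Delta_k/2$ is a concern. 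I expect this boundary equality to be the main technical obstacle. I would try to resolve it by using the slack: $\sigma \leq \bar R$, together with strict inequality in $\|x_k - x^*\| + \tfrac{L_H}{\mu}\|x_k-x^*\|^2 < 2\bar R$, which holds because $L_H\bar R/\mu \ll 1$. If needed, I would also invoke Steihaug's strict monotonicity. Positive curvature is never detected, because $H_k \succ 0$. Finiteness (Lemma~\ref{lemma:well_known_props}, $T \leq d$) then implies that tCG-bg ends with $\stopcrit = \stopres$. This proves item~\ref{item:stopres}.

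\textbf{Step 3: success, radius, and quadratic convergence.} Since $\bar R \leq \mu^2/(4L_HL_G)$, the success question falls under Lemma~\ref{lemma:radius}. That lemma assumes $\Delta_k \leq \Deltacrit$, which may fail here. However, its $\stopres$ branch only uses $\|u_k\| \leq \frac{8}{3\mu}\|g_k\|$ from~\eqref{eq:uk_res_bound} together with the cubic remainder. I would therefore redo that branch without the radius: bound $\|u_k\|^3 \leq \|u_k\|^2 \cdot \frac{8}{3\mu} L_G \bar R$ instead of $\|u_k\|^2\,\Delta_k/2$. This gives
\[
|\rho_k - 1| \leq \frac{L_HL_G}{3}\cdot\frac{64}{9\mu^2}\cdot\frac{8L_G\bar R}{3\mu}.
\]
This is small once $\bar R$ is small relative to $\mu^3/(L_HL_G^2)$. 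If that is not implied by the definition of $\bar R$, I would instead bound $\|u_k\| \leq 4\bar R \leq \Deltacrit/8$ from Step 2, and plug $\Deltacrit$ into the same estimate as in the proof of Lemma~\ref{lemma:radius}. This is the cleaner route, and I would take it.

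The iteration is therefore successful, and since $\stopk{k} = \stopres$, the update rule gives $\Delta_{k+1} = \Delta_k$. This proves item~\ref{item:successlocalmin}. Item~\ref{item:quad_conv} is then Lemma~\ref{lemma:quadratic_conv_crit} with $R = \bar R$, which satisfies its hypotheses by~\eqref{eq:def_constants_DeltacritRbar}.

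Finally, $\|x_{k+1}-x^*\| \leq \bar R/2$ means $k+1 \in \mathcal{M}$ with the same $x^*$, so induction gives the capture claim. Unrolling item~\ref{item:quad_conv} gives the quadratic rate.
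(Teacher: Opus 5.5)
Your proof is correct and follows the paper's structure. The paper uses the same sharpened Newton-step bound $\|H_k^{-1}g_k\|\le(1+\tfrac{1}{10})\|x_k-x^*\|$ inside Lemma~\ref{lemma:tcg_boundedness}, giving $\|v_{k,+}^{(t)}\|\le 2\|\xi_k\|+\tfrac{11}{10}\|x_k-x^*\|<\Delta_k/2$ (exactly your strictness fix) and hence $\stopres$, and item~(iii) is Lemma~\ref{lemma:quadratic_conv_crit}. The only difference is the success estimate: you rerun the $\stopres$ branch of Lemma~\ref{lemma:radius} with $\|u_k\|\le 4\bar R\le\Deltacrit/8$ in place of $\Delta_k/2$ (rightly discarding your first option, which loses a factor $L_G/\mu$), whereas the paper uses $\|u_k\|\le 2(\|\xi_k\|+\|x_k-x^*\|)$ together with $m_{x_k}(\xi_k)-m_{x_k}(u_k)\ge\frac{1}{2L_G}(\|H_k\xi_k\|^2+\|g_k\|^2)\ge\frac{\mu^2}{16L_G}(\|\xi_k\|+\|x_k-x^*\|)^2$; this lower bound stays positive even when $g_k=0$, a case your denominator $\frac{1}{2L_G}\|g_k\|^2$ leaves as $0/0$ (there $u_k=0$ and $\rho_k=1$, so it needs only a one-line patch).
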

\begin{proof}
  First note that by Lemma~\ref{lemma:growth_mu}\ref{item:growth_mu3} with $c = 1/2$, since $\bar R \leq \Deltacrit\leq \mu / (2L_H)$, the Hessian $H_k$ is positive definite and we have $H_k \succeq \frac{\mu}{2}I_d$.
  Also, $\Delta_k \geq 8\bar{R}$ by Lemma~\ref{lemma:lowbound_deltak} (this is the only use of the strong gradient part of Assumption~\ref{ass:mumorse_and_stronggrad}).

  To prove item~\ref{item:stopres}, we use Lemma~\ref{lemma:tcg_boundedness} which provides a bound on the inner iterates of $\tCGbg$ for positive definite quadratics.
  Let $T_k$ be the number of iterations of $\tCGbg$.
  Then, for all $0 \leq t \leq T_k-1$ the tentative iterates $v^{(t)}_{k,+}$ generated inside that algorithm satisfy
  \[
    \|v_{k,+}^{(t)}\| \leq 2 \|\xi_k\| + \|H_k^{-1} g_k\|.
  \]
  We show that this bound cannot exceed $\Delta_k/2$.
  To see this, first use Lipschitz continuity of the Hessian to observe that $0 = \nabla f(x^*) = \nabla f(x_k) + \nabla^2 f(x_k)[x^* - x_k] + w_k$ with $\|w_k\| \leq \frac{L_H}{2} \|x_k - x^*\|^2$~\eqref{eq:LHnablabound}.
  Then, we can check for every $0\leq t \leq T_k-1$ that
  \begin{align}
    \|v^{(t)}_{k,+}\|
    & \leq 2 \|\xi_k\| + \|x_k-x^* - H_k^{-1} w_k\| \nonumber\\
    & \leq 2 \|\xi_k\| + \|x_k-x^*\|+ \frac{2}{\mu} \|w_k\| \nonumber\\
    & \leq 2 \|\xi_k\| + \|x_k-x^*\|+ \frac{L_H}{\mu} \|x_k-x^*\| \cdot \|x_k-x^*\| \nonumber\\
    & \leq 2 \|\xi_k\| + \left( 1 + \frac{1}{10} \right) \|x_k - x^*\|
    \label{eq:bound_uk_min} \\
    %& \leq \frac{1}{4} \Delta_k + \frac{1}{8} \Delta_k + \frac{L_H \bar{R}}{\mu} \cdot \frac{1}{8} \Delta_k \\
    & < \frac{1}{2} \Delta_k, \nonumber
  \end{align}
  where we used $\|x_k - x^*\| \leq \bar{R}$ and $\bar{R} \leq \Deltacrit \leq \frac{\mu}{10 L_H}$,
  then also $\|\xi_k\| \leq \sigma \leq \bar{R} \leq \frac{1}{8} \Delta_k$.

  In addition, we always have $\la H_k p_k^{(t-1)}, p_k^{(t-1)} \ra > 0$ since $H_k$ is positive definite.
  Combined with the argument above, this shows that the ``If'' condition that would lead to the $\stopoob$ stopping criterion in Algorithm~\ref{algo:tCG} (line~\ref{line:iftoboundaryintcgbg}) never triggers: we necessarily have $\stopk{k} = \stopres$.
  For the same reason, $v_k^{(t)} = v_{k,+}^{(t-1)}$ for all $ 1\leq t \leq T_k$.

  To prove item~\ref{item:successlocalmin}, we control the improvement ratio $\rho_k$~\eqref{eq:rhok} as in~\eqref{eq:bound_rhok}:
  \begin{align}
    |\rho_k - 1| &\leq \frac{L_H \|u_k\|^3 }{6\left( m_{x_k}(\xi_k) - m_{x_k}(u_k)\right)}.
  \end{align}
  The numerator can be bounded using~\eqref{eq:bound_uk_min} since $u_k = v^{(T_k)}_k = v_{k,+}^{(T_k-1)}$, so that
  \[
    \|u_k\|^3 \leq \big(2\|\xi_k\| + 2\|x_k-x^*\|\big)^3 = 8 \big(\|\xi_k\| + \|x_k-x^*\|\big)^3.
  \]
  As for the denominator, we use the model decrease guarantee of Lemma~\ref{lemma:cauchy_decrease}:
  \[
    m_{x_k}(\xi_k) - m_{x_k}(u_k) \geq \frac{1}{2L_G}\|r^{(0)}_k\|^2 = \frac{1}{2L_G}\|H_k \xi_k + g_k\|^2 \geq \frac{1}{2L_G} \left( \|H_k \xi_k\|^2 + \| g_k\|^2 \right)
  \]
  since $\la H_k \xi_k, g_k \ra \geq 0$ by design.
  Using $H_k \succeq \frac{\mu}{2}I_d$ and $\|g_k\| \geq \frac{\mu}{2}\|x_k-x^*\|$ (Lemma~\ref{lemma:growth_mu}) leads to
  \[
    m_{x_k}(\xi_k) - m_{x_k}(u_k) \geq \frac{\mu^2}{8L_G}\left( \|\xi_k\|^2 + \|x_k - x^*\|^2 \right)
     \geq \frac{\mu^2}{16L_G}\left( \|\xi_k\| + \|x_k - x^*\| \right)^2.
  \]
  Therefore, the bound on $\rho_k$ yields
  \[
    \begin{split}
      1 - \rho_k \leq |\rho_k-1|
        & \leq \frac{8\cdot 16 L_G \cdot L_H}{6\mu^2}\cdot \frac{\left(\|\xi_k\| + \|x_k-x^*\|\right)^3 }{\left( \|\xi_k\| + \|x_k - x^*\| \right)^2} \\
        & = \frac{64L_HL_G}{3\mu^2}\cdot \left( \|\xi_k\| + \|x_k-x^*\| \right)\\
        & \leq \frac{64L_HL_G}{3\mu^2}\cdot 2\bar R \\
        & \leq 1-\rho',
    \end{split}
  \]
  using again $\|\xi_k\| \leq \sigma \leq \bar{R}$ and $\|x_k - x^*\| \leq \bar{R}$, as well as $2 \bar{R} \leq \frac{\Deltacrit}{16} = (1-\rho') \frac{\mu^2}{160 L_H L_G}$~\eqref{eq:def_constants_DeltacritRbar}.
  This proves that $\rho_k \geq \rho'$ and hence that iteration $k$ is successful.
  Since also $\stopk{k} = \stopres$, the mechanism of Algorithm~\ref{algo:pTR} ensures $\Delta_{k+1} = \Delta_k$.

  Finally, item~\ref{item:quad_conv} follows  from Lemma~\ref{lemma:quadratic_conv_crit}.
\end{proof}

\subsection{Global convergence proof}

We now have all the tools we need to prove convergence to a local minimizer with high probability.
The argument combines the deterministic control on large gradient steps (Proposition~\ref{prop:bound_large_gradient}), the deterministic control on steps near local minimizers (Proposition~\ref{prop:min_capture}), and the probabilistic control on steps near saddle points (Theorem~\ref{thm:saddle_escape}).

\begin{theorem}[Convergence to a local minimizer]\label{thm:outer_complexity}
  Let $f \colon \Rd \to \reals$ satisfy Assumptions~\ref{ass:f} and~\ref{ass:mumorse_and_stronggrad}.
  Run Algorithm~\ref{algo:pTR} with $x_0 \in \reals^d$, an initial trust-region radius $\Delta_0$ and a noise scale $\sigma$ satisfying $0 < 4\sigma \leq \Delta_0 \leq \bar \Delta$.

  Set the constants $(\Deltacrit, \bar R, \bar G, \Flg, \bar \sigma)$ as defined in~\eqref{eq:def_constants_DeltacritRbar} and~\eqref{eq:def_constants}.
  Let $\delta \in (0,1)$ be a confidence parameter.
  Define $\delta'$ and the saddle escape time $K_{\rm esc}(\delta',\sigma)$ as
  \begin{align}
    \delta' = \left(\frac{\Flg}{f(x_0) - \flow+\Flg} \right)\delta \;\;\in (0,1) && \textrm{ and } &&
    K_{\rm esc}(\delta', \sigma) = 2
          + \log_2 \log_2\!\left(2+\frac{\sqrt{d}}{ \omega_2 \mu \delta'\sigma}\right).
  \end{align}
  Assume that the noise scale $\sigma$ is small enough so that
  \begin{align}
    \sigma \leq \bar \sigma && \textrm{ and } && \sigma\cdot K_{\rm esc}(\delta',\sigma) \leq \frac{\rho' \mu^2 {\bar R}}{8  L_G^2}.
    \label{eq:cond_sigma_global}
  \end{align}
  Then, with probability at least $1 - \delta$, there is an iteration $K_{\rm cap}$ (for ``capture'') which is the first such that $\|x_{K_{\rm cap}} - x^*\| \leq \bar{R}$ for some local minimizer $x^*$ of $f$ and also
  \begin{align}
    K_{\rm cap} \leq \bar K_{G,N} := \frac{3}{2} \left( \frac{f(x_0)-\flow}{\Flg} + 1 \right) K_{\rm esc}(\delta',\sigma) + \frac{1}{2} \log_2\!\left( \frac{\Delta_0}{8\bar R} \right).
    \label{eq:KGN}
  \end{align}
  Conditioned on that event, for all $\epsilon > 0$, let $K$ be the first iteration such that $\|x_K - x^*\| \leq \epsilon$.
  Then,
  \begin{align}
    K \leq K_{\rm cap} + \bar K_{M,\epsilon} && \textrm{ with } && \bar K_{M,\epsilon} := \log_2\!\left( 1 + \left[\log_2 \frac{\bar R}{\epsilon} \right]_+ \right),
    \label{eq:KM}
  \end{align}
  that is, starting from $K_{\rm cap}$, the iterates converge at least quadratically to $x^*$.

  All iterations before $K_{\rm cap}$ are of the ``large gradient'' and ``saddle point'' type.
  All iterations starting from $K_{\rm cap}$ are of the ``local minimizer'' type.
\end{theorem}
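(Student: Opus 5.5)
The plan is a potential-function argument on $f$, applied to the iterations before capture. Those iterations lie in $\mathcal{G}\cup\mathcal{N}$. I will group their successful ones into \emph{episodes}, each of which decreases $f$ by at least $\Flg$ and contains at most $K_{\rm esc}(\delta',\sigma)$ successful iterations. The episodes are defined as follows.
\begin{itemize}
\item A successful $\mathcal{G}$-iteration is an episode by itself. It has one success, and Proposition~\ref{prop:bound_large_gradient} gives a decrease of at least $\Flg$, using $\sigma\le\bar\sigma$.
\item When the iterates enter some saddle ball at an iteration $k_0\in\mathcal{N}$, the episode runs from $k_0$ to $k_r$ as in~\eqref{eq:krpropescapetime}.
\end{itemize}

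For a saddle episode I will invoke Theorem~\ref{thm:saddle_escape} with $R=\bar R$ and $\Deltainf=8\bar R$. The radius lower bound comes from Lemma~\ref{lemma:lowbound_deltak}, and $\bar R\le\min(\mu^2/(4L_HL_G),\mu/(8\omega_2L_G^2))$ holds by~\eqref{eq:def_constants_DeltacritRbar}.

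The hypotheses of that theorem follow from the assumptions here.
\begin{itemize}
\item Condition~\eqref{eq:cond_sigma1} reads $\sigma\le\min(2\bar R,\bar R)$, which follows from $\sigma\le\bar\sigma\le\bar R$.
\item With $\Deltainf=8\bar R$, the right-hand side of~\eqref{eq:cond_sigma2} is $\rho'\mu^2 (8\bar R)^2/(512\bar R L_G^2)=\rho'\mu^2\bar R/(8L_G^2)$. This is exactly the right-hand side of~\eqref{eq:cond_sigma_global}.
\item The left-hand side of~\eqref{eq:cond_sigma2}, with $\delta'$ in place of $\delta$, is at most $\sigma K_{\rm esc}(\delta',\sigma)$. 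So~\eqref{eq:cond_sigma2} follows from~\eqref{eq:cond_sigma_global}.
\end{itemize}

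The theorem then guarantees, with probability at least $1-\delta'$, two things about the episode.
\begin{itemize}
\item The decrease satisfies $f(x_{k_0})-f(x_{k_r+1})\ge \rho'\mu^2(8\bar R)^2/(256L_G)=\rho'\mu^2\bar R^2/(4L_G)$. Since $\Flg=\rho'\mu^2\bar R^2/(20L_G)$, this is at least $\Flg$.
\item The number of successful iterations in $[k_0,k_r]$ is at most $\bKrs+1\le K_{\rm esc}(\delta',\sigma)$.
\end{itemize}
This episode decrease already accounts for the small increases that noisy successful steps can cause inside the saddle ball. Outside episodes, unsuccessful steps leave $f$ unchanged and successful $\mathcal{G}$-steps only decrease it. Hence $f$, sampled at episode boundaries, decreases monotonically.

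Because $f\ge\flow$, at most $(f(x_0)-\flow)/\Flg+1$ episodes can start before capture; the $+1$ allows for the last, possibly unfinished, one. I take a union bound over these episodes, each failing with probability at most $\delta'$. By the choice of $\delta'$, the total failure probability is at most $\big((f(x_0)-\flow)/\Flg+1\big)\delta' = \delta$.

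Making this rigorous is the main obstacle. The episode start times $k_0$ are random stopping times, so I must apply Theorem~\ref{thm:saddle_escape} conditionally on the past. This uses the strong Markov property of $\{(x_k,\Delta_k)\}$ together with the fact that the $\bar\xi_k$ are fresh and independent. A second point to check is that each episode really ends before a new one can begin: by Proposition~\ref{prop:escape}\ref{item:saddlecapture}, the iterates stay in the saddle ball up to $k_r$.

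On the good event, the number of successful iterations before capture satisfies
\begin{align*}
  |\mathcal{S}|_{K_{\rm cap}} \le \Big(\frac{f(x_0)-\flow}{\Flg}+1\Big)K_{\rm esc}(\delta',\sigma).
\end{align*}
Lemma~\ref{lem:mostlysuccessful}, combined with $\Delta_{K_{\rm cap}}\ge 8\bar R$, converts this into the bound~\eqref{eq:KGN} on $K_{\rm cap}$. This also shows that $K_{\rm cap}$ is finite.

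Once captured, Proposition~\ref{prop:min_capture} shows that every later iteration is in $\mathcal{M}$ for the same $x^*$. Item~\ref{item:quad_conv} of that proposition gives
\begin{align*}
  \frac{\|x_{K_{\rm cap}+j}-x^*\|}{2\bar R}\le\Big(\frac12\Big)^{2^j}.
\end{align*}
Solving $2\bar R\,2^{-2^j}\le\epsilon$ for $j$ gives~\eqref{eq:KM}, with the $[\cdot]_+$ covering the case $\epsilon\ge\bar R$.

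Finally, the classification claim follows from the definitions. Before $K_{\rm cap}$, no iterate is within $\bar R$ of a local minimizer, so every such iteration lies in $\mathcal{G}\cup\mathcal{N}$. From $K_{\rm cap}$ on, every iteration lies in $\mathcal{M}$ by Proposition~\ref{prop:min_capture}.
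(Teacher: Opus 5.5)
Your proposal is correct and takes essentially the same route as the paper. Your episodes are exactly the paper's index sequence $k_0, k_1, \ldots$: a single successful step from a large-gradient point, or the stretch from $k_0$ to $k_r$ near a saddle, where Theorem~\ref{thm:saddle_escape} is applied with $R=\bar R$, $\Deltainf=8\bar R$ and confidence $\delta'$. Your union bound over at most $\lfloor (f(x_0)-\flow)/\Flg\rfloor+1$ episodes plays the role of the paper's induction showing that each event $(P_i)$ holds with probability at least $1-i\delta'$. The final count via Lemma~\ref{lem:mostlysuccessful}, the local phase via Proposition~\ref{prop:min_capture}, and the classification of iterations are as in the paper. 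Your explicit attention to conditioning at random start times only makes rigorous what the paper does implicitly when it conditions on $(P_i)$.
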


\begin{remark}\label{remark:sigma_global} On a technical note regarding conditions~\eqref{eq:cond_sigma_global} above, we can prove similarly to Remark~\ref{remark:sigma_bar1}, using Lemma~\ref{lemma:sigma_loglog}, that there exists $\bar \sigma_{\rm global} > 0$ such that the conditions are satisfied for all $0 < \sigma \leq \bar \sigma_{\rm global}$. Moreover, $\sigma_{\rm global}$ only depends logarithmically on $d$ and $\delta$.
\end{remark}

\begin{proof}
  Recall the partition $(\mathcal{G}, \mathcal{N}, \mathcal{M})$ of the iteration indices $\mathbb{N} = \{ 0, 1, 2, \ldots \}$ as defined in the beginning of Section~\ref{s:global_comp}.
  Proposition~\ref{prop:min_capture} guarantees that local minimizers ``capture'' nearby iterates, in the sense that if $k$ is in $\mathcal{M}$ then (in particular) $k+1, k+2, \ldots$ are all in $\mathcal{M}$.
  Accordingly, let $K_{\rm cap} = \inf \mathcal{M}$ be the (random) time of capture.
  There are only two possibilities:
  either $K_{\rm cap}$ is finite and $\mathcal{M} = [K_{\rm cap}, +\infty)$, or $\mathcal{M} = \emptyset$.

  For a subset $\mathcal{C}$ of $\naturals$ and $k \in \mathbb{N}$, we use notation $|\mathcal{C}|_k = |\mathcal{C}\cap \{0,1,\dots k-1\}|$.

  \paragraph{Local phase.}
  Whether or not $K_{\rm cap}$ is finite is a random event: we will show next that it is the case with high probability.
  Assuming it is finite, there is a local minimizer $x^*$ such that $\|x_{K_{\rm cap}} - x^*\| \leq \bar{R}$ and we can apply Proposition~\ref{prop:min_capture}\ref{item:quad_conv} (which is deterministic) to claim that
  \begin{align*}
    \|x_{K_{\rm cap} + j} - x^*\| \leq \frac{\bar R}{2^{2^j-1}} && \textrm{ for all } j \geq 0.
  \end{align*}
  Therefore, the number of ``local minimizer'' iterations needed after $K_{\rm cap}$ to find a point $x_K$ satisfying $\|x_K-x^*\| \leq \epsilon$ is at most $\log_2\!\left( 1 + \left[\log_2 \frac{\bar R}{\epsilon} \right]_+ \right)$, that is, $\bar K_{M,\epsilon}$~\eqref{eq:KM}.

  \paragraph{Global phase.}
  It remains to show that $K_{\rm cap}$ is finite with high probability, and then to control how large it can be.
  The idea is that iterations in $\mathcal{G}$ (large gradient) and $\mathcal{N}$ (saddle point) decrease the objective at least by a constant value, and this cannot happen an infinite number of times as $f$ is bounded below.
  However, there is a subtlety: while the saddle escape theorem (Theorem~\ref{thm:saddle_escape}) guarantees that the objective value decreases \emph{overall} after at most some constant number of iterations (with high probability), this decrease may not be monotone.

  To handle this issue, let us first construct a sequence of indices $k_0, k_1, k_2, \ldots$ in $\mathbb{N}\cup\{+\infty\}$ as follows.
  Set $k_0 = 0$.
  Then, assuming $k_0, \ldots, k_i$ have been chosen, set $k_{i+1}$ according to the rules below, inductively (recall $\mathcal{S}$ indexes successful iterations):
  \begin{itemize}
    \item If $k_i = +\infty$, set $k_{i+1} = +\infty$.
    \item If $k_i \in \mathcal{G}$ (large gradient), set $k_{i+1} = 1 + \inf \{ k \geq k_i \,:\, k \in \mathcal{S}\}$.
    \item If $k_i \in \mathcal{N}$ (saddle point), set
    \[k_{i+1} = 1+\inf \{ k \geq k_i \,:\, k \in \mathcal{S}\textrm{ and } \stopk{k} = \stopoob \}.\]
    \item If $k_i \in \mathcal{M}$ (local minimizer), set $k_{i+1} = k_i+1$.
  \end{itemize}
  For each index $i \geq 0$, define a random event called \eqref{eq:rec_ki}, as follows:
  \begin{align}
    k_i < \infty, &&
    |\mathcal{S}|_{k_i} \leq i K_{\rm esc}(\delta',\sigma), &&
    \textrm{ and } &&
    \textrm{ if }\,  k_{i} \notin \mathcal{M}, \textrm{ then } f(x_0) - f(x_{k_i}) \geq i\Flg.
    \tag{$P_i$}
    \label{eq:rec_ki}
  \end{align}
  (Recall that $|\mathcal{S}|_{k_i}$ denotes the number of successful iterations before $k_i$.)

  Let us prove the following assertion by induction:
  \begin{align}
    \textrm{For every $i \geq 0$ such that $i \delta' < 1$, event \eqref{eq:rec_ki} holds with probability at least $1-i\delta'$.}
    \tag{Q}
    \label{Piproba}
  \end{align}
  This assertion is true for $i = 0$ because ($P_0$) always holds.
  Now, assume the assertion is true for some $i \geq 0$ such that $(i+1)\delta' < 1$.
  We must show that ($P_{i+1}$) holds with probability at least $1-(i+1)\delta'$.
  If we can show that ($P_{i+1}$) holds with probability at least $1-\delta'$ conditioned on ($P_i$), then we are done, as we would have
  $$
    \Prob(P_{i+1}) \geq \Prob(P_{i+1}\cap P_i) = \Prob(P_{i+1} | P_i) \Prob(P_i) \geq (1-\delta')(1-i\delta') \geq 1-(i+1)\delta'.
  $$
  Accordingly, let us assume ($P_i$) holds.
  Then, what is the probability that ($P_{i+1}$) holds?

  There are three possibilities to consider, depending on $k_i$.
  \begin{itemize}
    \item \textbf{``Large gradient'' iterations: $k_i \in \mathcal{G}$.} By Lemma~\ref{lemma:lowbound_deltak} which guarantees $\Delta_k \geq 8\bar R$, the number of consecutive unsuccessful steps is bounded and therefore $k_{i+1}$ is finite.

    By definition of $k_{i+1}$, we have $k_{i+1}-1\in \mathcal{S}$ while $k_i, \dots, k_{i+1}-2$ are unsuccessful. Therefore, we have $x_{k_i}=x_{k_i+1}= \cdots =x_{k_{i+1}-1}$ and thus $k_{i+1}-1 \in \mathcal{G}$. This means that $k_{i+1}-1 \in \mathcal{G} \cap \mathcal{S}$, and we can apply the deterministic decrease result of Proposition~\ref{prop:bound_large_gradient}:
      \[f(x_{k_{i+1}-1})-f(x_{k_{i+1}})\geq \Flg.\]
    We deduce using~\eqref{eq:rec_ki} that
    \begin{align*}
      f(x_0)-f(x_{k_{i+1}}) &=f(x_0)-f(x_{k_{i}}) +  f(x_{k_{i+1}-1})-f(x_{k_{i+1}}) \geq (i+1)\Flg.
    \end{align*}
    Noting that there is only one successful iteration between $k_i$ and $k_{i+1}-1$, we also have
    \[
      |\mathcal{S}|_{k_{i+1}} = |\mathcal{S}|_{k_{i}} + 1 \leq i K_{\rm esc}(\delta',\sigma) + 1 \leq (i+1) K_{\rm esc}(\delta',\sigma).
    \]
    Thus, if ($P_i$) holds and $k_i \in \mathcal{G}$, then ($P_{i+1}$) holds.

    \item \textbf{``Saddle point'' iterations: $k_i  \in \mathcal{N}$.}
    We invoke the saddle escape result of Theorem~\ref{thm:saddle_escape} with $R = \bar R$, $\Deltainf = 8\bar{R}$ and confidence parameter $\delta'$.
    The assumptions are satisfied: indeed, Lemma~\ref{lemma:lowbound_deltak} guarantees $\Delta_k \geq \Deltainf$ for all $k$,
    and conditions~\eqref{eq:cond_sigma1} and~\eqref{eq:cond_sigma2} follow from our conditions~\eqref{eq:cond_sigma_global} on $\sigma$ upon recalling that $\bar \sigma \leq \bar R$ (see~\eqref{eq:def_constants_DeltacritRbar}, \eqref{eq:def_constants}).

    Then, Theorem~\ref{thm:saddle_escape} implies that $k_{i+1}$ is finite with probability at least $1-\delta'$.
    Moreover, in that event,
    \[
      f(x_{k_{i}}) - f(x_{k_{i+1}}) \geq \frac{\rho' \mu^2\Delta_{k_i}^2}{256 L_G} \geq \frac{\rho' \mu^2{\bar R}^2}{4 L_G} = 5\Flg \geq \Flg,
    \]
    where we used $\Delta_{k_i} \geq 8 \bar R$ again, and the definition of $\Flg$~\eqref{eq:def_constants}.
    Using~\eqref{eq:rec_ki} then yields
    \[f(x_{0}) - f(x_{k_{i+1}}) \geq (i+1) \Flg.\]
    Still in that same event, Theorem~\ref{thm:saddle_escape} asserts that the number of successful iterations between $k_i$ and $k_{i+1}$ is at most $K_{\rm esc}(\delta',\sigma)$, and therefore
    \[
      |\mathcal{S}|_{k_{i+1}} \leq |\mathcal{S}|_{k_{i}} + K_{\rm esc}(\delta',\sigma) \leq (i+1) K_{\rm esc}(\delta',\sigma).
    \]
    Thus, if ($P_i$) holds and $k_i \in \mathcal{N}$, then ($P_{i+1}$) holds with probability at least $1 - \delta'$.

    \item \textbf{``Local minimizer'' iterations: $k_i \in \mathcal{M}$.}
    By the capture result of Proposition~\ref{prop:min_capture}, $k_{i+1} = k_i+1 \in \mathcal{M}$ and $|\mathcal{S}|_{k_i+1} = |\mathcal{S}|_{k_i}+1$.
    Thus, if ($P_i$) holds and $k_i \in \mathcal{M}$, then ($P_{i+1}$) holds.
  \end{itemize}

  In all three cases, we showed that the probability that $(P_{i+1})$ holds given $(P_i)$ is at least $1-\delta'$.
  Thus, the proof by induction is complete for assertion~\eqref{Piproba}.

  From here, we use~\eqref{Piproba} to claim that~\eqref{eq:rec_ki} with a carefully chosen index $i$ holds with good probability.
  Specifically, we choose $i = \bar I := \big\lfloor \tfrac{f(x_0)-\flow}{\Flg} \big\rfloor+1$.
  By definition of $\delta'$ (see theorem statement), we have
  \begin{equation}
    \bar I \delta'
      = \frac{ \big\lfloor \tfrac{f(x_0)-\flow}{\Flg} \big\rfloor + 1}{ \left(\frac{f(x_0)-\flow}{\Flg}\right) + 1 } \cdot \delta
      \leq \delta
      < 1.
  \end{equation}
  Therefore, $(P_{\bar I})$ holds with probability at least $1-\bar I \delta' \geq 1 - \delta$.

  Going forward, assume $(P_{\bar I})$ holds.
  Let us show that $K_{\rm cap}$ is bounded.
  We know $k_{\bar I}$ is finite.
  Also, for contradiction, if $k_{\bar I}$ is not in $\mathcal{M}$, then $(P_{\bar I})$ and the lower bound $\flow$ on $f$ from Assumption~\ref{ass:f} further provide
  \begin{align}
    f(x_0) - \flow \geq f(x_0) - f(x_{k_{\bar I}}) \geq  \bar I \Flg = \left(\big\lfloor \tfrac{f(x_0)-\flow}{\Flg} \big\rfloor + 1\right)\Flg  > f(x_0) - \flow.
  \end{align}
  This is not possible; thus, $k_{\bar I}$ is in $\mathcal{M}$ and it follows that $K_{\rm cap} \leq k_{\bar I}$.

  Property $(P_{\bar I})$ additionally offers a bound on the number of successful iterations up to $k_{\bar I}$, namely, $|\mathcal{S}|_{k_{\bar I}} \leq \bar I K_{\rm esc}(\delta', \sigma)$.
  It also follows from the mechanics of the trust-region radius update (Lemma~\ref{lem:mostlysuccessful}) that
  \begin{align*}
    K_{\rm cap} \leq k_{\bar I} \leq \frac{3}{2} |\mathcal{S}|_{k_{\bar I}} + \frac{1}{2} \log_2\!\left( \frac{\Delta_0}{\Delta_{k_{\bar I}}} \right).
  \end{align*}
  Since the trust-region radius is lower bounded as $\Delta_k \geq 8\bar R$ for all $k$ (Lemma~\ref{lemma:lowbound_deltak}), we find
  \begin{align*}
    K_{\rm cap}
      & \leq \frac{3}{2} \bar I K_{\rm esc}(\delta', \sigma) + \frac{1}{2} \log_2\!\left( \frac{\Delta_0}{8\bar{R}} \right) \\
      & \leq \frac{3}{2} \left( \frac{f(x_0)-\flow}{\Flg} +1\right) K_{\rm esc}(\delta',\sigma) + \frac{1}{2} \log_2\!\left( \frac{\Delta_0}{8\bar R} \right),
  \end{align*}
  where we used the expression of $\bar I$.
  The final right-hand side is $\bar K_{G,N}$, as announced in~\eqref{eq:KGN}.
\end{proof}

\subsection{Aside: algorithm's behavior under weaker assumptions}

In this section, we prove that, in the absence of the $\mu$-Morse and strong gradient property (Assumption \ref{ass:mumorse_and_stronggrad}), the method still enjoys the standard first-order guarantees of TR-tCG.

The strong gradient property (part of Assumption~\ref{ass:mumorse_and_stronggrad}) provides control on $f$ away from its critical points.
We use that assumption (only) to prove Lemma~\ref{lemma:lowbound_G} (lower bound on the gradient norm) and Lemma~\ref{lemma:lowbound_deltak} (lower bound on the trust-region radius).
In turn, those lemmas are used several times throughout our analysis.

Even without this assumption, Algorithm~\ref{algo:pTR} is still capable of finding approximate critical points of $f$, provided that the noise scale is small enough. The result below states that the method finds a point with gradient norm $\epsilon$ in at most $\tildeO(1/\epsilon^2)$ iterations, deterministically.
The argument tracks the standard trust-region proof while accounting for the perturbations $\xi_k$.
\begin{proposition}\label{prop:epsilon_complexity}
  Let $f$ satisfy Assumption~\ref{ass:f}.
  Let $0<\epsilon<32L_G\Delta_0$.
  Run Algorithm~\ref{algo:pTR} with noise scale $\sigma>0$ satisfying
  \[
    \sigma \leq \frac{\rho'(1-\rho')}{513 L_G} \epsilon. % Note: yes, 513, not 512. This gives just enough leeway to absorb the $\sigma^2$ term.
  \]
  Let $K$ be the first iteration index such that $\|\nabla f(x_K)\| \leq \epsilon$. Then,
  \[
      K \leq \frac{768 L_G(f(x_0) - \flow)}{\rho'(1-\rho')\epsilon^2} + \frac{1}{2} \log_2\!\left( \frac{32 L_G \Delta_0}{(1-\rho')\epsilon} \right).
  \]
\end{proposition}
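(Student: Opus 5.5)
The argument follows the standard trust-region template, run for all $k < K$, where $\|g_k\| > \epsilon$ by definition of $K$. It has three steps: a radius lower bound, a per-iteration decrease bound for successful iterations, and a count of unsuccessful iterations via Lemma~\ref{lem:mostlysuccessful}.

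\textbf{Step 1: radius lower bound.} We first show that $\Delta_k \geq \Delta_\epsilon := \frac{(1-\rho')\epsilon}{32 L_G}$ for all $k \leq K$. Lemma~\ref{lemma:large_gradient_radius} applies at every $k < K$: if $\Delta_k \leq \frac{(1-\rho')\epsilon}{8L_G} \leq \frac{(1-\rho')\|g_k\|}{8L_G}$, then iteration $k$ is successful. The induction is the one from Lemma~\ref{lemma:lowbound_deltak}:
\begin{itemize}
  \item The base case holds because $\epsilon < 32 L_G \Delta_0$ gives $\Delta_0 > \Delta_\epsilon$.
  \item If $\Delta_k \leq 4\Delta_\epsilon = \frac{(1-\rho')\epsilon}{8L_G}$, iteration $k$ is successful, so $\Delta_{k+1} \geq \Delta_k$.
  \item Otherwise $\Delta_{k+1} \geq \Delta_k/4 \geq \Delta_\epsilon$.
\end{itemize}
Note that $\|\xi_k\| \leq \Delta_k/4$ holds automatically by design.

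\textbf{Step 2: decrease on successful iterations.} For a successful $k < K$, combine Lemma~\ref{lemma:obj_decrease} with the $W_1$ term of Lemma~\ref{lemma:cauchy_decrease}, using $\|r_k^{(0)}\| \geq \|g_k\|$ from the sign choice of $\xi_k$. This gives
\[
  f(x_k) - f(x_{k+1}) \geq \rho' \min\!\left(\tfrac{\|g_k\|}{2L_G}, \tfrac{\Delta_k}{8}\right)\|g_k\| - \sigma\|g_k\| - \tfrac{L_G}{2}\sigma^2 .
\]
Inserting $\Delta_k \geq \Delta_\epsilon$, the minimum is at least $\frac{(1-\rho')\epsilon}{256 L_G}$, because $\frac{\epsilon}{2L_G}$ is larger. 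So the first term is at least $\frac{\rho'(1-\rho')}{256 L_G}\epsilon\|g_k\|$.

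This is the one delicate step: absorbing the noise terms. The hypothesis $\sigma \leq \frac{\rho'(1-\rho')}{513 L_G}\epsilon$ makes $\sigma\|g_k\|$ roughly half of the main term. The $\sigma^2$ term is controlled using $\|g_k\| \geq \epsilon$: it is at most $\frac{L_G}{2}\sigma\cdot\frac{\epsilon}{513 L_G} \leq \frac{\sigma\|g_k\|}{1026}$. The constant $513$ leaves just enough slack, since $1/256 - (1+1/1026)/513 \geq 1/768$. Hence each successful iteration before $K$ decreases $f$ by at least $\frac{\rho'(1-\rho')\epsilon^2}{768 L_G}$.

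\textbf{Step 3: counting iterations.} Unsuccessful iterations leave $x_k$ unchanged, so $f(x_0) - f(x_K) \leq f(x_0) - \flow$. This bounds the number of successful iterations before $K$ by $|\mathcal{S}|_K \leq \frac{768 L_G (f(x_0)-\flow)}{\rho'(1-\rho')\epsilon^2}$. Then Lemma~\ref{lem:mostlysuccessful} with $\Delta_K \geq \Delta_\epsilon$ gives
\[
  K \leq \tfrac{3}{2}|\mathcal{S}|_K + \tfrac{1}{2}\log_2\!\left(\tfrac{32 L_G \Delta_0}{(1-\rho')\epsilon}\right).
\]
This matches the stated bound, with $768$ absorbing the factor $3/2$ if the per-step constant is taken as $1/512$ rather than $1/768$. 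I would redo the constant bookkeeping so that the final coefficient lands exactly on $768$.
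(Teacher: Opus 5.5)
Your proof is correct and follows the paper's argument: the radius floor $\frac{(1-\rho')\epsilon}{32L_G}$ via Lemma~\ref{lemma:large_gradient_radius}, the $W_1$ term of Lemma~\ref{lemma:cauchy_decrease} combined with Lemma~\ref{lemma:obj_decrease}, telescoping, then Lemma~\ref{lem:mostlysuccessful}. Keeping the factor $\|g_k\|$ when absorbing $\sigma\|g_k\|$ is exactly what the paper's displayed step needs, since it writes $-\epsilon\sigma$ where Lemma~\ref{lemma:obj_decrease} gives $-\|g_k\|\sigma$ with $\|g_k\|>\epsilon$. The constant you left open already closes with your own slack estimate: $\frac{1}{256}-\frac{1027}{513\cdot 1026}\ge\frac{1}{512}$ because $512\cdot 1027=525824\le 526338=513\cdot 1026$, so each successful step before $K$ decreases $f$ by more than $\frac{\rho'(1-\rho')\epsilon^2}{512L_G}$, giving $|\mathcal{S}|_K\le\frac{512L_G(f(x_0)-\flow)}{\rho'(1-\rho')\epsilon^2}$ and $\frac{3}{2}\cdot 512=768$ exactly.
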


\begin{proof}
  Note that none of the arguments below rely on Assumption~\ref{ass:mumorse_and_stronggrad}.
  Assume that for all $k < K$, we have $\|g_k\| > \epsilon$.
  We first notice that for all $k \leq K$, we have
  \begin{equation}\label{eq:radius_lowbound_weak}
    \Delta_k \geq \frac{(1-\rho')\epsilon}{32 L_G}.
  \end{equation}
  Indeed, the inequality is true for $k =0 $ by assumption.
  For $k\geq 1$, it follows from the fact that every iteration such that $\Delta_k$ is smaller than $(1-\rho')\|g_k\| / (8 L_G)$ is successful (Lemma \ref{lemma:large_gradient_radius}).

  Consider a successful iteration $k \in \mathcal{S}$.
  From Lemma~\ref{lemma:cauchy_decrease} (model decrease), using $\|r^{(0)}_k\| \geq \|g_k\| > \epsilon$ and Eq.~\eqref{eq:radius_lowbound_weak}:
  $$
  m_{x_k}(\xi_k) - m_{x_k}(u_k) \geq \min\!\left( \frac{\|g_k\|^2}{2L_G}, \frac{\Delta_k \|g_k\|}{8} \right) > \min\!\left( \frac{\epsilon^2}{2L_G}, \frac{(1-\rho')\epsilon^2}{256 L_G} \right) = \frac{(1-\rho')}{256 L_G} \epsilon^2.
  $$
  Now, applying Lemma~\ref{lemma:obj_decrease} (objective decrease) with $\|\xi_k\| \leq \sigma$:
  \begin{equation}\label{eq:suff_dec}
      f(x_k) - f(x_{k+1}) \geq \rho' \left( m_{x_k}(\xi_k) - m_{x_k}(u_k) \right) - \epsilon \sigma - \frac{L_G}{2} \sigma^2 > \frac{\rho'(1-\rho')}{512 L_G} \epsilon^2.
  \end{equation}
  The usual telescoping sum argument then reveals
  \[
      f(x_0) - \flow \geq |\mathcal{S}|_K \frac{\rho'(1-\rho')}{512 L_G} \epsilon^2.
  \]
  This implies an upper bound on the number of successful steps.
  Finally, we relate the total iterations $K$ to successful iterations using Lemma~\ref{lem:mostlysuccessful}, namely,
  \[
      K \leq \frac{3}{2} |\mathcal{S}|_K + \frac{1}{2} \log_2\!\left( \frac{\Delta_0}{\Delta_K} \right),
  \]
  and use the lower bound on $\Delta_K$ from Eq.~\eqref{eq:radius_lowbound_weak} to conclude.
\end{proof}

\section{Complexity estimate in terms of oracle calls}\label{s:complexity}

The previous section bounds the number of outer iterations Algorithm~\ref{algo:pTR} may use to reach a point within distance $\epsilon$ of a local minimizer.
With proper care to avoid redundant computations, assuming $f(x_0)$ and $\nabla f(x_0)$ have been computed, each outer iteration requires:
\begin{itemize}
  \item at most one evaluation of $f$ (at $x_k + u_k$),
  \item at most one evaluation of $\nabla f$ (at $x_k$), and
  \item at most $T_k + 2$ Hessian-vector products (using $\nabla^2 f(x_k)$), where $T_k$ is the number of inner iterations of the call to $\tCGbg$ at outer iteration $k$. % Note: yes, it is T_k + 2. Think of the case where $T_k = 1$ and we did a boundary step. You had to compute Hxi (1), then also Hp0 (2), and also (in the boundary gradient step) you need the gradient of the model at v1 (that's free with proper reuse), called -r, but then also Hr: that's (3). And the method returns T_k = 1.
\end{itemize}
In this section, we estimate the total number of inner iterations, summed across all outer iterations.
To do so, we bound the number of iterations performed by $\tCGbg$ for each run, by distinguishing three scenarios: ``near a local minimizer'', ``near a saddle'' and ``large gradient''.
These require some understanding of the behavior of $\tCGbg$ in each setting, handled separately in the next three sections.

\subsection{Number of inner iterations at points near local minimizers (deterministic)}

First, we examine the behavior of $\tCGbg$ (Algorithm~\ref{algo:tCG}) when it is called for a point $x_k$ near a local minimizer.
In this case, the quadratic model is strongly convex and (classical) CG enjoys fast linear convergence:
we capitalize on this to show that the ``small residual'' criterion ($\stopres$) triggers rapidly (unless another stopping criterion triggers even sooner).

\begin{lemma}\label{lemma:cg_rate_convex} % [Complexity of $\tCGbg$ for strongly convex quadratics]
  Let $H \in \symm_d$ satisfy $\opnorm{H} \leq L_G$.
  Fix vectors $g, \xi \in \reals^d$ and a radius $\Delta > 0$ such that $\|\xi\| \leq \Delta / 4$ and $g \neq 0$.
  Assume $H \succeq \nu I_d$ with $\nu > 0$.
  Run $\tCGbg(H, g, \Delta, \xi)$, with output $(u, \stopcrit, T)$.
  Then, $T$ (the number of inner iterations performed) satisfies
  \begin{align*}
    T \leq 1 + \sqrt{\frac{L_G}{\nu}} \log\!\left( 2 \sqrt{\frac{L_G}{\nu}} \cdot \frac{\|g + H\xi\|}{\min(\omega_1\|g\|, \omega_2\|g\|^2)} \right).
  \end{align*}
  (We also have $T \leq d$, which remains valid if $g = 0$ by Lemma~\ref{lemma:well_known_props}.)
\end{lemma}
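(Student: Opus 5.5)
Because $H \succeq \nu I_d \succ 0$, every curvature test $\langle p^{(t-1)}, H p^{(t-1)}\rangle > 0$ passes. So $\tCGbg$ can stop in only two ways: the $\stopres$ criterion fires, or some tentative iterate leaves the ball of radius $\Delta/2$ and triggers $\stopoob$. Either way, up to iteration $\Tin$ the iterates coincide with those of standard CG started at $v^{(0)} = \xi$. The second exit can only shorten the run: it ends the algorithm at $T = \Tin + 1$, and $\Tin$ is itself bounded by the CG count we compute below. Hence it suffices to bound the first index $t$ at which plain CG from $\xi$ satisfies $\|r^{(t)}\| \le \min(\omega_1\|g\|, \omega_2\|g\|^2)$. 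Then $T$ is at most that index, or one more in the boundary case, which accounts for the additive $1$.

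To bound that index, I shift to zero initialization with Lemma~\ref{lemma:cg_equiv}, or simply work with the error $e^{(t)} = v^{(t)} - v^*$, where $v^* = -H^{-1}g$. I then use the classical CG energy-norm rate with $\kappa = L_G/\nu$:
\[
\|e^{(t)}\|_H \le 2\Big(\tfrac{\sqrt\kappa-1}{\sqrt\kappa+1}\Big)^t \|e^{(0)}\|_H \le 2\exp\!\big(-2t/\sqrt\kappa\big)\,\|e^{(0)}\|_H .
\]
Next I convert to residuals. Since $r^{(t)} = -He^{(t)}$, we have $\|r^{(t)}\|^2 = (e^{(t)})^\top H^2 e^{(t)}$, which lies between $\nu \|e^{(t)}\|_H^2$ and $L_G \|e^{(t)}\|_H^2$. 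This gives
\[
\|r^{(t)}\| \le 2\sqrt{\kappa}\,\exp\!\big(-2t/\sqrt\kappa\big)\,\|r^{(0)}\|,
\qquad \|r^{(0)}\| = \|H\xi + g\|.
\]
The stopping threshold is therefore met as soon as $t \ge \sqrt{\kappa}\,\log\!\big(2\sqrt\kappa\,\|g+H\xi\|/\min(\omega_1\|g\|,\omega_2\|g\|^2)\big)$. I can even afford to drop the factor $2$ in the exponent. Taking the ceiling of this value is absorbed by the $+1$ in the statement, which gives exactly the claimed bound. If the logarithm is negative, then the residual already passes the test at $t = 0$; Algorithm~\ref{algo:tCG} checks the residual only after the first step, so $T \le 1$ in that case.

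The main obstacle is bookkeeping rather than any single estimate. I must check that the residual test being satisfied at some $t \le \Tin$ really ends the algorithm at that $t$. I must also check that a boundary exit at step $\Tin + 1$ happens only if $\Tin$ is below the CG count, which holds because the CG iterates up to $\Tin$ are genuine CG iterates that have not yet met the threshold. This is how the bound on $T$ follows in both cases. The bound $T \le d$ is quoted from Lemma~\ref{lemma:well_known_props}.
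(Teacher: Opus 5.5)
Your proof follows the paper's: the classical CG rate, conversion to residuals through $\nu I_d\preceq H\preceq L_G I_d$, and the stopping test. You phrase the rate in the energy norm $\|v^{(t)}-v^*\|_H$, while the paper uses the equivalent gap $m(v^{(t)})-m(v^*)=\tfrac12\|v^{(t)}-v^*\|_H^2$. Your energy-norm rate $2e^{-2t/\sqrt{\kappa}}$ (with $\kappa=L_G/\nu$) is sharper than the paper's $2e^{-t/\sqrt{\kappa}}$, which is why you can drop the factor $2$.

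On bookkeeping, your last paragraph is the right one. A boundary exit at $T=\Tin+1$ happens only if the test failed for every $1\le t\le \Tin$. So in both cases $T\le t^\star$, where $t^\star$ is the first $t\ge 1$ at which plain CG meets the threshold. The $+1$ is therefore needed only to absorb the ceiling; do not also spend it on the boundary case, as your first paragraph suggests.

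One step does not go through. When the logarithm is negative you correctly get $T\le 1$, but then the right-hand side of the claim is strictly below $1$, so this does not prove it. No argument can, because the statement is false in that regime. Take $H=I_d$ (so $\nu=L_G=1$), $\|g\|\le\Delta/4$ and $\xi=-(1-\epsilon)g$. Then $r^{(0)}=-\epsilon g\neq 0$, and CG reaches $v^*=-g$ at $t=1$ with $r^{(1)}=0$, so $T=1$. The bound, however, equals $1+\log\bigl(2\epsilon\|g\|/\min(\omega_1\|g\|,\omega_2\|g\|^2)\bigr)<1$ for small $\epsilon$.

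The paper's proof silently skips the same case: there is no threshold test at $t=0$, so nothing is learned when $T=1$. There are two fixes:
\begin{itemize}
\item put $[\,\cdot\,]_+$ around the logarithm, or
\item assume $\la H\xi,g\ra\ge 0$, as Algorithm~\ref{algo:pTR} enforces. Then $\|g+H\xi\|\ge\|g\|$, the argument of the logarithm is at least $2\sqrt{L_G/\nu}/\omega_1>1$, and the case $T=1$ is trivial.
\end{itemize}
With either fix your argument is complete, and the second one is all that Theorem~\ref{thm:global_inner_its} uses.
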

\begin{proof}
  Let $m(v) = \la g,v \ra + \frac{1}{2} \la Hv,v \ra$.
  The minimizer of $m$ is $v^* = -H^{-1}g$.
  Consider $t \leq T-1$ for the entire proof.
  For such $t$, regardless of what brings $\tCGbg$ to terminate, we know $v^{(t)}$ is equal to the $t$th iterate of the standard CG method (see~\eqref{eq:def_tin}).
  Thus, $v^{(t)}$ satisfies the convergence bound
  \[
    m(v^{(t)}) - m(v^*) = \frac{1}{2}\langle H(v^{(t)} - v^*),v^{(t)} - v^* \rangle \leq 4
    \exp\!\left(
      -2t \sqrt{\frac{\nu}{L_G}}
    \right)
    \left( m(v^{(0)}) - m(v^*) \right),
  \]
  see, e.g., \cite[Thm~3.1.1]{Greenbaum1997iterative}.
  As $r^{(t)} = -(Hv^{(t)}+g) = H(v^*-v^{(t)})$ (Lemma~\ref{lemma:well_known_props}), we also have
  \begin{equation*}
    m(v^{(t)}) -m(v^*)= \frac{1}{2}\la H^{-1}r^{(t)},r^{(t)} \ra.
  \end{equation*}
  Since $\nu I_d \preceq H \preceq L_GI_d$, we deduce
  \begin{align*}
      \frac{1}{2L_G}\|r^{(t)}\|^2 &\leq   m(v^{(t)}) - m(v^*) \leq \frac{1}{2\nu} \|r^{(t)}\|^2.
  \end{align*}
  It follows that
  \begin{equation*}
    \|r^{(t)}\| \leq 2\exp\!\left( -t \sqrt{\frac{\nu}{L_G}} \right) \sqrt{\frac{L_G}{\nu}} \|r^{(0)}\|.
  \end{equation*}
  To conclude, write $r^{(0)} = -(g + Hv^{(0)}) = -(g + H\xi)$, and use the fact that $\tCGbg$ terminates as soon as $\|r^{(t)}\| \leq \min(\omega_1 \|g\|,\omega_2\|g\|^2)$ (if not earlier).
\end{proof}

\subsection{Number of inner iterations at points near saddle points}

Next, we turn to the behavior of $\tCGbg$ when it is called at a point $x_k$ near a saddle point, so that the Hessian has a sufficiently negative eigenvalue.
We first recall a fact about the eigenvalues of matrices restricted to Krylov spaces.
It can be traced back to \citet{LanczosBound1992}, and we quote the form presented in~\citep[Lem.~3]{Carmon2018a}.

\begin{lemma}\label{lemma:finding_eig}
  Let $M\in \symm_d$.
  Assume $M \succeq 0$ and $\la q, Mq \ra = 0$ for some unit vector $q \in \reals^d$.
  Let $r\in \reals^d$, $r \neq 0$.
  Then, for all $t \geq 1$ there exists $z^{(t)} \in \mathcal{K}_t(M, r) = \mathrm{span}(r, Mr, \ldots, M^{t-1}r)$ such that
  \begin{align*}
    \|z^{(t)}\| = 1  && \textrm{ and } &&
    \la z^{(t)}, Mz^{(t)} \ra \leq \frac{\opnorm{M}}{16(t - \frac{1}{2})^2} \left(\log\!\left(4\frac{\|r\|^2}{\la q, r\ra^2} - 2\right) \right)^2.
  \end{align*}
\end{lemma}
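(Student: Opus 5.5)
The approach is to build $z^{(t)}$ from a polynomial $p$ of degree at most $t-1$, setting $z^{(t)} \propto p(M) r$. Such a vector lies in $\mathcal{K}_t(M,r)$ automatically, and the task reduces to choosing $p$ so that its Rayleigh quotient
\[
\frac{\la p(M)r, M p(M) r\ra}{\|p(M)r\|^2}
\]
is small. Write $L=\opnorm{M}$ and diagonalize $M = \sum_i \lambda_i e_ie_i^\top$ with $0 \le \lambda_i \le L$. Since $M\succeq 0$ and $\la q,Mq\ra = 0$, we have $Mq=0$, so $q$ lies in the kernel of $M$. Let $P_0$ denote the orthogonal projector onto that kernel. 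Then $\|P_0 r\| \geq |\la q,r\ra|$, and setting $\alpha_i = \la e_i,r\ra$, the Rayleigh quotient becomes
\[
\frac{\sum_i \lambda_i p(\lambda_i)^2\alpha_i^2}{\sum_i p(\lambda_i)^2\alpha_i^2}
\;\le\; \frac{\sup_{\lambda\in[0,L]} \lambda\,p(\lambda)^2 \,\|r\|^2}{p(0)^2\la q,r\ra^2 + \sum_{\lambda_i>0} p(\lambda_i)^2\alpha_i^2}.
\]
If $P_0r = 0$ then $\la q,r\ra = 0$ and the bound is vacuous (the log is $+\infty$), so that case needs no work.

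The core of the argument is a threshold split rather than a uniform sup bound. Fix $\epsilon>0$ and split the eigenvalues into those below $\epsilon$ and those above. Eigenvalues below $\epsilon$ contribute at most $\epsilon$ to the weighted average, so they are harmless. For those above $\epsilon$, I want $p$ to be small on $[\epsilon,L]$ while $p(0)=1$. I will take the shifted Chebyshev polynomial $p(\lambda) = T_{t-1}\!\big(\tfrac{L+\epsilon-2\lambda}{L-\epsilon}\big)/T_{t-1}\!\big(\tfrac{L+\epsilon}{L-\epsilon}\big)$. It satisfies $|p|\le 1/T_{t-1}(1+\tfrac{2\epsilon}{L-\epsilon}) \le 2e^{-2(t-1)\sqrt{\epsilon/L}}$ on $[\epsilon,L]$, with $p(0)=1$. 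Then
\[
\text{quotient} \le \epsilon + L\,\frac{\|r\|^2\sup_{[\epsilon,L]}p^2}{\la q,r\ra^2},
\]
and choosing $\epsilon \approx \frac{L}{t^2}\log^2(\|r\|^2/\la q,r\ra^2)$ makes the second term comparable to the first.

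To reach the stated constants ($\frac{1}{16(t-1/2)^2}$ and $\log(4\kappa-2)$ with $\kappa = \|r\|^2/\la q,r\ra^2$), I would instead optimize $\lambda p(\lambda)^2$ directly. The idea is to substitute $\lambda = L\sin^2$-type variables, equivalently to work with $\lambda p(\lambda)^2$ as an odd Chebyshev polynomial in $\sqrt{\lambda}$: take $\sqrt{\lambda}\,p(\lambda)$ proportional to $T_{2t-1}(\sqrt{\lambda/L}\cdot c)$. The half-integer $(t-\tfrac12)$ strongly suggests degree $2t-1$ in $\sqrt\lambda$. Because the paper only quotes this lemma from \citet{Carmon2018a}, one could simply cite it. If I did carry the computation out myself, I would accept constant losses via the simpler Chebyshev-on-$[\epsilon,L]$ argument above and then tune the constants.

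The main obstacle is this final constant-matching step: balancing $\epsilon$ against $\cosh^{-2}$ of the Chebyshev growth so that the result is exactly $\frac{L}{16(t-1/2)^2}\log^2(4\kappa-2)$. I would attack it with the identity $T_n(\cosh\theta)=\cosh(n\theta)$, choosing $\theta$ so that $\cosh^2((2t-1)\theta) = 2\kappa - 1$. Since $2\cosh^2 x - 1 = \cosh 2x$, this gives $\theta \approx \frac{\log(4\kappa-2)}{2(2t-1)}$, and squaring yields the $16(t-\tfrac12)^2$ denominator.
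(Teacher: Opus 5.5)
The paper does not prove this lemma. It quotes it from \citet[Lem.~3]{Carmon2018a} (going back to \citet{LanczosBound1992}), so your fallback of citing it is exactly what the paper does. The self-contained argument you propose in its place, however, does not reach the stated bound, and the shortfall is not a matter of tuning constants. Write $L=\opnorm{M}$ and $\kappa=\|r\|^2/\langle q,r\rangle^2$. In your threshold split, the eigenvalues above $\epsilon$ are charged $L\kappa\sup_{[\epsilon,L]}p^2$, and your shifted Chebyshev polynomial already minimizes that supremum. Parametrizing $\epsilon=L\tanh^2\phi$ gives $\frac{L+\epsilon}{L-\epsilon}=\cosh 2\phi$, so this term equals exactly $L\kappa/\cosh^2(2(t-1)\phi)$. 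With your choice $\epsilon\approx\frac{L}{t^2}\log^2\kappa$, this term converges to a positive limit (about $4L\kappa^{-3}$ for large $\kappa$) as $t\to\infty$ instead of decaying, so the two terms are not balanced. Balancing them requires $\phi$ of order $\log(\kappa t^2)/(4t)$. This route therefore only gives a bound of order $\frac{L}{t^2}\log^2(\kappa t^2)$, with a spurious $\log t$, and nothing at $t=1$. The proposed refinement cannot fix this. Any bound that goes through $\sup_{[0,L]}\lambda p(\lambda)^2/p(0)^2$, as in your first display, is stuck at $\kappa L/(2t-1)^2$. Indeed, Bernstein's inequality for the odd polynomial $s\mapsto s\,p(Ls^2)$ of degree $2t-1$ gives $\sup_{[0,L]}\lambda p(\lambda)^2\ge L\,p(0)^2/(2t-1)^2$ for every $p$ of degree at most $t-1$, with equality for $\sqrt{\lambda}\,p(\lambda)\propto T_{2t-1}(\sqrt{\lambda/L})$. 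In that construction no argument of $T_{2t-1}$ leaves $[-1,1]$, so the identity $T_n(\cosh\theta)=\cosh(n\theta)$ has nothing to act on. Also, your equation $\cosh^2((2t-1)\theta)=2\kappa-1$ does not lead to the $\theta$ you then write down.

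The missing idea is to keep the weight $\lambda-\epsilon$ rather than bounding $\lambda$ by $L$, and to build the threshold into the substitution. With $p(0)=1$ and $\alpha_i=\langle e_i,r\rangle$, the inequality $\langle z,Mz\rangle\le\epsilon$ is equivalent to $\sum_i(\lambda_i-\epsilon)p(\lambda_i)^2\alpha_i^2\le 0$. The kernel of $M$ contributes $-\epsilon\|P_0r\|^2$, and eigenvalues in $(0,\epsilon)$ contribute nonpositively. The remaining weight is $\|r\|^2-\|P_0r\|^2\le(\kappa-1)\|P_0r\|^2$, so it suffices that $\sup_{[\epsilon,L]}(\lambda-\epsilon)p(\lambda)^2\le\epsilon/(\kappa-1)$. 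Take $s=\sqrt{(\lambda-\epsilon)/(L-\epsilon)}$ and $p(\lambda)=c\,T_{2t-1}(s)/s$, which is a polynomial of degree $t-1$ in $\lambda$ because $T_{2t-1}$ is odd. On $[\epsilon,L]$ we get $(\lambda-\epsilon)p(\lambda)^2=(L-\epsilon)c^2T_{2t-1}(s)^2\le(L-\epsilon)c^2$. Meanwhile $\lambda=0$ corresponds to $s=\mathrm{i}\sinh\phi$ with $\sinh^2\phi=\epsilon/(L-\epsilon)$, and there $T_{2t-1}(\mathrm{i}\sinh\phi)=(-1)^{t-1}\mathrm{i}\sinh((2t-1)\phi)$; this is the exponential growth you were looking for. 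Normalizing $p(0)=1$ gives $(L-\epsilon)c^2=\epsilon/\sinh^2((2t-1)\phi)$, so the requirement reads $\sinh^2((2t-1)\phi)\ge\kappa-1$, i.e.\ $\cosh(2(2t-1)\phi)\ge 2\kappa-1$. This holds for $\phi=\log(4\kappa-2)/(4(t-\frac12))$ since $\cosh x\ge e^x/2$. Then $\epsilon=L\tanh^2\phi\le L\phi^2$ is exactly the stated bound, valid for every $t\ge1$ (the cases $\langle q,r\rangle=0$ and $L=0$ are trivial).
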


We use this lemma to show that the restriction of $H$ to a sufficiently large Krylov space~\eqref{eq:KrylovspaceKtHb} has a negative eigenvalue, causing $\tCGbg$ to terminate with $\stopcrit = \stopoob$ (if not sooner).

\begin{lemma}\label{lemma:tcg_nc_innerits} % [Complexity of $\tCGbg$ for non-convex quadratics]
  Let $H \in \symm_d$ satisfy $\opnorm{H} \leq L_G$.
  Fix vectors $g, \xi \in \reals^d$ and a radius $\Delta > 0$ such that $\|\xi\| \leq \Delta / 4$.
  Assume the smallest eigenvalue of $H$, denoted by $\lambdamin(H)$, is negative, and let $\qmin$ be a corresponding unit eigenvector.
  Run $\tCGbg(H, g, \Delta, \xi)$, with output $(u, \stopcrit, T)$.
  Then, $T$ (the number of inner iterations performed) satisfies either $T = 0$ or
  \[
    T \leq \frac{3}{2} + \sqrt{
      \frac{L_G}{8|\lambdamin(H)|}
    }
      \log\!\left(4\frac{\|g+H\xi\|^2}{\la \qmin, g + H\xi\ra^2} - 2\right).
  \]
\end{lemma}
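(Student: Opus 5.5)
We will apply Lemma~\ref{lemma:finding_eig} to the shifted matrix $M = H - \lambdamin(H) I_d$. It satisfies $M \succeq 0$, $\la \qmin, M\qmin\ra = 0$ and $\opnorm{M} \leq 2L_G$. We take the vector $r = r^{(0)} = -(g + H\xi)$. If $r^{(0)} = 0$, the algorithm outputs $T = 0$ and there is nothing to prove, so assume $r^{(0)} \neq 0$.

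The first step is to note that Krylov spaces are shift invariant: $\mathcal{K}_t(M, r^{(0)}) = \mathcal{K}_t(H, r^{(0)})$. Lemma~\ref{lemma:finding_eig} then provides a unit vector $z^{(t)} \in \mathcal{K}_t(H, r^{(0)})$ with
\[
\la z^{(t)}, H z^{(t)}\ra = \la z^{(t)}, M z^{(t)}\ra + \lambdamin(H) \leq \frac{2L_G}{16(t-\frac12)^2} \Lambda^2 - |\lambdamin(H)|,
\]
where $\Lambda := \log\!\left(4\frac{\|g+H\xi\|^2}{\la \qmin, g+H\xi\ra^2} - 2\right)$. Squares of inner products are unchanged by the sign of $r^{(0)}$, so this $\Lambda$ matches the statement. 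If $\la \qmin, r^{(0)}\ra = 0$, the bound is vacuous (infinite), which is fine.

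Second, we invoke Lemma~\ref{lemma:well_known_props}\ref{item:psd_kt}: for every $1 \leq t \leq \Tin$, the restriction of $H$ to $\mathcal{K}_t(H, r^{(0)})$ is positive definite. Hence $\la z^{(t)}, Hz^{(t)}\ra > 0$ for all $t \leq \Tin$. Combining with the display forces
\[
(\Tin - \tfrac12)^2 < \frac{L_G}{8|\lambdamin(H)|}\Lambda^2,
\quad\text{that is,}\quad
\Tin < \frac12 + \sqrt{\frac{L_G}{8|\lambdamin(H)|}}\,\Lambda,
\]
whenever $\Tin \geq 1$ (if $\Tin = 0$ the bound on $\Tin$ is trivial). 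Since $\Tin \in \{T-1, T\}$, this yields $T \leq \frac32 + \sqrt{L_G/(8|\lambdamin(H)|)}\,\Lambda$.

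The main delicacy is the bookkeeping around $\Tin$. Positive definiteness is only guaranteed on $\mathcal{K}_t$ for $t \leq \Tin$, and the algorithm may terminate by $\stopres$ or $\stopoob$ one step later. This accounts for the extra $+1$, giving the constant $\frac32$ rather than $\frac12$. One must also check that the argument of the logarithm is at least $1$ (it is at least $2$, since $\la \qmin, r\ra^2 \leq \|r\|^2$), so that the square root and logarithm behave as expected.
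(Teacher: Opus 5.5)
Your proposal is correct and follows the paper's proof essentially verbatim. Both shift to $M = H - \lambdamin(H) I_d$, use shift-invariance of Krylov spaces together with Lemma~\ref{lemma:finding_eig} and $\opnorm{M} \leq 2L_G$, and then contradict the positive definiteness from Lemma~\ref{lemma:well_known_props}\ref{item:psd_kt}. The differences are only cosmetic: you evaluate at $t = \Tin \geq T-1$ instead of substituting $t = T-1$ directly, and you also handle the edge cases $\Tin = 0$ and $\la \qmin, r^{(0)}\ra = 0$, which the paper leaves implicit.
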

\begin{proof}
  If $r^{(0)} = -(g+H\xi) = 0$, then $T = 0$.
  Assume not.
  By Lemma~\ref{lemma:well_known_props}\ref{item:psd_kt}, for $1 \leq t \leq T-1$ the restriction of $H$ to the $t$th Krylov space~\eqref{eq:KrylovspaceKtHb} is positive definite:
  \begin{equation}
    \la z, H z \ra > 0, \qquad \forall z \in \mathcal{K}_t(H,r^{(0)}), z \neq 0.
    \label{eq:hpsdkt}
  \end{equation}
  We show that this condition cannot hold for $t$ larger than a certain value, thereby proving that the algorithm must terminate earlier.
  Define $M = H - \lambdamin(H) I_d$. Then $M \succeq 0$ and $\la \qmin,M \qmin \ra = 0$.
  By Lemma~\ref{lemma:finding_eig} applied to $M$, for all $t \geq 1$ there exists $z^{(t)} \in \mathcal{K}_t(M,r^{(0)})$ such that $\|z^{(t)}\| = 1$ and
  \[
    \la z^{(t)},Hz^{(t)} \ra - \lambdamin(H) \leq \frac{\opnorm{H - \lambdamin(H)I_d}}{16(t - \frac{1}{2})^2} \left(\log\!\left(4\frac{\|r^{(0)}\|^2}{\la \qmin, r^{(0)}\ra^2} - 2\right) \right)^2.
  \]
  We use the bound $\opnorm{H-\lambdamin(H)I_d} \leq L_G+|\lambda_{\min}(H)| \leq 2L_G $.
  Note that $\mathcal{K}_t(M,r^{(0)}) = \mathcal{K}_t(H,r^{(0)})$, therefore by \eqref{eq:hpsdkt} we have $\la z^{(t)}, Hz^{(t)} \ra > 0$. It follows that
  \[
  \begin{split}
    |\lambdamin(H)| & \leq \frac{L_G}{8(t-\frac{1}{2})^2}
      \left(\log\!\left(4\frac{\|r^{(0)}\|^2}{\la \qmin, r^{(0)}\ra^2} - 2\right) \right)^2.
  \end{split}
  \]
  Substitute $t = T-1$ to derive the announced bound on $T$.
\end{proof}

\subsection{Number of inner iterations at large gradient points (deterministic)}

Finally, we deal with the ``large gradient'' scenario.
In this case, the Hessian can be indefinite or ill-conditioned, making standard complexity results for CG inapplicable.
Nevertheless, we can provide an estimate when the initial residual is large enough.
The idea is that, as long as the $\stopres$ criterion does not trigger, the residual $\|r^{(t)}\|$ remains bounded away from zero, thus ensuring that the model decreases at least by a constant amount at each iteration.

\begin{lemma}\label{lemma:cg_complexity_delta} % [Complexity of $\tCGbg$ for general models]
  Let $H \in \symm_d$ satisfy $\opnorm{H} \leq L_G$.
  Fix vectors $g, \xi \in \reals^d$ and a radius $\Delta > 0$ such that $\|\xi\| \leq \Delta / 4$ and $g \neq 0$.
  Run $\tCGbg(H, g, \Delta, \xi)$, with output $(u, \stopcrit, T)$.
  Then, $T$ (the number of inner iterations performed) satisfies
  \[
    T \leq 1 + \frac{2L_G\Delta \cdot \|g + H\xi\|}{\big(\min(\omega_1 \|g\|, \omega_2 \|g\|^2)\big)^2}.
  \]
\end{lemma}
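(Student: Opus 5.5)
The plan is a potential-function argument on the model value: bound the total model decrease available inside the trust region from above, and show that each inner iteration that does not trigger $\stopres$ consumes a fixed positive amount of it.

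First, the upper bound. Let $\epsilon_g := \min(\omega_1\|g\|, \omega_2\|g\|^2) > 0$. For every $t \leq \Tin$ we have $\|v^{(t)}\| < \Delta/2 \leq \Delta$, and $v^{(0)} = \xi$ satisfies $\|\xi\| \leq \Delta/4$. Since $m$ is quadratic, $m(v^{(0)}) - m(v) = -\la \nabla m(v^{(0)}), v - v^{(0)}\ra - \frac12 \la H(v-v^{(0)}), v-v^{(0)}\ra$. Bounding this naively gives $\|r^{(0)}\|\cdot \frac{3\Delta}{2} + \frac{L_G}{2}\left(\frac{3\Delta}{2}\right)^2$. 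The quadratic term is not of the form $\Delta\,\|r^{(0)}\|$, so I will handle it more carefully below.

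Second, the per-iteration decrease. By Lemma~\ref{lemma:well_known_props}\ref{item:grad_decrease}, for $1 \leq t \leq \Tin$ we have $m(v^{(t)}) \leq m(v^{(t-1)}) - \frac{1}{2L_G}\|r^{(t-1)}\|^2$. For $t-1 \leq T-1$ the residual criterion has not yet fired, so $\|r^{(t-1)}\| > \epsilon_g$ for $t-1 \geq 1$; for $t-1 = 0$ we have $\|r^{(0)}\| \geq \epsilon_g$ whenever $T\geq 1$, since $\|r^{(0)}\| \geq \|g\| > \epsilon_g$ given $\omega_1<1$. Summing over $t = 1, \ldots, T-1$ (all of which are $\leq \Tin$, because $\Tin \geq T-1$) gives
\[
  (T-1)\,\frac{\epsilon_g^2}{2L_G} \leq m(v^{(0)}) - m(v^{(T-1)}).
\]

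Third, closing the argument, which I expect to be the main obstacle. Because the target bound has the form $2L_G\Delta\|r^{(0)}\|/\epsilon_g^2$, I need $m(v^{(0)}) - m(v^{(T-1)}) \leq \Delta\,\|r^{(0)}\|$, so the quadratic term must be avoided. The step I would try first is to exploit that CG is a Krylov method. Write $w := v^{(T-1)} - v^{(0)} \in \mathcal{K}_{T-1}(H,r^{(0)})$. By Lemma~\ref{lemma:well_known_props}\ref{item:psd_kt}, $H$ is positive definite on this space, so $\la Hw, w\ra > 0$. Then
\[
  m(v^{(0)}) - m(v^{(T-1)}) = \la r^{(0)}, w\ra - \tfrac12\la Hw,w\ra \leq \|r^{(0)}\|\,\|w\| \leq \|r^{(0)}\|\cdot \tfrac{3\Delta}{4} \leq \Delta\,\|r^{(0)}\|,
\]
where $\|w\| \leq \|v^{(T-1)}\| + \|\xi\| < \Delta/2 + \Delta/4$. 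Combining this with the second step yields $T - 1 \leq 2L_G\Delta\|r^{(0)}\|/\epsilon_g^2$, and $\|r^{(0)}\| = \|g+H\xi\|$ gives the claim. The edge cases $T \in \{0,1\}$ satisfy the bound trivially.
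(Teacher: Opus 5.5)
Your argument is the paper's. A per-iteration decrease of at least $\epsilon_g^2/(2L_G)$, from Lemma~\ref{lemma:well_known_props}\ref{item:grad_decrease} and the failed residual tests, is played against the upper bound $\|r^{(0)}\|\,\|v^{(T-1)}-\xi\|$, which comes from positive definiteness of $H$ on the Krylov space. Dropping $-\tfrac12\la Hw,w\ra\leq 0$ is exactly the paper's ``convexity along the segment'' step, and summing to $T-1$ instead of $\Tin$ is cosmetic. All terms with $t-1\geq 1$ are handled correctly.

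The one step that does not follow from the stated hypotheses is the $t-1=0$ term. Algorithm~\ref{algo:tCG} never applies the residual test to $r^{(0)}$; it only checks $r^{(0)}=0$. Your justification $\|r^{(0)}\|=\|g+H\xi\|\geq\|g\|$ needs $\la H\xi,g\ra\geq 0$. That is the sign convention of line~\ref{line:xik} in Algorithm~\ref{algo:pTR}, not an assumption of the lemma. The paper's own proof has the same hole: it asserts $\|r^{(t)}\|>\min(\omega_1\|g\|,\omega_2\|g\|^2)$ for $t=0$ as well.

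The hole is real, as this example shows. Take $L_G=1$, $\Delta=4$, $\omega_1=\omega_2=\tfrac12$, $H=\mathrm{diag}(1,-\tfrac{88}{90})$ and $\xi=(1,0)$. Set $g=-H\xi-r^{(0)}$ with $r^{(0)}=\tfrac{0.01}{\sqrt2}(1,1)$. (Append a decoupled coordinate, with zero entries in $\xi$ and $g$, if you want $d\geq 3$.)

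Then $\epsilon_g=\tfrac12\|g\|\approx 0.504$ and $\alpha^{(1)}=90$. Also $\|v^{(1)}\|\approx 1.76<\Delta/2$ and $\|r^{(1)}\|=0.89>\epsilon_g$, so neither stopping test fires at $t=1$. At $t=2$ nonpositive curvature is detected, because the $H$-conjugate directions $p^{(0)},p^{(1)}$ span the plane, on which $H$ is indefinite. So $T=2$, whereas the claimed bound is $1+0.08/\epsilon_g^2\approx 1.32$.

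There are two fixes. You can add $\la H\xi,g\ra\geq 0$ (or simply $\|g+H\xi\|>\epsilon_g$) as a hypothesis; it holds where the lemma is used in Theorem~\ref{thm:global_inner_its}, and with it your proof is complete. Alternatively, discard the $t=0$ term, use $m(v^{(1)})\leq m(v^{(0)})$, and settle for $T\leq 2+2L_G\Delta\|g+H\xi\|/\epsilon_g^2$ with no extra hypothesis.
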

\begin{proof}
  If $r^{(0)} = -(g+H\xi) = 0$, then $T = 0$ and the lemma holds. Assume not.
  Let $m(v) = \la g,v \ra + \frac{1}{2} \la Hv,v \ra$.
  By Lemma~\ref{lemma:well_known_props}\ref{item:grad_decrease}, for $\Tin$ defined as in~\eqref{eq:def_tin} we have
  \[
    m(v^{(0)}) - m(v^{(\Tin)}) \geq \frac{1}{2L_G} \sum_{t=0}^{\Tin-1} \|r^{(t)}\|^2.
  \]
  As long as $\tCGbg$ did not terminate, the inequality that would trigger the $\stopres$ criterion does not hold and therefore $\|r^{(t)}\| > \min\!\left( \omega_1 \|g\|,\omega_2 \|g\|^2 \right)$ for $0 \leq t \leq \Tin - 1$.
  Thus,
  \begin{equation} \label{eq:bound_vt_omegag}
    m(v^{(0)}) - m(v^{(\Tin)}) \geq \frac{\Tin}{2L_G} \left(\min\!\left( \omega_1 \|g\|,\omega_2 \|g\|^2 \right)\right)^2.
  \end{equation}
  Moreover, by Lemma~\ref{lemma:well_known_props}\ref{item:psd_kt}, the restriction of $H$ to the Krylov space $\mathcal{K}_{\Tin}(H,r^{(0)})$ is positive definite.
  Since $v^{(0)}-v^{(\Tin)}$ belongs to this space (Lemma~\ref{lemma:well_known_props}\ref{item:kt}), the restriction of the quadratic $m$ to the segment $[v^{(0)},v^{(\Tin)}]$ is convex, in the sense that $t \mapsto m((1-t) v^{(0)} + t v^{(\Tin)})$ is convex.
  Therefore,
  \[
  \begin{split}
    m(v^{(0)}) - m(v^{(\Tin)}) &\leq -\la \nabla m(v^{(0)}), v^{(\Tin)} - v^{(0)} \ra \\
     & \leq \|r^{(0)}\| \cdot \|v^{(\Tin)} - v^{(0)}\| \\
     & \leq \|g + H\xi\| \left( \|v^{(\Tin)}\| + \|v^{(0)}\| \right)
      \leq \|g + H\xi\| \cdot \Delta, %\\
     %& \leq \left(\|g\| + L_G \|\xi\| \right) \Delta,\\
     %& \leq \left(\|g\| + \frac{1}{4} L_G \Delta \right) \Delta,
  \end{split}
  \]
  where we used the fact that $\|v^{(t)}\| \leq \Delta / 2$ for $t \leq \Tin$ by definition.
  Combining with \eqref{eq:bound_vt_omegag} and recalling that the total number of iterations $T$ is at most $\Tin + 1$ allows to conclude.
\end{proof}

\subsection{Combined oracle complexity estimate}

We are now ready to establish the complexity estimate for the randomized trust-region method, in terms of the total number of calls to $f$ and $\nabla f$ (corresponding to the number of outer iterations, already bounded in Theorem~\ref{thm:outer_complexity}) and $\nabla^2 f$ (Hessian-vector products, corresponding to the total number of inner iterations, bounded below).

\begin{theorem}[Inner iterations complexity] \label{thm:global_inner_its}
  Continuing from Theorem~\ref{thm:outer_complexity},
  condition on the stated event of probability at least $1-\delta$.
  Thus, there exists an iteration $K_{\rm cap} \leq \bar K_{G,N}$~\eqref{eq:KGN} which is the first such that $\|x_{K_{\rm cap}} - x^*\| \leq \bar{R}$ for some local minimizer $x^*$ of $f$.

  Let $T_{\rm lg}$ denote the total number of inner iterations of $\tCGbg$ performed in the ``large gradient'' region.
  It satisfies
  \begin{equation}
    T_{\rm lg} \leq
     \bar K_{G,N}
     \left(
      1 + \frac{ ( L_G \bar \Delta)^2}{\left(\min(\omega_1 \bar G, \omega_2 \bar G^2) \right)^2}
     \right).
  \end{equation}
  Likewise, let $T_{\rm saddle}$ denote the total number of inner iterations of $\tCGbg$ performed in the ``near saddle points'' region.
  With probability at least $1-\delta$ (for a combined probability of at least $1-2\delta$), it satisfies
  \begin{equation}
    T_{\rm saddle} \leq
      \bar K_{G,N}
      \left(2+\sqrt{
      \frac{L_G}{4\mu}
      }
      \log\!\left( \frac{512 L_G^2 {\bar{R}}^2 \bar K_{G,N}^2 d  }{\pi \sigma^2 \delta^2 \mu^2}\right) \right).
  \end{equation}
  For all $\epsilon > 0$, let $K$ be the first iteration such that $\|x_K - x^*\| \leq \epsilon$.
  Let $T_{\rm min}$ denote the total number of inner iterations of $\tCGbg$ performed in the ``near local minimizers'' region before reaching $x_K$.
  Then,
  \begin{equation}
    T_{\rm min,\epsilon} \leq
      \bar K_{M,\epsilon}
      \left(
        1
        +  \sqrt{ \frac{2L_G}{\mu}}
        \log\!\left(
        \sqrt{\frac{2L_G}{\mu}} \cdot
        \frac{
          16L_G\bar R
        }{
          \min(\omega_1 \mu \epsilon , \omega_2 \mu^2 \epsilon^2)
        }
        \right)
      \right),
  \end{equation}
  where $\bar K_{M,\epsilon}$ is as defined in~\eqref{eq:KM}.

  Overall, with probability at least $1-2\delta$, such a point $x_K$ is first reached after a total of $T_{\rm lg} + T_{\rm saddle} + T_{\rm min}$ inner iterations of $\tCGbg$, bounded as above.
\end{theorem}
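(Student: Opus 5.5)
The plan is to bound each of the three sums separately by (number of outer iterations of that type) $\times$ (per-call bound on $T_k$), using the three per-call lemmas of this section. Throughout we condition on the event of Theorem~\ref{thm:outer_complexity}, so all iterations before $K_{\rm cap}\le \bar K_{G,N}$ are in $\mathcal{G}\cup\mathcal{N}$. In particular, at most $\bar K_{G,N}$ outer iterations are of large gradient or saddle type.

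\emph{Large gradient iterations.} For $k\in\mathcal{G}$ we have $\|g_k\|\ge\bar G>0$ by Lemma~\ref{lemma:lowbound_G}. We apply Lemma~\ref{lemma:cg_complexity_delta} with $\Delta=\Delta_k\le\bar\Delta$ and bound $\|g_k+H_k\xi_k\|\le \|g_k\|+L_G\Delta_k/4$. The ratio $\|g+H\xi\|/\min(\omega_1\|g\|,\omega_2\|g\|^2)^2$ has to be controlled uniformly, which needs some care because it is not monotone in $\|g\|$. It helps to note $\omega_1<1$, so the denominator is at most $\|g\|^2$ times constants, and for large $\|g\|$ the $\omega_1\|g\|$ branch dominates. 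We also need a bound $\|g_k\|\le L_G\bar\Delta$-type (or absorb it), so as to reach the factor $(L_G\bar\Delta)^2/\min(\omega_1\bar G,\omega_2\bar G^2)^2$. I expect this to require a short case split on whether $\|g_k\|\lesssim L_G\Delta_k$. In the complementary case, CG's first step already exits or the residual criterion triggers quickly, and we would handle it via $T\le\Tin+1$ together with the model-decrease bound. This is the step I regard as fiddliest, since constants may need adjusting.

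\emph{Saddle iterations.} For $k\in\mathcal{N}$, Lemma~\ref{lemma:growth_mu} with $c=1/2$ gives $|\lambdamin(H_k)|\ge\mu/2$, so Lemma~\ref{lemma:tcg_nc_innerits} yields $T_k\le \tfrac32+\sqrt{L_G/(4\mu)}\log(4\|r_k^{(0)}\|^2/\la q_{\min},r_k^{(0)}\ra^2)$. We bound the numerator by $\|r^{(0)}_k\|\le\|g_k\|+L_G\sigma\le 2L_G\bar R$, using $\sigma\le\bar R$. The denominator is random: $\la q_{\min},r_k^{(0)}\ra=-\la q_{\min},g_k\ra-\lambdamin\la q_{\min},\xi_k\ra$. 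By the concentration Lemma~\ref{lemma:concentration_xi} (as in Lemma~\ref{lemma:reslowerbound}), with probability $1-\delta/\bar K_{G,N}$ it is at least $\tfrac{\mu}{2}\sigma\,(\delta/\bar K_{G,N})\sqrt{\pi/(8d)}$. The sign choice of $\xi_k$ is a measurable symmetric flip, so $\xi_k$ conditioned on the past is still uniform up to sign, and the lemma applies conditionally. A union bound over the at most $\bar K_{G,N}$ saddle iterations gives total failure probability at most $\delta$. Plugging in gives exactly the $\log(512L_G^2\bar R^2\bar K_{G,N}^2d/(\pi\sigma^2\delta^2\mu^2))$ term. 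This probabilistic step, with its conditioning on the Markov filtration and the random number of saddle iterations, is the main obstacle. I would handle it by union-bounding over the first $\bar K_{G,N}$ indices, each of which either is not in $\mathcal{N}$ or satisfies the bound with the stated conditional probability.

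\emph{Local minimizer iterations.} For $k\ge K_{\rm cap}$ with $k<K$, Proposition~\ref{prop:min_capture} gives $H_k\succeq\frac{\mu}{2}I$ and $\epsilon<\|x_k-x^*\|\le\bar R$. Hence $\|g_k\|\ge\frac{\mu}{2}\epsilon$ and $\|g_k+H_k\xi_k\|\le L_G(\bar R+\sigma)\le 2L_G\bar R$. We then apply Lemma~\ref{lemma:cg_rate_convex} with $\nu=\mu/2$. The number of such iterations is at most $\bar K_{M,\epsilon}$ by Theorem~\ref{thm:outer_complexity}, which gives the claimed $T_{\min,\epsilon}$. Summing the three bounds completes the proof, with overall probability $1-2\delta$.
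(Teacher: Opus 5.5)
\textbf{Verdict.} Your saddle-point and local-minimizer parts match the paper's argument and its constants. In the saddle part you use Lemma~\ref{lemma:tcg_nc_innerits} with $|\lambdamin(H_k)|\ge\mu/2$, the numerator bound $\|g_k+H_k\xi_k\|\le 2L_G\bar R$, Lemma~\ref{lemma:concentration_xi} with confidence $\delta/\bar K_{G,N}$, and a union bound. In the local-minimizer part you use Lemma~\ref{lemma:cg_rate_convex} with $\nu=\mu/2$ and $\|g_k\|\ge\mu\epsilon/2$. The gap is in the large-gradient part: you leave it unfinished, and you build it on a false premise.

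\textbf{The missing idea: the bound is monotone.} You say the relevant ratio is not monotone in $\|g_k\|$, but it is once the numerator is bounded. Write $a=\frac14L_G\Delta_k$ and bound $\|g_k+H_k\xi_k\|\le\|g_k\|+a$. Then Lemma~\ref{lemma:cg_complexity_delta} gives $T_k\le 1+2L_G\Delta_k\,\psi(\|g_k\|)$ with
\[
\psi(t)=\frac{t+a}{\min(\omega_1t,\omega_2t^2)^2}=\frac{1}{\min(\omega_1^2t,\omega_2^2t^3)}+\frac{a}{\min(\omega_1^2t^2,\omega_2^2t^4)},
\]
and $\psi$ is decreasing in $t$. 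So you may substitute $\|g_k\|=\bar G$ (Lemma~\ref{lemma:lowbound_G}) and $\Delta_k\le\bar\Delta$. Then use $\bar G=\mu\bar R/2\le\mu\Delta_0/16\le L_G\bar\Delta/16$ to get $2L_G\bar\Delta(\bar G+\frac14L_G\bar\Delta)\le\frac58(L_G\bar\Delta)^2$. This gives exactly the stated $T_{\rm lg}$ bound. You also say you need a bound like $\|g_k\|\lesssim L_G\bar\Delta$. Nothing in the assumptions provides one, and it is not needed: large $\|g_k\|$ only makes the bound smaller. What the argument needs is an upper bound on $\bar G$, not on $\|g_k\|$.

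\textbf{Your case split loses a constant.} The split you sketch can be made rigorous. If $\|g_k\|\ge\frac34L_G\Delta_k$, the first CG step has length at least $\|r_k^{(0)}\|/L_G\ge\frac34\Delta_k$. Since $\|\xi_k\|\le\Delta_k/4$, it either leaves the ball of radius $\Delta_k/2$ or meets nonpositive curvature, so $T_k=1$. Otherwise you bound the numerator by $2L_G\Delta_k\cdot L_G\Delta_k$. Because this bounds numerator and denominator separately, the best it gives is $T_k\le 1+2(L_G\bar\Delta)^2/\min(\omega_1\bar G,\omega_2\bar G^2)^2$. That is a factor $2$ worse than the theorem's claim, so as written your proposal does not prove the first inequality of the statement.
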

\begin{proof}%[Proof of Theorem~\ref{thm:global_inner_its}]
  Recall the partition $(\mathcal{G}, \mathcal{N}, \mathcal{M})$ of the iteration indices $\mathbb{N} = \{ 0, 1, 2, \ldots \}$ as defined in the beginning of Section~\ref{s:global_comp}.
  By definition, $|\mathcal{G}| + |\mathcal{N}| = K_{\rm cap}$.

  Further recall that $T_k$ denotes the total number of inner iterations performed by $\tCGbg$ during the $k$th outer iteration of the trust-region method.
  Below, we use the complexity estimates established earlier in this section to bound the sum of $T_k$ in each region.

  \paragraph{``Large gradient'' iterations.}
  Let $k \in \mathcal{G}$.
  We have $\|g_k\| \geq \bar G$ by Lemma~\ref{lemma:lowbound_G} and $\|\xi_k\| \leq \Delta_k / 4$ by design, so that Lemma~\ref{lemma:cg_complexity_delta} provides the first inequality in
  \begin{equation*}
    T_k
    \leq 1 + \frac{2L_G \Delta_k \cdot (\|g_k\| + \frac{1}{4}L_G \Delta_k)}{\left(\min(\omega_1 \|g_k\|, \omega_2 \|g_k\|^2)\right)^2}
    \leq 1 + \frac{2L_G \bar \Delta \cdot (\bar G + \frac{1}{4}L_G \bar \Delta)}{\left(\min(\omega_1 \bar G, \omega_2 {\bar G} ^ 2)\right)^2}
    \leq 1 + \frac{ (L_G \bar \Delta)^2 }{\left(\min(\omega_1 \bar G, \omega_2 {\bar G} ^ 2)\right)^2},
  \end{equation*}
  whereas the second inequality holds because the first bound is decreasing\footnote{Indeed, the function $\psi:t\mapsto \frac{t+a}{\min(bt,ct^2)^2}$ is decreasing, as $\psi(t)= \frac{1}{\min(b^2t,c^2t^3)}+\frac{a}{\min(b^2t^2,c^2t^4)}$.} as a function of $\|g_k\|$, and then also because $\Delta_k \leq \bar\Delta$ by design (see Algorithm~\ref{algo:pTR}).
  The third inequality comes from $\bar G = \mu \bar R / 2 \leq \mu \Delta_0 / 16 \leq L_G \bar \Delta  / 16$ by~\eqref{eq:def_constants} and the fact that $\mu \leq L_G$.
  This bound, along with $|\mathcal{G}| \leq K_{\rm cap} \leq \bar K_{G,N}$ from Theorem~\ref{thm:outer_complexity}, leads to the upper estimate of $T_{\rm lg}$.

  \paragraph{``Saddle point'' iterations.}
  Let $k \in \mathcal{N}$.
  The smallest eigenvalue of $H_k = \nabla^2 f(x_k)$ is $\lambdamin(H_k)$, which is negative.
  Let $q_k$ be a corresponding unit-norm eigenvector of $H_k$.
  By Lemma~\ref{lemma:tcg_nc_innerits}, the number of inner iterations performed by $\tCGbg$ at iteration $k$ is bounded as
  \[
  T_k \leq 2 + \sqrt{
    \frac{L_G}{8|\lambdamin(H_k)|}
  }
    \log\!\left(4\frac{\|g_k+H_k\xi_k\|^2}{\la q_k, g_k + H_k\xi_k\ra^2} \right).
  \]
  Since $k \in \mathcal{N}$, we have $\|x_k-\xbar\| \leq \bar R$ for some saddle point $\xbar$, and hence $|\lambdamin(H_k)| \geq \mu / 2$ by Lemma~\ref{lemma:growth_mu}.
  As usual, $\|\xi_k\| = \sigma \leq \bar \sigma \leq \bar R \leq \frac{1}{8} \Delta_k$ (Lemma~\ref{lemma:lowbound_deltak}).
  This allows us to bound the numerator inside the logarithm by
  \begin{equation} \label{eq:boundgkplusHkxiklate}
    \|g_k+H_k\xi_k\| \leq \|g_k\| + \|H_k \xi_k\| \leq L_G \big(\|x_k-\bar x\| + \sigma \big) \leq 2 L_G \bar R.
  \end{equation}
  For the denominator, we use Lemma~\ref{lemma:concentration_xi}: for any $\delta'' \in (0, 1)$ we have with probability at least $1-\delta''$ that
  \begin{equation*}
    \la q_k, g_k + H_k\xi_k\ra^2 = \big( \la H_k q_k, \xi_k \ra + \la q_k, g_k \ra \big)^2 \geq \frac{ \pi \sigma^2(\delta'')^2}{8d} \|H_k q_k\|^2 \geq \frac{ \pi \sigma^2(\delta'')^2\mu^2}{32d}.
  \end{equation*}
  Therefore, for each $k \in \mathcal{N}$, with probability at least $1-\delta''$ we have
  \[
  T_k \leq 2+\sqrt{
   \frac{L_G}{4\mu}
   }
   \log\!\left( \frac{512 L_G^2 {\bar{R}}^2 d}{\pi \sigma^2 (\delta'')^2 \mu^2}\right).
  \]
  Choose $\delta'' = \delta / \bar K_{G,N}$.
  Then, that bound along with $|\mathcal{N}| \leq K_{\rm cap} \leq \bar K_{G,N}$ from Theorem~\ref{thm:outer_complexity} and a union bound over all $k \in \mathcal{N}$ leads to the claimed upper bound on $T_{\rm saddle}$, valid with probability at least $1 - \bar K_{G,N} \delta'' = 1-\delta$.
  Recall that we had already conditioned on the event of probability at least $1-\delta$ in Theorem~\ref{thm:outer_complexity}: an additional union bound with that event yields the overall probability of at least $1-2\delta$.

  \paragraph{``Local minimizer'' iterations.}
  % Note: it's important to exclude work done at $x_K$ itself, because for all we know $x_K = x^*$, in which case $g_K = 0$.
  By assumption, $K_{\rm cap}$ is finite and $f$ has a local minimizer $x^*$ such that $\|x_k - x^*\| \leq \bar R$ for all $k \geq K_{\rm cap}$.
  In particular, $H_k \succeq \frac{\mu}{2}I_d$ for all such $k$.
  By the complexity estimate of $\tCGbg$ for strongly convex quadratics (Lemma~\ref{lemma:cg_rate_convex}) we have
  \begin{equation*}
    T_k \leq 1 + \sqrt{ \frac{2L_G}{\mu}} \log\!\left( 2 \sqrt{\frac{2L_G}{\mu}} \cdot \frac{\|g_k + H_k\xi_k\|}{\min(\omega_1\|g_k\|, \omega_2\|g_k\|^2)} \right).
  \end{equation*}
  The numerator in the log is upper bounded as in~\eqref{eq:boundgkplusHkxiklate}. % $\|g_k\| + L_G\|\xi_k\| \leq L_G \big(\|x_k-x^*\| + \sigma\big) \leq 2L_G \bar{R}$.
  To bound the denominator, we restrict our attention to $k$ in $K_{\rm cap}, \ldots, K-1$.
  Then, we also have $\|x_k - x^*\| > \epsilon$ so that $\|g_k\| \geq \frac{\mu}{2} \|x_k-x^*\| \geq \frac{\mu}{2} \epsilon$ (Lemma~\ref{lemma:growth_mu}).
  This leads to a bound on $T_k$ for the stated range of $k$.
  Since also $K - K_{\rm cap} \leq \bar K_{M,\epsilon}$ as per Theorem~\ref{thm:outer_complexity}, the proof is complete.
\end{proof}

To connect Theorem~\ref{thm:global_inner_its} with the simplified formulation in Section~\ref{sss:informal_thm}, refer to Appendix~\ref{app:informal_justif}.

\section{Numerical experiments}\label{s:numerical_experiments}

We implemented the proposed randomized trust-region method with $\tCGbg$ in Matlab, integrated into the Riemannian optimization toolbox Manopt~\citep{manopt}.
In doing so, we apply the usual recipes to generalize our algorithm from the Euclidean setting to the more general setting of Riemannian manifolds~\citep{genrtr,Absil2008,boumal2023intromanifolds}.
This extension makes it possible to test the algorithm on a wider range of problems.
Another advantage is that for some problems with symmetry, the Morse property may fail in the base formulation, yet hold when passing to the quotient manifold.

Code for the numerical experiments below and for the algorithm is available at:\\
\indent\href{https://github.com/NicolasBoumal/randomized-trustregions-code}{github.com/NicolasBoumal/randomized-trustregions-code} and\\
\indent\href{https://github.com/NicolasBoumal/manopt/tree/master/manopt/solvers/trustregions}{github.com/NicolasBoumal/manopt/tree/master/manopt/solvers/trustregions}.

\paragraph{Competing algorithms.}
We compare several versions of the trust-region method. % (since the purpose is to assess the role of randomization in that algorithmic framework).
They all rely on the same core Manopt code, so that we can reliably change specific parameters while keeping all else fixed.
In particular, they all use the same default parameter values ($\omega_1, \omega_2$, $\bar\Delta$, $\Delta_0$, $\rho'$, etc.) and the same  trust-region update mechanism (cosmetically different from Algorithm~\ref{algo:pTR}, see code).

The versions we compare differ along two axes:
\begin{enumerate}
  \item The inner problem solver can be either the classical tCG, or the new randomized tCG with boundary gradient step ($\tCGbg$). For the latter, we must set a noise level $\sigma$: we try both $\sigma = 10^{-6}$ and $\sigma = 10^{-3}$.

  \item The algorithm may use the true Hessian (``no Hessian shift''), or a shifted Hessian $\nabla^2 f(x_k) + \sqrt{\epsilon_{\mathrm{M}}} I_d$ where $\epsilon_{\mathrm{M}}$ is machine precision (``Hessian shift''). This shift is known to help with numerical stability when the Hessian is nearly singular at minimizers, which may happen if the Morse property does not hold.
\end{enumerate}
Notice that we make no effort to tune $\sigma$ to the specific problems: we simply fix a somewhat small value a priori.
Our implementation of TR-tCG-bg departs from theory as follows: in line~\ref{line:xik} of Algorithm~\ref{algo:pTR}, in principle we set $\xi_k$ to have norm $\min(\sigma, \Delta_k/4)$.
In the actual code, we set the norm to be $\min(\max(\sigma, \sqrt{\epsilon_{\mathrm{M}}}), \Delta_k/100)$; this effectively replaces the 4 by 100 and seemed slightly better.

\paragraph{Recorded metrics.}
For each experiment, we generate a random problem instance and a random initial guess $x_0$.
We then run each algorithm on the same problem instance from that same initial guess.
As each algorithm proceeds, we record the following metrics at each outer iteration:
\begin{itemize}
  \item The objective function value $f(x_k)$.
  \item The gradient norm $\|\nabla f(x_k)\|$.
  \item The cumulative number of inner iterations performed so far. Notice that the extra boundary gradient steps do not count as inner iterations here.
  \item The cumulative number of Hessian-vector products performed so far: this \emph{mostly} matches the inner iteration counts, up to two subtleties:
  \begin{enumerate}
    \item When $\tCGbg$ uses an extra boundary gradient step, the associated Hessian-vector product \emph{is} accounted for here.
    \item In Manopt's implementation of TR-tCG, if step $k$ is rejected, then the Hessian-vector products computed by tCG at iteration $k$ are reused at iteration $k+1$. This refinement could be brought to $\tCGbg$ as well (with mild tweaks to the theory), but we did not implement it. As a result, $\tCGbg$ may use more Hessian-vector products than required.
  \end{enumerate}
\end{itemize}

\paragraph{Observations.}

As stated in the introduction, classical TR-tCG is already quite effective at escaping saddle points.
It is its worst-case behavior that is lacking in theory.
Thus, the baseline is merely to verify that our proposed randomized variant is comparable to the deterministic one, that is, the practical price of the theoretical benefits should not be too high.
We find that this is mostly the case.
We might further hope that the randomized variant is actually better at escaping saddles.
That is not particularly borne out: even when initializing close to a saddle point, randomization is not the decisive factor.
(Of course, if we initialize \emph{exactly} at a saddle point, then \emph{only} the randomized variants succeed; but this is unlikely to happen in practice.)

% Notes to selves: About trying a simple saddle to see what happens:
% \begin{enumerate}
%   \item With $f(x) = \frac{1}{2}\|x\|^2 - x_1^2$, if you initialize at $x_0 = 0$ then of course deterministic methods are stuck but randomized methods escape in 2 Hessian-vector products. This is highly unusual though: if $(x_0)_1$ is even slightly nonzero, classical TR-tCG escapes just as well as our randomized method. This goes to the narrative that standard methods actually behave quite well: it is their worst-case behavior that hinders good theory, but their practical behavior is quite good, and this is why practical randomized methods should not detract them too much.
%   \item With $f(x) = \frac{1}{2}\|x\|_4^4 - x_1^4$, initialize very close to zero, both TR-tCG and the randomized method escape equally well (5--20 Hess-vecs) if they have NO Hessian shift. Otherwise, stuck.
%   \item Emphasize that deterministic methods fail (in that they are truly stuck) if they are initialized exactly (both mathematically and numerically exactly) on a saddle point. Randomized methods can escape even in that situation. But this is an adversarial setup: numerical ``noise'' is enough to allow deterministic methods to escape too.
% \end{enumerate}

\paragraph{Problem 1: sine saddle.} % sinesaddle.m
Fix $d = 10^5$.
Let $w_1 = -10^{-2}$ and sample $w_2, \ldots, w_d > 0$ i.i.d.\ uniformly at random in the interval $[1, 2]$.
The (artificial) cost function is\footnote{For a vector $x \in \Rd$, we write $x_i$ to denote the $i$th entry of $x$; not to be confused with $x_k$ which usually denotes the $k$th iterate of an algorithm.}
\begin{align}
  f(x) = -w_1 + \sum_{i=1}^d w_i \sin(x_i)^2.
  \label{eq:sinesaddle}
\end{align}
Its minimal value is zero, attained when $\sin(x_1)^2 = 1$ and $\sin(x_i)^2 = 0$ for $i = 2, \ldots, d$.
The origin ($x = 0$) is a saddle point with value $f(0) = -w_1 = 10^{-2}$ and $\nabla^2 f(0)$ has eigenvalues $2w_1, \ldots, 2w_d$: exactly one of these is negative.
We initialize all algorithms at a common, random initial point close to the origin.
Figure~\ref{fig:sinesaddle} shows that all methods are able to escape this saddle point efficiently, to then converge to a global minimizer.

For additional context, the gradient is $\nabla f(x) = w \odot \sin(2x)$ (where $\odot$ denotes entry-wise product) and the Hessian is $\nabla^2 f(x) = 2 \diag(w \odot \cos(2x))$, where $\diag$ forms a diagonal matrix from a vector in the usual way.
A point $x$ is critical exactly if $\sin(2x_i) = 0$ for all $i$.
At such points, $\cos(2x_i) = \pm 1$ so that $f$ satisfies the $\mu$-Morse property with $\mu = 2\min_i |w_i| = 2 \cdot 10^{-2}$.
Also, $\nabla f$ is $L_G$-Lipschitz continuous with $L_G = \max_x \opnorm{\nabla^2 f(x)} = 2\|w\|_\infty \leq 4$, while $\nabla^2 f$ is $L_H$-Lipschitz continuous with $L_H = 4\|w\|_\infty \leq 8$. % because $\opnorm{\nabla^2 f(x) - \nabla^2 f(y)} = 2\| w \odot (\cos(2x) - \cos(2y)) \|_\infty \leq 2 \|w\|_\infty \|\cos(2x) - \sin(2x)\|_\infty \leq 4 \|w\|_\infty \|x - y\|_\infty$ then bound the last $\|x - y\|_\infty \leq \|x - y\|$.

% Note: Here we see a different picture depending on whether you plot inner iterations or Hess-vecs, because initial step is rejected, and deterministic method can recycle. Might also change $\Delta_0$ I suppose. At $n=10^5$ get clear picture; at $n=10^6$, it takes a bit longer, and we see more prominently the role of recycling: the initial $\Delta_0$ is too large, and it takes 4 rejections before things get started. That's expensive for us, but not for the deterministic method. It's not a relevant difference though, because we could recycle for the randomized method too; it's just a matter of putting in a lot of engineering efforts in the code, and making the proofs even longer: not difficult, but let's not go there.

\paragraph{Problem 2: nonlinear synchronization.} % nonlinearsynchro.m
Fix $d = 3$, $n = 1000$ and $\beta = 6$.
Following the setup in~\citep{criscitiello2026synchcircles}, we minimize the function
\begin{align}
  f(X) = - \frac{1}{2\beta n^2} \sum_{i = 1}^n \sum_{j = 1}^n e^{\beta x_i^\top x_j}
  \label{eq:nonlinearsynchro}
\end{align}
where $X \in \reals^{d \times n}$ has unit-norm columns denoted by $x_1, \ldots, x_n$.
This is a potential for $n$ particles on a sphere which attract each other with nonlinear forces.
The minimizers are the synchronized states, namely, any configuration such that $x_1 = \cdots = x_n$.
In particular, the minimizers form a smooth manifold and hence we do \emph{not} have the $\mu$-Morse property, but there is still a lot of structure.

We first run gradient descent for a while, initialized at a random initial point: this reliably brings us near a saddle point.
Then, we run each trust-region variant from that point.
Here too, Figure~\ref{fig:nonlinearsynchro} shows that all methods are able to escape this saddle point efficiently, to then converge to a global minimizer.

\paragraph{Problem 3: rectangular matrix approximation.} % matrixapprox_rect.m
Fix $m = 1000, n = 3000$ and generate a matrix $A \in \reals^{m \times n}$ with approximately 1\% of entries chosen at random with i.i.d.\ values uniformly in $[0, 1]$, and all other entries set to zero.
We minimize the cost function
\begin{align}
  f(L, R) = \frac{1}{2} \|LR^\top - A\|_\mathrm{F}^2 + \frac{\lambda}{2} \left( \|L\|_{\mathrm{F}}^2 + \|R\|_{\mathrm{F}}^2 \right),
  \label{eq:matrixapprox_rect}
\end{align}
with $L \in \reals^{m \times r}, R \in \reals^{n \times r}$.
This corresponds to seeking a rank-$r$ approximation of $A$ with a nuclear norm regularization controlled by $\lambda$.
We initialize $L, R$ close to zero (a saddle point) and try three different choices of $(r, \lambda)$, reported in Figure~\ref{fig:matrixapprox_rect}:
\begin{enumerate}
  \item $(r = 1, \lambda = 0)$: The solution forms a manifold since $f(\alpha L, \frac{1}{\alpha} R) = f(L, R)$ for all $\alpha \neq 0$. Thus, the $\mu$-Morse property does not hold, and we can see that only the methods with Hessian shift behave well.
  \item $(r = 1, \lambda = 0.01)$: The added regularization ensures uniqueness of the solution, and all methods behave well.
  \item $(r = 2, \lambda = 0.01)$: The increased rank leads to non-uniqueness of the solution again (despite regularization) since $f(LQ, RQ) = f(L, R)$ for all orthogonal $Q \in \reals^{r\times r}$. Here too, the Hessian shift is useful.
\end{enumerate}
In all cases, we see that the algorithms manage to escape the saddle, and they reach a minimizer.

\paragraph{Problem 4: positive semidefinite matrix approximation.} % matrixapprox_psd.m
Fix $n = 3000$ and generate a symmetric matrix $A \in \reals^{n \times n}$ with approximately 1\% of entries chosen at random with values uniformly in $[0, 1]$ (i.i.d.\ up to the fact that $A = A^\top$), and all other entries set to zero.
We minimize
\begin{align}
  f(X) = \frac{1}{4}\|XX^\top - A\|_{\mathrm{F}}^2
  \label{eq:matrixapprox_psd}
\end{align}
with $X \in \reals^{n \times r}$.
This corresponds to seeking a rank-$r$ positive semidefinite approximation of $A$.
We initialize $X$ close to zero (a saddle point) and try three different versions, reported in Figure~\ref{fig:matrixapprox_psd}:
\begin{enumerate}
  \item $r = 1$: This is Example~\ref{ex:mb} (with possibly indefinite $A$, see Lemma~\ref{lemma:mu_morse_example}) which fits our assumptions. Algorithms behave well.
  \item $r = 2$: The increased rank breaks the $\mu$-Morse property since $f(XQ) = f(X)$ for all orthogonal $Q \in \reals^{r \times r}$, and we see that the Hessian shift starts to play a role.
  \item $r = 2$, quotiented: One can quotient out the invariance of $f$ under the aforementioned action of the orthogonal group, reducing the domain of $f$ to the quotient $\reals^{n \times r} / \sim$ where $X \sim Y$ if there exists an orthogonal $Q$ such that $X = YQ$. In doing so, the uniqueness of the solution is reinstated, and with it the $\mu$-Morse property as well. As a result, all algorithms work well. See~\citep{massart2020quotientpsd} for details.
\end{enumerate}
Here too, the algorithms escape the saddle and reach a minimizer.

\begin{figure}[htbp]
  \centering
  \includegraphics[width=1.0\linewidth]{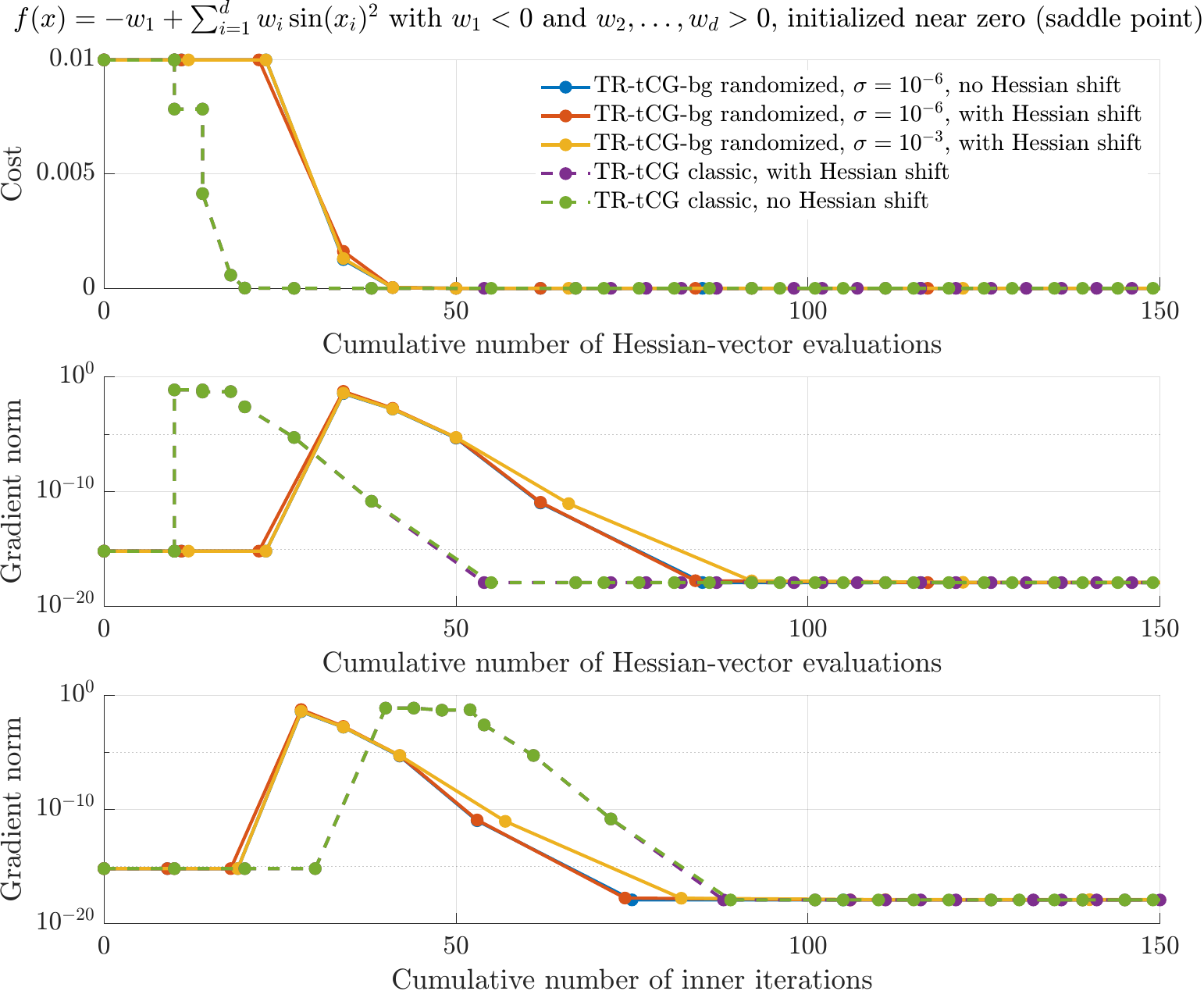}
  \caption{Experimental run for Problem 1 (sine saddle, see~\eqref{eq:sinesaddle}). All methods escape the saddle point and converge to a global minimizer. The dashed lines (TR-tCG with and without Hessian shift) overlap almost perfectly, as do the curves for TR-tCG-bg with $\sigma = 10^{-6}$.}
  \label{fig:sinesaddle}
\end{figure}

\begin{figure}[htbp]
  \centering
  \includegraphics[width=1.0\linewidth]{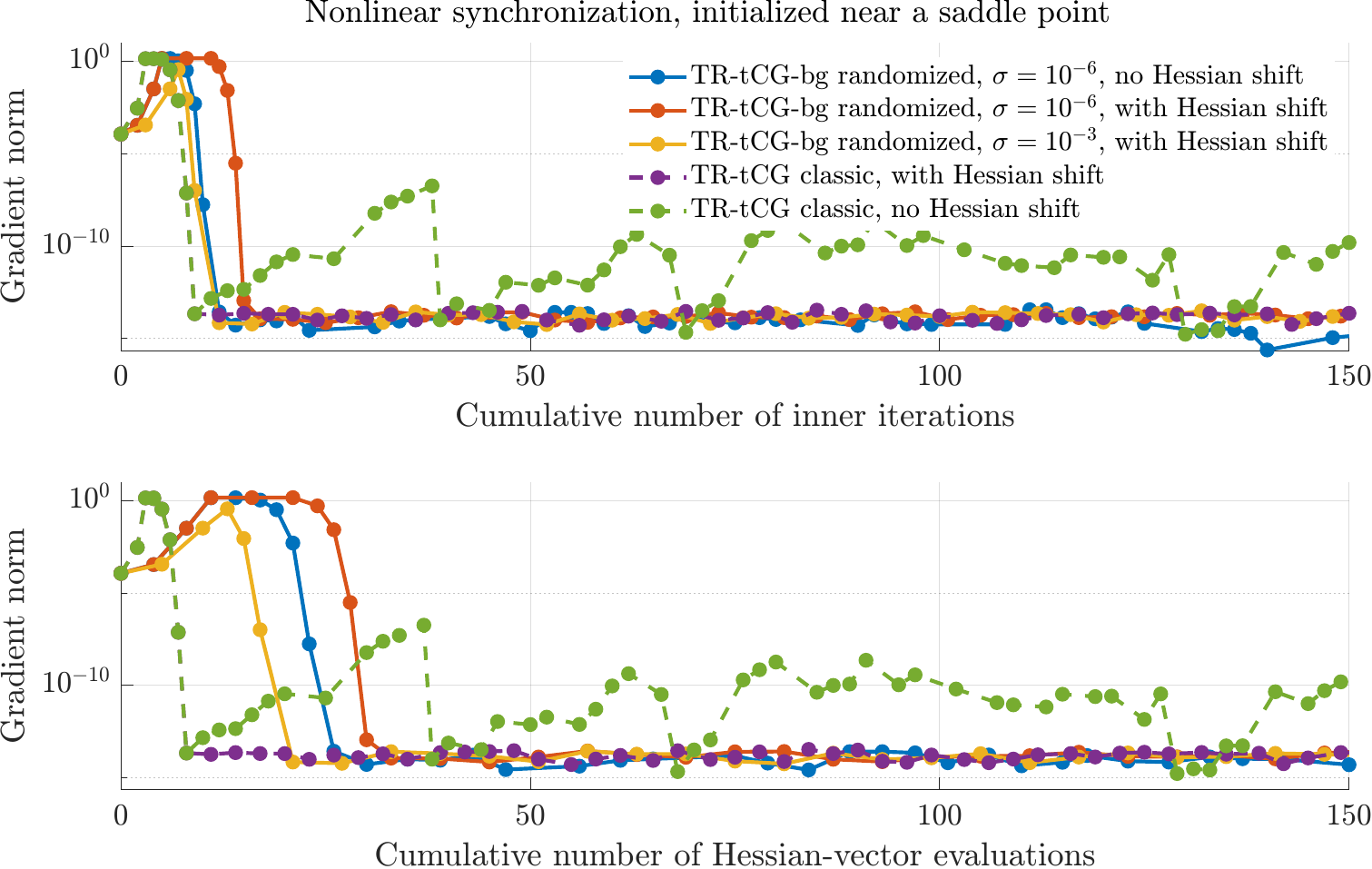}
  \caption{Experimental run for Problem 2 (nonlinear synchronization, see~\eqref{eq:nonlinearsynchro}).}
  \label{fig:nonlinearsynchro}
\end{figure}

\begin{figure}[htbp]
  \centering
  \includegraphics[width=1.0\linewidth]{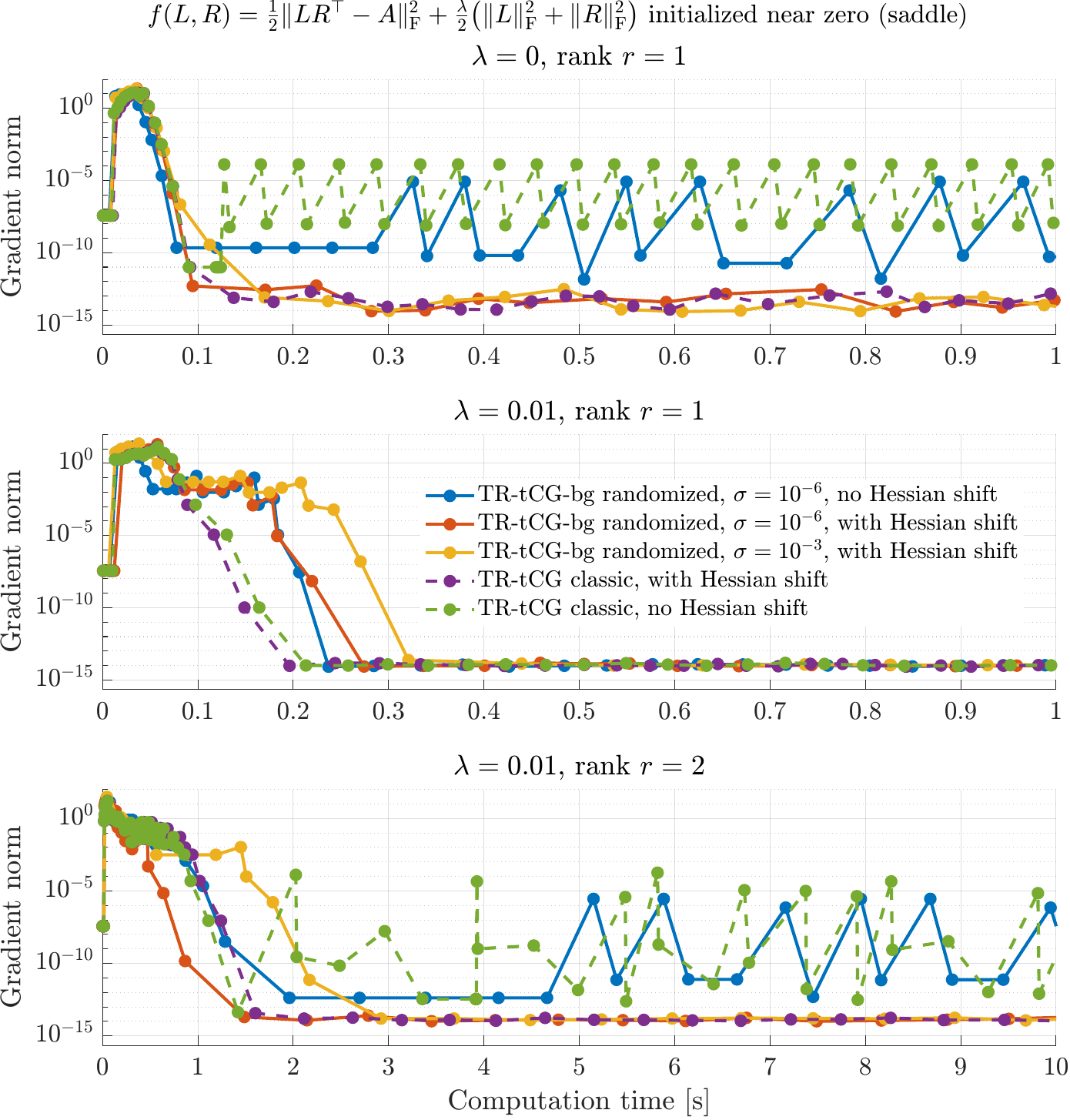}
  \caption{Experimental run for Problem 3 (rectangular matrix approximation, see~\eqref{eq:matrixapprox_rect}) with three different choices of rank $r$ and regularization parameter $\lambda$.}
  \label{fig:matrixapprox_rect}
\end{figure}

\begin{figure}[htbp]
  \centering
  \includegraphics[width=1.0\linewidth]{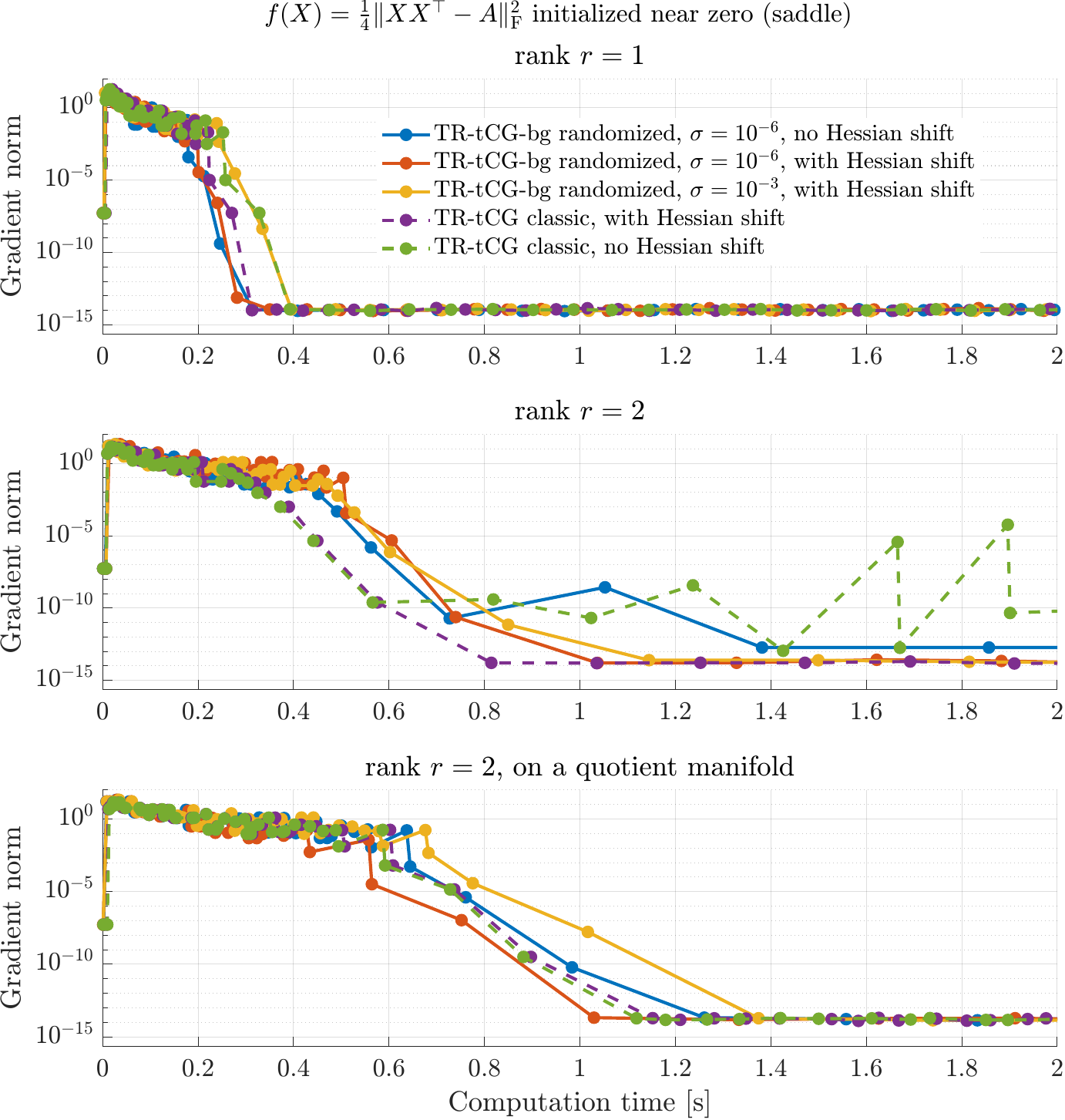}
  \caption{Experimental run for Problem 4 (positive semidefinite matrix approximation, see~\eqref{eq:matrixapprox_psd}) with two different choices of rank $r$, and one experiment on a quotient manifold.}
  \label{fig:matrixapprox_psd}
\end{figure}

\section{Further discussion of the theoretical results} \label{s:discussion}

In this section, we explain why the maximal radius $\bar \Delta$ appears in the complexity bounds, and we prove that randomization is essential to break the $\Omega(d)$ complexity of deterministic methods.

\subsection{Inner iteration complexity: dependency on $\bar \Delta$}

In Theorem~\ref{thm:global_inner_its}, the estimate for the number of $\tCGbg$ steps for \emph{large gradient} iterations depends on the maximal trust-region radius $\bar \Delta$.
This parameter is typically chosen large, as a safeguard.

The dependency is unavoidable: for \emph{general non-convex} quadratics, no better worst-case estimate is possible.
Efficient rates can only be proved for strongly convex models (Lemma~\ref{lemma:cg_rate_convex}) or when the Hessian has a sufficiently small negative eigenvalue (Lemma~\ref{lemma:tcg_nc_innerits}).
In the general case, when the iterates are not near a local minimizer or a saddle point, the spectrum of the Hessian can be arbitrary.
The only way to establish a bound is to use the fact that the total model decrease is bounded, as its iterates are confined in the trust region with radius $\Delta_k$.
In the worst case, this radius can be as high as $\bar\Delta$.

This limitation stems from worst-case analysis.
A solution could be to set an artificial \emph{cap} on the $\tCGbg$ number of iterations, as done by \citet{Royer2020}.
We chose not to do so as this introduces additional parameters and can degrade practical performance.

\subsection{Why randomization is necessary}
\label{ss:justification_randomization}

The fact that randomization is necessary to achieve complexities that are sublinear in $d$ is folklore: see for example~\citep[Ex.~2.1]{simchowitz2017gapstrictsaddlestrueconvexity}, where it is stated in passing for a different though related context.
That paper (and its follow-up in~\citep{simchowitz2018queryLBforPCA}) further shows that adaptive randomization brings the lower-bound down to $\Omega(\log d)$, that is, a polylogarithmic dependence on $d$ is inevitable.

Let us justify explicitly that randomization is necessary in our context.
We do so with an information-theoretic lower bound based on the following ``worst-cast function'':
\begin{equation}
  f(x) = \cos(x[d]) - 1 + \frac{1}{2} \sum_{i=1}^{d-1} x[i]^2,
  \label{eq:worstcasefunction}
\end{equation}
where $x[i]$ denotes the $i$th coordinate of $x \in \Rd$.
It is easily verified that $f$ satisfies Assumption~\ref{ass:f} with $L_G = 1$ and $L_H = 1$.
It also satisfies the $\mu$-Morse property with $\mu = 1$.
The minimum of $f$ is $\flow = -2$, and the origin is a saddle point with value $f(0) = 0$.
Moreover, since the critical points of $f$ are of the form $(0, \ldots, 0, k\pi)$ with $k \in \mathbb{Z}$, we can reason that the distance from any $x$ to the closest critical point is at most $\frac{\pi}{2} \|\nabla f(x)\|$.
% Note: to see this, start from:
%  dist(x, critical set)^2 = min_{k \in Z} x_1^2 + ... + x_{d-1}^2 + (x_d - k\pi)^2
%                          = \|\nabla f(x)\|^2 - \sin(x_d)^2 + min_{k \in Z} (x_d - k\pi)^2
% Then check: [min_{k \in Z} |t - k\pi|] = arcsin(|sin(t)|) \leq \frac{\pi}{2} |\sin(t)|.
%
%   t = linspace(-10, 10, 501);
%   plot(t, min(mod(t, pi), pi-mod(t, pi)), t, (pi/2)*abs(sin(t)));
%
% Now use $|\sin(x_d)| \leq \|\nabla f(x)\|$.
Therefore, $f$ satisfies the $(R_s, \gamma_s)$-strong gradient property with $R_s = \frac{1}{4}$ and $\gamma_s = \frac{1}{2\pi}$, as required by Assumption~\ref{ass:mumorse_and_stronggrad}.

To prove the lower bound, we use the classical technique of \emph{adversarial rotations} \citep{Nemirovski1983,Guzman2015}.
At a high level: as an algorithm queries $f$, $\nabla f$ and $\nabla^2 f$ (through Hessian-vector products), the ``resisting oracle'' rotates $f$ so as to reveal as little information as possible, while remaining consistent with past queries.
This produces a rotated function $f$ such that this (deterministic) algorithm cannot reach a value lower than $f(0)$ in fewer than $d/2 - 1$ queries.
Stated differently: bypassing the saddle at the origin requires $\Omega(d)$ queries.

Our class of algorithms is as follows.
Many (including standard TR-tCG) belong to this class.

\begin{definition}\label{def:algo}
  A \emph{deterministic second-order algorithm} is a procedure which generates a sequence of pairs $(x_1, u_1), (x_2, u_2), \ldots$ in $\Rd \times \Rd$ in the following manner: for $k\geq 1$,
  \begin{itemize}
    \item Choose $(x_k, u_k)$ as a deterministic function of $\left\{ \left( f(x_i), \nabla f(x_i), \nabla^2 f(x_i)[u_i] \right) \right\}_{1 \leq i \leq k-1}$.
    \item Query the oracle at $(x_k, u_k)$ to obtain $\left( f(x_k), \nabla f(x_k), \nabla^2 f(x_k)[u_k] \right)$.
  \end{itemize}
\end{definition}

\begin{proposition}\label{prop:oracle_complexity}
  Let $K \geq 1$.
  For every deterministic second-order algorithm $\mathcal{A}$, there exists a smooth function $f \colon \reals^{2K+1} \to \reals$ satisfying Assumptions~\ref{ass:f} and~\ref{ass:mumorse_and_stronggrad} such that, upon running $\mathcal{A}$ on $f$, the first $K$ query points $x_1, \ldots, x_K$ satisfy $f(x_k) \geq f(0) = 0$ for all $k = 1, \ldots, K$, whereas the minimal value $\min_{x} f(x) = -2$ is attained at a point $x^*$ with norm $\|x^*\| = \pi$.
\end{proposition}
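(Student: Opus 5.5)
The plan is to take $d = 2K+1$ and use a rotated copy of the worst-case function~\eqref{eq:worstcasefunction}. For an orthogonal $U \in \reals^{d\times d}$ with last column $w := U e_d$, set $f_U(x) = f(U^\top x)$. Concretely,
\[
  f_U(x) = \cos(\la w, x\ra) - 1 + \tfrac12\big(\|x\|^2 - \la w,x\ra^2\big).
\]
Rotations preserve Assumptions~\ref{ass:f} and~\ref{ass:mumorse_and_stronggrad}, with the same constants as verified for~\eqref{eq:worstcasefunction}. The minimum value is $-2$, attained at $x^* = \pi w$ with $\|x^*\| = \pi$. The whole function depends on the hidden unit vector $w$ only through inner products with $w$. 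So the task reduces to choosing $w$ adversarially after seeing the deterministic algorithm's queries.

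The oracle answers are explicit. At $(x,u)$ we get
\begin{align*}
  f_U(x) &= \cos(\la w,x\ra) - 1 + \tfrac12\big(\|x\|^2 - \la w,x\ra^2\big),\\
  \nabla f_U(x) &= x - \big(\sin\la w,x\ra + \la w,x\ra\big) w,\\
  \nabla^2 f_U(x)[u] &= u - \big(\cos\la w,x\ra + 1\big)\la w,u\ra\, w.
\end{align*}
The key observation is this: if $w \perp x$ and $w \perp u$, then the answers are $(\tfrac12\|x\|^2, x, u)$, which do not depend on $w$ at all. Moreover $f_U(x) = \tfrac12\|x\|^2 \ge 0 = f(0)$ in that case.

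The resisting oracle works as follows. Run $\mathcal{A}$ against the fixed responses $(\tfrac12\|x_k\|^2, x_k, u_k)$ for $k = 1,\ldots,K$. Because $\mathcal{A}$ is deterministic and each response depends only on the query itself, this produces a well-defined sequence of query pairs $(x_1,u_1),\ldots,(x_K,u_K)$. Their span $V$ has dimension at most $2K < d = 2K+1$. Pick a unit vector $w \in V^\perp$ and complete it to an orthogonal $U$ with $Ue_d = w$. Then, by induction on $k$, the true oracle of $f_U$ returns exactly the fixed responses at every query. Hence $\mathcal{A}$ run on $f_U$ generates the same queries, and $f_U(x_k) = \tfrac12\|x_k\|^2 \ge 0$ for all $k \le K$.

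The only delicate point is the consistency of this induction. Because the algorithm is adaptive, the later queries $x_{k+1}, u_{k+1}$ depend on earlier answers. The argument goes through because the simulated answers coincide with the true answers as long as the queries already made are orthogonal to $w$. Since $w$ is chosen orthogonal to all $2K$ vectors generated in the simulation, the true and simulated runs agree step by step.

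The remaining verifications are routine and can be cited from the discussion after~\eqref{eq:worstcasefunction}. These are the Lipschitz constants $L_G = L_H = 1$, the $\mu$-Morse property with $\mu = 1$, and the strong gradient property with $R_s = 1/4 \le \mu^2/(4L_HL_G)$ and $\gamma_s = 1/(2\pi)$, all of which are invariant under rotation. Smoothness of $f_U$ is clear.
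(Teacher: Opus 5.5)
Your proof is correct and is essentially the paper's argument. It uses the same resisting oracle returning $(\frac{1}{2}\|x\|^2, x, u)$, picks a hidden unit direction orthogonal to the at most $2K$-dimensional span of the queries, and gets agreement of the true and simulated runs from determinism. Your quadratic term $\frac{1}{2}(\|x\|^2 - \la w, x\ra^2)$ coincides with the paper's $\frac{1}{2}\|V_K x\|^2$ when $\mathcal{V}_K$ is taken to be all of $w^\perp$, and writing it your way makes explicit that $f_U$ is a genuine rotation of \eqref{eq:worstcasefunction}, hence $\mu$-Morse, even if the queries span fewer than $2K$ dimensions.
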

\begin{proof}
  The resisting oracle proceeds as follows: when the algorithm queries $(x, u)$ to obtain the values $\left(f(x), \nabla f(x), \nabla^2 f(x)[u] \right)$, it returns $(\frac{1}{2}\|x\|^2, x,u)$.
  Let us explain why this works.

  Assume that the algorithm has performed at most $K$ oracle calls.
  Then, all queried points $x_1, \ldots x_{K}$ and directions $u_1,\ldots u_{K} $ belong to a subspace $\mathcal{V}_{K}$ of dimension at most $2K$ in $\reals^{2K+1}$.
  Let $q$ be a unit-norm vector orthogonal to $\mathcal{V}_{K}$, and let $V_{K} \in \reals^{(2K+1)\times (2K+1)}$ be the orthogonal projection matrix on $\mathcal{V}_{K}$.
  Consider the function $f$ given by
  \[
     f(x) = \frac{1}{2}\| V_K x\|^2 + \cos\!\big( q^\top x \big) -1.
  \]
  (Notice this is simply~\eqref{eq:worstcasefunction} in a rotated coordinate system.)
  Its gradient and Hessian are given by
  \begin{align*}
    \nabla f(x) = V_K x - \sin\!\big( q^\top x \big) q, &&
    \nabla^2 f(x)[u] = V_K u - \cos\!\big( q^\top x \big) (q^\top\!u) q .
  \end{align*}
  For all $x, u \in \mathcal{V}_{K}$, we have $\left( f(x),\nabla f(x),\nabla^2 f(x)u\right) = (\frac{1}{2}\|x\|^2, x,u)$.
  Thus, the oracle's outputs are consistent with this $f$ so far.
  Since the algorithm is \textbf{deterministic} and uses only the information given by the oracle, the points $x_1, \ldots, x_{K}$ are exactly those generated by algorithm $\mathcal{A}$ applied to $f$, and $f$ indeed has the announced properties.
\end{proof}

In other words, Proposition~\ref{prop:oracle_complexity} shows that no deterministic second-order method can escape saddles points (in function value) in less than $(d-1)/2$ oracle calls, where $d = 2K$ is the dimension of the optimization space.
In contrast, when applied to $f$~\eqref{eq:worstcasefunction} or any of its rotations, our method escapes with high probability to find a point $\hat{x}$ satisfying $f(\hat{x}) - f(0) \leq -\Omega(1)$ in at most $\mathcal{\tilde{O}}\!\left(\sqrt{\frac{L_H}{\mu}}\right) = \mathcal{\tilde{O}}(1)$ steps (Theorem~\ref{thm:global_inner_its}), with a polylogarithmic dependence on $d$.

\section{Perspectives}

Our work opens several directions for further research.

On the theory side, we believe that some parts of the complexity bounds could be sharpened.
In particular, the dependence of the outer-iteration estimate on the condition number $L_G / \mu$ appears to be suboptimal.
Improving this would require a finer analysis of the algorithm's behavior around saddle points across multiple iterations.
It would also be interesting to reduce the dependence on the maximal radius $\bar \Delta$.
While this dependence appears unavoidable for general non-convex models, it could be overcome with new assumptions.
It would also be interesting to extend the analysis to Morse--Bott functions (allowing for non-isolated critical points), for example using tools developed by~\citet{rebjock2025thesis}.

On the practical side, an important issue is finding a reliable \emph{stopping criterion}.
At the moment, our analysis guarantees that the method finds an $\epsilon$-local minimizer point in a certain number of iterations, with high probability.
However, Algorithm~\ref{algo:pTR} provides no mechanism for certifying that a given iterate is indeed an $\epsilon$-minimizer.
The difficulty is that, when the gradient is small, the algorithm cannot determine whether the current iterate is near a saddle or a local minimizer (it could if it knew the problem constants $\mu$ etc.).
Another question is whether we can adjust the noise scale $\sigma$ adaptively, thus eliminating the need for even that single additional parameter.
The task is to bring these improvements while preserving the good behavior of the algorithm.

% THIS LINE EXCLUDES THE APPENDICES FROM THE TOC
\addtocontents{toc}{\protect\setcounter{tocdepth}{0}}

\appendix

\section{A tool and an example to check problem assumptions}\label{app:intro_proofs}

This appendix holds additional details for the discussion in Section~\ref{ss:assumptions}, related to Definitions~\ref{def:mu_morse} ($\mu$-Morse) and~\ref{def:strong_saddle} (strong gradient).

\begin{proposition}[Sufficient condition for strong gradient property]\label{prop:strong_gradient}
   Let $f\colon\reals^d \rightarrow \reals$ be $C^2$.
   Assume there exist $B,C> 0$ such that $\|\nabla f(x)\| \geq B$ for every $x$ such that $\|x\| \geq C$.
   Then, for every radius $R > 0$, there exists $\gamma > 0$ such that $f$ satisfies the $(R,\gamma)$-strong gradient property.
\end{proposition}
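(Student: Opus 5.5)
The plan is a compactness argument. Fix $R > 0$; we must produce $\gamma > 0$ such that every $x$ either has $\|\nabla f(x)\| \geq \gamma$ or lies within distance $R$ of a critical point. Split $\Rd$ into the exterior region $\{x : \|x\| \geq C\}$ and the closed ball $\bar B_C = \{x : \|x\| \leq C\}$. On the exterior region the hypothesis already gives $\|\nabla f(x)\| \geq B$, so any $\gamma \leq B$ handles those points.

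On the ball, let $\mathcal{C}$ denote the set of critical points of $f$. Every critical point lies in the open ball $\{\|x\| < C\}$, because $\|\nabla f\| \geq B > 0$ outside it. Define
\[
  K = \{ x \in \reals^d : \|x\| \leq C \text{ and } \operatorname{dist}(x, \mathcal{C}) \geq R \},
\]
with the convention $\operatorname{dist}(x, \emptyset) = +\infty$, so that $K = \bar B_C$ when there are no critical points. The distance function to a set is $1$-Lipschitz, hence continuous, so $K$ is closed; it is also bounded, hence compact. Moreover $K$ contains no critical point, since each critical point is at distance $0 \leq R$ from $\mathcal{C}$ (and $R > 0$). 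Since $f$ is $C^2$, the map $x \mapsto \|\nabla f(x)\|$ is continuous and strictly positive on $K$. If $K$ is nonempty it therefore attains a positive minimum $m_K > 0$; if $K$ is empty, set $m_K = +\infty$.

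Take $\gamma = \min(B, m_K) > 0$ and verify the property. Let $x \in \Rd$. If $\|x\| \geq C$, then $\|\nabla f(x)\| \geq B \geq \gamma$. Otherwise $\|x\| < C$. Either $\operatorname{dist}(x, \mathcal{C}) < R$, or $x \in K$ and then $\|\nabla f(x)\| \geq m_K \geq \gamma$. In the first case, the set $\mathcal{C}$ is closed (it is the zero set of a continuous map) and bounded (it sits inside the ball), so it is compact and the distance is attained by some critical point $\xbar$ with $\|x - \xbar\| < R$. Hence item (ii) of Definition~\ref{def:strong_saddle} holds.

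The only delicate points are bookkeeping: confirming that $K$ is closed via continuity of the distance function, handling an empty $\mathcal{C}$ or empty $K$, and ensuring the infimum in the distance is attained so that condition (ii), which asks for an actual critical point, is satisfied. None of these is a real obstacle. Note that no Morse-type assumption is needed, because the argument uses only continuity and compactness.
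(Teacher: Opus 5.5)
Your proof is correct and follows essentially the same route as the paper, which uses the same compact set (points of the ball of radius $C$ at distance at least $R$ from every critical point), takes the positive minimum of $\|\nabla f\|$ over it, and sets $\gamma$ to the minimum of that value and $B$. Your appeal to compactness of the critical set to attain the distance is harmless but unnecessary, since $\operatorname{dist}(x,\mathcal{C}) < R$ already yields a critical point within distance $R$ by definition of the infimum. Your explicit treatment of the empty cases is slightly more careful than the paper's write-up.
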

\begin{proof}
   Let $\mathrm{Crit} f$ be the set of critical points of $f$.
   For some $R>0$, define the set
     \[
     \mathcal{D} = \{ x \in \reals^d \,:\, \|x\| \leq C \textrm{ and }  \;\forall \,\xbar \in \mathrm{Crit} f, \;\|x-\xbar\| \geq R  \}.
     \]
     This is an intersection of closed sets and it is bounded: it is therefore compact. Define
     \[
     \gamma_{\mathcal{D}} = \inf_{x\in \mathcal{D}} \|\nabla f(x)\|.
     \]
     Since $\mathcal{D}$ is compact and $f$ is $C^2$, there exists $y \in \mathcal{D}$ such that $\|\nabla f(y)\| = \gamma_{\mathcal{D}}$. Because $\mathcal{D}$ and $\mathrm{Crit} f$ are disjoint, $y$ is not a critical point and hence $\gamma_{\mathcal{D}} > 0$.

     Then, $f$ satisfies the $(R,\gamma)$-strong gradient property with $\gamma = \min(B,\gamma_{\mathcal{D}})$.
     Indeed, for each $x \in \reals^d$, at least one of the following is true: (i) there exists $\xbar \in \mathrm{Crit} f$ such that $\|x-\xbar\| \leq R$, or (ii) $x \in \mathcal{D}$ and $\|\nabla f(x)\| \geq \gamma_{\mathcal{D}}$, or (iii) $\|x\| \geq C$ and $\|\nabla f(x)\| \geq B$.
\end{proof}

\begin{lemma}[Example of $\mu$-Morse function.]\label{lemma:mu_morse_example}
  Let $M \in \symm_d$ be a symmetric matrix.
  Assume its eigenvalues are distinct and denote them $\lambda_1 > \cdots > \lambda_{d'} > 0 > \lambda_{d'+1} > \cdots > \lambda_d$ with corresponding unit eigenvectors $u_1, \ldots, u_d$.
  Consider the function $f$ defined for $x\in \reals^d$ by
  \[
    f(x) = \frac{1}{4}\|xx^\top - M\|_{\mathrm{F}}^2.
  \]
  Then $f$ satisfies the $\mu$-Morse property with constant
  \[
    \mu = \min\!\big( \lambda_1 - \lambda_2, \ldots, \lambda_{d'-1} - \lambda_{d'}, \lambda_{d'} - 0, 0 - \lambda_{d'+1}, \lambda_{d'+1} - \lambda_{d'+2}, \ldots, \lambda_{d-1} - \lambda_d \big) > 0.
  \]
\end{lemma}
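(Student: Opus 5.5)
We must prove Lemma~\ref{lemma:mu_morse_example}: $f(x)=\frac14\|xx^\top-M\|_{\mathrm F}^2$ is $\mu$-Morse with $\mu$ equal to the minimal gap between consecutive elements of $\{\lambda_1,\dots,\lambda_{d'},0,\lambda_{d'+1},\dots,\lambda_d\}$. The plan is to compute the derivatives, classify the critical points, and diagonalize the Hessian at each one in the eigenbasis $u_1,\dots,u_d$ of $M$.

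\textbf{Derivatives and critical points.} Expanding, $f(x)=\frac14\|x\|^4-\frac12 x^\top Mx+\frac14\|M\|_{\mathrm F}^2$. Hence
\[
\nabla f(x)=\|x\|^2x-Mx,\qquad \nabla^2 f(x)=\|x\|^2 I_d+2xx^\top-M.
\]
Setting $\nabla f(x)=0$ gives $Mx=\|x\|^2x$. So either $x=0$, or $x$ is an eigenvector of $M$ with eigenvalue $\|x\|^2>0$. Since the eigenvalues are distinct, the critical points are exactly $x=0$ and $x=\pm\sqrt{\lambda_i}\,u_i$ for $i\le d'$.

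\textbf{Spectra at critical points.} At $x=0$, $\nabla^2 f(0)=-M$, with eigenvalues $-\lambda_j$. Each $|\lambda_j|$ equals the gap between $\lambda_j$ and $0$ when $\lambda_j$ is adjacent to $0$ in the ordered list, and is at least such a gap otherwise; in all cases $|\lambda_j|\ge\mu$.

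At $x=\pm\sqrt{\lambda_i}\,u_i$, the Hessian is $\lambda_i I+2\lambda_i u_iu_i^\top-M$. It is diagonal in the basis $\{u_j\}$, with
\begin{itemize}
\item eigenvalue $2\lambda_i$ on $u_i$;
\item eigenvalue $\lambda_i-\lambda_j$ on $u_j$ for $j\neq i$.
\end{itemize}
Since $\lambda_i>0$ and $0$ lies in the list, $\lambda_i$ and $0$ are separated by a union of consecutive gaps, so $2\lambda_i\ge\lambda_i\ge\mu$. For $j\ne i$, $|\lambda_i-\lambda_j|$ is a sum of consecutive gaps in the ordered list, including through $0$ if $\lambda_j<0$, so $|\lambda_i-\lambda_j|\ge\mu$.

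Since each Hessian is symmetric, its singular values are the absolute values of its eigenvalues, all of which are at least $\mu$. This is exactly Definition~\ref{def:mu_morse}.

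\textbf{Positivity of $\mu$} follows because the eigenvalues are distinct and nonzero, so every gap in the list is positive. The only delicate point is the bookkeeping in the gap argument when $\lambda_j<0$, which inserting $0$ into the ordered list handles; no genuine obstacle is expected.
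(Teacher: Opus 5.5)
Your proof is correct and follows essentially the same route as the paper: compute $\nabla f(x)=\|x\|^2x-Mx$ and $\nabla^2 f(x)=\|x\|^2 I_d+2xx^\top-M$, identify the critical points $0$ and $\pm\sqrt{\lambda_i}\,u_i$ for $i\le d'$, and read off the Hessian spectra ($-\lambda_j$ at the origin; $2\lambda_i$ and $\lambda_i-\lambda_j$ for $j\neq i$ at $\pm\sqrt{\lambda_i}\,u_i$). The differences are cosmetic: you diagonalize directly in the eigenbasis $\{u_j\}$ where the paper solves the eigen-equation via $v=\alpha u_i+u_\perp$, and you spell out the consecutive-gap bookkeeping (with $0$ inserted into the ordered list) that the paper leaves implicit.
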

\begin{proof}
The gradient is $\nabla f(x) = (xx^\top - M)x$ and therefore the set of critical points of $f$ is
\[
  \mathrm{Crit} f = \{0\}\cup \{ \pm \sqrt{\lambda_i} u_i \, : \, i = 1, \ldots, d'\}.
\]
The Hessian is $\nabla^2 f(x)= \|x\|^2 I_d + 2 xx^\top - M$.
At the origin, we have $\nabla^2 f(0) = -M$: all eigenvalues have absolute value at least $\min(\lambda_{d'}, -\lambda_{d'+1})$.
Consider now the pair of critical points $\pm\sqrt{\lambda_i}u_i$ for some $i \in \{1, \ldots, d'\}$.
The Hessian is then $H_i = \lambda_i I_d + 2\lambda_i u_iu_i^\top - M$.
Letting $\lambda$ be an eigenvalue of $H_i$ and $v$ the corresponding eigenvector, we have
\[
  \lambda_i v + 2\lambda_i  u_i u_i^\top v - Mv  = \lambda v.
\]
Write $v = \alpha u_i + u_\perp$ with $u_i^\top u_\perp = 0$.
It follows from the previous equation that
\begin{align*}
  (2\lambda_i - \lambda) \alpha = 0 && \textrm{ and } &&
  Mu_\perp = (\lambda_i - \lambda)u_\perp.
\end{align*}
If $\alpha \neq 0$, then $\lambda = 2\lambda_i$, which is larger than $\lambda_{d'}$.
If $\alpha = 0$, then $u_\perp$ is nonzero and the second equation implies that $\lambda = \lambda_i - \lambda_j$ for some $j \neq i$.
Thus, at each critical point, each eigenvalue of the Hessian has absolute value at least $\mu$.
\end{proof}

\section{Additional proofs for Section~\ref{s:preliminaries}}\label{app:additional}

This appendix holds the deferred proofs for preliminary lemmas stated in Section~\ref{s:preliminaries}.

\begin{proof}[Proof of Lemma~\ref{lemma:growth_mu}]
  Let us prove item~\ref{item:growth_mu1}.
  Since the singular values of $\nabla^2 f(\xbar)$ are at least $\mu$, using the Lipschitz Hessian property and $\nabla f(\bar x) = 0$ we find
  \begin{align*}
    \| \nabla f(x) \|
      & = \left\|\int_{0}^{1} \nabla^2 f\left(\xbar + t(x-\xbar)\right)(x-\xbar) \dt \right\| \\
      & \geq \| \nabla^2 f(\xbar)(x-\xbar)\| - \left\|\int_{0}^{1}  \big[\nabla^2 f\left(\xbar + t(x-\xbar)\right) - \nabla^2 f(\xbar)\big](x-\xbar) \dt \right\| \\
      & \geq \mu\| x-\xbar\| - \frac{L_H}{2}\|x-\xbar\|^2 \\
      & \geq \mu(1 - \frac{c}{2})\|x-\xbar\|.
  \end{align*}
  To prove the second inequality, we write
  \begin{align*}
    \|\nabla^2 f(x) u\|
      & \geq \|\nabla^2 f(\xbar) u\| - \opnorm{\nabla^2 f(x)-\nabla^2 f(\xbar)} \|u\| \\
      & \geq \mu \|u\| - L_H\|x-\xbar\|  \|u\| \\
      & \geq \mu (1-c)\|u\|.
  \end{align*}
  To prove item~\ref{item:growth_mu2}, assume that $\xbar$ is a saddle point of $f$.
  Then, $\nabla^2 f(\xbar)$ has a negative eigenvalue, and it must be less than $-\mu$.
  Therefore, there exists a vector $\bar u$ such that $\la \nabla ^2 f(\xbar)\bar u, \bar u \ra \leq -\mu \|\bar u\|^2$, so that
  \[
    \la \nabla^2 f(x) \bar u, \bar u\ra \leq \la \nabla^2 f(\bar x) \bar u, \bar u \ra + L_H\|x-\xbar\| \|\bar u\|^2 \leq (-\mu  + \mu c) \|\bar u\|^2
    % \leq -(\mu+c) \|\bar u\|^2,
  \]
  implies that $\nabla^2 f(x)$ has an eigenvalue smaller than $(-1+c)\mu$.

  If on the other hand $\xbar$ is a local minimizer of $f$, then $\nabla^2 f(\xbar) \succeq \mu I_d$ and so
  \[
    \la \nabla^2 f(x)u,u \ra \geq \la \nabla^2 f(\xbar)u,u \ra - L_H \|x-\xbar\| \|u\|^2 \geq \mu \|u\|^2 - \mu c \|u\|^2
  \]
  for all $u \in \reals^{d}$, which proves item~\ref{item:growth_mu3}.
\end{proof}

\begin{lemma}\label{lemma:cg_equiv}
  For $0\leq t \leq \Tin$, the iterates $(v^{(t)},p^{(t)},r^{(t)})$ generated by Algorithm~\ref{algo:tCG} initialized at $v^{(0)}=\xi$ and applied to the quadratic $v \mapsto m(v)$ are equal to $(\xi + \tilde{v}^{(t)},\tilde{p}^{(t)},\tilde{r}^{(t)})$, where $( \tilde{v}^{(t)},\tilde{p}^{(t)},\tilde{r}^{(t)})$ are the iterates of CG initialized at $\tilde{v}^{(0)}=0$ and applied to the translated quadratic
  $
    \tilde v \mapsto m(\xi + \tilde{v}). % = m(\xi) + \la g + H\xi,\tilde{v} \ra +\frac{1}{2} \la H\tilde{v},\tilde{v} \ra .
  $
\end{lemma}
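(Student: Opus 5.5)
The plan is an induction on $t$ from $0$ to $\Tin$, comparing the two recursions line by line. For the translated quadratic $\tilde m(\tilde v) = m(\xi + \tilde v)$ I first expand
\[
  \tilde m(\tilde v) = m(\xi) + \la g + H\xi, \tilde v \ra + \tfrac{1}{2}\la H\tilde v, \tilde v\ra .
\]
So $\tilde m$ is a quadratic with the same Hessian $H$ and linear term $\tilde g = g + H\xi$. Its gradient satisfies $\nabla \tilde m(\tilde v) = \nabla m(\xi + \tilde v)$. The point of this is that CG only ever touches $H$ and residuals, so shifting the base point should change nothing except the iterates themselves.

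The base case $t=0$ is immediate. Algorithm~\ref{algo:tCG} sets $v^{(0)} = \xi$ and $r^{(0)} = -(H\xi + g)$. Classical CG on $\tilde m$ from $\tilde v^{(0)} = 0$ sets $\tilde r^{(0)} = -\tilde g = -(g + H\xi)$. Both take $p^{(0)} = r^{(0)}$ and $\tilde p^{(0)} = \tilde r^{(0)}$, so the triples agree.

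For the inductive step, assume $(v^{(t-1)}, p^{(t-1)}, r^{(t-1)}) = (\xi + \tilde v^{(t-1)}, \tilde p^{(t-1)}, \tilde r^{(t-1)})$ with $t \le \Tin$. The step size $\alpha^{(t)} = \|r^{(t-1)}\|^2 / \la p^{(t-1)}, H p^{(t-1)}\ra$ depends only on $r$, $p$ and $H$, so it equals $\tilde\alpha^{(t)}$.

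Because $t \le \Tin$, the boundary/negative-curvature branch at line~\ref{line:iftoboundaryintcgbg} was not taken. Hence $\tCGbg$ performs the plain CG update $v^{(t)} = v^{(t-1)} + \alpha^{(t)} p^{(t-1)}$. This equals $\xi + \tilde v^{(t-1)} + \tilde\alpha^{(t)}\tilde p^{(t-1)} = \xi + \tilde v^{(t)}$.

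The residual recursion $r^{(t)} = r^{(t-1)} - \alpha^{(t)} H p^{(t-1)}$ is identical in both methods, so $r^{(t)} = \tilde r^{(t)}$. The coefficient $\beta^{(t)} = \|r^{(t)}\|^2/\|r^{(t-1)}\|^2$ therefore agrees, and so does $p^{(t)} = r^{(t)} + \beta^{(t)} p^{(t-1)}$.

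There is one care point. The index $t = \Tin$ may be the last one if $\stopres$ triggers, in which case $\beta^{(t)}$ and $p^{(t)}$ are never formed by Algorithm~\ref{algo:tCG}. For that case I will read the claim as covering the quantities actually produced, or simply as defining $p^{(t)}$ by the same formula.

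As a consistency check, $r^{(t)} = -\nabla m(v^{(t)}) = -\nabla\tilde m(\tilde v^{(t)})$, which matches Lemma~\ref{lemma:well_known_props}\ref{item:res}.

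The main obstacle is not algebraic; it is bookkeeping about when $\tCGbg$ coincides with unmodified CG. The induction must stop exactly at $\Tin$, where the truncation branch would otherwise alter $v^{(t)}$. The argument also needs to note that whenever CG's step is well defined (nonzero denominator), the branch conditions refer to the same quantities in both recursions.
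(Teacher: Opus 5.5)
Your proposal is correct and matches the paper's proof essentially line by line. Both argue by induction on $t$, with base case $\tilde r^{(0)} = r^{(0)} = -(g+H\xi)$ and $\tilde p^{(0)} = p^{(0)}$, and then match $\alpha$, $v$, $r$, $\beta$, $p$ in turn because these updates depend only on $H$, $r$ and $p$. Your extra remarks are sound bookkeeping that the paper leaves implicit: the truncation branch is never taken for $t \le \Tin$, and $p^{(\Tin)}$ is never formed when $\stopres$ triggers.
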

\begin{proof}[Proof of Lemma~\ref{lemma:cg_equiv}]
  The proof is by induction. By construction, we have $v^{(0)}=\xi$ and $\tilde v^{(0)}=0$ thus $v^{(0)}=\xi+\tilde v^{(0)}$. Define the translation $\tilde v := v-\xi$ and the translated quadratic
  \[
  \tilde m(\tilde v) := m(\xi+\tilde v)
  = m(\xi) + \langle g+H\xi,\tilde v\rangle + \tfrac12 \langle H\tilde v,\tilde v\rangle .
  \]
  Then we see that $\tilde r^{(0)} = -(g + H\xi) = r^{(0)}$ and therefore $\tilde p^{(0)} = p^{(0)}$. Assume for some $t\ge 0$ (with $t+1\le \Tin$~\eqref{eq:def_tin}) that
  \[
  v^{(t)}=\xi+\tilde v^{(t)},\qquad r^{(t)}=\tilde r^{(t)},\qquad p^{(t)}=\tilde p^{(t)}.
  \]
  Then by definition $\alpha^{(t+1)} = \tilde\alpha^{(t+1)}$, thus
  \[
  v^{(t+1)}=v^{(t)} + \alpha^{(t+1)}p^{(t)} = \xi+\tilde v^{(t)} + \tilde \alpha^{(t+1)}\tilde p^{(t)} = \xi + \tilde v^{(t+1)} \qquad \text{and} \qquad r^{(t+1)}=\tilde r^{(t+1)}.
  \]
  This leads to $\beta^{(t+1)} = \tilde\beta^{(t+1)}$, which in turn implies that $p^{(t+1)}=\tilde p^{(t+1)}$.
\end{proof}

\begin{proof}[Proof of Lemma~\ref{lemma:well_known_props}]
  Recall that the iterates $v^{(0)}, \ldots, v^{(\Tin)}$ are the same as those generated by the classical CG method~\citep{Hestenes1952CG} applied to $m$ and initialized at $v^{(0)} = \xi$. % Indeed, the difference between conjugate gradients (CG) and Algorithm~\ref{algo:pTR} only appears for iterates that lie outside the ball $B(0, \Delta / 2)$.
  Items~\ref{item:pt_basis}-\ref{item:kt} are known fundamental properties of CG \citep{Hestenes1952CG,Conn2000}.
  To prove item~\ref{item:grad_decrease}, let $t$ satisfy $1 \leq t \leq \Tin-1$.
  From item~\ref{item:kt} we have
  \[
    m(v^{(t+1)}) \leq m(v),\quad \forall v \in v^{(0)} + \mathcal{K}_{t+1}(H, r^{(0)}).
  \]
  It is true in particular if we choose $v$ to be $v^{(t+1)}_{\rm GD} = v^{(t)} + \frac{1}{L_G} r^{(t)}$ (which is indeed in the right space).
  Since $r^{(t)} = -\nabla m(v^{(t)})$, the point $v^{(t+1)}_{\rm GD}$ corresponds to a gradient descent step with step size $1/L_G$.
  As the function $m$ has Lipschitz continuous gradients with constant $L_G$, a classical result from smooth optimization (see, e.g., \cite[\S1.2.3]{Nesterov20018}) states that
  \[
    m(v^{(t+1)}_{\rm GD}) \leq m(v^{(t)}) - \frac{1}{2L_G}\|r^{(t)}\|^2.
  \]
  Combining the two previous inequalities yields
  \[
    m(v^{(t+1)}) - m(v^{(t)}) \leq m(v^{(t+1)}_{\rm GD}) - m(v^{(t)}) \leq-\frac{1}{2L_G} \|r^{(t)}\|^2.
  \]

  To prove item~\ref{item:psd_kt}, fix $1 \leq t \leq \Tin$ and a nonzero $u \in \mathcal{K}_t(H,r^{(0)})$.
  We know from \cite[\S5.1]{Conn2000} that the vectors $p^{(0)}, \ldots, p^{(t-1)}$ form a basis of this space and that they are $H$-conjugate, meaning that $\la Hp^{(s)},p^{(s')} \ra = 0$ for $s\neq s'$.
  We can write $u = \sum_{s = 0}^{t-1} z_s p^{(s)}$ with at least one $z_s$ which is nonzero.
  Then,
  \[
    \la Hu,u \ra = \sum_{s=0}^{t-1} z_s^2 \la Hp^{(s)},p^{(s)} \ra >0,
  \]
  where the first equality follows from $H$-conjugacy and the inequality holds because the algorithm did not stop before iteration $t$ (recall that Algorithm~\ref{algo:tCG} terminates as soon as $\la  Hp^{(t)},p^{(t)}\ra \leq 0$).
  This proves that $H$ is positive definite on the $t$th Krylov subspace.

  To establish~\ref{item:steplength}, we rely on~\citep[Lem.~7]{Royer2020}, which states for $1 \leq t \leq \Tin$ that
  \[
    \la r^{(t-1)}, H r^{(t-1)}\ra \geq \la  p^{(t-1)}, H p^{(t-1)} \ra.
  \]
  From there, we find
  \[
    \alpha^{(t)} = \frac{\|r^{(t-1)}\|^2}{\la H p^{(t-1)},p^{(t-1)} \ra} \geq \frac{\|r^{(t-1)}\|^2}{\la H r^{(t-1)},r^{(t-1)} \ra} \geq \frac{1}{L_G}.
  \]
  (Note that \citet{Royer2020} assume that the method is initialized at 0, however the result also holds in our case thanks to Lemma~\ref{lemma:cg_equiv}.)

  Finally, we prove $T \leq d$.
  For contradiction, assume $\tCGbg$ runs $d$ (or more) iterations.
  In particular, it produces iterates $v^{(0)}, \ldots, v^{(d)}$.
  If $\|v^{(d)}\| = \Delta/2$, then the algorithm terminates with $\stopcrit = \stopoob$ and $T = d$.
  Otherwise, we know $\|v^{(d)}\| < \Delta/2$ so that $\Tin \geq d$.
  Then, item~\ref{item:pt_basis} provides $\mathcal{K}_d(H, r^{(0)}) = \reals^d$, while item~\ref{item:psd_kt} reveals that $H$ is positive definite on $\Rd$, and hence item~\ref{item:kt} provides that $v^{(d)}$ is the critical point of $m$.
  It follows that $r^{(d)} = 0$ (by item~\ref{item:res}) and so the algorithm terminates with $\stopcrit = \stopres$ and $T = d$.
\end{proof}

\section{Additional lemmas} \label{app:technical}

The first lemma of this appendix is a helper for the lemma thereafter.
%In this appendix, $S^{d-1} = \{ x \in \Rd : \|x\| = 1 \}$ is the unit sphere in $\Rd$, and $\mathrm{Unif}(S^{d-1})$ denotes the uniform distribution on that sphere.

\begin{lemma}\label{lemma:distribution_s}
  Fix $d \geq 2$.
  Let $q \in \Rd$ have unit norm, and let $\bar\xi$ be sampled uniformly at random from the unit sphere in $\Rd$.
  %$\bar \xi \sim \mathrm{Unif}(S^{d-1})$ and $q \in S^{d-1}$.
  Then, the random variable $|\la q, \bar \xi\ra|$ has density
  $$
    \rho(t) = 2a_d (1-t^2)^{(d-3)/2} \qquad \mbox{for } t \in [0, 1)
  $$
  and $\rho(t) = 0$ for $t \notin [0, 1)$, where the normalizing constant $a_d$ satisfies
  $$
    a_d = \frac{\Gamma(d/2)}{\Gamma(1/2) \Gamma((d-1)/2)} \leq \sqrt{\frac{d}{2\pi}}.
  $$
  % NB checked numerically: see density_sphere_inner_product.m
\end{lemma}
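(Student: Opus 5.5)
The density follows from the standard marginal of the uniform distribution on the sphere. By rotation invariance of the uniform distribution on the unit sphere, we may take $q = e_1$, so that $\la q, \bar\xi\ra = \bar\xi_1$. The plan has three steps: first obtain the density of $\bar\xi_1$ on $(-1,1)$, then fold it onto $[0,1)$, and finally bound the normalizing constant $a_d$.

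\textbf{Density of the first coordinate.} I would represent $\bar\xi = Z/\|Z\|$ with $Z \sim \mathcal{N}(0, I_d)$. Then $\bar\xi_1^2 = Z_1^2/(Z_1^2 + \|Z_{2:d}\|^2)$ is a ratio of independent chi-squares. Hence it is $\mathrm{Beta}(1/2, (d-1)/2)$, with density
\[
\frac{s^{-1/2}(1-s)^{(d-3)/2}}{B(1/2,(d-1)/2)} \quad \text{on } (0,1).
\]
Changing variables $s = t^2$, with $\mathrm{d}s = 2t\,\mathrm{d}t$, gives for $|\bar\xi_1|$ the density
\[
\frac{2(1-t^2)^{(d-3)/2}}{B(1/2,(d-1)/2)}, \qquad t \in [0,1).
\]
This equals $2a_d(1-t^2)^{(d-3)/2}$ with $a_d = 1/B(1/2,(d-1)/2) = \Gamma(d/2)/(\Gamma(1/2)\Gamma((d-1)/2))$. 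As an alternative route, the surface-area slicing formula gives $\bar\xi_1$ density proportional to $(1-t^2)^{(d-3)/2}$ on $(-1,1)$, and symmetry doubles it on $[0,1)$.

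\textbf{Bound on $a_d$.} Since $\Gamma(1/2) = \sqrt{\pi}$, the bound $a_d \le \sqrt{d/(2\pi)}$ is equivalent to the Gamma ratio inequality
\[
\frac{\Gamma(d/2)}{\Gamma((d-1)/2)} \le \sqrt{d/2}.
\]
I would prove this with Gautschi's inequality, $\Gamma(x+1)/\Gamma(x+s) \le (x+1)^{1-s}$ for $s \in (0,1)$. Applying it with $x = (d-2)/2$ and $s = 1/2$ yields
\[
\frac{\Gamma(d/2)}{\Gamma((d-1)/2)} \le \sqrt{d/2},
\]
which is exactly the required bound. Alternatively, log-convexity of $\Gamma$ gives $\Gamma(x+1/2)^2 \le \Gamma(x)\Gamma(x+1) = x\Gamma(x)^2$ with $x = (d-1)/2$. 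This yields the slightly stronger bound $\sqrt{(d-1)/2}$, so the inequality holds for every $d \ge 2$.

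\textbf{Main obstacle.} The only delicate point is stating the Gamma-ratio inequality in the form that applies to all $d \ge 2$. The log-convexity route above handles this most cleanly, so I would use it. For $d = 2$ the exponent $(d-3)/2$ is negative, but the density remains integrable on $[0,1)$ and the argument is unchanged.
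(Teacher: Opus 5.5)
Your proposal is correct and follows the paper's route for the density: you identify $\la q,\bar\xi\ra^2$ as $\mathrm{Beta}(1/2,(d-1)/2)$ and then change variables $s=t^2$ (the paper cites Dixon for the Beta law, whereas you derive it from the Gaussian representation $\bar\xi = Z/\|Z\|$). The paper's proof never justifies the bound $a_d \le \sqrt{d/(2\pi)}$, and your log-convexity argument $\Gamma(x+\tfrac12)^2 \le x\,\Gamma(x)^2$ with $x=(d-1)/2$ is valid for every $d\ge 2$ and fills that omission.
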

\begin{proof}
  The variable $\la q,\bar \xi \ra^2$ is beta-distributed with parameters $\left(\frac{1}{2}, \frac{d-1}{2}\right)$ \citep[Sec. 3]{dixon1983}.
  Its density is given by $t \mapsto a_d t^{-1/2} (1-t)^{(d-3)/2}$ on $(0,1)$.
  The expression of the density of $|\la q,\bar \xi \ra|$ follows from a change of variable.
\end{proof}

The next lemma is a probabilistic statement used in the proofs of Lemma~\ref{lemma:reslowerbound} and Theorem~\ref{thm:global_inner_its}.

\begin{lemma}\label{lemma:concentration_xi}
  Let $z\in \reals^d$, $c \in \reals$ and $\delta \in (0,1)$.
  Assume $d \geq 3$.
  With probability at least $1-\delta$, the vector $\xi_k$ generated at iteration $k$ of Algorithm~\ref{algo:pTR} (line~\ref{line:xik}) satisfies
  \[
    \big| \la z, \xi_k \ra + c \, \big| \geq \delta \|z\| \|\xi_k\| \sqrt{\frac{\pi}{8d}}.
  \]
\end{lemma}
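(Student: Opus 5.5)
The plan is to reduce to the one-dimensional anti-concentration statement of Lemma~\ref{lemma:distribution_s}. If $z = 0$ the claim is trivial, since the right-hand side is zero. Otherwise, write $\xi_k = s\,\eta\,\bar\xi_k$, where $s = \min(\sigma,\Delta_k/4)$, $\eta\in\{\pm1\}$ is the sign chosen in line~\ref{line:xik}, and $\bar\xi_k$ is uniform on the unit sphere. Set $q = z/\|z\|$. The event to control is then
\[
  \big| \eta s\|z\| \la q,\bar\xi_k\ra + c \big| < \delta s\|z\|\sqrt{\tfrac{\pi}{8d}},
\]
which is the same as $\big|\eta\la q,\bar\xi_k\ra + c'\big| < \epsilon$ with $c' = c/(s\|z\|)$ and $\epsilon = \delta\sqrt{\pi/(8d)}$. (Here $x_k,\Delta_k$ are fixed by conditioning on the past, so $s$ is deterministic.)

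The sign $\eta$ depends on $\bar\xi_k$, so I will not try to handle it directly. Instead I take a union over both signs. The event above is contained in
\[
  \{|\la q,\bar\xi_k\ra + c'| < \epsilon\} \cup \{|\la q,\bar\xi_k\ra - c'| < \epsilon\}.
\]
Each of these says that the scalar $Y = \la q,\bar\xi_k\ra$ lies in an interval of length $2\epsilon$.

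The density of $Y$ is bounded. By Lemma~\ref{lemma:distribution_s}, $|Y|$ has density $2a_d(1-t^2)^{(d-3)/2}$, so by symmetry $Y$ has density $a_d(1-t^2)^{(d-3)/2}$ on $(-1,1)$. Because $d\ge3$, this is at most $a_d \le \sqrt{d/(2\pi)}$. Each interval therefore has probability at most $2\epsilon\,a_d$, and the union has probability at most
\[
  4\epsilon\, a_d \le 4\delta\sqrt{\tfrac{\pi}{8d}}\sqrt{\tfrac{d}{2\pi}} = \delta,
\]
which gives the claim.

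The main obstacle is the sign coupling. Because $\eta$ is a function of $\bar\xi_k$, $\xi_k$ is not uniform on the sphere, and the union over both signs is exactly what absorbs this. It costs a factor of $2$, and the constant $\sqrt{\pi/(8d)}$ leaves just enough room for it, since $4\sqrt{\pi/8}\cdot\sqrt{1/(2\pi)} = 1$. If $\la H_k\xi,g_k\ra=0$ makes the sign ambiguous, the union bound covers any tie-breaking rule.
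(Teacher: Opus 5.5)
Your proof is correct and is essentially the paper's argument. The paper removes the sign by bounding the bad event by $\bigl||\langle z,\xi_k\rangle| - |c|\bigr| \le \epsilon$ (via $|a+b| \ge \bigl||a|-|b|\bigr|$) and then integrates the density $2a_d(1-t^2)^{(d-3)/2}$ of $|\langle q,\bar\xi_k\rangle|$; this is the same as your union over the two signs with the density $a_d(1-t^2)^{(d-3)/2}$ of $\langle q,\bar\xi_k\rangle$, and it gives the same bound $4\epsilon a_d \le \delta$ with the same constant.
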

\begin{proof}
  If $z = 0$, the claim is clear.
  If $z \neq 0$, then $|\inner{z}{\xi_k} + c\,| = \big|\inner{z/\|z\|}{\xi_k} + c/\|z\| \big| \cdot \|z\|$.
  Thus, assume $\|z\| = 1$ without loss of generality.

  Recall that $\xi_k = \|\xi_k\| \cdot e_k \bar \xi_k$ where $e_k \in \{\pm 1\}$ and $\bar \xi_k$ is sampled uniformly at random from the unit sphere in $\Rd$.
  For $\epsilon > 0$ we have
  \begin{align*}
    \Prob\!\left( |\la z,\xi_k\ra + c| \leq \epsilon \right)
    &\leq \Prob\!\left( \big||\la z,\xi_k\ra| - |c|\big| \leq \epsilon \right),
  \end{align*}
  because (for any two reals $a, b$) it holds that $|a + b| \geq |a| - |b|$ and $|a + b| \geq |b| - |a|$, so that $|a + b| \geq \big| |a| - |b| \big|$.
  On the right-hand side, replace $\xi_k$ by $\|\xi_k\| \cdot e_k \bar \xi_k$ and divide by $\|\xi_k\|$ to deduce:
  \begin{align*}
    \Prob\!\left( |\la z,\xi_k\ra + c| \leq \epsilon \right)
      & \leq \Prob\!\left( \left| \big|\la z, \bar \xi_k\ra\big| - \frac{|c|}{\|\xi_k\|}\right| \leq \frac{\epsilon}{\|\xi_k\|} \right)
        = \int_{\frac{|c|-\epsilon}{\|\xi_k\|}}^{\frac{|c|+\epsilon}{\|\xi_k\|}} \rho(t)dt,
  \end{align*}
  where $\rho$ is the density defined in Lemma~\ref{lemma:distribution_s}.
  For $d\geq 3$, $\rho$ is maximal at $0$ so that
  \[
    \Prob\!\left( |\la z,\xi_k\ra + c| \leq \epsilon \right)
    \leq \frac{2\epsilon}{\|\xi_k\|} \rho(0) \leq \frac{4\epsilon}{\|\xi_k\|} \sqrt{\frac{d}{2\pi}}.
  \]
  The right-hand side equals $\delta$ upon setting $\epsilon = \frac{1}{4}\delta \|\xi_k\| \sqrt{\frac{2\pi}{d}}$.
\end{proof}

The final lemma of this appendix is referenced in Remark~\ref{remark:sigma_bar1}.

\begin{lemma}\label{lemma:sigma_loglog}
  Let $a,b > 0$.
  Then for any $\sigma > 0$ we have
  \[
    \sigma \leq \frac{b}{\log_2\!\left( 2 + \frac{a}{b}\right)} \qquad \implies \qquad \sigma \log_2\log_2\!\left(2 + \frac{a}{\sigma} \right) \leq b.
  \]
\end{lemma}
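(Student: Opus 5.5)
Assume $\sigma \le b/\log_2(2 + a/b)$ and write $L := \log_2(2 + a/b) \ge 1$, so that $\sigma \le b/L$. The goal is to bound $\sigma \log_2\log_2(2 + a/\sigma)$ by $b$. The plan has two steps: first reduce to the extreme value $\sigma = b/L$ using monotonicity, then verify the inequality there by an elementary estimate.

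\textbf{Monotonicity.} The function $\phi(\sigma) = \sigma \log_2\log_2(2 + a/\sigma)$ is nondecreasing on the relevant range. As $\sigma$ shrinks, the double logarithm grows only like $\log\log(1/\sigma)$, while the prefactor $\sigma$ shrinks linearly. To make this rigorous without differentiating, I would avoid monotonicity of $\phi$ altogether and bound directly. Since $\sigma \le b/L$, we have $a/\sigma \ge aL/b$, so the double logarithm is large, and I cannot simply substitute the extreme value. Instead I would use $\log_2\log_2(2 + a/\sigma) \le \log_2(2 + a/\sigma)$ and then write
\[
\sigma \log_2\!\left(2 + \frac{a}{\sigma}\right) = \sigma \log_2\!\left(2 + \frac{a}{b}\cdot\frac{b}{\sigma}\right) \le \sigma\left[\log_2\!\left(2 + \frac{a}{b}\right) + \log_2\frac{b}{\sigma}\right].
\]
The last inequality uses $2 + xy \le (2+x)y$ for $y \ge 1$, where $y = b/\sigma \ge L \ge 1$.

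\textbf{Bounding the two terms.} With $s = \sigma/b \in (0, 1/L]$, the right-hand side equals $b\,[sL + s\log_2(1/s)]$. The first term satisfies $sL \le 1$. However, $s\log_2(1/s)$ can be as large as about $0.53$, so this crude bound gives only $\lesssim 1.53\,b$. This is too weak, and keeping the outer $\log_2$ rather than discarding it is needed. So I would instead bound
\[
\log_2\log_2\!\left(2 + \frac{a}{\sigma}\right) \le \log_2\!\big(L + \log_2(1/s)\big)
\]
by the same product estimate. The quantity to control is then $s\log_2(L + \log_2(1/s))$ with $sL \le 1$. Using $\log_2(x+y) \le \log_2 x + y/(x\ln 2)$, this is at most $s\log_2 L + s\log_2(1/s)/(L\ln 2)$. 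Finally, $s\log_2 L \le (\log_2 L)/L$ and $s\log_2(1/s) \le L^{-1}\log_2 L$ (because $s\log(1/s)$ is increasing on $(0,1/e)$). Both pieces are then bounded by $\sup_{L \ge 1} (\log_2 L)/L \le 0.54$ times constants, and I expect their sum to stay below $1$.

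\textbf{Main obstacle.} The hardest step is making these numerical constants close below $1$, especially for $L$ near $1$, where I would handle the case $s > 1/e$ (possible only when $L < e$) separately by direct inspection. If the constants do not close, the fallback is to check a one-variable calculus inequality in $(s, L)$ over the region $sL \le 1$.
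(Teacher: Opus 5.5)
Your reduction is sound and differs from the paper's. The estimate $2+xy\le(2+x)y$ for $y=b/\sigma\ge 1$ gives $\log_2(2+a/\sigma)\le L+\log_2(1/s)$. Combined with $\ln(L+y)\le \ln L+y/L$, the lemma follows once
\[
F(s,L) := s\log_2 L+\frac{s\log_2(1/s)}{L\ln 2}\le 1 \qquad\text{for all } L\ge 1,\ 0<s\le 1/L .
\]
The gap is in how you verify this last inequality.

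The bound $s\log_2(1/s)\le L^{-1}\log_2 L$ follows from monotonicity of $s\log(1/s)$ on $(0,1/e]$ only when $1/L\le 1/e$, i.e.\ $L\ge e$. For $1\le L<e$ it also fails for many $s\le 1/e$; at $L=1$ its right-hand side is $0$. So setting aside only the case $s>1/e$ does not repair it.

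Moreover, in that regime the constants do not close term by term. If you use $s\log_2(1/s)\le 1/(e\ln 2)$ instead, the two pieces are bounded by $\frac{\log_2 L}{L}$ (supremum $\approx 0.53$) and $\frac{1}{eL(\ln 2)^2}$ (supremum $\approx 0.77$, at $L=1$). These suprema add up to more than $1$.

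The fix is to keep the two pieces coupled. For all admissible $(s,L)$,
\[
F(s,L)\le \frac{1}{L}\Big(\log_2 L+\frac{1}{e(\ln 2)^2}\Big).
\]
As a function of $L\ge 1$, the right-hand side is maximized at $\log_2 L^*=\frac{1}{\ln 2}-\frac{1}{e(\ln 2)^2}\approx 0.68$. There it equals $\frac{1}{L^*\ln 2}\approx 0.90<1$, because $L^*\approx 1.60>1/\ln 2$. This covers every $L\ge 1$ at once, with no case split.

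For comparison, the paper carries out the extreme-value reduction you considered and abandoned, but moves the extremality from $\sigma$ to $a$. It writes $\sigma=b/\log_2(2+z)$ with $z\ge a/b$, so that $a/\sigma\le z\log_2(2+z)$. In other words, $\sigma$ is exactly the threshold for the inflated parameter $zb$, and the left-hand side is increasing in $a$. Everything then reduces to the single-variable inequality $2+t\log_2(2+t)\le 2^{2+t}$ for $t\ge 0$, which the paper proves by convexity.

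That route loses nothing at the threshold and avoids a two-variable optimization. Yours spends constants on two linearizations, but once repaired it still leaves a margin of roughly $10\%$.
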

\begin{proof}
  We first prove the auxiliary inequality
  \begin{equation} \label{eq:aux_ineq}
    \forall t \geq 0, \qquad \log_2\!\big( 2 + t \log_2(2+t) \big) \leq 2+t.
  \end{equation}
  % % Matlab check:
  % t = linspace(0, 20, 1001);
  % lhs = log2( 2 + t .* log2(2 + t) );
  % rhs = 2 + t;
  % plot(t, lhs, t, rhs);
  %
  Equivalently, we should prove that $\phi(t) = 2^{2+t} - 2 - t \log_2(2+t)$ is nonnegative for $t\geq 0$.
  We have
  \begin{align*}
    \phi'(t) &= (\log 2) 2^{2+t} - \log_2(2+t) - \frac{t}{(\log 2)(2+t)}, \\
    \phi''(t) &= (\log 2)^2 2^{2+t} - \frac{2}{(\log 2)(2+t)} + \frac{t}{(\log 2) (2+t)^2}.
  \end{align*}
  We claim that $\phi$ is convex: indeed we have $\phi''(t) \geq (\log 2)^2 2^2 - \frac{2}{(\log 2) 2} \approx 0.48$ for $t\geq 0$.
  Then, since $\phi(0) = 2 > 0$ and $\phi'(0) = 4\log 2 - 1 > 0$, we deduce that $\phi(t) > 0 $ for $t \geq 0$.
  This proves inequality~\eqref{eq:aux_ineq}.

  We now prove the lemma.
  Assume $\sigma$ satisfies the prerequisites.
  Then we can write
  \begin{align*}
    \sigma = \frac{b}{\log_2\!\left(2 + z \right)} && \textrm{ with } && z \geq \frac{a}{b}.
  \end{align*}
  We therefore have
  \[
  \begin{split}
    \sigma \log_2 \log_2\!\left( 2 + \frac{a}{\sigma} \right) &= \frac{b}{\log_2(2+z)} \cdot  \log_2 \log_2\!\left( 2 + \frac{a}{b} \log_2(2+z) \right)\\
    &\leq \frac{b}{\log_2(2+z)} \cdot  \log_2 \log_2\!\left( 2 + z \log_2(2+z) \right)\\
    &\leq \frac{b}{\log_2(2+z)} \cdot  \log_2(2+z)\\
    &= b,
  \end{split}
  \]
  owing to inequality~\eqref{eq:aux_ineq}.
\end{proof}

\section{Connection between formal and informal statements of the theorem}\label{app:informal_justif}

In this section, we justify that the informal presentation of the main theorems in Section \ref{sss:informal_thm} is indeed a simplification of Theorems~\ref{thm:outer_complexity} and~\ref{thm:global_inner_its}.
First, Theorem~\ref{thm:outer_complexity} states that with probability at least $1-\delta$, the method finds a point $x_K$ that is $\epsilon$-close to a local minimizer $x^*$ in at most $K$ outer iterations with
\begin{equation}\label{eq:boundK}
  K \leq \bar K_{G,N} + \bar K_{M,\epsilon} = \frac{3}{2} \left( \frac{f(x_0)-\flow}{\Flg} + 1 \right) K_{\rm esc}(\delta',\sigma) + \frac{1}{2} \log_2\!\left( \frac{\Delta_0}{8\bar R} \right) + \log_2\!\left(1 + \left[\log_2 \frac{\bar R}{\epsilon} \right]_+\right).
\end{equation}
Here $\Flg = \frac{\rho'}{5 L_G} \gradlb^2$ is defined as in \eqref{eq:def_constants} and
\begin{align}
    \delta' = \left(\frac{\Flg}{f(x_0) - \flow+\Flg} \right)\delta \;\;\in (0,1) && \textrm{ and } &&
    K_{\rm esc}(\delta', \sigma) = 2
          + \log_2 \log_2\!\left(2+\frac{\sqrt{d}}{ \omega_2 \mu \delta'\sigma}\right).
\end{align}
So $\delta'$ can be rewritten as
$$
    \delta' = \dfrac{\delta}{\kappa + 1} \quad \textrm{ with } \quad \kappa = \dfrac{f(x_0) - \flow}{\Flg}.
$$
The second term in \eqref{eq:boundK} is hidden in the $\tildeO$ notation in \eqref{eq:bound_outer_informal} and the third term matches exactly the second term in \eqref{eq:bound_outer_informal} since $\epsilon \leq \bar R$. For the first term, we have
\begin{align*}
  \left( \kappa + 1 \right) K_{\rm esc}(\delta',\sigma) &= \left( \kappa + 1 \right) \left( 2 + \log_2 \log_2\!\left(2+\frac{\sqrt{d}(\kappa + 1)}{ \omega_2 \mu \delta\sigma}\right) \right) \\
  & = \mathcal{O}\left(\kappa \log \log\!\left(\frac{\sqrt{d}(\kappa + 1)}{ \omega_2 \mu \delta\sigma}\right)\right) = \tildeO\left(\kappa \log \log\!\left(\frac{d}{ \delta\sigma}\right)\right),
\end{align*}
where we absorbed the $\log\log$ dependence on $\kappa$, $\omega_2$, $\mu$ in the $\tildeO$ notation.

For the inner iterations, Theorem~\ref{thm:global_inner_its} implies that for each outer iteration $k$, the number of inner iterations $T_k$ during that outer iteration satisfies
\begin{equation}\label{eq:bound_Tk}
  \begin{aligned}
    T_k &\leq 4 + \frac{ ( L_G \bar \Delta)^2}{\left(\min(\omega_1 \gradlb, \omega_2 \gradlb^2) \right)^2}
  + \sqrt{
      \frac{L_G}{4\mu}
      }
      \log\!\left( \frac{512 L_G^2 {\bar{R}}^2 \bar K_{G,N}^2 d  }{\pi \sigma^2 \delta^2 \mu^2}\right) \\
      &+ \sqrt{ \frac{2L_G}{\mu}}
        \log\!\left(
        \sqrt{\frac{2L_G}{\mu}} \cdot
        \frac{
          16L_G\bar R
        }{
          \min(\omega_1 \mu \epsilon , \omega_2 \mu^2 \epsilon^2)
        }
        \right).
  \end{aligned}
\end{equation}
The first term matches exactly the second term in \eqref{eq:bound_inner_informal}. For the second and third terms, we note that $\bar K_{G,N} = \tildeO(\kappa \log\log(d/(\delta \sigma)))$ from above. After hiding logarithmic dependence on $\omega_1, \omega_2, \mu, L_G, \bar R, \kappa$ in the $\tildeO$ notation, we obtain that $T_k$ is upper bounded by the second term in \eqref{eq:bound_inner_informal}. Finally, summing the bound on $T_k$ for $k \leq K$ gives the desired result.

\section*{Acknowledgments}

This work was supported by the Swiss State Secretariat for Education, Research and Innovation (SERI) under contract number MB22.00027.
RD is a chair holder of the Hi! Paris interdisciplinary research center.

{\small \bibliographystyle{plainnat_modif}
\bibsep 1ex
\bibliography{library,library_supp}}

\end{document}